\newcolumntype{P}[1]{>{\raggedright\let\newline\\\arraybackslash\hspace{0pt}}m{#1}}
\newtheorem{proposition}{Proposition}[section]
\newtheorem{theorem}[proposition]{Theorem}
\newtheorem{corollary}[proposition]{Corollary}
\newtheorem{lemma}[proposition]{Lemma}
\newtheorem{thm}[proposition]{Theorem}
\newtheorem{conjecture}[proposition]{Conjecture}
\theoremstyle{definition}
\newtheorem{question}[proposition]{Question}
\theoremstyle{remark}
\newtheorem{remark}[proposition]{Remark}
\numberwithin{equation}{section}
\newcommand{\newword}[1]{\textbf{\emph{#1}}}
\newcommand{\integers}{\mathbb Z}
\newcommand{\reals}{\mathbb R}
\newcommand{\cut}{\operatorname{cut}}
\newcommand{\JIrr}{\operatorname{JIrr}}
\newcommand{\ji}{\operatorname{ji}}
\newcommand{\Sh}{\operatorname{Sh}}
\newcommand{\sgn}{\operatorname{sgn}}
\newcommand{\vsgn}{\mathbf{sgn}}
\newcommand{\Ram}{{\operatorname{Ram}}}
\newcommand{\cov}{\mathrm{cov}}
\newcommand{\covers}{{\,\,\,\cdot\!\!\!\! >\,\,}}
\newcommand{\covered}{{\,\,<\!\!\!\!\cdot\,\,\,}}
\newcommand{\set}[1]{{\left\lbrace #1 \right\rbrace}}
\newcommand{\pidown}{\pi_\downarrow}
\newcommand{\br}[1]{{\langle #1 \rangle}}
\newcommand{\F}{{\mathcal F}}
\newcommand{\D}{{\mathfrak D}}
\newcommand{\p}{{\mathfrak p}}
\newcommand{\join}{\vee}
\newcommand{\meet}{\wedge}
\newcommand{\ck}{\spcheck}
\newcommand{\0}{{\mathbf{0}}}
\newcommand{\Tits}{\mathrm{Tits}}
\newcommand{\Cone}{\mathrm{Cone}}
\newcommand{\Proj}{\mathrm{Proj}}
\newcommand{\relint}{\mathrm{relint}}
\DeclareMathOperator{\Span}{Span}
\DeclareMathOperator{\supp}{supp}
\DeclareMathOperator{\inv}{inv}
\newcommand{\g}{\mathbf{g}}
\newcommand{\m}{\mathfrak{m}}
\renewcommand{\k}{\mathbbm{k}}
\newcommand{\kk}{\mathbf{k}}
\renewcommand{\u}{\mathbf{u}}
\newcommand{\w}{\mathbf{w}}
\newcommand{\tB}{{\tilde{B}}}
\newcommand{\C}{\mathcal{C}}
\newcommand{\Scat}{\operatorname{Scat}}
\newcommand{\Fan}{\operatorname{Fan}}
\newcommand{\ScatFan}{\operatorname{ScatFan}}
\newcommand{\re}{\mathrm{re}}
\renewcommand{\d}{{\mathfrak d}}
\newcommand{\seg}[1]{\overline{#1}}
\newcommand{\hy}{\hat{y}}
\newcommand{\ab}{\uparrow}
\newcommand{\bel}{\downarrow}
\newcommand{\cm}[3]{(#1\Vert#2)_{#3}}
\newcommand{\cmcircarrow}{{{\circlearrowright}}}
\newcommand{\cmcirc}[3]{(#1\,\cmcircarrow\,#2)_{#3}}
\newcommand{\dd}{{\mathbf d}}
\newcommand{\aff}{\mathrm{aff}}
\newcommand{\fin}{\mathrm{fin}}
\newcommand{\DF}{{\mathcal {DF}}}
\newcommand{\DCScat}{{\operatorname{DCScat}}}
\newcommand{\adj}[2]{\operatorname{adj}_{#1}(#2)}
\newcommand{\Upper}{\operatorname{Upper}}
\newcommand{\ScatTB}{\Scat^T\!(B)}
\newcommand{\ScatFanTB}{\ScatFan^T\!(B)}
\newcommand{\eigenspace}[1]{U^{#1}}
\newcommand{\RSChar}{\Phi}
\newcommand{\RS}{\RSChar}
\newcommand{\RSre}{\RS^\re}
\newcommand{\RSpos}{\RS^+}
\newcommand{\RSfin}{\RS_\fin}
\newcommand{\SimplesChar}{\Pi}
\newcommand{\Simples}{\SimplesChar}
\newcommand{\RSTChar}{\Upsilon}
\newcommand{\RST}[1]{\RSTChar^{#1}}
\newcommand{\RSTfin}[1]{\RST{#1}_\fin}
\newcommand{\SimplesTChar}{\Xi}
\newcommand{\SimplesT}[1]{\SimplesTChar^{#1}}
\newcommand{\simpleT}{\beta}
\newcommand{\SuppT}{\operatorname{Supp}_\SimplesTChar}
\newcommand{\TravInfChar}{\Psi}
\newcommand{\proj}{\to}
\newcommand{\TravProj}[1]{\overrightarrow{\TravInfChar}^{#1}}
\newcommand{\inj}{\leftarrow}
\newcommand{\TravInj}[1]{\overleftarrow{\TravInfChar}^{#1}}
\newcommand{\AP}[1]{\RS_{#1}}
\newcommand{\APre}[1]{\AP{#1}^\re}
\newcommand{\APTChar}{\Lambda}
\newcommand{\APT}[1]{\APTChar_{#1}}      
\newcommand{\APTre}[1]{\APT{#1}^\re}
\author{Nathan Reading}
\author{Salvatore Stella}
\title{Cluster scattering diagrams of acyclic affine type}
\address[N. Reading]{Department of Mathematics, North Carolina State University, Raleigh, NC, USA}
\address[S. Stella]{Dipartimento di Ingegneria e Scienze dell'Informazione e Matematica, Università degli Studi dell'Aquila, IT}
\thanks{Nathan Reading was partially supported by the National Science Foundation under Grant Numbers DMS-1500949 and DMS-2054489 and by the Simons Foundation under award number 581608.\\ 
\indent Salvatore Stella was partially supported by ISF grant \#1144/16, the University of Haifa, the University of Leicester, and the University of Rome ``La Sapienza''.}
\begin{document}

\begin{abstract}
We give an explicit construction of the cluster scattering diagram for any acyclic exchange matrix of affine type.
We show that the corresponding cluster scattering fan coincides both with the mutation fan and with a fan constructed in the almost positive roots model.
\end{abstract}

\maketitle

\vspace{-8pt}

\setcounter{tocdepth}{2}
\tableofcontents

\section{Introduction}\label{intro}
Cluster scattering diagrams were introduced by Gross, Hacking, Keel, and Kontsevich \cite{GHKK}  (``GHKK'') to bring the powerful machinery of scattering diagrams 
to bear on the study of cluster algebras.
In the process, GHKK proved (or corrected and proved) longstanding structural conjectures about cluster algebras.
Since then, cluster scattering diagrams have become an important tool in the theory.

The key result of GHKK gives a recursive construction of the cluster scattering diagram, but no general, concrete description.
A primary output of a cluster scattering diagram is a collection of theta functions, which, in essence, is a superset of the cluster monomials.
However, the definition of a theta function, while combinatorial, requires listing all possible ``broken lines'' with a certain starting slope and endpoint.
This can be prohibitively complicated in high dimension and/or when the cluster scattering diagram has infinitely many walls.
Furthermore, computing structure constants for the multiplication of theta functions involves an even more complicated enumeration of broken lines.
For all of these reasons, there is a need for combinatorial models of cluster scattering diagrams, to make the machinery of scattering diagrams concrete in specific cases.

For cluster algebras of acyclic finite type, the construction of cluster scattering diagrams is easy, by combining existing results on cluster algebras and results of GHKK.
(See \cite[Remark~4.8]{scatcomb}.)
As an alternative, \cite[Theorem~4.3]{scatcomb} gives a tidy construction of cluster scattering diagrams of acyclic finite type using the machinery of shards \cite{hyperplane,congruence,shardint} and sortable elements \cite{sortable,typefree}.

We will say that an exchange matrix $B$ is of \newword{affine type} if it is mutation-equivalent to an acyclic exchange matrix $B'$ whose underlying Cartan matrix is of affine type in the usual sense.
As a consequence of \cite[Theorem~3.5]{Seven} and \mbox{\cite[Theorem~1.1]{FeShThTu12}}, if $B$ is of rank $\ge 3$, then it is of affine type if and only if the associated cluster algebra has linear growth but is not finite.
(For a less restrictive definition of affine type, with different motivations, see~\cite{FeLamp}.)

In this paper, we construct cluster scattering diagrams of acyclic affine type with principal coefficients.
One can easily extend this construction to other choices of coefficients (assuming that the columns of the extended exchange matrix are independent as required in the definition of scattering diagrams) by making substitutions to all scattering terms, without changing the geometry of the walls.
(See Remark~\ref{other coeffs}.)

The construction again uses the machinery of shards and sortable elements.
We draw on results of \cite{afframe} that construct the affine $\g$-vector fan as the doubled Cambrian fan and results of \cite{affdenom} that establish an almost positive roots model in affine type.
In a future paper in preparation \cite{afftheta}, we will use the results of this paper to compute theta functions and study their structure constants.

In this paper, we also show that, in the acyclic affine case, the cluster scattering fan (the fan cut out by the walls of the cluster scattering diagram \cite{scatfan}) coincides with two other fans:  a fan called the mutation fan \cite{universal} that exists for any exchange matrix $B$, and the generalized associahedron fan, constructed in \cite{affdenom} for $B$ of acyclic affine type.
Since the generalized associahedron fan is constructed as an explicit clique complex of its rays, the coincidence of these three fans provides an explicit construction of the cluster scattering fan and mutation fan in terms of rays.

The coincidence of the cluster scattering fan and mutation fan in acyclic affine type leads easily to a more general result:
The cluster scattering fan and the mutation fan coincide in the affine case, even without the assumption of acyclicity.

Mutation fans were defined in \cite{universal} in order to construct universal geometric cluster algebras.
In another future paper, we plan to use the fact that the mutation fan and the generalized associahedron fan coincide to construct universal geometric cluster algebras of affine type, proving a conjecture from~\cite{universal}.

The proofs in this paper draw heavily on three sets of tools:  the sortable elements, Cambrian lattices/fans and doubled Cambrian fans of \cite{cambrian,sortable,camb_fan,typefree,afframe}; the shards/lattice theory of \cite{hyperplane,congruence,cambrian,sort_camb,shardint}; and the almost positive roots/generalized associahedra of \cite{associahedra,FoZe03a,FoZe03,MRZ,afforb,affdenom}.
Some parts of the proofs use new results about these tools (most often generalizing something that was known in finite type).
Beyond the basic definitions, tools from the theory of scattering diagrams do not play a role in the proofs.
This is to be expected, because there are no general tools to \emph{explicitly} construct cluster scattering diagrams.
Instead, we must rely on combinatorial models.

\begin{remark}\label{other efforts}
Ours is not the only effort to understand cluster scattering diagrams using different tools.  
For example, Bridgeland \cite{Bridgeland} has a general construction of cluster scattering diagrams in terms of motivic Hall algebras. 
In addition, Hanson, Igusa, Kim, and Todorov \cite{HIKT} worked out the local structure of a cluster scattering fan of affine type near its limiting ray.
Working in the representation-theoretic language of semi-invariant pictures, they described this local structure in terms of standard wall-and-chamber structures of Nakayama algebras.
\end{remark}

\section{Main results}
We now describe our main results in more detail, leaving careful definitions until Section~\ref{def sec}.

\subsection{The cluster scattering diagram}
We give several characterizations of the cluster scattering diagram in acyclic affine type, all having to do with a cutting relation on hyperplanes orthogonal to roots, and most phrased in terms of shards.

An exchange matrix $B$ determines a Cartan matrix $A$, a root system $\Phi \subset V$, and a Coxeter group $W$ as described in Section~\ref{rs sec}. 
An acyclic $B$ also determines a Coxeter element $c$ of $W$ and, in this case, $A$ and $c$ together amount to the same information as $B$.
(Unfortunately, over the years, these equivalent ways of encoding the same data led to an ill-assorted bunch of notations; in order not to confuse readers already familiar with the topic we resist the urge to rationalize these notations here.)
When $B$ is acyclic and of affine type, $A$, $\Phi$, and $W$ are of affine type in the usual sense.

We write $\ScatTB$ for the (transposed) \newword{cluster scattering diagram} for $B$ with principal coefficients.  
(The transpose, relative to the conventions of \cite{GHKK} is necessary to keep our conventions in line with \cite{ca4}.)
This is a collection of \newword{walls} $(\d,f_\d)$, where $\d$ is a codimension-$1$ rational cone in $V^*$ and $f_\d$ is a formal power series in variables $\hy$ indexed by the simple roots.
Specifically, if $\d \subset \beta^\perp$ with $\beta$ a positive primitive vector, then $f_\d$ is a univariate formal power series evaluated on~$\hy^\beta$, the monomial whose exponents are given by the simple-root coordinates of~$\beta$.

\begin{remark}\label{other coeffs}
Here we construct $\ScatTB$ with principal coefficients.
By \cite[Proposition~2.6]{scatfan}, for arbitrary coefficients (chosen so that the extended exchange matrix has linearly independent columns), the scattering fan can then be obtained from the principal-coefficients scattering diagram by making a substitution in all scattering terms.
The substitution amounts to replacing the monomials $\hy_i$ that appear here with the more general monomials $\hy_i$ defined in \cite[(7.11)]{ca4}.
\end{remark}

Following constructions for finite type, in Section~\ref{shard sec} we will define a \newword{cutting} relation on hyperplanes orthogonal to roots (including the hyperplane orthogonal to the imaginary root $\delta$).
Each hyperplane is cut along its intersection with other hyperplanes into finitely many pieces, whose closures are called \newword{shards}.
The shards that intersect the Tits cone are in bijection with the join-irreducible elements of the weak order on $W$.
Given a join-irreducible element $j$, we write $\Sh(j)$ for the corresponding shard and let $f_j$ be $1+\hy^\beta$, where $\beta$ is the positive root orthogonal to $\Sh(j)$.

We now introduce a distinguished wall called the \newword{imaginary wall} $(\d_\infty,f_\infty)$.
It is supported on the hyperplane $\delta^\perp$ that forms the boundary of the Tits cone and it is defined in terms of a skew-symmetric bilinear form $\omega_c$ that depends on~$B$.
We set $\RSfin^{\omega_c+}=\set{\beta\in\RSfin:\omega_c(\beta,\delta)>0}$, where $\RSfin$ denotes the underlying finite root system, and define 
$\d_\infty=\{ x \in \partial \Tits(A) : \br{x,\beta}\le0,\, \forall\beta \in \RSfin^{\omega_c+}\}$. 
We also define~$f_\infty$ to be   
\begin{equation}\label{f inf def}
f_\infty=\begin{cases}
(1-\hy^\delta)^{-2}&\text{if }W\text{ is not of type }A_{2k}^{(2)},\text{ or}\\
(1+\hy^\delta)\cdot(1-\hy^\delta)^{-2}&\text{if }W\text{ is of type }A_{2k}^{(2)}.
\end{cases}\end{equation}

The $c$-sortable elements are certain elements of $W$ that, in finite type, play a role in Coxeter-Catalan combinatorics.
We write $\JIrr_c(W)$ for the set of join-irreducible $c$-sortable elements.

Finally, define the scattering diagram $\DCScat(A,c)$ as 
\begin{equation}\label{DCScat union}
\set{(\Sh(j),f_j):j\in\JIrr_c(W)}\cup\set{(-\Sh(j),f_j):j\in\JIrr_{c^{-1}}(W)}\cup\set{(\d_\infty,f_\infty)}.
\end{equation}
In general, the definition of scattering diagrams allows multiple copies of the same wall, but we don't allow them in $\DCScat(A,c)$.
Thus \eqref{DCScat union} should be understood in the ordinary sense of set union, rather than multiset union.
There is some overlap between the first two sets of walls, but simple reasoning based on Proposition~\ref{c beta adj} shows that the overlap consists of a finite set of walls.

Our first main result is the following theorem.

\begin{theorem}\label{DCScat}
If $B$ is an acyclic exchange matrix of affine type and $A$ and $c$ are the corresponding Cartan matrix and Coxeter element, then $\ScatTB$ is $\DCScat(A,c)$.
\end{theorem}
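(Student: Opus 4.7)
The plan is to apply the GHKK uniqueness theorem for consistent scattering diagrams with prescribed incoming walls: it suffices to show that $\DCScat(A,c)$ is consistent and that its incoming walls coincide with the canonical initial data $\set{(\alpha_i^\perp,\,1+\hy^{\alpha_i}):i=1,\dots,n}$ that characterizes $\ScatTB$ up to equivalence.

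First, I would identify the incoming walls of $\DCScat(A,c)$. Each simple reflection $s_i$ lies in both $\JIrr_c(W)$ and $\JIrr_{c^{-1}}(W)$, and the shards $\Sh(s_i)$ and $-\Sh(s_i)$ together fill the hyperplane $\alpha_i^\perp$; after collapsing the overlap noted following \eqref{DCScat union}, these contribute a single wall with function $1+\hy^{\alpha_i}$, which is incoming because $\alpha_i$ itself lies in the wall. For any non-simple $j\in\JIrr_c(W)\cup\JIrr_{c^{-1}}(W)$, the associated positive root $\beta$ is non-simple and $\Sh(j)$ is a proper convex piece of $\beta^\perp$ that does not contain $\beta$ in its interior, so the wall is outgoing. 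The imaginary wall on $\delta^\perp$ is likewise outgoing since $\delta$ is not a simple root. Thus the incoming data of $\DCScat(A,c)$ matches that of $\ScatTB$.

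The substantive work is consistency: the path-ordered product around every small loop linking a codimension-two face must be trivial. I would organize codimension-two faces into three regimes. Faces in the interior of the Tits cone lie in the intersection of two non-imaginary shards and are locally governed by a finite rank-two root subsystem; the Cambrian or anti-Cambrian local picture coincides with an acyclic finite-type subpicture, so consistency at these faces reduces to the finite-type statement \cite[Theorem~4.3]{scatcomb}. Faces lying on $\partial\Tits(A)=\delta^\perp$ but away from the limiting ray meet only finitely many sortable shards; here consistency dictates how $f_\infty$ interacts with neighboring binomial wall functions, and one checks that the expression in \eqref{f inf def} is the unique choice compatible with the local braid relations, with the $A_{2k}^{(2)}$ case requiring the extra factor $(1+\hy^\delta)$ because of the short imaginary-root contributions.

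The main obstacle is consistency at the limiting ray, where infinitely many sortable shards from both the $c$- and $c^{-1}$-sides accumulate against $\d_\infty$. This is not a finite local check but a statement about an infinite product of wall-crossings. To control it I would exploit the Coxeter translation orbit structure on $\RSfin$ developed in \cite{afforb,affdenom}: the accumulating shards organize into finitely many Coxeter-translation orbits, each contributing a convergent geometric factor to the path-ordered product; the definition of $\d_\infty$ in terms of $\RSfin^{\omega+}$ ensures that only orbits with positive $\omega_c$-pairing against $\delta$ appear on a given side of the loop, so the two sides are related by a single formal transformation. Summing these geometric series should yield $(1-\hy^\delta)^{-2}$ in all types and an additional $(1+\hy^\delta)$ in type $A_{2k}^{(2)}$, exactly matching \eqref{f inf def}. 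Verifying this telescoping at infinity, and in particular justifying convergence in the topology used by GHKK, is where I expect the bulk of the technical effort.
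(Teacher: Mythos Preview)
Your overall strategy---verify the incoming walls and prove consistency, then invoke GHKK uniqueness---matches the paper's. But two of your key steps contain genuine gaps.

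First, your argument that non-initial walls are outgoing is not correct. Incoming means that $\omega_c(\,\cdot\,,\beta)\in\d$, not that $\beta$ (which lives in $V$, not $V^*$) lies in the wall; so ``$\Sh(j)$ is a proper piece of $\beta^\perp$ not containing $\beta$'' proves nothing, and ``$\delta$ is not simple'' is likewise irrelevant for $\d_\infty$. The paper instead proves (Propositions~\ref{out c}, \ref{out cinv}, \ref{out inf}) that $-\omega_c(\,\cdot\,,\beta)$ lies in the \emph{relative interior} of each wall, by an induction using the recursions $\Sh(j)\leftrightarrow\Sh(sj)$ and $\omega_c\leftrightarrow\omega_{scs}$ (Lemma~\ref{OmegaInvariance}, Propositions~\ref{Sigma sj} and~\ref{Sigma init para}); for $\d_\infty$ it follows directly from the description in Theorem~\ref{DFc complement}. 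Also, $\Sh(s_i)=\alpha_i^\perp$ is already the full hyperplane, so there is no gluing of halves.

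Second, your picture of the consistency check on $\partial\Tits(A)$ is off. There is no single ``limiting ray'': the locus where a small loop meets infinitely many walls is the entire relative boundary of $\d_\infty$ (a codimension-$2$ set), namely those faces $G\subset\d_\infty$ with $\omega_c(\beta,\delta)\neq0$ for the finite root $\beta$ normal to~$G$ inside $\delta^\perp$. The paper does \emph{not} sum infinite products directly; instead it shows (using Proposition~\ref{ref cond}) that the sequence of normals around such a $G$ is governed by the restriction of $\omega_c$ to the plane $G^\perp$, which gives a rank-$2$ affine exchange matrix $B'$. Lemma~\ref{22 14} pins down the type of this rank-$2$ subsystem ($A_1^{(1)}$ generically, $A_2^{(2)}$ exactly in type $A_{2k}^{(2)}$), and then consistency follows from the already-known rank-$2$ affine scattering diagrams (Theorem~\ref{rk2 aff formula}), whose limiting-wall functions are precisely the $f_\infty$ of~\eqref{f inf def}. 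You are also missing the case $F\subset -\Tits(A)$, which the paper handles by the antipodal symmetry of Proposition~\ref{scat antip}. Your proposed direct telescoping of Coxeter-translation orbits would have to rediscover the rank-$2$ formulas from scratch and is not needed.
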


In \cite{affdenom}, we combinatorially defined a set $\AP{c}$ of \newword{almost positive Schur roots} and showed that it consists of the denominator vectors of cluster variables associated to $B$ together with the imaginary root $\delta$.
In Proposition~\ref{DC Phic}, we characterize the normal vectors to walls appearing in \eqref{DCScat union} as precisely the positive roots in $\AP{c}$, thus obtaining the following corollary of Theorem~\ref{DCScat}.

\begin{theorem}\label{scat Schur}
If $B$ is an acyclic exchange matrix of affine type, then the set of positive normals to walls of $\ScatTB$ is $\AP{c}\cap\RSpos$.
\end{theorem}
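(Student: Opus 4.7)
The plan is to derive Theorem~\ref{scat Schur} from Theorem~\ref{DCScat} by analyzing the positive normals of the walls enumerated in \eqref{DCScat union}. Since Theorem~\ref{DCScat} asserts $\ScatTB=\DCScat(A,c)$, it suffices to show that the set of positive primitive normals to the walls of $\DCScat(A,c)$ equals $\AP{c}\cap\RSpos$.

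The easy direction is essentially definitional. Each shard $\Sh(j)$ for $j\in\JIrr_c(W)$ lies in a hyperplane $\beta^\perp$ for a unique positive root $\beta$, by the bijection between join-irreducibles and shards to be recalled in \cref{shard sec}; the same hyperplane supports $-\Sh(j)$, and the imaginary wall is supported on $\delta^\perp$ with $\delta\in\RSpos$. Writing $N(c)$ for the set of positive roots orthogonal to some shard $\Sh(j)$ with $j\in\JIrr_c(W)$, the set of positive normals to walls of $\DCScat(A,c)$ is therefore $N(c)\cup N(c^{-1})\cup\{\delta\}$, and the task becomes to identify this union with $\AP{c}\cap\RSpos$.

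This identification is exactly the content promised by the forward reference to \cref{DC Phic}, and it is the step I expect to be the main obstacle. I would split the argument by root type. For finite positive real roots, a sortable-element calculation in the spirit of \cite{sortable,typefree} should realize every $\beta\in\RSfinpos$ as the normal of some $\Sh(j)$ with $j\in\JIrr_c(W)\cup\JIrr_{c^{-1}}(W)$, and the resulting set should be matched to $\AP{c}\cap\RSfinpos$ via the description given in \cite{affdenom}. The imaginary root $\delta$ belongs to $\AP{c}$ by construction and is realized by the imaginary wall $\d_\infty$. The genuinely affine content lies with the infinite positive real roots: here I would use the doubled Cambrian fan of \cite{afframe} together with the characterization of $\AP{c}\cap\RSre$ as a set of denominator vectors from \cite{affdenom} to pair, for each such $\beta$, the appearance of $\beta$ in $\AP{c}$ with the shards on $\beta^\perp$ occurring in \eqref{DCScat union}. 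The delicate point, which drives the principal difficulty, is twofold: one must verify completeness as $\beta$ approaches the limiting direction $\delta$ (so that every positive real root in $\AP{c}$ is accounted for, in spite of the accumulation of shards near $\delta^\perp$), and one must rule out the appearance of any positive root not lying in $\AP{c}$ as a normal in $N(c)\cup N(c^{-1})$, which requires a careful bookkeeping of how a given hyperplane $\beta^\perp$ is partitioned between the $c$-side and the $c^{-1}$-side shards.
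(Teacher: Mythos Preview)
Your reduction is correct and matches the paper exactly: \cref{scat Schur} is deduced as an immediate corollary of \cref{DCScat} together with \cref{DC Phic}, plus the observation that $\delta$ is the normal to $\d_\infty$ and is the unique imaginary root in $\AP{c}$. The paper's proof is literally a one-line citation of these facts.

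Where your proposal diverges is in your speculative sketch for establishing the identification $N(c)\cup N(c^{-1})\cup\{\delta\}=\AP{c}\cap\RSpos$ (the content of \cref{DC Phic}). You propose splitting real roots into ``finite'' versus ``infinite'' and worrying about limiting behavior near $\delta$ and about bookkeeping the $c$-side versus $c^{-1}$-side shards. The paper takes a cleaner route that avoids all of this. \cref{c beta adj} shows, by induction on length and rank, that a hyperplane $H$ arises as $\beta_t^\perp$ for some $c$-sortable join-irreducible $j$ with $\cov(j)=\{t\}$ if and only if $H=\beta^\perp$ for $\beta=c^k\gamma$ with $k\ge0$ and $\gamma$ in a proper parabolic subsystem. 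Then \cref{pmPhic c} (quoted from \cite{affdenom}) identifies $\{\pm c^k\gamma:k\in\integers,\ \gamma\text{ in a proper subsystem}\}$ with the positive real roots in $\AP{c}$ and their negations. Combining the $c$ and $c^{-1}$ versions of \cref{c beta adj} gives \cref{DC Phic} directly. No asymptotic analysis near $\delta$ is needed, and the exclusion of roots not in $\AP{c}$ falls out automatically from the orbit characterization rather than from any shard-by-shard bookkeeping.
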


Inspired by Theorem~\ref{scat Schur}, we can also define the cluster scattering diagram directly in terms of $\AP{c}$, the cutting relation, and the form $\omega_c$.
Given a pair $\beta,\gamma$ of positive roots, we say $\gamma$ \newword{cuts} $\beta$ if $\beta^\perp$ is cut along its intersection with $\gamma^\perp$ in the construction of shards.
Write $\cut(\beta)$ for the set of positive roots $\gamma$ that cut $\beta$.
This is always a finite set.
Define
\begin{equation}\label{d beta}
\d_\beta=\set{x\in V^*:\br{x,\beta}=0\text{ and }\br{x,\gamma}\le0,\,\forall\gamma\in\cut(\beta)\text{ with }\omega_c(\gamma,\beta)>0}.
\end{equation}
If $\beta$ is real, define $f_\beta$ to be $1+\hy^\beta$.
The unique imaginary root in $\AP{c}$ is $\delta$, and we define $f_\delta=f_\infty$ as in \eqref{f inf def}.

\begin{theorem}\label{easy scat}
If $B$ is an acyclic exchange matrix of affine type with corresponding Coxeter element $c$ and root system $\RS$, then $\ScatTB$ is $\set{(\d_\beta,f_\beta):\beta\in\AP{c}\cap\RSpos}$.
\end{theorem}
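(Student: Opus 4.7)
The plan is to derive \cref{easy scat} by translating the description of walls in $\DCScat(A,c)$ provided by \cref{DCScat} into the language of the cutting relation and the form $\omega_c$. By \cref{scat Schur}, the set of positive normals to walls of $\ScatTB$ equals $\AP{c} \cap \RSpos$, so the correct indexing set is already in place. What remains is to show that, for each $\beta \in \AP{c} \cap \RSpos$, the walls of $\DCScat(A,c)$ supported in $\beta^\perp$ together coincide with the single wall $(\d_\beta, f_\beta)$ defined in \eqref{d beta}.

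For the imaginary root $\beta = \delta$, the matching is essentially tautological: the unique wall in \eqref{DCScat union} with support in $\delta^\perp$ is $(\d_\infty, f_\infty)$, and one verifies that the elements of $\cut(\delta)$ on which the sign condition $\omega_c(\gamma,\delta) > 0$ holds are precisely those of $\RSfin^{\omega+}$, giving $\d_\delta = \d_\infty$. The equality $f_\delta = f_\infty$ holds by definition.

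For real $\beta$, the walls of $\DCScat(A,c)$ supported in $\beta^\perp$ come from $\Sh(j)$ for $j \in \JIrr_c$ with associated root $\beta$, together with $-\Sh(j')$ for $j' \in \JIrr_{c^{-1}}$ with associated root $\beta$. By the construction of shards, each such $\Sh(j)$ is the piece of $\beta^\perp$ cut out by the hyperplanes $\gamma^\perp$ for $\gamma \in \cut(\beta)$, with the choice of half-space on each side determined by the Cambrian structure attached to $c$. The crucial step is to show that this choice is governed precisely by the sign of $\omega_c(\gamma,\beta)$: for a $c$-shard, $\Sh(j)$ lies in the half-space $\br{x,\gamma} \le 0$ exactly when $\omega_c(\gamma,\beta) > 0$, and the analogous statement with signs reversed holds for $-\Sh(j')$, yielding the same inequality after negation. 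Combining the contributions from $c$ and $c^{-1}$ produces the single cone $\d_\beta$ of \eqref{d beta}, and the scattering function matches as $f_\beta = 1 + \hy^\beta = f_j$.

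The main obstacle is verifying the sign compatibility between the Cambrian orientation of a shard and the form $\omega_c$, and simultaneously reconciling the (finitely many) coincidences produced by the overlap between $c$- and $c^{-1}$-sortable contributions mentioned after \eqref{DCScat union}. This reduces to properties of $\omega_c$ on reflection data, combined with the sortable/Cambrian machinery developed earlier in the paper; the key input is that $\omega_c$ encodes precisely the orientation of the Coxeter element $c$ at the level of pairs of roots cutting one another, so that the intrinsic formulation \eqref{d beta} and the extrinsic shard description of \eqref{DCScat union} define the same cone.
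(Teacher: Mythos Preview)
Your proposal is correct and follows essentially the same route as the paper. The paper's proof appeals directly to \cref{shard ineq sort} (which in turn rests on \cref{a cap inv}) for the ``crucial step'' you identify---that the half-space choice on each $\gamma\in\cut(\beta_t)$ is determined by the sign of $\omega_c(\gamma,\beta_t)$---and then handles the $c^{-1}$-sortable case by the same $\gamma\leftrightarrow\gamma'$ swap you describe; you would strengthen your write-up by citing that proposition explicitly rather than gesturing at ``the sortable/Cambrian machinery.'' Your treatment of the imaginary wall (checking that the inequalities from $\cut(\delta)$ with $\omega_c(\gamma,\delta)>0$ reduce on $\delta^\perp$ to those of $\RSfin^{\omega+}$) is actually more explicit than the paper's proof, which leaves $\d_\delta=\d_\infty$ implicit.
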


The construction of cluster scattering diagrams in \cite{GHKK} defines a wall with normal vector $\beta$ to be \newword{outgoing} if (in our notation) the vector $\omega_c(\,\cdot\,,\beta)$ is not in the wall.
We define a cone to be \newword{gregarious} if $-\omega_c(\,\cdot\,,\beta)$ is in the relative interior of the cone.

\begin{theorem}\label{aff greg}
Suppose $B$ is acyclic of affine type, associated to a root system $\RS$ and a Coxeter element $c$.
Then $\ScatTB$ can be constructed entirely of gregarious walls, with exactly one wall in each hyperplane $\beta^\perp$, where $\beta$ runs over all positive roots in $\AP{c}$, including the imaginary root $\delta$.
\end{theorem}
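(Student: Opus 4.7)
The plan is to deduce \cref{aff greg} directly from \cref{easy scat}. That theorem already presents $\ScatTB$ as the set $\set{(\d_\beta,f_\beta):\beta\in\AP{c}\cap\RSpos}$, and since distinct positive roots span distinct hyperplanes $\beta^\perp$, this automatically realizes $\ScatTB$ with exactly one wall per hyperplane. All that remains is to show each wall $(\d_\beta,f_\beta)$ is gregarious, i.e.\ that $-\omega_c(\,\cdot\,,\beta)$ lies in the relative interior of $\d_\beta$ for every $\beta\in\AP{c}\cap\RSpos$.

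Suppose first that $\beta$ is a real positive root. Skew-symmetry of $\omega_c$ gives $\omega_c(\beta,\beta)=0$, so $-\omega_c(\,\cdot\,,\beta)$ lies in the hyperplane $\beta^\perp$ containing $\d_\beta$. For any $\gamma\in\cut(\beta)$ with $\omega_c(\gamma,\beta)>0$,
\[\br{-\omega_c(\,\cdot\,,\beta),\gamma}=-\omega_c(\gamma,\beta)<0,\]
so each of the defining inequalities of $\d_\beta$ in \eqref{d beta} is satisfied strictly. This is exactly the condition to belong to the relative interior of $\d_\beta$. One sees in hindsight that the definition \eqref{d beta} was built precisely to make $-\omega_c(\,\cdot\,,\beta)$ a distinguished interior point.

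For the imaginary root $\beta=\delta$ the situation is parallel. In affine type the closed Tits cone is the half-space $\set{x:\br{x,\delta}\ge0}$, so $\partial\Tits(A)=\delta^\perp$. Skew-symmetry gives $\br{-\omega_c(\,\cdot\,,\delta),\delta}=0$, placing $-\omega_c(\,\cdot\,,\delta)$ on $\partial\Tits(A)$. For each $\beta'\in\RSfin^{\omega+}$, membership in $\RSfin^{\omega+}$ means $\omega_c(\beta',\delta)>0$, so
\[\br{-\omega_c(\,\cdot\,,\delta),\beta'}=-\omega_c(\beta',\delta)<0,\]
and every defining inequality of $\d_\infty$ is again strictly satisfied. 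Hence $-\omega_c(\,\cdot\,,\delta)$ lies in the relative interior of $\d_\infty$.

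I do not anticipate any genuine obstacle: all of the serious work is front-loaded into \cref{easy scat}, and once its explicit description is in hand the gregariousness of each wall reduces to a one-line calculation using skew-symmetry and the sign conventions built into the definitions of $\d_\beta$ and $\d_\infty$. The only point that deserves a momentary remark is the identification $\partial\Tits(A)=\delta^\perp$ in affine type, but this is standard affine Kac--Moody theory and is already in use elsewhere in the paper.
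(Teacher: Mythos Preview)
Your argument is correct. Once \cref{easy scat} is in hand, the gregariousness of each wall $(\d_\beta,f_\beta)$ really is a one-line computation exactly as you wrote: skew-symmetry places $-\omega_c(\,\cdot\,,\beta)$ on $\beta^\perp$, and the sign convention in \eqref{d beta} (respectively the definition of $\RSfin^{\omega+}$) makes every defining inequality strict. The observation that distinct positive roots in $\AP{c}$ determine distinct hyperplanes is also fine, since $\delta$ is the only imaginary root in $\AP{c}$ and no two positive real roots are proportional.

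The paper takes a different route, and the comparison is worth noting. In the paper's logical order, \cref{aff greg} is established \emph{before} \cref{easy scat}: gregariousness is proved wall-by-wall as \cref{out c}, \cref{out cinv}, and \cref{out inf}, with \cref{out c} requiring an induction on $\ell(j)$ and on rank, and the single-wall-per-hyperplane statement comes from \cref{DC Phic} and \cref{Sigma j j'}. Only afterward is \cref{easy scat} derived, via \cref{shard ineq sort}, by identifying $\Sh(j)$ with $\d_{\beta_t}$. Your approach reverses this dependency: you treat \cref{easy scat} as already proved and then read off gregariousness directly from the inequalities \eqref{d beta}. This is more economical for \cref{aff greg} itself, and in hindsight the formula \eqref{d beta} is designed exactly so that $-\omega_c(\,\cdot\,,\beta)$ sits at its center. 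The paper's ordering is dictated by the need to prove \cref{DCScat} first (which requires outgoing walls, hence \cref{out c}--\cref{out inf}), so the inductive argument cannot be avoided in the overall development; but as a proof of \cref{aff greg} given \cref{easy scat}, your direct verification is cleaner.
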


Indeed, we can add detail to Theorem~\ref{aff greg} by describing walls in terms of shards and $\omega_c$.

\begin{theorem}\label{aff greg shard}
Suppose $B$ is acyclic of affine type, associated to a root system $\RS$ and a Coxeter element $c$.
Then the walls of $\ScatTB$ are
\begin{itemize}
\item the unique shard in $\beta^\perp$ containing $-\omega_c(\,\cdot\,,\beta)$ for each real root $\beta\in\APre{c}$ and
\item the union of all shards in $\delta^\perp$ containing $-\omega_c(\,\cdot\,,\delta)$,
\end{itemize}
with $f_\beta$ and $f_\delta$ as in Theorem~\ref{easy scat}.
\end{theorem}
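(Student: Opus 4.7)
The plan is to combine the identification $\ScatTB=\DCScat(A,c)$ from \cref{DCScat} with the gregariousness established in \cref{aff greg}, which asserts that $\ScatTB$ has exactly one wall supported on each hyperplane $\beta^\perp$ (for $\beta\in\AP{c}\cap\RSpos$) and that this wall contains $-\omega_c(\,\cdot\,,\beta)$ in its relative interior. The real-root case then reduces to picking out the unique shard singled out by gregariousness, while the imaginary case requires translating the explicit inequality description of $\d_\infty$ into shard language.

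For a real root $\beta\in\APre{c}$, we may take $\beta$ positive since $\beta^\perp=(-\beta)^\perp$. By \cref{DCScat} the unique wall in $\beta^\perp$ is $\pm\Sh(j)$ for some join-irreducible $c$- or $c^{-1}$-sortable element, i.e.\ a single shard, with attaching function $1+\hy^\beta$. By \cref{aff greg} this shard is gregarious, so $-\omega_c(\,\cdot\,,\beta)$ lies in its relative interior. Since the relative interiors of distinct shards in a common hyperplane are disjoint, this identifies the wall as the unique shard in $\beta^\perp$ containing $-\omega_c(\,\cdot\,,\beta)$, as claimed.

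For the imaginary root $\delta$, \cref{DCScat} gives the wall as $(\d_\infty,f_\infty)$ with $\d_\infty=\set{x\in\partial\Tits(A):\br{x,\beta}\le0,\,\forall\beta\in\RSfin^{\omega+}}$. A shard in $\delta^\perp$ is cut out by a choice of sign $\br{x,\gamma}\le 0$ or $\br{x,\gamma}\ge 0$ for each $\gamma\in\cut(\delta)$. Since $\br{-\omega_c(\,\cdot\,,\delta),\gamma}=-\omega_c(\gamma,\delta)$, a (closed) shard contains $-\omega_c(\,\cdot\,,\delta)$ exactly when its sign choice reads $\br{x,\gamma}\le 0$ whenever $\omega_c(\gamma,\delta)>0$ and $\br{x,\gamma}\ge 0$ whenever $\omega_c(\gamma,\delta)<0$, with either sign admissible when $\omega_c(\gamma,\delta)=0$. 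Invoking the identification $\cut(\delta)\cap\RSpos=\RSfinpos$ (implicit in the analysis of cuts and of $\AP{c}$ carried out earlier in the paper), the union of all such shards coincides with the inequality system defining $\d_\infty$, and $f_\delta=f_\infty$ by construction.

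The main obstacle is the imaginary-root case: unlike a real-root wall, $\d_\infty$ is typically a union of many shards of $\delta^\perp$, because $-\omega_c(\,\cdot\,,\delta)$ can lie on the boundary between adjacent shards whenever $\omega_c(\gamma,\delta)=0$ for some $\gamma\in\cut(\delta)$. Controlling $\cut(\delta)\cap\RSpos$ in acyclic affine type and correctly accounting for these boundary incidences is where the work concentrates; once that correspondence is in hand, matching the shard sign conditions to the defining inequalities of $\d_\infty$ is a direct unpacking of definitions.
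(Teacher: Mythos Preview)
Your treatment of the real-root case is correct and matches the paper's implicit argument: the wall is a single shard by \cref{DCScat}, and gregariousness (which the paper establishes via \cref{out c,out cinv}, packaged as \cref{aff greg}) places $-\omega_c(\,\cdot\,,\beta)$ in its relative interior, so the shard is uniquely determined.

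The imaginary case contains a genuine error. The claim $\cut(\delta)\cap\RSpos=\RSfinpos$ is false, and it is not established anywhere in the paper. For any rank-$2$ subsystem $\RS'$ containing $\delta$, the imaginary root $\delta$ is never canonical, so \emph{both} canonical roots of $\RS'$ lie in $\cut(\delta)$. One of these canonical roots is in $\RSfinpos$ (as the paper argues in the proof of \cref{consist}), but the other has positive $\alpha_\aff$-coefficient and is therefore not in $\RSfin$. Hence $\cut(\delta)\supsetneq\RSfinpos$.

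The argument is salvageable, because for the two canonical roots $\gamma,\gamma'$ of such an $\RS'$, the hyperplanes $\gamma^\perp\cap\delta^\perp$ and $(\gamma')^\perp\cap\delta^\perp$ coincide (both equal the orthogonal complement in $\delta^\perp$ of the line $\RS'\cap\delta^\perp$), and the sign conditions they impose are equivalent on $\delta^\perp$. So the extra roots in $\cut(\delta)\setminus\RSfinpos$ contribute only redundant cuts. The correct statement is that the shards in $\delta^\perp$ are exactly the maximal cones of the Coxeter fan in $\delta^\perp$. With this in hand, \cref{DFc complement} directly gives that $\d_\infty$ is the union of those cones containing $x_c=-\omega_c(\,\cdot\,,\delta)$, which is the desired description. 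This is the route the paper takes (implicitly, via the proof of \cref{out inf}); it avoids your sign-matching computation entirely and does not require analyzing $\cut(\delta)$.
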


In general, the cluster scattering diagram is defined up to a notion of equivalence.
We emphasize, however, that the descriptions of $\ScatTB$ in Theorems~\ref{DCScat}, \ref{easy scat}, and~\ref{aff greg shard} are identical, not merely equivalent.

The arguments of this paper simplify to recover a finite-type result \cite[Proposition~4.10]{scatcomb}, and in fact, augment the finite-type result by giving explicit inequalities for the walls.
See Theorem~\ref{CScat}.

The finite-type analogs of Theorems~\ref{aff greg} and~\ref{aff greg shard} are \cite[Proposition~4.12]{scatcomb} and \cite[Corollary~4.13]{scatcomb}.
Preliminary results in ongoing work with Greg Muller and Shira Viel suggest that similar results hold for cluster scattering diagrams associated to marked surfaces.
Similar results also hold trivially in rank $2$.
Since all of these examples are special in that they involve mutation-finite exchange matrices, it seems premature to make a general conjecture.
However, it seems appropriate to at least pose some questions.
(Rather than $\omega_c$, we use the analogous form $\omega$ that can be defined directly from $B$, because we are not restricting the questions to acyclic $B$.)

\begin{question}
For a general exchange matrix $B$, can the (transposed) cluster scattering diagram for $B$ be constructed with at most one wall in each hyperplane, and with that wall being gregarious?
\end{question}

\begin{question}  
For a general exchange matrix $B$, are all of the walls orthogonal to (real or imaginary) roots?
For a (real or imaginary) root $\beta$, is the wall orthogonal to a root $\beta$ the union of all shards in $\beta^\perp$ that contain $-\omega(\,\cdot\,,\beta)$?
\end{question}

\subsection{Three fans}\label{fans results}
Any consistent scattering diagram defines a complete (and often infinite) fan \cite[Theorem~3.1]{scatfan}.
We write $\ScatFanTB$ for the scattering fan associated to $\ScatTB$.

The mutation fan $\F_B$ associated to an exchange matrix $B$ encodes the piecewise-linear geometry of matrix mutation and is central to the notion of universal geometric cluster algebras~\cite{universal}.

In \cite{affdenom}, in acyclic finite or affine type, we defined a complete fan $\Fan_c(\RS)$ whose rays are spanned by the roots $\AP{c}$, and whose cones are defined by a certain compatibility relation on $\AP{c}$.
This fan contains the fan of denominator vectors of cluster variables as a subfan.
There is a piecewise-linear homeomorphism $\nu_c$ that takes denominator vectors to $\g$-vectors.
This map is linear on every cone of $\Fan_c(\RS)$, so $\nu_c(\Fan_c(\RS))$ is a fan that contains the $\g$-vector fan as a subfan, but also subdivides the space outside the $\g$-vector fan.

The following is our main theorem about these three fans.

\begin{theorem}\label{fans thm}
Suppose that $B$ is an acyclic exchange matrix of affine type, let $\RS$ be the associated root system, and let $c$ be the associated Coxeter element.  
Then $\ScatFanTB$, $\F_{B^T}$, and $\nu_c(\Fan_c(\RS))$ all coincide.
\end{theorem}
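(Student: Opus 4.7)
The plan is to prove \cref{fans thm} by establishing the two equalities $\nu_c(\Fan_c(\RS)) = \ScatFanTB$ and $\F_{B^T} = \ScatFanTB$. The first gives an explicit combinatorial model of the cluster scattering fan via the almost-positive Schur roots of \cite{affdenom}, and the second transports this description to the mutation fan.

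For the equality $\nu_c(\Fan_c(\RS)) = \ScatFanTB$, the key inputs are \cref{easy scat} and \cref{scat Schur}. The latter identifies the positive normals to walls of $\ScatTB$ with the positive roots in $\AP{c}$, which are precisely the ray generators of $\Fan_c(\RS)$. Since $\nu_c$ is a piecewise-linear homeomorphism that is linear on each cone of $\Fan_c(\RS)$, applying $\nu_c$ sends the rays of $\Fan_c(\RS)$ to a set of rays that includes all $\g$-vectors of cluster variables together with $\nu_c(\delta)$, matching the rays of $\ScatFanTB$. To upgrade this ray-level agreement to a coincidence of fans, I would match maximal cones by using the explicit inequalities \eqref{d beta} to determine, on each side, the cone containing a generic interior point. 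This amounts to checking that the compatibility relation on $\AP{c}$ from \cite{affdenom} corresponds, under $\nu_c$, to the incidence structure of the shards and the imaginary wall appearing in $\DCScat(A,c)$.

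For the equality $\F_{B^T} = \ScatFanTB$, I would combine the general relationship between mutation fans and consistent scattering fans with the explicit description obtained above. Every $\g$-vector cone is a cone of both $\F_{B^T}$ and $\ScatFanTB$, which handles the interior of the cluster complex part. Outside the $\g$-vector fan, both fans have full-dimensional cones that are forced to lie on the far side of the image of $\delta^\perp$, and I would show that any proper refinement of $\ScatFanTB$ by $\F_{B^T}$ would necessarily produce a wall that is not supported on any hyperplane $\beta^\perp$ with $\beta\in\AP{c}\cap\RSpos$, contradicting \cref{scat Schur} combined with the first equality.

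The main obstacle will be the detailed analysis near the limiting ray $\nu_c(\delta)$, where infinitely many walls of $\ScatTB$ accumulate onto $\delta^\perp$. This is the region where the $\g$-vector fan fails to cover the full space, and also the region where the twisted case $A^{(2)}_{2k}$ forces the additional factor $(1+\hy^\delta)$ in $f_\infty$. Verifying that the compatibility structure of $\Fan_c(\RS)$ maps exactly onto the accumulating shard structure on one side of $\delta^\perp$ and onto the single imaginary wall on the other side, without introducing phantom subdivisions or missing walls, will require careful bookkeeping with the form $\omega_c$ and the gregariousness characterization of \cref{aff greg shard}.
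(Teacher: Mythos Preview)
Your overall decomposition into the two equalities is the same as the paper's, but each half of your plan has a genuine gap.

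For $\nu_c(\Fan_c(\RS)) = \ScatFanTB$, you conflate normals to walls with rays of the scattering fan. \cref{scat Schur} tells you that the primitive vectors in $V$ orthogonal to the walls are the positive roots in $\AP{c}$; it says nothing directly about the rays of $\ScatFanTB$, which live in $V^*$ and are determined by intersections of walls. The ray generators of $\Fan_c(\RS)$ are all of $\AP{c}$ (including the negative simple roots), not just the positive ones, and $\nu_c$ does map them to the rays of the $\g$-vector fan plus $\nu_c(\delta)$, but matching rays is neither what \cref{scat Schur} gives you nor sufficient to conclude the fans agree. The paper instead proves both refinements directly: it shows that every codimension-$1$ face of $\nu_c(\Fan_c(\RS))$ lies in a wall (\cref{faces in shards}, \cref{faces in shards imaginary}), and conversely that no wall cuts through the relative interior of a maximal cone of $\nu_c(\Fan_c(\RS))$, using \cref{explicit shards} and the limiting sequence of real clusters from \cref{cluster limit} to handle the imaginary cones in~$\d_\infty$.

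For $\F_{B^T} = \ScatFanTB$, your picture of the geometry is off: the complement of the $\g$-vector fan is the codimension-$1$ cone~$\d_\infty$, not a region supporting full-dimensional cones. More importantly, your contradiction strategy (``any proper refinement would produce a wall not supported on a root hyperplane'') cannot work, because the mutation fan is not defined via walls of a scattering diagram and there is no mechanism by which a refinement of one fan by the other produces a new scattering-diagram wall. The paper's route is to invoke \cref{scat ref mut} for the refinement $\ScatFanTB\preceq\F_{B^T}$, and then to prove $\F_{B^T}\preceq\nu_c(\Fan_c(\RS))$ (\cref{mut ref nu}) by projecting to the finite parabolic $W_\fin$: for two points of $\d_\infty$ separated by a face of $\nu_c(\Fan_c(\RS))$, one finds a separating shard coming from a reflection in $W_\fin$, and then \cref{FB Proj} together with the finite-type identification of the mutation fan with the Cambrian fan (\cref{camb FB finite}, \cref{explicit shards fin}) shows the points lie in different $\F_{B^T}$-cones. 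Your proposal contains no analogue of this reduction to finite type, which is the essential idea.
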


We will use Theorem~\ref{fans thm} to prove the following more general theorem about affine scattering fans and mutation fans without the assumption of acyclicity.

\begin{theorem}\label{gen fans thm}
If $B$ is an exchange matrix of affine type, then the scattering fan $\ScatFanTB$ and the mutation fan $\F_{B^T}$ coincide.
\end{theorem}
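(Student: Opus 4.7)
The plan is to bootstrap from \cref{fans thm} using the compatibility of both fans with matrix mutation. By the definition of affine type from the introduction, an affine exchange matrix $B$ is mutation-equivalent to some acyclic affine $B'$, so we may fix a sequence $B=B_0,B_1,\ldots,B_\ell=B'$ with $B_{i+1}=\mu_{k_i}(B_i)$. It suffices to show that the equality $\ScatFanTB=\F_{B^T}$ is preserved under a single mutation step, for then we propagate it backwards along this sequence from $B'$ (where it holds by \cref{fans thm}) to $B$.

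Both fans satisfy a mutation-equivariance statement. On the mutation-fan side, $\F_{B^T}$ was defined in \cite{universal} so that a certain piecewise-linear homeomorphism $\eta_k$ sends $\F_{B^T}$ onto $\F_{\mu_k(B)^T}$, being linear on each cone of $\F_{B^T}$. On the scattering-fan side, the same piecewise-linear map carries $\ScatFanTB$ to $\ScatFan^T\!(\mu_k(B))$; this is a reformulation, at the level of fans, of the consistency of the cluster scattering diagram with the scattering diagrams at adjacent seeds, as extracted from the scattering-diagram mutation results of \cite{GHKK} and organized in \cite{scatfan}. Granting this, applying $\eta_{k_i}$ to both sides of the equality for $B_i$ gives the equality for $B_{i+1}$, and finite induction from $B_\ell=B'$ completes the argument.

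The principal technical point — and what I would check most carefully — is verifying that the piecewise-linear homeomorphism implementing mutation on the two sides really is one and the same map. In formulas both are the standard piecewise-linear transformation determined by the $k$th row of $B^T$, and the transpose built into $\ScatTB$ has already been chosen to align the conventions of \cite{ca4} and \cite{GHKK}. So this reduces to a bookkeeping exercise reconciling the mutation-invariance statements for cluster scattering diagrams in \cite{GHKK} with those for mutation fans in \cite{universal}. Once this identification is pinned down, no further combinatorial input beyond \cref{fans thm} is needed, and the theorem follows from a finite composition of such mutation steps.
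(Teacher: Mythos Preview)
Your proposal is correct and essentially identical to the paper's proof: both argue that by definition an affine $B$ is mutation-equivalent to an acyclic affine $B'$, apply \cref{fans thm} to $B'$, and then transport the equality along the mutation sequence using the fact that the same piecewise-linear mutation map $\eta^{B^T}_\kk$ is an isomorphism of both fans (the paper cites \cite[Proposition~7.3]{universal} for the mutation-fan side and \cite[Corollary~4.4]{scatfan} for the scattering-fan side, exactly the two ingredients you identify). The ``principal technical point'' you flag is handled in the paper by simply invoking these two cited results, which are already stated in terms of the same map $\eta^{B^T}_\kk$.
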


The scattering fan and the mutation fan do not always coincide, but there is a refinement relation in general, as described in the following theorem \cite[Theorem~4.10]{scatfan}.
\begin{theorem}\label{scat ref mut}
$\ScatFanTB$ refines $\F_{B^T}$ for any exchange matrix $B$.
\end{theorem}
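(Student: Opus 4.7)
The plan is to prove that every cone of $\ScatFanTB$ is contained in some cone of $\F_{B^T}$. Since the mutation fan $\F_{B^T}$ is characterized by piecewise-linear mutation maps on tropical $B^T$-vectors, and $\ScatFanTB$ is the fan whose maximal cones are the connected components of the complement of the walls of $\ScatTB$, the strategy is to show that the ``bends'' in the tropical mutation maps occur only along walls of $\ScatTB$. Concretely, two points lie in the same cone of $\F_{B^T}$ if and only if for every finite sequence of mutations $\mathbf{k}=(k_1,\ldots,k_r)$, the associated piecewise-linear map $\eta_{B^T}^{\mathbf{k}}$ is linear on the segment joining them, and the goal is to deduce this from the fact that the segment avoids all walls of $\ScatTB$.

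First, I would establish the single-step correspondence: the elementary mutation map $\eta_{B^T}^k$, as a piecewise-linear map on the tropical space, bends precisely along the coordinate hyperplane $e_k^\perp$, which is a wall of $\ScatTB$ in the initial chamber structure. The wall-crossing automorphism $\mathfrak{p}_{e_k^\perp}$ coming from the scattering diagram reproduces this bending, via the dictionary between the consistency relations of $\ScatTB$ and cluster mutation (this is essentially the compatibility that drives the GHKK construction). Next, I would upgrade this to the statement for arbitrary mutation sequences by induction on $r$. At each step, the new mutation is performed in a mutated seed, whose walls are obtained from walls of $\ScatTB$ by applying the previously established wall-crossing automorphisms; crucially, the consistency of $\ScatTB$ ensures that these transformed walls are themselves walls of the same scattering diagram.

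With these ingredients in place, the conclusion follows readily. If $\mathbf{a}$ and $\mathbf{a}'$ lie in the same (open) cone $C$ of $\ScatFanTB$, then the entire segment $[\mathbf{a},\mathbf{a}']\subset C$ avoids all walls of $\ScatTB$. By the single-step correspondence, $\eta_{B^T}^{k_1}$ is linear on this segment; by the inductive step, so is the composition $\eta_{B^T}^{\mathbf{k}}$ for any mutation sequence $\mathbf{k}$. Therefore $\mathbf{a}$ and $\mathbf{a}'$ are $\F_{B^T}$-equivalent, so $C$ is contained in a single cone of $\F_{B^T}$, proving the refinement.

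The main obstacle is the technical correspondence between the piecewise-linear bending of iterated tropical mutation maps and the support of walls in $\ScatTB$: while the one-step case amounts to unpacking definitions, the general case requires tracking how wall-crossing automorphisms transform both the walls and the tropical space, and using consistency of the scattering diagram to conclude that no extra locus of non-linearity appears outside the union of walls. This is where I would expect to spend most of the effort, and where the proof in \cite{scatfan} presumably develops the careful inductive machinery to handle the general mutation sequence.
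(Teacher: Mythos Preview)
This theorem is not proved in the present paper; it is quoted as \cite[Theorem~4.10]{scatfan}. Your overall strategy---show that two points in the same $\ScatFanTB$-cone remain indistinguishable under all mutation maps, hence lie in the same $B^T$-class---is the right one, and is how the argument in \cite{scatfan} runs.

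However, your inductive step contains a genuine gap. You write that after a mutation, the transformed walls ``are themselves walls of the same scattering diagram,'' and attribute this to consistency of $\ScatTB$. This is not correct: applying $\eta_k^{B^T}$ does not carry walls of $\ScatTB$ to walls of $\ScatTB$, but rather to walls of $\Scat^T(\mu_k(B))$, the scattering diagram for the \emph{mutated} exchange matrix. The needed input is exactly \cref{mut scat fan} (i.e.\ \cite[Corollary~4.4]{scatfan}): the mutation map $\eta_k^{B^T}$ is an isomorphism of fans from $\ScatFan^T(B)$ to $\ScatFan^T(\mu_k(B))$. This is a substantial result, resting on the mutation-invariance of cluster scattering diagrams from \cite{GHKK}; it is not a formal consequence of consistency.

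With that in hand the induction is clean and there is no need to phrase things in terms of linearity on segments. If $p,q$ lie in the same cone of $\ScatFan^T(B)$, then since each $\alpha_i^\perp$ is a wall, $p$ and $q$ have the same sign vector $\vsgn$. By \cref{mut scat fan}, $\eta_{k_1}^{B^T}(p)$ and $\eta_{k_1}^{B^T}(q)$ lie in the same cone of $\ScatFan^T(\mu_{k_1}(B))$; iterating, $\eta_\kk^{B^T}(p)$ and $\eta_\kk^{B^T}(q)$ lie in the same cone of $\ScatFan^T(\mu_\kk(B))$ for every $\kk$, and hence have the same sign vector. By the definition of $\equiv^{B^T}$ this gives $p\equiv^{B^T}q$, which is the refinement statement. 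Your characterization of $\F_{B^T}$-equivalence via linearity of $\eta_\kk$ on segments is a detour and is not the definition used here.
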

The following conjecture is \cite[Conjecture~4.11]{scatfan}.
\begin{conjecture}\label{scat eq mut}
Given an exchange matrix $B$, the scattering fan $\ScatFanTB$ coincides with the mutation fan $\F_{B^T}$ if and only if either 
\begin{enumerate}[label=\rm(\roman*), ref=\roman*]
\item
$B$ is $2\times2$ and of finite or affine type or 
\item
$B$ is $n\times n$ for $n\neq2$ and of finite \emph{mutation} type.
\end{enumerate}
\end{conjecture}

The case of Conjecture~\ref{scat eq mut} where $n<2$ is trivial and the case where $n=2$ is proved by comparing \cite[Section~3]{greedytheta} with \cite[Section~9]{universal}.
Also, $B$ is of infinite mutation type if and only if there exists $B'$ mutation equivalent to $B$ and indices~$i,j$ such that the restriction of $B'$ to~$i,j$ is not of finite or affine type \cite[Theorem~2.8]{FeShTu12a}.
Thus, one expects the $n=2$ results to imply that $\ScatFanTB$ strictly refines $\F_{B^T}$ whenever $B$ is of infinite mutation type.
When $B$ is of finite type, and thus of finite mutation type, $\ScatFanTB$ and $\F_{B^T}$ both coincide with the $\g$-vector fan by \cite[Lemma~2.10]{GHKK}, \cite[Corollary~5.9]{GHKK}, and \cite[Section~10]{universal}.
If $B$ is of affine type, it is again of finite mutation type \cite[Theorem~3.5]{Seven}, so Theorem~\ref{gen fans thm} provides more evidence for Conjecture~\ref{scat eq mut}.

\section{Definitions and background}\label{def sec}
In this section, we give the detailed definitions necessary to understand statements of the main results.
(However, we postpone the definition of the mutation fan until Section~\ref{aff mut fan sec}, because it plays no role in the paper until then.)

\subsection{Root systems and Coxeter groups}\label{rs sec}
We assume the basic background on root systems and Coxeter groups.
Standard references include \cite{BjBr,Bourbaki,Humphreys}.
Much of the background can be found, with the notational conventions matching this paper, in \cite{afforb,affdenom} and, with slightly different conventions, in \cite{framework,afframe}.

The initial data for a cluster scattering diagram or cluster algebra with principal coefficients is an $n\times n$ \newword{exchange matrix}: a skew-symmetrizable integer matrix $B=[b_{ij}]$.
We make a particular choice of the skew-symmetrizing constants (the $d_i$ such that $d_ib_{ij}=-d_jb_{ji}$ for all $i,j\in\set{1,\ldots,n}$).
Specifically, we choose the $d_i$ so that $d_i^{-1}$ is an integer for each $i$ and $\gcd(d_i^{-1}:i\in\set{1,\ldots,n})=1$.

The exchange matrix $B$ determines a Cartan matrix $A=[a_{ij}]$ by $a_{ii}=2$ for $i\in\set{1,\ldots n}$ and $a_{ij}=-|b_{ij}|$ for $i,j\in\set{1,\ldots,n}$ with $i\neq j$.
The matrix $A$ is symmetrizable, specifically with $d_ia_{ij}=d_ja_{ji}$ for all $i,j\in\set{1,\ldots,n}$.
We make what is essentially the usual construction of roots, weights, co-roots, etc.\ associated to $A$.
(We depart from the standard conventions in Lie theory in that we put roots and co-roots in the same space and put weights and co-weights in the dual space.
This departure makes sense in our context as explained in \cite[Remark~2.1]{scatcomb} or \cite[Remark~2.1]{affdenom}.)
Let $V$ be an $n$-dimensional real vector space with a basis $\Simples=\set{\alpha_1,\ldots,\alpha_n}$.
The $\alpha_i$ are the \newword{simple roots}.
The \newword{simple co-roots} are $\alpha_i\ck=d_i^{-1}\alpha_i$.
Because of how we chose the constants $d_i$, the \newword{co-root lattice} $Q\ck=\Span_\integers(\alpha_1\ck,\ldots,\alpha_n\ck)$ is a finite-index sublattice of the \newword{root lattice} $Q=\Span_\integers(\alpha_1,\ldots,\alpha_n)$.
The Cartan matrix defines a symmetric bilinear form $K$ on $V$ by $K(\alpha\ck_i, \alpha_j)=a_{ij}$. 

Let $V^*$ be the dual space of $V$, with a basis $\rho_1,\ldots,\rho_n$ defined by $\br{\rho_i,\alpha_j\ck}=\delta_{ij}$, where $\br{\,\cdot\,,\cdot\,}:V^*\times V\to\reals$ is the usual pairing and $\delta_{ij}$ is the Kronecker delta function.
The $\rho_i$ are the \newword{fundamental weights}.
The \newword{fundamental co-weights} are the basis $\rho\ck_1,\ldots,\rho\ck_n$ for $V^*$ defined by $\br{\rho\ck_i,\alpha_j}=\delta_{ij}$.

We write $s_i$ for the linear map on $V$ defined by $s_i(v)=v-K(\alpha\ck_i, v)\alpha_i$.   The set $S=\set{s_i:i=1,\ldots,n}$ of \newword{simple reflections} generates the associated \newword{Coxeter group} $W$.
We will sometimes use $S$ as an alternative indexing set (rather than $\set{1,\ldots,n}$), writing $\alpha_s$ for $\alpha_i$ and $\rho_s$ for $\rho_i$ when $s=s_i$.

The set $\RSre$ of \newword{real roots} associated to $A$ is the set $\set{w\alpha_i:w\in W, i=1,\ldots,n}$.
There are also \newword{imaginary roots} that arise in Lie theory.  
We will not need the general definition of imaginary roots.
We will only need imaginary roots associated to a Cartan matrix of affine type, and we describe these specifically in Section~\ref{affdenom sec}.
We write $\RS$ for the \newword{root system} associated to $A$: the set of (real and imaginary) roots.
The notation $\RSpos$ stands for the set $\RS\cap\Span_{\reals_{\ge0}}(\alpha_1,\ldots,\alpha_n)$ of positive roots.

The \newword{inversion set} of $w\in W$ is the set $\inv(w)=\set{\beta\in\RSpos:w^{-1}(\beta)\in-\RSpos}$.
We use the symbol $\le$ for the \newword{weak order} on $W$, defined by $v\le w$ if and only if $\inv(v)\subseteq\inv(w)$, and the symbol $\covered$ for cover relations in the weak order.
These cover relations are $ws\covered w$ for $w\in W$ and $s\in S$ such that $\ell(ws)<\ell(w)$, where $\ell$ is the usual length function with respect to $S$.
The weak order is a meet-semilattice, and is a lattice if and only if $W$ is finite.
An element $j\in W$ is \newword{join-irreducible} if and only if it covers a unique element.

We write $T$ for the set $\set{w s w^{-1}:w\in W, s\in S}$ of \newword{reflections} in $W$.
We write $\beta\mapsto t_\beta$ for the bijection between real positive roots and $T$, where $t_\beta$ is the map $v\mapsto v-K(\beta\ck, v) \beta$ on $V$.
Given $t\in T$ we write $\beta_t$ for the corresponding positive real root.

The \newword{cover reflections} of $w\in W$ are the reflections $t$ such that $tw\covered w$.
Thus there is exactly one cover reflection $t=wsw^{-1}$ for each cover relation of the form $ws\covered w$.
The root $\beta_t$ is the unique element of $\inv(w)\setminus\inv(ws)$.
We write $\cov(w)$ for the set of cover reflections of $w$.

If $I\subseteq S$, then the \newword{parabolic subgroup} $W_I$ is the subgroup of $W$ generated by~$I$.
For each $w\in W$ and $I\subseteq S$, there is a unique element $w_I\in W_I$ such that $\inv(w_I)=\inv(w)\cap\RS_I$.
In particular, we will need the case where $I$ is obtained by removing one element from $S$, so for each $s\in S$, we define $\br{s}$ to mean $S\setminus\set{s}$.

\subsection{Coxeter elements and sortable elements}\label{cox sort sec}
The sign information lost in passing from $B$ to $A$ is precisely an orientation of the Dynkin diagram of $A$.
If this orientation is acyclic, then we say that $B$ is \newword{acyclic}.
In that case, the lost sign information is equivalent to a choice of \newword{Coxeter element} $c$ of $W$, namely the product of the simple reflections $S$ ordered so that $s_i$ precedes $s_j$ if $b_{ij}>0$.
We will often assume that the simple roots/reflections have been numbered such that this Coxeter element is $s_1\cdots s_n$, but there may be additional reduced words for this same Coxeter element.
Given a Coxeter element $c$ and given $s\in S$, we say $s$ is \newword{initial} in $c$ if there exists a reduced word for $c$ having $s$ as its first letter.
Similarly, $s$ is \newword{final} in $c$ if there is a reduced word for $c$ having $s$ as its last letter.
If $s$ is initial or final in $c$, then $scs$ is also a Coxeter element.
If $s$ is initial in $c$, then $sc$ is a Coxeter element of $W_\br{s}$, and if $s$ is final in $c$, then $cs$ is a Coxeter element of $W_\br{s}$.

Closely related to the Coxeter element $c$ are a skew-symmetric bilinear form $\omega_c$ and another form $E_c$, defined by 
\[
\omega_c(\alpha_i\ck,\alpha_j)=\begin{cases}
a_{ij}&\mbox{if }i>j,\\
0&\mbox{if }i=j,\mbox{ or}\\
-a_{ij}&\mbox{if }i<j,
\end{cases}
\qquad\text{and}\quad
E_c(\alpha\ck_i,\alpha_j)=\begin{cases}
a_{ij}&\text{if } i>j,\\
1&\text{if }i=j,\text{ or}\\
0&\text{if } i<j.
\end{cases}
\]
Note that $K(\alpha, \beta)=E_c(\alpha, \beta)+E_{c}(\beta, \alpha)$ and $\omega_c(\alpha,\beta)=E_c(\alpha,\beta)-E_c(\beta,\alpha)$ for all $\alpha,\beta\in V$.

We follow \cite{affdenom} in defining the piecewise linear homeomorphism $\nu_c:V\to V^*$.
(Cf. \cite[Section~5.3]{framework}.)
For a vector $\beta\in V$, write $I=\set{i:\br{\rho_i\ck,\beta}<0}$ and define ${\beta_+=\sum_{i\not\in I}\br{\rho_i\ck,\beta}\alpha_i}$.
Then
\[\nu_c(\beta)=-\sum_{i\in I}\br{\rho_i\ck,\beta}\rho_i-\sum_{i\not\in I}E_c(\alpha_i\ck,\beta_+)\rho_i.\]
In particular, if $\beta$ is in the nonnegative linear span of $\Simples$, then 
\[\nu_c(\beta)=-\sum_{i=1}^nE_c(\alpha_i\ck,\beta)\rho_i=-E_c(\,\cdot\,,\beta).\]

We now give a definition of \newword{$c$-sortable elements}, together with a distinguished reduced word for each $c$-sortable element, called its \newword{$c$-sorting word}.
The $c$-sortable elements are usually defined (e.g.\ in \cite{sortable,typefree}) by characterizing their $c$-sorting words, but here we are content to define both concepts by a recursion.
As a base case for the recursion, the identity element is $c$-sortable (and has empty $c$-sorting word) for any $c$ in any $W$.
Let $v\in W$ and suppose $s$ is initial in $c$.
If $s\le v$ (or equivalently if $\ell(sv)<\ell(v)$), then $v$ is $c$-sortable if and only if $sv$ is $scs$-sortable.
In this case, the $c$-sorting word for $v$ is $s$ followed by the $scs$-sorting word for $sv$.
If $s\not\le v$, then $v$ is $c$ sortable if and only if $v\in W_{\br{s}}$ and is $sc$-sortable.
In this case, the $c$-sorting word for $v$ equals the $sc$-sorting word for $v$.

If we write a reduced word $s_1\cdots s_n$ for $c$, then the $c$-sorting word for a $c$-sortable element $v$ consists of $k$ copies of $s_1\cdots s_n$ with $k\ge0$ followed by the $c$-sorting word for a $c$-sortable element $v'$ contained in a proper standard parabolic subgroup.

\subsection{Transposed cluster scattering diagrams}\label{trans scat sec}
We follow \cite[Section~2]{scatcomb} in constructing principal-coefficients cluster scattering diagrams in the context of root systems.
In particular we leave out some extra dimensions that occur in scattering diagrams in~\cite{GHKK}, and we take a global transpose relative to \cite{GHKK}.
(The purpose of the transpose is to match the conventions of \cite{ca4}.)

We write $P=\Span_\integers(\rho_1,\ldots,\rho_n)$ for the \newword{weight lattice} in $V^*$ and denote by $Q^+=\set{\beta\in Q\setminus\set{0} : \br{\rho_i\ck,\beta}\geq0, \, i=1,\ldots,n}$ the \newword{positive part of the root lattice}.
Let $\omega:V\times V\to\reals$ be the bilinear form given by $\omega(\alpha_i\ck,\alpha_j)=b_{ij}$.
When $B$ is acyclic, $\omega$ coincides with $\omega_c$.

We take $x_1,\ldots,x_n$ and $y_1,\ldots,y_n$ to be indeterminates and for $i$ from $1$ to $n$, we define $\hy_i=y_ix_1^{b_{1i}}\cdots x_n^{b_{ni}}$.
Given $\lambda=\sum_{i=1}^na_i\rho_i\in P$ and $\beta=\sum_{i=1}^nc_i\alpha_i\in Q^+$, define $x^\lambda \hy^\beta$ to be $x_1^{a_1}\cdots x_n^{a_n}\hy_1^{c_1}\cdots\hy_n^{c_n}$. 
Taking $\k$ to be a field of characteristic zero, we work in the ring $\k[x_1^{\pm1},\ldots,x_n^{\pm1}][[\hy_1,\ldots,\hy_n]]$ of formal power series in the $\hy_i$ with coefficients Laurent polynomials in the $x_i$.
Let $\m$ be the ideal consisting of series with constant term zero. 
Let $\k[[\hy]]$ be the subring consisting of formal power series in the $\hy_i$ with coefficients in $\k$.

A \newword{wall} is a pair $(\d,f_\d)$, where $\d$ is a codimension-$1$ subset of $V^*$ and $f_\d$ is in $\k[[\hy]]$, subject to these requirements:
\begin{enumerate}[label=\rm(\roman*), ref=\roman*]
\item $\d$ is contained in $\beta^\perp$ for some $\beta\in Q^+$ and is defined by inequalities of the form $\br{\,\cdot\,,\phi}\le0$ for $\phi\in Q$.
Importantly, we assume that $\beta$ has been chosen to be \emph{primitive} in $Q$, meaning that if $a\beta\in Q$ for some positive rational $a$, then $a$ is an integer.
\item $f_\d$ is a univariate power series evaluated at $\hy^\beta$ for this primitive $\beta$.  
\end{enumerate}
When we wish to identify the primitive vector $\beta$ associated to a wall, we will write $f_\d(\hy^\beta)$ for $f_\d$.
Two walls are \newword{parallel} if they have the same $\beta$. 
We will refer to $f_\d$ as the \newword{scattering term} on the wall $(\d,f_\d)$.

A \newword{scattering diagram} is a collection $\D$ of walls that may be infinite but must satisfy the following finiteness condition:
For all $k\ge 1$, the set of walls $(\d,f_\d)\in\D$ with $f_\d\not\equiv 1$ modulo $\m^{k+1}$ is finite.
The point of this finiteness condition is that for any given $k$, we can make computations in the scattering diagram on monomials of total degree $\le k$ by considering only finitely many walls, and the full computations can be made as limits of formal power series.
Note that since each scattering term $f_\d(\hy^\beta)$ is a formal power series in $\hy^\beta$, a sufficient condition implying the finiteness condition is that there be only finitely many walls in each hyperplane $\beta^\perp$.

A \newword{generic} path for a scattering diagram $\D$ is a piecewise differentiable path $\gamma:[0,1]\to V^*$ that does not pass through the relative boundary of any wall or the intersection of any two non-parallel walls, has endpoints not contained in any wall, and only intersects walls by crossing them transversely.
Given a generic path~$\gamma$ and a wall $(\d,f_\d)$ of $\D$ with $\gamma(t)\in\d$ for some $t\in(0,1)$, the \newword{wall-crossing automorphism} $\p_{\gamma,\d}$ is given by 
\begin{align}
\label{theta def x}
\p_{\gamma,\d}(x^\lambda)&=x^\lambda f_\d^{\br{\lambda,\pm\beta\ck}},\\
\label{theta def hat y}
\p_{\gamma,\d}(\hy^\phi)&=\hy^\phi f_\d^{\omega(\pm\beta\ck\!,\,\phi)},
\end{align}
where $\beta\ck$ is the normal vector to $\d$ that is contained in $Q^+$ and is primitive in $Q\ck$, taking $+\beta\ck$ if $\gamma$ crosses $\d$ \emph{against} the direction of $\beta\ck$ (as $t$ increases) or $-\beta\ck$ if $\gamma$ crosses $\d$ \emph{in} the direction of $\beta\ck$.

The \newword{path-ordered product} $\p_{\gamma,\D}$ for a generic path $\gamma$ is essentially the composition of all of the wall-crossing automorphisms for walls crossed by $\gamma$.
This composition makes sense even when the path crosses infinitely many walls; indeed the finiteness condition on $\D$ allows us to compute the composition modulo $\m^{k+1}$ as a finite composition for each $k\ge1$, and $\p_{\gamma,\D}$ is the limit (in the sense of formal power series) of these finite compositions as $k\to\infty$.

A scattering diagram $\D$ is \newword{consistent} if $\p_{\gamma,\D}$ depends only on the endpoints $\gamma(0)$ and $\gamma(1)$.
Two scattering diagrams $\D$ and $\D'$ are \newword{equivalent} if and only if $\p_{\gamma,\D}=\p_{\gamma,\D'}$ whenever $\gamma$ is generic for both $\D$ and $\D'$.

A wall $(\d,f_\d)$ with associated primitive normal $\beta$ is \newword{incoming} if the vector $\omega(\,\cdot\,,\beta)$ is in $\d$.
Otherwise, $(\d,f_\d)$ is \newword{outgoing}.
The crucial result of GHKK is \cite[Theorem~1.12]{GHKK}, which says that a consistent scattering diagram can be obtained by starting with walls $\set{(\alpha_i^\perp,1+\hy_i):i=1,\ldots,n}$ and appending \emph{outgoing} walls, and that such a scattering diagram is unique up to equivalence.
This is the \newword{transposed cluster scattering diagram with principal coefficients}, which we denote by $\ScatTB$.

A subset $C$ of $V^*$ is a \newword{closed convex cone} if it is closed under addition and closed under nonnegative scaling and also closed in the topological sense.
A \newword{face} of $C$ is a cone $F$ that is a subset of $C$ such that if $x,y\in C$ and $F$ intersects the line segment $\seg{xy}$ at a point other than $x$ or $y$, then the whole segment $\seg{xy}$ is in $F$.
A \newword{facet} of $C$ is a face of $C$ with dimension one less than the dimension of $C$.
A collection $\F$ of closed convex cones is a \newword{fan} if it satisfies the following two conditions: if $C\in\F$ then every face of $C$ is in $\F$; and if $C,D\in\F$ then $C\cap D$ is a face of $C$ and a face of $D$.
The notation $|\F|$ stands for the union of the cones in $\F$.
A fan $\F$ in $V^*$ is called \newword{complete} if $|\F|$ is all of $V^*$.

We now define a complete fan associated to a consistent scattering diagram.
(See~\cite{scatfan} for details.)
Essentially, this is the fan ``cut out by the walls'' of the scattering diagram, but some care must be taken to make that notion precise.

Suppose $\D$ is a scattering diagram and $\beta\in Q^+$.
The \newword{rampart} of $\D$ associated to $\beta$ is the union of the walls of $\D$ that are contained in $\beta^\perp$.  
(In acyclic affine type, we will construct the transposed cluster scattering diagram with at most one wall in each hyperplane.
As a consequence, each rampart is a single wall, so for the acyclic affine case, it is safe to replace ``rampart'' with ``wall'' throughout this definition.)
Given a point $p\in V^*$, let $\Ram_\D(p)$ stand for the set of ramparts of $\D$ containing $p$.

The \newword{support} $\supp(\D)$ of a scattering diagram $\D$ is the union of the walls of~$\D$.
Every consistent scattering diagram is equivalent to a scattering diagram $\D$ with \newword{(uniquely) minimal support}, meaning that $\supp(\D)\subseteq\supp(\D')$ for all $\D'$ equivalent to $\D$.

Now suppose $\D$ is consistent and has minimal support.
We define an equivalence relation on $V^*$ by declaring that $p$ is \newword{$\D$-equivalent} to $q$ if and only if there exists a path $\gamma$ with endpoints $p$ and $q$ on which $\Ram_\D(\,\cdot\,)$ is constant.
A \newword{$\D$-class} is a $\D$-equivalence class.
The closure of a $\D$-class is called a \newword{$\D$-cone}.  
Indeed, every $\D$-cone $C$ is the closure of a unique $\D$-class, and this class contains the \newword{relative interior} $\relint(C)$ of $C$.
Each $\D$-cone is a closed convex cone, and the collection $\Fan(\D)$ of all $\D$-cones and their faces is a complete fan in $V^*$.
We write $\ScatFanTB$ for the \newword{transposed scattering fan} $\Fan(\ScatTB)$.

\subsection{Shards}\label{shard sec}
We now discuss a notion, for $\beta\in\RS$, of cutting the hyperplane $\beta^\perp$ into shards, generalizing some constructions from finite Coxeter groups and hyperplane arrangements.
More specifically, we define shards and prove that the shards that intersect the Tits cone correspond to join-irreducible elements.

A \newword{rank-$2$ subsystem} $\RS'$ of $\RS$ is a rank-$2$ root system that is the intersection of $\RS$ with a plane.
The \newword{canonical roots} of $\RS'$ are a canonical pair of simple roots for $\RS'$, namely the unique pair of roots containing $\RS'\cap\RSpos$ in its nonnegative span.
Any two distinct positive roots in $\RS$ are contained in a unique rank-$2$ subsystem of~$\RS$, namely the intersection of $\RS$ with the span of the two roots.

Given $\beta,\gamma\in\RSpos$, say that $\gamma$ \newword{cuts} $\beta$ if $\gamma$ is a canonical root of the rank-$2$ subsystem $\RS'$ containing them, but $\beta$ is not a canonical root in $\RS'$.
Write $\cut(\beta)$ for the set of roots $\gamma$ such that $\gamma$ cuts $\beta$.
For each $\beta\in\RSpos$, consider the set $\beta^\perp\setminus\bigcup_{\gamma\in\cut(\beta)}\gamma^\perp$.
The closures of the connected components of this set are the \newword{shards} in $\beta^\perp$.
Less formally, we ``cut'' each $\beta^\perp$ along its intersections with all hyperplanes $\gamma^\perp$ with $\gamma\in\cut(\beta)$, and the resulting pieces are the shards.

We define $D$ to be the closed convex cone $\bigcap_{i=1}^n\set{x\in V^*: \br{x,\alpha_i}\ge 0}$ in $V^*$.
The Coxeter group $W$ acts on $V^*$ by the action dual to its action on $V$.
On the basis of fundamental weights, 
\begin{equation}\label{dual on rho}
s_k(\rho_j)=\begin{cases}
\rho_j&\text{if }j\neq k,\text{ or}\\
\rho_k-\sum_{i=1}^na_{ik}\rho_i&\text{if }j=k.
\end{cases}
\end{equation}

The map $w\mapsto wD$ is an injective map to a set of cones with disjoint interiors.  
The union of these cones is a convex cone called the \newword{Tits cone} $\Tits(A)$.

Each shard $\Sigma$ is a cone defined by inequalities of the form $\br{x,\beta}\le0$ for $\beta\in\RS$, so in particular if $\Sigma$ intersects $\Tits(A)$ in codimension $1$, then $\Sigma\cap\Tits(A)$ is a union of codimension-$1$ faces of cones $wD$ for $w\in W$.
Call $w$ an \newword{upper element} of $\Sigma$ if $wD\cap\Sigma$ has codimension $1$ in $V^*$ and $\inv(w)$ contains the positive root $\beta$ such that $\Sigma\subseteq\beta^\perp$.
We write $\Upper(\Sigma)$ for the set of upper elements of $\Sigma$, and we consider $\Upper(\Sigma)$ as a partially ordered set, with the order induced by the weak order on~$W$.

We now establish a bijection between shards intersecting $\Tits(A)$ and join-irreducible elements of $W$.
Given a join-irreducible element $j$, we write $j_*$ for the unique element covered by~$j$ and define $\Sh(j)$ to be the unique shard containing $jD\cap j_*D$.
Versions of the following proposition for finite hyperplane arrangements can be found as \cite[Proposition~2.2]{hplane_dim} and \cite[Proposition~3.5]{congruence}. 

\begin{proposition}\label{U Sigma ji}
Given a shard $\Sigma$ intersecting $\Tits(A)$ in codimension $1$, the poset $\Upper(\Sigma)$ has a unique minimal element $\ji(\Sigma)$, which is also the unique element of $\Upper(\Sigma)$ that is join-irreducible in $W$.
The maps $\Sigma\mapsto\ji(\Sigma)$ and $j\mapsto\Sh(j)$ are inverse bijections between the set of shards intersecting $\Tits(A)$ and the set of join-irreducible elements of the weak order on~$W$.
\end{proposition}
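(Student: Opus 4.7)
The plan is to prove the bijection by establishing both directions: first, that for each join-irreducible $j$ the element $j$ is a minimal element of $\Upper(\Sh(j))$ that is join-irreducible; second, that any minimal element of $\Upper(\Sigma)$ is automatically join-irreducible; and finally, that the minimum is unique.

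First, I would fix a join-irreducible $j$ with $j \covers j_*$, and let $t = j j_*^{-1} \in T$ and $\beta = \beta_t \in \RSpos$. The face $jD \cap j_*D$ lies in $\beta^\perp$ and is open in that hyperplane, so it meets the relative interior of a unique shard, which is $\Sh(j)$ by definition. Clearly $j \in \Upper(\Sh(j))$. To see that $j$ is minimal in $\Upper(\Sh(j))$, suppose $w \in \Upper(\Sh(j))$ with $w < j$ in weak order; since $\Upper(\Sh(j)) \ni w$ forces $\beta \in \inv(w)$, and since any saturated chain from $w$ to $j$ must pass through $j_*$ (as $j_*$ is the unique element covered by $j$), we have $w \le j_*$, hence $\inv(w) \subseteq \inv(j_*)$. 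But $\beta \in \inv(j) \setminus \inv(j_*)$, a contradiction.

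Next, I would show that any minimal element $w$ of $\Upper(\Sigma)$, for $\Sigma \subset \beta^\perp$, is join-irreducible. By minimality, $t_\beta \in \cov(w)$, so there is a unique $s_0 \in S$ with $w(\alpha_{s_0}) = \beta$ and $ws_0 \covered w$. Suppose, toward contradiction, that $w$ has another descent $s_1$, corresponding to a positive root $\gamma = w(\alpha_{s_1}) \neq \beta$. The heart of the argument is a rank-two analysis in the parabolic $w W_{\{s_0,s_1\}} w^{-1}$: using the description of the cover reflections of $ws_1$ and the formulas $s_1(\alpha_{s_0}) = \alpha_{s_0} - a_{s_1 s_0}\alpha_{s_1}$, one shows that $t_\beta \in \cov(ws_1)$ precisely when $s_0$ and $s_1$ commute, and in that case the face of $ws_1 D$ in $\beta^\perp$ is the reflection of $wD \cap ws_0D$ through $\gamma^\perp$. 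Because $s_0,s_1$ commute, $\gamma^\perp$ preserves $\beta^\perp$, and the two faces sit on opposite sides of $\gamma^\perp\cap\beta^\perp$ inside $\beta^\perp$; they lie on the same shard iff $\gamma \notin \cut(\beta)$. In the non-commuting case, an analogous rank-two computation using the canonical roots of the subsystem $\RS \cap \Span(\beta,\gamma)$ shows that $\gamma$ must cut $\beta$ (so $\beta^\perp$ is indeed cut along $\gamma^\perp$ between $wD$ and the region adjacent on the $ws_1$ side). In both scenarios I would show that either $ws_1 \in \Upper(\Sigma)$, contradicting minimality, or one can exhibit a smaller element of $\Upper(\Sigma)$ by stepping through the chamber $ws_1D$ to reach a chamber adjacent to $\Sigma$ that is strictly below $w$. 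This forces $w$ to have only the descent $s_0$, i.e., $w$ is join-irreducible.

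Finally, I would assemble the bijection. Given the above, the minimum of $\Upper(\Sigma)$ exists and is join-irreducible: uniqueness of the minimum follows from the meet-semilattice structure of the weak order, since if $w_1,w_2$ were two minimal join-irreducibles in $\Upper(\Sigma)$ then $w_1 \meet w_2$ still has $\beta$ in its inversion set (indeed any element $\ge w_1$ or $\ge w_2$ does) and a small continuity argument would place it in $\Upper(\Sigma)$, contradicting minimality unless $w_1=w_2$. To see $\Sigma\mapsto\ji(\Sigma)$ and $j\mapsto\Sh(j)$ are mutually inverse, note that the face $\ji(\Sigma) D \cap (\ji(\Sigma))_* D$ is the codim-$1$ face of $\ji(\Sigma) D$ lying in $\beta^\perp$, and by construction this face lies in $\Sigma$, so $\Sh(\ji(\Sigma)) = \Sigma$; conversely, $\ji(\Sh(j)) = j$ by the minimality proved in the first paragraph.

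The main obstacle will be the rank-two case analysis in the join-irreducibility step: making the local comparison between the faces of $wD$ and $ws_1D$ in $\beta^\perp$ rigorous, especially verifying the equivalence between ``$\gamma$ cuts $\beta$'' and the separation of these faces into different shards. In the finite case this is folklore via \cite{hplane_dim,congruence}, but here one must check carefully that the argument passes through for shards arising from the (possibly infinite) Tits cone.
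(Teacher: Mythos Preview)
Your first step (join-irreducible implies minimal) and the final assembly of the bijection are correct and match the paper. The trouble is in your other two steps.

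In the join-irreducibility step, your claim in the non-commuting case is wrong: if $w$ has two distinct cover reflections $t_\beta$ and $t_\gamma$, then both $\beta$ and $\gamma$ are \emph{canonical} roots in the rank-$2$ subsystem $\RS'=\RS\cap\Span(\beta,\gamma)$, so $\gamma$ does \emph{not} cut $\beta$. (This follows from biconvexity: since $\inv(w)\setminus\{\beta\}$ and $\inv(w)\setminus\{\gamma\}$ are both inversion sets, \cref{biconv} forces $\beta$ and $\gamma$ to be the two canonical roots of $\RS'$ and $\inv(w)\cap\RS'=\RS'$.) So the shard is not cut along $\gamma^\perp$, and your stated dichotomy collapses. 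The paper exploits exactly this: because $\beta$ is canonical in $\RS'$, one can walk from the face $wD\cap\beta^\perp$ across all the hyperplanes of $\RS'\setminus\{\beta\}$ while remaining in the same shard, arriving at the element with inversion set $(\inv(w)\setminus\RS')\cup\{\beta\}$, which is strictly below $w$ and lies in $\Upper(\Sigma)$.

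Your uniqueness argument also has a genuine gap. The assertion that $\beta\in\inv(w_1\meet w_2)$ is not justified: the meet in weak order is \emph{not} given by intersecting inversion sets, and in general $\beta$ can lie in $\inv(w_1)\cap\inv(w_2)$ without lying in $\inv(w_1\meet w_2)$ (your parenthetical ``any element $\ge w_1$ or $\ge w_2$ does'' is irrelevant, since $w_1\meet w_2$ is below both). The paper avoids this entirely: it first proves that $\Upper(\Sigma)$ is \emph{connected} in the weak order (by a convexity/path argument inside the shard), and then uses semidistributivity of intervals in the weak order. If $j$ is a minimal element and some $x\in\Upper(\Sigma)$ has $x\not\ge j$, connectedness produces an adjacent pair $x<y$ in $\Upper(\Sigma)$ with $j\le y$ but $j\not\le x$; then $j\join ty=x\join ty=y$ while $(j\meet x)\join ty=ty$, contradicting semidistributivity in $[e,y]$. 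You will need both ingredients---connectedness and semidistributivity---or something equivalent; the naive meet argument does not go through.
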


To prove Proposition~\ref{U Sigma ji}, we need a special property of the weak order.
As explained in the paragraph after \cite[Theorem~8.1]{typefree}, every interval in the weak order on $W$ is a finite semidistributive lattice, meaning that the following conditions hold for all $x,y,z\in W$ such that $x\join y\join z$ exists:
If $x\join y=x\join z$ then $x\join y=x\join(y\meet z)$, and if $x\meet y=x\meet z$ then $x\meet y=x\meet(y\join z)$.

We also need the following fact about rank-$2$ subsystems of $\RS$ and inversion sets of elements of $W$, which is well known.
(See, for example, \cite[Lemma~2.17]{typefree}.)

\begin{lemma}\label{biconv}
Suppose $w\in W$ and $\RS'$ is a rank-$2$ subsystem of $\RS$ with canonical roots $\beta$ and $\gamma$.
\begin{enumerate}[label=\bf\arabic*., ref=\arabic*]  
\item If $\beta,\gamma\in\inv(w)$ then $\RS'\subseteq\inv(w)$.
\item \label{biconv item 2}
If $\phi\in(\RS'\setminus\set{\beta,\gamma})$ and $\phi\in\inv(w)$, then $\inv(w)\cap\set{\beta,\gamma}\neq\emptyset$.
\item If $\phi,\psi\in\RS'\cap\inv(w)$ and $\chi\in\RS$ is a positive linear combination of $\phi$ and $\psi$, then $\chi\in\inv(w)$.
\end{enumerate}
\end{lemma}

\begin{proof}[Proof of Proposition~\ref{U Sigma ji}]
We first check that the poset $\Upper(\Sigma)$ is connected.
Given $v$ and $w$ in $\Upper(\Sigma)$, choose a point $p\in vD\cap\Sigma$ and a point $q\in wD\cap\Sigma$.
Choose generically, so that the line segment $\seg{pq}$ does not pass through any intersections of  hyperplanes in $\set{\beta^\perp:\beta\in\Phi^+}$ such that the intersection is of codimension $>2$ in $V^*$.
The entire line segment is in $\Sigma\cap\Tits(A)$ because this is a convex set.
Since $\inv(v)$ and $\inv(w)$ are both finite, their symmetric difference is also finite, and therefore we move along $\seg{pq}$ from $p$ to $q$ while passing through finitely many hyperplanes.
Every time $\seg{pq}$ passes from a cone $xD\cap\Sigma$ to an adjacent cone $yD\cap\Sigma$, with $x,y\in\Upper(\Sigma)$, since it does not leave $\Sigma$, it crosses through the hyperplanes in a finite rank-$2$ subsystem $\RS'$ in which $\beta$ is a canonical root.
We see that either $\inv(x)\cap\RS'=\set{\beta}$ and $\inv(y)\cap\RS'=\RS'$, or vice versa.
No other hyperplanes separate $xD$ from $yD$, and we see that $x\le y$ or vice versa.
We take finitely many such steps in passing from $v$ to $w$, and we conclude that $\Upper(\Sigma)$ is connected.

Suppose $\Sigma$ intersects $\Tits(A)$ in codimension $1$.
In particular, $\Sigma$ is contained in~$\beta_t^\perp$ for some reflection $t$.
Also, $\Upper(\Sigma)$ is nonempty, so it has at least one minimal element. 
(Every nonempty subset of $W$ has a minimal element in the weak order:  Take an element with minimal length.)

We check that an element $j$ of $\Upper(\Sigma)$ is minimal in $\Upper(\Sigma)$ if and only if it is join-irreducible in $W$.
If $j$ is join-irreducible in $W$, then every element strictly below $j$ is weakly below $j_*=tj$, which has $\beta_t\not\in\inv(tj)$.
On the other hand, if $j\in\Upper(\Sigma)$ is not join-irreducible, then let $t'$ be a reflection such that $j\covers t'j$ and $t'\neq t$.
Let $\RS'$ be the rank-$2$ subsystem containing $\beta_t$ and $\beta_{t'}$.
Since both $\inv(j)\setminus\set{\beta_t}$ and $\inv(j)\setminus\set{\beta_{t'}}$ are inversion sets of elements of $W$, Lemma~\ref{biconv} implies that both $\beta_t$ and $\beta_{t'}$ are canonical roots in $\RS'$ and that $\inv(j)\cap\RS'=\RS'$.
We also see that there is an element $w$ of $\Upper(\Sigma)$ with $\inv(w)=(\inv(j)\setminus\RS')\cup\set{\beta_t}$, so $j$ is not minimal in $\Upper(\Sigma)$.

Let $j$ be a minimal element of $\Upper(\Sigma)$.
As we have checked, $j$ is join-irreducible.
If $j$ is not the unique minimal element, then since $\Upper(\Sigma)$ is connected, there is a path in $\Upper(\Sigma)$ from $j$ to another minimal element.
Let $x$ be the first element along that path that is not $\ge j$, and let $y$ be the element immediately before $x$ (i.e.\ closer to $j$) on the path.
Thus $x\le y$ and $j\le y$ but $j\not\le x$.
We have $j\covers tj$, $x\covers tx$, and $y\covers ty$, and in each case the difference in inversion sets is $\set{\beta_t}$.
Thus $j\join ty=x\join ty=y$, but $j\meet x<j$, and thus $j\meet x\le j_*= tj$.
We have $(j\meet x)\join ty=ty$, contradicting semidistributivity in the interval below $y$.
Thus $j$ is the unique minimal element.

Now $j=\ji(\Sh(j))$ because $j\in\Upper(\Sh(j))$.
Since $\ji(\Sigma)$ is minimal in~$\Upper(\Sigma)$, the intersection $\ji(\Sigma)D\cap(\ji(\Sigma))_*D$ is contained in $\Sigma$, so $\Sigma=\Sh(\ji(\Sigma))$.
\end{proof}

\subsection{The affine almost positive roots model}\label{affdenom sec}
We now describe the affine version of the almost positive roots model.
The finite version is in \cite{FoZe03a,FoZe03,MRZ}, and more details on the affine version can be found in \cite{afforb,affdenom}.

The Cartan matrix $A$ (and the associated root system $\RS$ and Coxeter group $W$) are said to be of \newword{affine type} if $A$ is positive semidefinite but all of its principal minors are positive.
More details on root systems of affine type are in \cite[Chapter~4]{Kac90}.
If $A$ is of affine type, then there is a unique positive imaginary root $\delta$ such that the imaginary roots in $\RS$ are $\set{k\delta:k\in\integers\setminus\set{0}}$.

We depart from the tradition of taking $A$ to be an $(n+1)\times(n+1)$ matrix indexed by $\set{0,\ldots,n}$.
Instead, we choose an index $\aff\in\set{1,\ldots,n}$ and call $\alpha_\aff$ the \newword{affine simple root}.
We write $S_\fin=S\setminus\set{s_\aff}$ and write $W_\fin$ for the standard parabolic subgroup generated by $S_\fin$.
The choice of the index $\aff$ is made in such a way that $W$ is a semidirect product of $W_\fin$ with the lattice generated by $\set{\alpha_i\ck:i\neq \aff}$.
(In the traditional indexing, the affine simple root is indexed by $0$.)
We write $V_\fin$ for the linear span of $\set{\alpha_i:i\neq\aff}$ and $\RSfin$ for the finite irreducible root system $\RS\cap V_\fin$.

The fixed space in $V$ of a Coxeter element $c=s_1\cdots s_n$ is spanned by $\delta$.
We define~$\gamma_c$ to be the unique vector in $V_\fin$ that is a generalized $1$-eigenvector associated to $\delta$ (meaning that $c\gamma_c=\delta+\gamma_c$). 

The set $\eigenspace{c}=\set{v\in V:K(\gamma_c,v)=0}$ is a hyperplane in~$V$.
We write $\RSTfin{c}$ for the set $\RS\cap\eigenspace{c}\cap V_\fin$.
This is a finite root system of rank $n-2$, each of whose components are of type A.
We define $\APTre{c}$ to be the union of the $c$-orbits of the positive roots in $\RSTfin{c}$ and we define $\APT{c}$ to be $\APTre{c}\cup\set\delta$.
Every root contained in $\eigenspace{c}$ has a finite $c$-orbit, so $\APT{c}$ is a finite set.

The set of roots that participate in the affine almost positive roots model is the set $\AP{c}=-\Simples\cup(\RSpos\setminus \eigenspace{c})\cup\APT{c}$.
The set $\APre{c}=\AP{c}\setminus\set{\delta}$ of real roots in $\AP{c}$ is the set of $\dd$-vectors of cluster variables in the cluster algebra associated to $B$ \cite[Theorem~1.2]{affdenom}.  

We consider several permutations of $\AP{c}$ defined in \cite{affdenom} following the finite-type definitions in \cite{FoZe03a,FoZe03,MRZ}.
For each $s\in S$ that is initial or final in $c$, define an involution $\sigma_s:\AP{c}\to\AP{scs}$ by 
\[\sigma_s(\alpha)=
\begin{cases}
\alpha & \text{if }\alpha\in-\Simples\setminus\set{-\alpha_s}\\
s(\alpha) & \text{otherwise}.
\end{cases}\]
For $c=s_1\cdots s_n$, define $\tau_c:\AP{c}\to\AP{c}$ by $\tau_c=\sigma_{s_1}\cdots\sigma_{s_n}$.

We now prepare to define the $c$-compatibility degree, which associates an integer to each pair of roots in $\AP{c}$.
We will need additional notation related to roots contained in $\eigenspace{c}$.

The set $\RST{c}=\RS\cap\eigenspace{c}$ is essentially a direct sum of root systems of affine type, with one component (of affine type $\tilde A$) for each component of $\RSTfin{c}$.
However, in each component, the isotropic direction is $\reals\delta$.
Since the spans of these components intersect (in $\reals\delta$), $\RST{c}$ is not a reducible root system in the usual sense, but is a root system in a sense considered in \cite{Deodhar} and \cite{Dyer}.  
(See \cite[Theorem~2.7]{typefree}, \cite[Remark~2.13]{typefree}, and \cite[Remark~1.7]{afforb}.)
We write $\SimplesT{c}$ for the unique minimal set of roots whose nonnegative span is $\RST{c}\cap\RSpos$.
These should be thought of as the simple roots of $\RST{c}$.
The Dynkin diagram for $\RST{c}$ consists of cycles and the Coxeter element $c$ of $W$ acts as a rotation on each cycle, moving each root in $\SimplesT{c}$ to an adjacent root in the cycle.

The following lemma is \cite[Lemma~3.11]{affdenom}, rewritten in light of the fact that $\sigma_s$ coincides with $s$ on positive roots.
\begin{lemma}\label{s pos real in tubes}
If $s$ is initial or final in $c$, then $\APTre{scs}=s(\APTre{c})$.
\end{lemma}

Each real root $\beta\in\APTre{c}$ has a unique expression as a linear combination of vectors in $\SimplesT{c}$.
We write $\SuppT(\beta)$ for the set of vectors appearing in this expression with nonzero coefficient.

Given $\alpha\in\APTre{c}$ and $\simpleT\in\SimplesT{c}$, say $\simpleT$ is \newword{adjacent} to $\alpha$ if $\simpleT$ is in $\SuppT(c\alpha)\cup\SuppT(c^{-1}\alpha)$ but not in $\SuppT(\alpha)$.   
Given $\alpha,\beta\in\APTre{c}$, define $\adj\alpha\beta$ to be the number of roots in $\SuppT(\beta)$ that are adjacent to $\alpha$.
Furthermore, define
\[
  \cmcirc\alpha\beta c=
  \left\lbrace\begin{array}{rl}
    -1&\text{if }\alpha=\beta,\\
    0&\text{if }\SuppT(\alpha)\subsetneq\SuppT(\beta)\text{ or }\SuppT(\beta)\subsetneq\SuppT(\alpha),\\
    \adj\alpha\beta&\text{otherwise.}
  \end{array}\right.
\]

The \newword{$c$-compatibility degree} is the unique function $\AP{c}\times\AP{c}\to\integers$, given by $(\alpha,\beta)\mapsto\cm\alpha\beta c$ satisfying the following conditions:
\begin{align}
\label{compat base}
\cm{-\alpha_i}{\beta}c&=\br{\rho\ck_i,\beta}\text{ for }\alpha_i\text{ simple and }\beta\in\AP{c},\\
\label{compat cobase}
\cm{\beta}{{-\alpha_i}}c&=\br{\rho_i,\beta\ck}\text{ for }\alpha_i\text{ simple and }\beta\in\AP{c},\\
\label{compat U}		
\cm{\alpha}{\beta}c&=\cmcirc\alpha\beta c\text{ if }\alpha,\beta\in\APTre{c},\\
\label{compat delta U}
\cm{\delta}{\alpha}c&=\cm{\alpha}{\delta}c=0\text{ if }\alpha\in\APT{c},\text{ and}\\
\label{compat tau} 
\cm\alpha\beta c&=\cm{\tau_c\alpha}{\tau_c\beta}c.
\end{align} 

Two roots $\alpha$ and $\beta$ in $\AP{c}$ are \newword{$c$-compatible} if and only if ${\cm\alpha\beta c=0}$.
Although the $c$-compatibility degree is not symmetric, as a consequence of \cite[Proposition~4.12]{affdenom}, $c$-compatibility is a symmetric relation.

A \newword{$c$-cluster} is a maximal set of pairwise $c$-compatible roots in $\AP{c}$.
An \newword{imaginary $c$-cluster} is a $c$-cluster containing $\delta$, and a \newword{real $c$-cluster} is a $c$-cluster not containing $\delta$.

Given a set $C$ of pairwise $c$-compatible roots in $\AP{c}$, let $\Cone(C)$ denote the nonnegative real span of $C$.
The \newword{affine $c$-cluster fan} or \newword{affine generalized associahedron fan} $\Fan_c(\RS)$ consists of the cones $\Cone(C)$ for all sets $C$ of pairwise $c$-compatible roots in $\AP{c}$. 

If $\delta\in C$, then $\Cone(C)$ is an \newword{imaginary cone}.
Otherwise $\Cone(C)$ is a \newword{real cone}.
The subfan of $\Fan_c(\RS)$ consisting of real cones is the \newword{real affine $c$-cluster fan} $\Fan_c^\re(\RS)$.

We are interested in the image of $\Fan_c(\RS)$ under the piecewise linear map $\nu_c$ defined in Section~\ref{cox sort sec}.
The following theorem is essentially \cite[Theorem~1.1(1)]{affdenom}, although it also incorporates the immediate observation that $\nu_c$ is linear on the union of the imaginary cones in $\Fan_c(\RS)$ (because these cones are all in the nonnegative span of the simple roots).

\begin{theorem}\label{nu thm}
Suppose that $B$ is an acyclic exchange matrix of affine type, let $\RS$ be the associated root system, and let $c$ be the associated Coxeter element.  
The piecewise-linear homeomorphism $\nu_c$ acts linearly on each cone of $\Fan_c(\RS)$ and thus defines a complete fan $\nu_c(\Fan_c(\RS))$.
The isomorphism $\nu_c$ from $\Fan_c(\RS)$ to $\nu_c(\Fan_c(\RS))$ restricts to an isomorphism from $\Fan_c^\re(\RS)$ to the $\g$-vector fan. 
\end{theorem}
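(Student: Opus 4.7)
The plan is to exploit the fact that the theorem is ``essentially'' \cite[Theorem~1.1(1)]{affdenom}: that prior result handles everything concerning the real cones, and the only new content is the parenthetical observation about the imaginary cones. The proof breaks naturally into three steps — citing the known result for real cones, verifying that imaginary cones lie in the nonnegative span of $\Simples$, and assembling these into a fan statement.

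First I would invoke \cite[Theorem~1.1(1)]{affdenom} directly. This gives that $\nu_c$ is linear on every real cone of $\Fan_c(\RS)$ and restricts to a fan isomorphism from $\Fan_c^\re(\RS)$ onto the $\g$-vector fan. This proves the final assertion of the theorem outright, and reduces what remains to the cones $\Cone(C)$ containing $\delta$.

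Next I would verify the key observation: any imaginary cluster $C$ lies entirely in the nonnegative real span of $\Simples$. For this it suffices to rule out negative simple roots $-\alpha_i$ as elements of $C$. By \eqref{compat base}, $\cm{-\alpha_i}{\delta}{c}=\br{\rho\ck_i,\delta}$, and in affine type $\delta$ has strictly positive simple-root coordinates, so this pairing is positive and $-\alpha_i$ is not $c$-compatible with $\delta$. Hence $C\setminus\set{\delta}\subseteq(\RSpos\setminus\eigenspace{c})\cup\APT{c}$, both of whose terms consist of positive roots (the positivity of the $c$-orbits comprising $\APTre{c}$ is recorded in \cref{affdenom sec}). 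On this nonnegative span the formula for $\nu_c$ recalled in \cref{cox sort sec} simplifies to the \emph{linear} assignment $\beta\mapsto-E_c(\,\cdot\,,\beta)$, so $\nu_c$ acts linearly on each imaginary cone.

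Combining the two steps, $\nu_c$ is linear on every cone of $\Fan_c(\RS)$. Because $\nu_c$ is a piecewise-linear homeomorphism of $V$ onto $V^*$, the images of these cones are closed convex cones, and intersections and face relations transfer across $\nu_c$, yielding a complete fan $\nu_c(\Fan_c(\RS))$ onto which $\nu_c$ is a fan isomorphism. The main obstacle I anticipate is the cleanliness of the second step — checking that no element of $\APT{c}$ appearing in an imaginary cluster is a negative root — but this is essentially bookkeeping built into the construction of $\APT{c}$ in \cref{affdenom sec}, so no new idea is required.
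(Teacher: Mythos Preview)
Your proposal is correct and matches the paper's treatment exactly: the paper does not give a formal proof but simply states that the theorem ``is essentially \cite[Theorem~1.1(1)]{affdenom}, although it also incorporates the immediate observation that $\nu_c$ is linear on the union of the imaginary cones in $\Fan_c(\RS)$ (because these cones are all in the nonnegative span of the simple roots).'' Your three-step argument is precisely a fleshed-out version of this sentence, with the added detail of verifying via \eqref{compat base} that no $-\alpha_i$ can lie in an imaginary cluster.
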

(The $\g$-vector fan associated to $B$ is defined from the cluster variables in a principal-coefficients cluster algebra.
We will not need the detailed definition here, but to avoid confusion, we point out that the $\g$-vector is defined to be an integer vector, but here we interpret vectors in $V^*$ as $\g$-vectors by taking fundamental-weight coordinates.)

\section{Beginning the cluster scattering diagram proofs}\label{scat proofs 1}
In this section, we begin to prove Theorem~\ref{DCScat}.
The proof will take several sections.

Since the notion of $\ScatTB$ is defined only up to equivalence, to show that $\ScatTB$ ``is'' $\DCScat(A,c)$, we will show that $\DCScat(A,c)$ satisfies the finiteness condition, show that it contains the walls $\set{(\alpha_i^\perp,1+\hy_i):i=1,\ldots,n}$, show that every other wall is outgoing, and show that $\DCScat(A,c)$ is consistent.
Since every wall in $\DCScat(A,c)$ has a nontrivial scattering term, and since no two walls are parallel, it is easy to see that $\DCScat(A,c)$ has minimal support as well.

The finiteness condition on $\DCScat(A,c)$ is supplied by the following proposition, which is a combination of \cite[Theorem~8.3]{typefree} and the case $k=1$ of \cite[Theorem~6.1]{sortable}.
(See also \cite[Corollary~3.9]{camb_fan}.)
\begin{proposition}\label{ji ref}
Suppose $W$ is a Coxeter group and $c$ is a Coxeter element of~$W$.
Then for any reflection $t$, there is at most one $c$-sortable join-irreducible element whose unique cover reflection is $t$.
If $W$ is finite, then there is exactly one.
\end{proposition}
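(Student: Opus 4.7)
The plan is to prove uniqueness by induction on the rank $n$ of $W$; the base case $n=1$ is immediate. For the inductive step, fix $s$ initial in $c$ and suppose $j^{(1)},j^{(2)}$ are $c$-sortable join-irreducibles with common cover reflection $t$. A preliminary observation drives the case analysis: for a join-irreducible $j$, one has $s\le j$ in the right weak order if and only if the (unique) cover reflection of $j$ equals $s$. Indeed, $s\le j$ means $\ell(sj)=\ell(j)-1$, so $sj\covered j$, making $s$ a cover reflection, which by join-irreducibility must be the unique one; conversely, $t=s$ forces $sj=j_*\covered j$, hence $s\le j$. Applied to $j^{(1)}$ and $j^{(2)}$, this synchronizes the relation $s\le j^{(i)}$ across $i=1,2$, eliminating the mixed configuration.

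If $s\not\le j^{(1)},j^{(2)}$, the recursive definition of $c$-sortability places each $j^{(i)}$ in $W_\br{s}$ as an $sc$-sortable element. Because $W_\br{s}$ is a lower set in the right weak order, the elements covered by $j^{(i)}$ in $W$ and in $W_\br{s}$ coincide, so join-irreducibility is inherited; and $t=j^{(i)}_*(j^{(i)})^{-1}\in W_\br{s}$. The inductive hypothesis applied to $(W_\br{s},sc)$ then gives $j^{(1)}=j^{(2)}$.

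The heart of the proof is the remaining case $s\le j^{(1)},j^{(2)}$ with $t=s$. Write $j=j^{(i)}$ and $v=sj$, so $v$ is $scs$-sortable with $\ell(v)=\ell(j)-1$. The standard inversion identity
\[\inv(sv)=\inv(v)\sqcup\{v^{-1}(\alpha_s)\}\qquad(\ell(sv)>\ell(v))\]
combined with the cover-reflection identity $\inv(j)=\inv(j_*)\sqcup\{\beta_t\}=\inv(v)\sqcup\{\alpha_s\}$ forces $v^{-1}(\alpha_s)=\alpha_s$, so $v$ fixes $\alpha_s$. I then enumerate the right descents of $j=sv$: a simple reflection $s^*$ is a right descent iff $sv(\alpha_{s^*})\in-\RSpos$. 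Splitting on the sign of $v(\alpha_{s^*})$ and using $v(\alpha_s)=\alpha_s$ to rule out the potential pathological case $v(\alpha_{s^*})=-\alpha_s$ (which would require $\alpha_{s^*}=-v^{-1}(\alpha_s)=-\alpha_s<0$), one finds the right descent set of $j$ equals $\{s\}\sqcup\{\text{right descents of }v\}$, disjointly since $v(\alpha_s)>0$. Join-irreducibility of $j$ collapses this to a singleton, so $v$ has no right descents, $v=e$, and $j=s$, pinning down $j^{(1)}=j^{(2)}=s$.

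The main technical obstacle is this final descent-set enumeration, which hinges on carefully using $v^{-1}(\alpha_s)>0$ (a consequence of $\ell(sv)>\ell(v)$) to eliminate the sign ambiguity. For the existence statement when $W$ is finite, I would either carry out a parallel recursive construction — take $j_c(s)=s$ when $t=s$, lift the witness from $W_\br{s}$ via the inductive hypothesis when $t$ is a reflection of $W_\br{s}$, and otherwise recurse using the conjugate Coxeter element $scs$ — or combine the injection $j\mapsto\beta_t$ just established with the classical finite-type count that the number of $c$-sortable join-irreducibles equals $|\RSpos|$, promoting the injection to a bijection.
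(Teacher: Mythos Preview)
Your ``preliminary observation'' is false, and this breaks the case analysis. In the weak order used in the paper, cover relations are given by \emph{right} multiplication: $w\covered ws$ when $\ell(ws)>\ell(w)$, and a cover reflection of $j$ is $t=jrj^{-1}$ for some right descent $r$ of $j$. On the other hand, $s\le j$ means that $s$ is a \emph{left} prefix of some reduced word for $j$, i.e.\ $\ell(sj)=\ell(j)-1$. The step ``$\ell(sj)=\ell(j)-1$, so $sj\covered j$'' conflates left and right multiplication and is not valid. A concrete counterexample: in type $A_2$ with $c=s_1s_2$, the element $j=s_1s_2$ is $c$-sortable and join-irreducible, has $s_1\le j$, but its unique right descent is $s_2$ and its cover reflection is $t=s_1s_2s_1\neq s_1$.

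Because of this, your treatment of the case $s\le j^{(1)},j^{(2)}$ is incomplete: you only handle the subcase $t=s$, whereas the generic situation is $s\le j^{(i)}$ with $t\neq s$. That case is exactly where the real inductive content lies: one checks that $sj^{(i)}$ is again join-irreducible with cover reflection $sts$ and is $scs$-sortable, and then invokes the inductive hypothesis for the pair $(W,scs)$ --- which requires an induction on length (or on $\ell(j)$) in addition to your induction on rank. Your proof never sets up that second induction. (The mixed case $s\le j^{(1)}$, $s\not\le j^{(2)}$ is indeed impossible, but not for the reason you give; rather, $j^{(2)}\in W_{\br{s}}$ forces $t\in W_{\br{s}}$, and then \cref{cover para} forces $j^{(1)}\in W_{\br{s}}$, contradicting $s\le j^{(1)}$.) Note also that the paper does not prove this proposition directly; it is quoted from \cite{typefree} and \cite{sortable}, so there is no in-paper argument to compare yours against.
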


Immediately from Proposition~\ref{ji ref}, we can conclude that at most two walls of $\DCScat(A,c)$ are in the same hyperplane, which is already enough for the finiteness condition.
(We will see later that there is at most one wall in each hyperplane.
There would be two walls when there exist a join-irreducible $c$-sortable element $j$ and a $c^{-1}$-sortable element $j'$ such that $\Sh(j)$ and $-\Sh(j')$ are different shards in the same hyperplane.
We will rule out that possibility in Proposition~\ref{Sigma j j'}.)

Every wall $(\alpha_s^\perp,1+\hy^{\alpha_s})$ is present because each $s\in S$ is $c$-sortable and join-irreducible, and $\Sh(s)=\alpha_s^\perp$ because $\cut(\alpha_s)=\emptyset$.

Thus to prove Theorem~\ref{DCScat}, it remains to verify that every other wall is outgoing, and $\DCScat(A,c)$ is consistent.
We prove the assertion about outgoing walls in three propositions, one for each of the kinds of walls in $\DCScat(A,c)$, namely walls associated to $c$-sortable join-irreducible elements, walls associated to $c^{-1}$-sortable join-irreducible elements, and the wall $(\d_\infty,f_\infty)$.
We state and prove the two propositions having to do with sortable elements in this section:

\begin{proposition}\label{out c} 
If $j\in\JIrr_c(W)\setminus S$, then $(\Sh(j),f_j)$ is outgoing and gregarious.
\end{proposition}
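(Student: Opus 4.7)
The plan is to prove gregariousness directly by induction; outgoingness then follows because a shard cannot contain both a nonzero vector and its negative in its relative interior, and $-\omega_c(\,\cdot\,,\beta_j)$ lies in $\beta_j^\perp$ by skew-symmetry of $\omega_c$. Write $c=s_1\cdots s_n$, let $s=s_1$ be initial in $c$, set $\beta=\beta_j$ (the positive root with $\Sh(j)\subset\beta^\perp$), and let $t=t_\beta$ be the unique cover reflection of $j$. The goal becomes $-\omega_c(\,\cdot\,,\beta)\in\relint(\Sh(j))$, and I will split on whether $s\le j$.

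In the case $s\not\le j$, the sortable theory gives that $j\in W_{\br s}$ is $sc$-sortable; since $j\notin S$ also $j\notin S_{\br s}$, so the inductive hypothesis applies inside the strictly smaller rank group $W_{\br s}$ and yields $-\omega_{sc}(\,\cdot\,,\beta)\in\relint(\Sh_{W_{\br s}}(j))$. Lifting to $V^*$ involves two checks. First, every positive root of $W$ that cuts $\beta$ already lies in the root system of $W_{\br s}$: since $\beta$ is supported on $\set{\alpha_i:i\ne s}$ and $s$ is initial in $c$, no rank-$2$ subsystem containing $\beta$ as a non-canonical root can involve $\alpha_s$. Hence $\Sh(j)$ is the cylindrical lift of $\Sh_{W_{\br s}}(j)$ along the direction $\rho_s$. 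Second, $\omega_c$ and $\omega_{sc}$ agree on arguments supported on $\set{\alpha_i:i\ne s}$; consequently $-\omega_c(\,\cdot\,,\beta)$ is the lift of $-\omega_{sc}(\,\cdot\,,\beta)$ with some $\rho_s$-component, which is free within the lift, so gregariousness transfers.

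In the case $s\le j$, write $j=sj'$; sortable theory gives that $j'$ is $scs$-sortable, and I verify from the structure of $c$-sorting words that $j'$ inherits the join-irreducibility of $j$, with cover reflection $sts$ and root $\beta_{j'}=s\beta$. If $j'\in S$, then $j=ss'$ for a pair of non-commuting simple reflections (else $j$ would cover both $s$ and $s'$); this length-$2$ base case is checked directly using $\beta=s(\alpha_{s'})$ and $\cut(\beta)\subseteq\set{\alpha_s,\alpha_{s'}}$. Otherwise the induction on length (now with Coxeter element $scs$) yields $-\omega_{scs}(\,\cdot\,,s\beta)\in\relint(\Sh(j'))$; I then transport by the linear action of $s$ on $V^*$ using $s\cdot\Sh(j')=\Sh(j)$ (an initial simple reflection realizes the shard bijection matching the bijection $j'\leftrightarrow j$ on join-irreducibles) together with an identity of the form $\omega_c(sx,sy)=\omega_{scs}(x,y)$ up to a correction that pairs trivially against every $\gamma\in\cut(\beta)$.

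The hardest step will be the transport in the case $s\le j$: to turn gregariousness for $j'$ into gregariousness for $j$ requires applying $s$ simultaneously to the vector and to the shard while reconciling the change of form from $\omega_{scs}$ to $\omega_c$, which forces careful bookkeeping of how $\omega$ transforms under the conjugation $c\mapsto scs$ when $s$ is initial. The case $s\not\le j$ is comparatively routine, since parabolic containment protects against genuinely new cuts being introduced. A subsidiary technical point is verifying the equality $s\cdot\Sh(j')=\Sh(j)$, which rests on the cutting relation being equivariant under reflection by an initial simple root.
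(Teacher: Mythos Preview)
Your overall inductive scheme matches the paper's, but the crucial transport step in the case $s\le j$ contains a genuine error: the equality $s\cdot\Sh(j')=\Sh(j)$ that you assert (and flag as ``subsidiary'') is false in general. The cutting relation is indeed equivariant under $s$ \emph{away from} the rank-$2$ subsystem containing $\alpha_s$ and $\beta_t$, but $\alpha_s$ itself can belong to $\cut(\beta_t)$, in which case $\Sh(j)$ acquires the extra defining inequality $\br{x,\alpha_s}\le0$ that has no counterpart in $s\cdot\Sh(j')$. The correct relation, proved in the paper as \cref{Sigma sj}, is only the containment $s\cdot\Sh(sj)\supseteq\Sh(j)$, together with equality after intersecting both sides with the halfspace $\set{x:\br{x,\alpha_s}\le0}$.

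Consequently the paper must do strictly more than transport: after applying $s$ and using the exact identity $\omega_c(\beta,\beta')=\omega_{scs}(s\beta,s\beta')$ of \cref{OmegaInvariance} (no ``correction'' is needed), one knows only that $-\omega_c(\,\cdot\,,\beta_t)\in\relint(s\cdot\Sh(sj))$. To land in $\relint(\Sh(j))$ one must additionally prove the \emph{strict} inequality $-\omega_c(\alpha_s,\beta_t)<0$. The paper argues this separately: by initiality of $s$ one has $-\omega_c(\alpha_s,\beta_t)\le0$, and equality would force $t$ into the parabolic generated by simples commuting with $s$, which is ruled out using \cref{cover para} and the hypothesis $s<j$. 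Your proposal omits this step entirely, and it cannot be recovered from the claimed shard equality since that equality fails. (Your base case $j=ss'$ and your parabolic case $s\not\le j$ are essentially the paper's arguments, modulo the minor point that $\cut(\beta)\subseteq\RS_{\br s}$ follows simply from $\beta\in\RS_{\br s}$, not from $s$ being initial.)
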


\begin{proposition}\label{out cinv}
If $j\in\JIrr_{c^{-1}}(W)\setminus S$, then $(-\Sh(j),f_j)$ is outgoing and gregarious.
\end{proposition}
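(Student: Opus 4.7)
The plan is to deduce Proposition \ref{out cinv} directly from Proposition \ref{out c} via an orientation-reversal symmetry. The key observation is that if $B$ is acyclic of affine type with Coxeter element $c$, then the exchange matrix $-B$ (obtained by reversing the orientation of the Dynkin diagram) is also acyclic of affine type; it has the same Cartan matrix $A$, the same root system $\RS$, Coxeter group $W$, simple reflections $S$, and hence the same cutting relation on hyperplanes and the same shards. Its associated Coxeter element is $c^{-1}$, and its associated bilinear form is $\omega_{c^{-1}}=-\omega_c$. I would check this sign identity directly from the definition: under any numbering with $c=s_1\cdots s_n$, reversing the order gives a valid numbering for $c^{-1}$, and the defining formula $\omega_c(\alpha_i^\vee,\alpha_j)=a_{ij}$ if $i>j$, $=-a_{ij}$ if $i<j$, $=0$ if $i=j$ switches sign on every off-diagonal entry under this reversal.

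With this in hand, I would fix $j\in\JIrr_{c^{-1}}(W)\setminus S$ and let $\beta\in\RSpos$ be the primitive positive root with $\Sh(j)\subset\beta^\perp$. Applying Proposition \ref{out c} to $-B$ (for which $c^{-1}$ is the Coxeter element) yields that the wall $(\Sh(j),f_j)$ is both outgoing and gregarious with respect to $\omega_{c^{-1}}$; that is,
\begin{equation*}
\omega_{c^{-1}}(\,\cdot\,,\beta)\notin\Sh(j)\qquad\text{and}\qquad -\omega_{c^{-1}}(\,\cdot\,,\beta)\in\relint(\Sh(j)).
\end{equation*}
Substituting $\omega_{c^{-1}}=-\omega_c$ and negating both the distinguished vector and the cone, these two statements become
\begin{equation*}
\omega_c(\,\cdot\,,\beta)\notin -\Sh(j)\qquad\text{and}\qquad -\omega_c(\,\cdot\,,\beta)\in\relint(-\Sh(j)),
\end{equation*}
which is exactly the assertion that $(-\Sh(j),f_j)$ is outgoing and gregarious with respect to $\omega=\omega_c$ for $B$. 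Since $-\Sh(j)$ lies in the same hyperplane $\beta^\perp$ as $\Sh(j)$, the positive primitive normal is unchanged; and $f_j=1+\hy^\beta$ depends only on $\beta$, so the wall produced by Proposition \ref{out c} applied to $-B$ is literally the wall named in the statement.

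The main obstacle is the clean verification that $\omega_{c^{-1}}=-\omega_c$, together with the bookkeeping needed to confirm that everything else entering the statement (the shard $\Sh(j)$, the scattering term $f_j$, the positive primitive normal $\beta$, and the notions of \emph{outgoing} and \emph{gregarious}) transforms correctly under the passage $B\leftrightarrow -B$. Everything except $\omega_c$ is intrinsic to $(A,W,\RS)$, so once the sign identity for $\omega_c$ is established, the proposition reduces to a symmetric restatement of Proposition \ref{out c}. No further combinatorial input is needed beyond the machinery already required for Proposition \ref{out c}.
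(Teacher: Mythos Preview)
Your proposal is correct and follows essentially the same approach as the paper's own proof: apply Proposition~\ref{out c} with $c^{-1}$ in place of $c$, use the identity $\omega_{c^{-1}}=-\omega_c$, and translate via the antipodal map. The paper compresses this into three sentences, while you spell out the bookkeeping more explicitly, but the argument is the same.
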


To prove these propositions, we need to quote and prove a few additional background results.

The following lemma is \cite[Lemma~1.1]{sortable}.
It implies in particular that a join-irreducible element is in $W_I$ if and only if its unique cover reflection is.

\begin{lemma}
\label{cover para}
For any $I\subseteq S$, an element $w\in W$ is in~$W_I$ if and only if every cover reflection of~$w$ is in~$W_I$.
\end{lemma}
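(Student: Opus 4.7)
The plan is to prove both directions of the biconditional via the characterization $w\in W_I$ iff $\inv(w)\subseteq\RS_I$, together with the correspondence between reflections in $W_I$ and positive roots in $\RS_I$. The forward direction is immediate: any cover reflection $t$ of $w$ has $\beta_t\in\inv(w)$, so $w\in W_I$ gives $\beta_t\in\RS_I$, and hence $t\in W_I$.

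For the reverse direction, I would induct on $\ell(w)$, with the trivial base case $w=e$. Assuming every cover reflection of $w$ lies in $W_I$, I would pick any right descent $s$ of $w$, so that $t=wsw^{-1}$ is a cover reflection with $\beta_t=-w(\alpha_s)\in\RS_I$ by hypothesis. The aim is to establish two claims: (a) every cover reflection of $ws$ also lies in $W_I$, so that induction applied to $ws$ yields $ws\in W_I$; and (b) $s\in I$, so that $w=(ws)s\in W_I$. Claim (b) will follow from claim (a): once $ws\in W_I$ is known, the identity $\beta_t=(ws)(\alpha_s)\in\RS_I$, combined with the fact that $W_I$ sends $\alpha_s$ (for $s\notin I$) to a vector whose $\alpha_s$-coefficient is $1$ and therefore lies outside $\Span(\RS_I)$, forces $s\in I$.

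The main obstacle is claim (a). Using $\inv(ws)=\inv(w)\setminus\set{\beta_t}$, a cover reflection root $\gamma$ of $ws$ is either also a cover reflection root of $w$ (in which case $\gamma\in\RS_I$ by hypothesis), or else removing $\gamma$ from $\inv(w)$ fails to produce an inversion set, and then the biconvexity of inversion sets provided by \cref{biconv}, applied to the rank-$2$ subsystem in which $\gamma$ and $\beta_t$ sit, forces $\gamma$ to be expressible as a positive integer combination of $\beta_t$ and another cover reflection root of $w$; both of these belong to $\RS_I$, so $\gamma$ lies in $\Span(\RS_I)$, and as a root it therefore lies in $\RS_I$. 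The hard part is precisely that the cover reflections of $ws$ need not descend from those of $w$, and controlling the newly appearing ones relies on the combinatorial constraints that biconvexity imposes on how $\inv(w)$ intersects rank-$2$ subsystems.
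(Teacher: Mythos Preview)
The paper does not give its own proof of this lemma; it quotes it as \cite[Lemma~1.1]{sortable}. So there is nothing in the paper to compare against, and your attempt must stand on its own.

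Your forward direction is correct, and your reduction of claim~(b) to claim~(a) is fine. The problem is your argument for claim~(a). You assert that if $\gamma$ is a cover-reflection root of $ws$ but not of $w$, then biconvexity in the rank-$2$ subsystem containing $\gamma$ and $\beta_t$ forces $\gamma$ to be a positive combination of $\beta_t$ and another cover-reflection root of $w$. This is false. Take $W=S_4$ and $w=s_2s_1s_3s_2$. Then $\inv(w)=\{\alpha_2,\ \alpha_1+\alpha_2,\ \alpha_2+\alpha_3,\ \alpha_1+\alpha_2+\alpha_3\}$, and the \emph{unique} cover-reflection root of $w$ is $\beta_t=\alpha_1+\alpha_2+\alpha_3$. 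With $s=s_2$ one has $ws=s_2s_1s_3$, whose cover-reflection roots are $\alpha_1+\alpha_2$ and $\alpha_2+\alpha_3$; neither is a cover-reflection root of $w$, and neither is a positive combination of $\beta_t$ with ``another'' cover-reflection root of $w$, since there is no other. What actually goes wrong when you remove $\gamma=\alpha_1+\alpha_2$ from $\inv(w)$ is not closedness but \emph{co}-closedness: in the complement one finds $\alpha_3$ and $\alpha_1+\alpha_2$, whose sum $\alpha_1+\alpha_2+\alpha_3=\beta_t$ lies in $\inv(w)\setminus\{\gamma\}$. So the failure does not place $\gamma$ in the positive span of two inversions of $w$, and your mechanism collapses.

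In this example the hypothesis of the lemma forces $I=S$, so claim~(a) happens to hold for trivial reasons; but your argument for it does not. A correct proof of the reverse implication needs a different idea (the original argument in \cite{sortable}, for instance, does not proceed by passing to $ws$ and tracking cover reflections in this way).
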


Given a Coxeter element $c$ and a subset $I\subseteq S$, the \newword{restriction} of $c$ to $W_I$ is the Coxeter element $c'$ of $W_I$ obtained by deleting the letters $S\setminus I$ from a reduced word for $c$.
(Recall that for any $w\in W$ and $I\subseteq S$, we defined an element $w_I\in W_I$.
In general $c_I$ is not equal to the restriction $c'$ of $c$ to $W_I$.
For example, if $S=\set{s_1,s_2}$ with $c=s_1s_2\neq s_2s_1$ and $I=\set{s_2}$, then $c'=s_2$ but $c_I$ is the identity.)
We write~$V_I$ for the subspace $\Span\set{\alpha_i:i\in I}$ of $V$.
The following lemma is immediate from the definition.

\begin{lemma} \label{OmegaRestriction}
Let $I\subseteq S$ and let $c'$ be the restriction of $c$ to $W_I$.
Then $\omega_c$ restricted to $V_I$ is $\omega_{c'}$. 
\end{lemma}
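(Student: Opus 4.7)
The plan is to unwrap the definition of $\omega_c$ on both sides and check equality entry-by-entry on the basis of simple co-roots and simple roots indexed by $I$. Since both $\omega_c|_{V_I}$ (evaluated on the chosen basis pairs) and $\omega_{c'}$ are bilinear forms, it suffices to verify that $\omega_c(\alpha_i\ck,\alpha_j)=\omega_{c'}(\alpha_i\ck,\alpha_j)$ for every $i,j\in I$.

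The key observation is that the recipe for $\omega_c$ depends on two pieces of data: the Cartan entry $a_{ij}$, and whether $i<j$, $i=j$, or $i>j$ in the total order on $\{1,\dots,n\}$ used to express $c$ as $s_1\cdots s_n$. For the restriction, the Cartan matrix of $W_I$ is the principal submatrix of $A$ on rows and columns in $I$, so the relevant $a_{ij}$ is unchanged. For the ordering, I would pick a reduced word $s_1\cdots s_n$ for $c$; then by definition $c'$ is obtained by deleting the letters $s_k$ with $k\notin I$, which preserves the relative order of the remaining letters. Hence for $i,j\in I$ with $i\ne j$, the sign of ``$i$ vs.\ $j$'' in $c$ agrees with the corresponding sign in $c'$, and the piecewise definition of $\omega_{c'}(\alpha_i\ck,\alpha_j)$ returns the same value as $\omega_c(\alpha_i\ck,\alpha_j)$. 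The diagonal case $i=j$ is trivially zero on both sides.

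There is really no obstacle here beyond keeping the conventions straight, which is why the paper labels the statement as immediate; the only subtlety worth flagging is that the ordering attached to $c$ is well-defined on the level of the Coxeter element (independent of the choice of reduced word) precisely because distinct reduced words differ by commutations of non-adjacent generators, and such commutations never change the relative order of two letters that both remain after restricting to $I$. Once that is noted, the verification is a one-line comparison of the three cases in the definitions of $\omega_c$ and $\omega_{c'}$.
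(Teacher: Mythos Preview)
Your proposal is correct and matches the paper's approach exactly: the paper simply states that the lemma is immediate from the definition, and your entry-by-entry comparison is precisely what that means. One minor quibble: your parenthetical about well-definedness is slightly off, since a commutation $s_is_j\to s_js_i$ with both $i,j\in I$ \emph{does} swap their relative order; the correct point is that in that case $a_{ij}=0$, so $\omega_c(\alpha_i\ck,\alpha_j)=0$ regardless of order, but this side remark is not needed for the argument anyway.
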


The \newword{inversion sequence} of a word $a_1\cdots a_k$ (in the alphabet $S$) is the sequence $(a_1\cdots a_{i-1}\alpha_{a_i}:i=1\ldots,k)$.
If $a_1\cdots a_k$ is a reduced word for $w$, then the $k$ roots in the inversion sequence of $a_1\cdots a_k$ are distinct and are precisely the roots in $\inv(w)$.
If $s\in S$ and $s<w$ in the weak order, then by writing a reduced word $a_1\cdots a_k$ for $w$ with $a_1=s$, we see that $\inv(w)={s\cdot\inv(sw) \cup\set{\alpha_s}}$.

The following is one direction of \cite[Proposition~3.11]{typefree}
\begin{proposition}\label{sort omega}
Let $v$ be a $c$-sortable element with $c$-sorting word $a_1\cdots a_k$ and let $\beta_1,\ldots,\beta_k$ be the inversion sequence of $a_1\cdots a_k$.
Then $\omega_c(\beta_i,\beta_j)\ge0$ for all $i<j$, and furthermore if $\omega_c(\beta_i,\beta_j)=0$ then $K(\beta_i,\beta_j)=0$.
\end{proposition}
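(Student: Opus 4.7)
The plan is to induct on $\ell(v) + \rank(W)$, following the recursive definition of $c$-sortability. Let $s$ be an initial simple reflection of $c$. If $s \not\le v$, then $v \in W_{\br{s}}$ and the $c$-sorting word of $v$ coincides with its $sc$-sorting word inside $W_{\br{s}}$, with all inversions $\beta_i$ lying in $V_{\br{s}}$. By \cref{OmegaRestriction}, $\omega_c$ restricted to $V_{\br{s}}$ equals $\omega_{sc}$, and $K$ likewise restricts to the Cartan form of the subsystem, so the inductive hypothesis at strictly smaller rank delivers both the inequality and the equality clause.

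In the remaining case $s \le v$ one has $\beta_1 = \alpha_s$ and $\beta_i = s\gamma_{i-1}$ for $i \ge 2$, where $\gamma_1, \ldots, \gamma_{k-1}$ is the inversion sequence of the $scs$-sorting word of the shorter element $sv$. A direct computation from the definitions shows that when $s$ is initial in $c$ the identity $\omega_c(sx, sy) = \omega_{scs}(x, y)$ holds for all $x, y \in V$: the relative order of $s_j$ and $s_k$ in $c$ versus $scs$ agrees when $j, k \ne s$ and is reversed when one of them equals $s$, and the resulting sign changes balance the sign flip in $s(\alpha_s) = -\alpha_s$. Consequently, for pairs $2 \le i < j$ we get $\omega_c(\beta_i, \beta_j) = \omega_{scs}(\gamma_{i-1}, \gamma_{j-1}) \ge 0$ by induction on length; if this vanishes then $K(\gamma_{i-1}, \gamma_{j-1}) = 0$ by induction, and hence $K(\beta_i, \beta_j) = 0$ because $s$ preserves~$K$.

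For the remaining pairs $(1, j)$ with $j \ge 2$, writing $\gamma_{j-1} = \sum_k c_k \alpha_k$ one computes directly that $\omega_c(\alpha_s, s\gamma_{j-1}) = -d_s \sum_{k \ne s} c_k a_{sk}$, which is nonnegative because $c_k \ge 0$ and $a_{sk} \le 0$ for $k \ne s$. If it vanishes then no $k \in \supp(\gamma_{j-1}) \setminus \set{s}$ is adjacent to $s$ in the Dynkin diagram; combined with the connectedness of the support of a positive real root and the observation that $\gamma_{j-1} \ne \alpha_s$ (else $\beta_j = -\alpha_s$ would not be positive), this forces $c_s = 0$. A direct computation yields $K(\alpha_s, s\gamma_{j-1}) = -2 d_s c_s$, which therefore vanishes. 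The main obstacle is precisely this equality clause for pairs $(1, j)$: the implication $\omega_c(\alpha_s, \beta_j) = 0 \Rightarrow K(\alpha_s, \beta_j) = 0$ rests both on the explicit formula relating $K$ to $\omega_c$ and on the connectedness of supports of positive real roots in the Dynkin diagram.
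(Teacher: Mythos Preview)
The paper does not supply its own proof of this proposition; it is quoted as one direction of \cite[Proposition~3.11]{typefree}.  Your inductive argument via the recursion defining $c$-sortability is correct and is precisely the approach taken in \cite{typefree}: reduce to $W_{\br{s}}$ when $s\not\le v$ using \cref{OmegaRestriction}, and when $s\le v$ use the invariance $\omega_c(sx,sy)=\omega_{scs}(x,y)$ (which is \cref{OmegaInvariance}) together with the direct computation for the pairs involving $\beta_1=\alpha_s$; the equality clause for those pairs is handled, as you do, by the connectedness of the support of a positive real root.
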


The following lemma is \cite[Lemma~3.8]{typefree}.

\begin{lemma} \label{OmegaInvariance}
If~$s$ is initial or final in $c$, then $\omega_c(\beta,\beta')=\omega_{scs}(s\beta,s\beta')$ for all roots $\beta$ and $\beta'$.
\end{lemma}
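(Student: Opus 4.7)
The plan is to reduce the identity to a short direct computation on pairs of simple basis vectors. First I would eliminate the \emph{final} case: if $s$ is final in $c$, then $s$ is initial in $c':=scs$, and the initial case applied to $c'$ gives $\omega_{c'}(\beta,\beta')=\omega_{sc's}(s\beta,s\beta')=\omega_c(s\beta,s\beta')$, so substituting $s\beta\mapsto\beta$ and $s\beta'\mapsto\beta'$ recovers the final case. Hence I may assume $s$ is initial in $c$, and after relabeling that $s=s_1$ and $c=s_1s_2\cdots s_n$, so that $scs=s_2\cdots s_n s_1$. The effect on the ordering defining $\omega$ is that the index $1$ is cycled from first position to last, while the relative order of the remaining indices is preserved.

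By bilinearity of $\omega_c$ and $\omega_{scs}$, it then suffices to verify the identity on all pairs $(\alpha_i\ck,\alpha_j)$. The action of $s_1$ on the relevant vectors is $s_1\alpha_j=\alpha_j-a_{1j}\alpha_1$ and $s_1\alpha_j\ck=\alpha_j\ck-a_{j1}\alpha_1\ck$ for $j\neq 1$, together with $s_1\alpha_1=-\alpha_1$ and $s_1\alpha_1\ck=-\alpha_1\ck$; the coefficient $a_{j1}$ in the coroot formula comes from the symmetrization $d_ia_{ij}=d_ja_{ji}$ combined with $\alpha_j\ck=d_j^{-1}\alpha_j$. I would then split into cases according to whether $i$ or $j$ equals $1$. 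The diagonal case and the cases with exactly one index equal to $1$ are immediate: the minus signs introduced by $s_1$ match precisely the sign flip caused by moving the index $1$ to the last position in the $scs$-ordering.

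The substantive case is $i,j\neq 1$. Here I would expand bilinearly:
\[
\omega_{scs}(s_1\alpha_i\ck, s_1\alpha_j)=\omega_{scs}(\alpha_i\ck,\alpha_j)-a_{1j}\omega_{scs}(\alpha_i\ck,\alpha_1)-a_{i1}\omega_{scs}(\alpha_1\ck,\alpha_j),
\]
using $\omega_{scs}(\alpha_1\ck,\alpha_1)=0$. Evaluating in the $scs$-ordering, the first term equals $\omega_c(\alpha_i\ck,\alpha_j)$ because the relative order of $i,j$ is unchanged; the second becomes $-a_{1j}\cdot(-a_{i1})=+a_{1j}a_{i1}$; and the third becomes $-a_{i1}\cdot a_{1j}=-a_{i1}a_{1j}$. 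The two cross terms cancel and the identity follows.

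The main difficulty I anticipate is purely bookkeeping: keeping the sign conventions consistent between the two orderings and verifying that the two cross terms in the last case cancel exactly. No input beyond bilinearity, the explicit formulas for the action of $s_1$, and the two defining sign rules for $\omega$ is required.
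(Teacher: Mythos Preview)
Your argument is correct. The reduction of the final case to the initial case is clean, and the case split on whether $i$ or $j$ equals~$1$ goes through; in particular your expansion in the case $i,j\neq 1$ is right: in the $scs$-ordering the index~$1$ sits last, so $\omega_{scs}(\alpha_i\ck,\alpha_1)=-a_{i1}$ and $\omega_{scs}(\alpha_1\ck,\alpha_j)=a_{1j}$, and the two cross terms cancel exactly as you say.

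As for comparison: the paper does not supply a proof of this lemma at all; it simply quotes it as \cite[Lemma~3.8]{typefree}. Your direct computation is essentially the standard verification one finds in that reference, so there is no substantive difference in approach to discuss.
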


We prove the following lemma for general Coxeter groups.
The finite-type version is \cite[Lemma~3.9]{congruence}.
\begin{lemma}\label{Sigma s}
Suppose $j$ is join-irreducible in $W$ and $\cov(j)=\set{t}$.
Then $\Sh(j)=\beta_t^\perp$ if and only if $j\in S$.
\end{lemma}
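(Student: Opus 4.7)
The plan is to establish the chain
\[
\Sh(j)=\beta_t^\perp \ \iff\ \cut(\beta_t)=\emptyset \ \iff\ \beta_t\in\Simples \ \iff\ j\in S,
\]
leaning on the bijection of \cref{U Sigma ji} to close the final loop without an extended length chase in $W$.

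The first equivalence is essentially definitional: any $\gamma\in\cut(\beta_t)$ lies with $\beta_t$ in a rank-$2$ subsystem, so $\gamma$ and $\beta_t$ are linearly independent and $\gamma^\perp\cap\beta_t^\perp$ properly subdivides $\beta_t^\perp$; conversely, $\cut(\beta_t)=\emptyset$ leaves $\beta_t^\perp$ as its own unique shard.

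For the middle equivalence, a simple root $\alpha_s$ is an extreme ray of $\RSpos$, hence canonical in every rank-$2$ subsystem containing it, so $\cut(\alpha_s)=\emptyset$. Conversely, given a non-simple $\beta\in\RSpos\cap\RSre$, the real-root condition $K(\beta,\beta)>0$ combined with the expansion $K(\beta,\beta)=\sum_i n_i d_i K(\alpha_i\ck,\beta)$ (with $n_i,d_i\ge 0$) yields some index $i$ with $c:=K(\alpha_i\ck,\beta)>0$. Then $s_i\beta=\beta-c\alpha_i$ must be positive, since otherwise all its simple-root coefficients would be nonpositive and force $\supp(\beta)\subseteq\{i\}$, contradicting non-simplicity. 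Inside the rank-$2$ subsystem $\RS\cap\Span(\alpha_i,\beta)$, the root $\alpha_i$ is canonical; if $\beta$ were also canonical, $s_i\beta$ would admit a nonnegative expression in $\alpha_i$ and $\beta$, but the unique such expression has coefficient $-c<0$ on $\alpha_i$. Hence $\alpha_i\in\cut(\beta)$.

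The final equivalence now drops out quickly. If $j=s\in S$, the unique right descent of $s$ is $s$ itself, so $\beta_t=-s\alpha_s=\alpha_s\in\Simples$. Conversely, if $\beta_t=\alpha_s\in\Simples$ then the middle equivalence gives $\cut(\alpha_s)=\emptyset$, whence $\alpha_s^\perp$ is a single shard and in particular $\Sh(s)=\alpha_s^\perp$. But also $\Sh(j)=\beta_t^\perp=\alpha_s^\perp=\Sh(s)$, so the injectivity of the shard map in \cref{U Sigma ji} forces $j=s$. The main obstacle will be the middle equivalence, where the sign analysis that promotes $s_i\beta$ into $\RSpos$ and then extracts non-canonicity needs some care in the general Kac--Moody setting; the first and third equivalences are short once that key ingredient is in place.
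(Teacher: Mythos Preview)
Your proof is correct and takes a genuinely different route from the paper's. You factor the equivalence through the root-system fact that $\cut(\beta)=\emptyset$ if and only if $\beta$ is simple, proved by locating a simple root $\alpha_i$ with $K(\alpha_i\ck,\beta)>0$ and showing that $\alpha_i$ cuts $\beta$; you then close the loop via the bijection of \cref{U Sigma ji}. The paper instead works directly with a reduced word $a_1\cdots a_k$ for $j$: it locates a maximal dihedral tail $a_m\cdots a_k$, uses join-irreducibility to rule out the possibility that $\beta_t$ is canonical in the associated rank-$2$ subsystem (else a braid move would produce a second descent), and concludes that the first root of that tail cuts $\beta_t$. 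Your approach isolates a clean general statement about cuts of non-simple real roots and even names a \emph{simple} root as the cutter, at the cost of invoking \cref{U Sigma ji}; the paper's argument avoids that appeal but is tied to word combinatorics and uses join-irreducibility in a more hands-on way.

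One small point to tighten: when you say that $\supp(\beta)\subseteq\{i\}$ ``contradicts non-simplicity,'' this only yields $\beta=n_i\alpha_i$. You should add the standard observation (implicit in the paper's bijection $\beta\mapsto t_\beta$ between positive real roots and reflections) that no positive real root is a proper multiple of a simple root, so in fact $\beta=\alpha_i$. In the present lemma $\beta=\beta_t$ anyway, so this is immediate, but the sentence as written is slightly loose.
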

\begin{proof}
One direction is easy because $\cut(\alpha_s)=\emptyset$ for all $s\in S$.
On the other hand, suppose $j\not\in S$, so that $\ell(j)>1$.
Given a reduced word $a_1\cdots a_k$ for $j$, let $m$ be minimal such that the word $a_m\cdots a_k$ contains only the letters $a_{k-1}$ and~$a_k$.
Furthermore, choose the reduced word $a_1\cdots a_k$ to minimize $m$.
The roots $a_1\cdots a_{r-1}\alpha_{a_r}$ for $r=m,\ldots,k$ are all contained in the same rank-$2$ subsystem $\RS'$, with canonical roots $a_1\cdots a_{m-1}\alpha_{a_{k-1}}$ and $a_1\cdots a_{m-1}\alpha_{a_k}$.
One of these two is $a_1\cdots a_{m-1}\alpha_{a_m}$.
If the other is $a_1\cdots a_{k-1}\alpha_{a_k}$, then using a single braid move, we can obtain a new reduced word ending in $a_{k-1}$.
Then $j\covers ja_k$ and $j\covers ja_{k-1}$, contradicting the fact that $j$ is join-irreducible.
We conclude that $a_1\cdots a_{m-1}\alpha_{a_m}$ cuts $a_1\cdots a_{k-1}\alpha_{a_k}$.
But the latter is $\beta_t$, so $\Sh(j)\neq\beta_t^\perp$.
\end{proof}

Throughout the section, it will be useful to set a convention on being above or below a hyperplane.
Given a root $\beta\in\RS$ and a set $U\subseteq V^*$, we say that $U$ is \newword{above} the hyperplane $\beta^\perp$ if every point in $U$ either is separated from $D$ by $\beta^\perp$ or is on~$\beta^\perp$.
We say that $U$ is \newword{below} $\beta^\perp$ if every point in $U$ is either on $\beta^\perp$ or on the same side of $\beta^\perp$ as $D$.  

A version of the following proposition for finite simplicial hyperplane arrangements is \cite[Lemma~3.7]{congruence}.
The proposition shows in particular that each shard is defined by a finite list of inequalities.
\begin{proposition}\label{shard ineq}
If $j$ is join-irreducible in $W$ and $\cov(j)=\set{t}$, then 
\[\Sh(j)=\set{x\in V^*:\br{x,\beta_t}=0\text{ and }\br{x,\gamma}\le0\text{ for all }\gamma\in\cut(\beta_t)\cap\inv(j)}.\] 
\end{proposition}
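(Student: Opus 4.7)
The plan is to prove the two set containments separately, with \cref{biconv} doing the main work. For the containment $\Sh(j)\subseteq\mathrm{RHS}$, I would start from the observation that, since $\gamma$ cuts $\beta_t$, we have $\gamma\neq\beta_t$, so $\gamma\in\inv(j)$ forces $\gamma\in\inv(j_*)=\inv(j)\setminus\set{\beta_t}$ as well. Consequently both $jD$ and $j_*D$ lie in the half-space $\br{\,\cdot\,,\gamma}\le0$, hence so does the face $jD\cap j_*D$. Because $\Sh(j)$ is the closure of a single connected component of $\beta_t^\perp\setminus\bigcup_{\gamma\in\cut(\beta_t)}\gamma^\perp$ that contains this face, $\Sh(j)$ lies on the same side of each cutting hyperplane as $jD\cap j_*D$. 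In particular, $\br{x,\gamma}\le 0$ on $\Sh(j)$ for every $\gamma\in\cut(\beta_t)\cap\inv(j)$.

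For the reverse inclusion, the crucial step is the following claim: for every rank-$2$ subsystem $\RS'$ in which $\beta_t$ is non-canonical, with canonical roots $\gamma,\gamma'$, \emph{exactly one} of $\gamma,\gamma'$ lies in $\inv(j)$. One direction is immediate from \cref{biconv}(2): since $\beta_t\in\inv(j)$ and $\beta_t\in\RS'\setminus\set{\gamma,\gamma'}$, at least one of $\gamma,\gamma'$ is in $\inv(j)$. The other direction is where the main obstacle sits, and it is resolved by a short but delicate argument using the join-irreducibility of $j$: if both $\gamma,\gamma'$ were in $\inv(j)$, then (using $\gamma,\gamma'\neq\beta_t$) both would belong to $\inv(j_*)$, and \cref{biconv}(1) applied to $j_*$ would give $\RS'\cap\RSpos\subseteq\inv(j_*)$, placing $\beta_t$ in $\inv(j_*)$ — contradicting $\inv(j_*)=\inv(j)\setminus\set{\beta_t}$.

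Given the claim, the inclusion $\mathrm{RHS}\subseteq\Sh(j)$ is quick. Take any $\delta\in\cut(\beta_t)$ with $\delta\notin\inv(j)$, and let $\delta'$ be its canonical partner in the relevant rank-$2$ subsystem. By the claim, $\delta'\in\cut(\beta_t)\cap\inv(j)$, so any $x$ in the RHS satisfies $\br{x,\delta'}\le 0$. Writing $\beta_t=a\delta+b\delta'$ with $a,b>0$ and using $\br{x,\beta_t}=0$ yields $\br{x,\delta}\ge 0$, which is the same side of $\delta^\perp$ occupied by $jD\cap j_*D$. Thus every RHS point agrees in sign with $jD\cap j_*D$ on every hyperplane $\gamma^\perp$ with $\gamma\in\cut(\beta_t)$, hence lies in the closure of the connected component containing $jD\cap j_*D$, namely $\Sh(j)$. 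Finiteness of $\inv(j)$ guarantees $\cut(\beta_t)\cap\inv(j)$ is finite, so the shard is cut out by finitely many inequalities as asserted.
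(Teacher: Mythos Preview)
Your proof is correct and follows essentially the same line as the paper's: pair each $\gamma\in\cut(\beta_t)$ with its canonical partner $\gamma'$, show that exactly one of the pair lies in $\inv(j)$, and conclude that the shard is cut out by the inequalities $\br{x,\gamma}\le0$ for $\gamma\in\cut(\beta_t)\cap\inv(j)$. The only real difference is in how the ``exactly one'' fact is obtained: you prove it combinatorially via \cref{biconv} applied to $j$ and $j_*$, whereas the paper argues geometrically, using that $j\in\Upper(\Sh(j))$ (from \cref{U Sigma ji}) to see that $jD$ must lie above whichever $\gamma^\perp$ the shard lies above, forcing that $\gamma$ into $\inv(j)$. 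Your route is a bit more self-contained (it does not invoke \cref{U Sigma ji}), while the paper's is shorter; both yield the same identification of the defining half-spaces.
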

\begin{proof}
For any root $\gamma\in\cut(\beta_t)$, there is another root $\gamma'\in\cut(\beta_t)$ such that $\gamma$,~$\gamma'$, and $\beta_t$ are all in the same rank-$2$ subsystem $\RS'$ and the canonical roots of $\RS'$ are $\gamma$ and $\gamma'$.
Up to swapping $\gamma$ and $\gamma'$, all of $\Sh(j)$ is above $\gamma^\perp$ and below $(\gamma')^\perp$.
Thus since $j\in\Upper(\Sh(j))$, also $jD$ is above $\gamma^\perp$, or in other words $\gamma\in\inv(j)$.
Thus $\Sh(j)$ is defined as the set of points in $\beta_t^\perp$ above all hyperplanes $\gamma^\perp$ with $\gamma\in\cut(\beta_t)\cap\inv(j)$.
\end{proof}

We will now relate the shards associated to $W$ with the shards associated to its parabolic subgroups.
Given $I\subseteq S$, we write $V_I$ for the subspace $\Span\set{\alpha_i:i\in I}$ of $V$, as before, and write $V^*_I$ for its dual.
The notation $\Proj_I$ stands for the surjection from $V^*$ onto $V_I^*$ that is dual to the inclusion of $V_I$ into $V$.
We define $D_I=\bigcap_{s \in I} \set{x\in V^*_I: \br{x,\alpha_s}\ge 0}$, which equals $\Proj_I(D)$.
For $j$ a join-irreducible element of $W_I$, we write $\Sh_I(j)$ for the shard in $V^*_I$ containing $jD_I\cap j_*D_I$.

The \newword{parabolic subsystem} $\RS_I$ is $\RS\cap V_I$.
The restriction of the cutting relation on $\RS$ to $\RS_I$ coincides with the cutting relation intrinsically defined on $\RS_I$.
Moreover, it is easily verified that when $\beta$ is in $\RS_I$, the set $\cut(\beta)$ is also contained in~$\RS_I$.
(See, for example, \cite[Lemma~6.6]{congruence} for a finite-type argument that generalizes easily.)
It is also well known that for $w\in W_I$, the inversion set $\inv(w)$ is the same set of roots of $\RS_I$ whether it is computed in $W$ or in $W_I$.
Thus for $j$ a join-irreducible element of $W_I$ with $\cov(j)=\set{t}$, 
\[\Sh_I(j)=\set{x\in V_I^*:\br{x,\beta_t}=0\text{ and }\br{x,\gamma}\le0\text{ for all }\gamma\in\cut(\beta_t)\cap\inv(j)}.\]
Comparing this expression for $\Sh_I(j)$ with Proposition~\ref{shard ineq}, we have the following lemma.

\begin{lemma}\label{Sigma para}
If $j$ is join-irreducible and lies in $W_I$, then $\Sh(j)=\Proj_I^{-1}\Sh_I(j)$.
\end{lemma}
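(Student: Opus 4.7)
The plan is to verify the identity by comparing the explicit inequality descriptions of $\Sh(j)$ and $\Sh_I(j)$ supplied by \cref{shard ineq}, both in $W$ and intrinsically in $W_I$. The first step is to record the setup: let $t$ be the unique cover reflection of $j$ in $W$. Since $j\in W_I$, \cref{cover para} forces $t\in W_I$, so $\beta_t\in\RS_I\subseteq V_I$. Also, $\inv(j)$ does not depend on whether we compute in $W$ or in $W_I$, and the unique cover of $j$ in $W$ lies in $W_I$; hence $j$ is join-irreducible in $W_I$ as well, with the same cover reflection $t$. Thus \cref{shard ineq} applies in both settings.

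The second step is to compare the two defining conditions. Applied in $W$, \cref{shard ineq} presents $\Sh(j)\subset V^*$ as the set cut out by $\br{x,\beta_t}=0$ together with the inequalities $\br{x,\gamma}\le 0$ for $\gamma\in\cut(\beta_t)\cap\inv(j)$. The expression for $\Sh_I(j)\subset V_I^*$ displayed immediately before the lemma uses the same equation and the same inequalities; this rests on the facts, also recorded in that paragraph, that $\cut(\beta_t)\subseteq\RS_I$ whenever $\beta_t\in\RS_I$ and that the cutting relation on $\RS_I$ agrees with the restriction of the cutting relation on $\RS$.

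The third and final step is the duality observation. Because $\Proj_I$ is dual to the inclusion $V_I\hookrightarrow V$, we have $\br{x,v}=\br{\Proj_I(x),v}$ for every $v\in V_I$ and every $x\in V^*$. All the vectors $\beta_t$ and $\gamma$ appearing in the inequalities above lie in $V_I$, so an element $x\in V^*$ satisfies the defining conditions of $\Sh(j)$ if and only if $\Proj_I(x)$ satisfies the defining conditions of $\Sh_I(j)$. This yields $\Sh(j)=\Proj_I^{-1}\Sh_I(j)$.

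There is not really a hard step here: the substantive content (parabolic stability of the cutting relation, invariance of inversion sets, and the explicit inequality description of shards) is assembled in the paragraph preceding the lemma, and the argument amounts to noting that the same finite list of linear conditions cuts out both shards, interpreted in $V^*$ and $V_I^*$ respectively.
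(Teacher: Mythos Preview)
Your argument is correct and is essentially the paper's own proof, which amounts to the one-line observation that the inequality description of $\Sh_I(j)$ displayed just before the lemma matches that of $\Sh(j)$ from \cref{shard ineq}. You have simply spelled out the details the paper leaves implicit, including the duality identity $\br{x,v}=\br{\Proj_I(x),v}$ for $v\in V_I$ and the verification that $j$ remains join-irreducible in $W_I$.
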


If $s$ is initial in $c$ and $j$ is a $c$-sortable join-irreducible element with $s\not\le j$, then~$j$ is in $W_\br{s}$, and in particular we have the following consequence of Lemma~\ref{Sigma para}.

\begin{proposition}\label{Sigma init para}
Suppose $j$ is a $c$-sortable join-irreducible element and $s$ is initial in $c$ with $s\not\le j$.
Then $\Sh(j)=\Proj_\br{s}^{-1}\Sh_\br{s}(j)$.
\end{proposition}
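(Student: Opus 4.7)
The plan is to reduce the proposition directly to \cref{Sigma para} by showing that the hypothesis ``$j$ is $c$-sortable and $s \not\le j$'' forces $j$ to lie in the parabolic subgroup $W_\br{s}$. Once we have that, the conclusion is immediate from \cref{Sigma para} with $I=\br{s}$.

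To establish $j \in W_\br{s}$, I would appeal to the recursive definition of $c$-sortability recalled in \cref{cox sort sec}. Since $s$ is initial in $c$ and $s \not\le j$, that recursion says that a $c$-sortable element $j$ with $s \not\le j$ must in fact lie in $W_\br{s}$ (and be $sc$-sortable there, though we do not need that). This is the only substantive step; it uses no more than the base case of the recursion applied to $j$.

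One small sanity check that is worth mentioning, although the proof of \cref{Sigma para} does not strictly require it, is that $j$ remains join-irreducible when viewed as an element of $W_\br{s}$: by \cref{cover para}, the unique cover reflection $t$ of $j$ in $W$ lies in $W_\br{s}$, and the cover relations of $j$ inside $W_\br{s}$ are exactly those cover relations in $W$ whose associated reflection lies in $W_\br{s}$, so $j$ covers exactly one element of $W_\br{s}$. There is no real obstacle here; the proposition is essentially a repackaging of \cref{Sigma para} under the extra hypothesis that lets us identify the relevant parabolic subgroup from the $c$-sortability data. The whole argument should fit in a few lines.
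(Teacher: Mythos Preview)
Your proposal is correct and matches the paper's approach exactly: the paper also observes that the recursive definition of $c$-sortability forces $j\in W_\br{s}$ when $s$ is initial in $c$ and $s\not\le j$, and then invokes \cref{Sigma para}. Your extra sanity check that $j$ stays join-irreducible in $W_\br{s}$ is harmless but unnecessary, since \cref{Sigma para} only needs $j$ to be join-irreducible in $W$.
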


We will also need the following fact relating $\Sh(j)$ to $\Sh(sj)$ when $s<j$.

\begin{proposition}\label{Sigma sj} 
Suppose $j$ is join-irreducible with $\cov(j)=\set{t}$ and let $s\in S$ have $s<j$.
Then $(s\cdot\Sh(sj))\supseteq\Sh(j)$ and 
\[(s\cdot\Sh(sj))\cap\set{x\in V^*:\br{x,\alpha_s}\le0}=\Sh(j)\cap\set{x\in V^*:\br{x,\alpha_s}\le0}.\]
\end{proposition}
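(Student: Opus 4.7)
The plan is to directly compare the explicit inequality descriptions of $\Sh(j)$ and $s\cdot\Sh(sj)$ provided by \cref{shard ineq}, while tracking how the action of $s$ transports cover reflections, inversion sets, and the cutting relation. First I would establish that $sj$ is itself join-irreducible, with cover reflection $t'=sts$ and $\beta_{t'}=s\beta_t$. The key input is the standard identity $\inv(sj)=s\bigl(\inv(j)\setminus\{\alpha_s\}\bigr)$, which holds because $s$ negates only the positive root $\alpha_s$ and permutes all other positive roots. Since $s=t$ combined with this identity would force $sj=e$ (contradicting $s<j$), we have $s\ne t$, so $s\beta_t>0$. A right-descent comparison via the identity shows that $sj$ shares the unique right descent of $j$, so $sj$ is also join-irreducible with cover reflection $t'=sts$, and consequently $s\cdot\Sh(sj)$ lies in $\beta_t^\perp$, the same hyperplane as $\Sh(j)$.

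Applying \cref{shard ineq} and pushing the description of $\Sh(sj)$ forward by $s$ yields
\begin{align*}
\Sh(j) &= \set{x\in\beta_t^\perp : \br{x,\gamma}\le 0\text{ for all }\gamma\in\cut(\beta_t)\cap\inv(j)},\\
s\cdot\Sh(sj) &= \set{x\in\beta_t^\perp : \br{x,s\gamma'}\le 0\text{ for all }\gamma'\in\cut(\beta_{t'})\cap\inv(sj)}.
\end{align*}
Under the substitution $\gamma'\mapsto s\gamma'$, the inversion-set identity transports $\inv(sj)$ bijectively onto $\inv(j)\setminus\{\alpha_s\}$, and the heart of the argument is to understand the corresponding behaviour of $\cut(\beta_{t'})$ versus $\cut(\beta_t)$. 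When the rank-2 subsystem joining the relevant pair of roots does not contain $\alpha_s$, the action of $s$ bijectively maps positive roots on it and preserves canonical roots, so the cut relation is preserved. When the subsystem does contain $\alpha_s$, the subsystem is $s$-stable and $\alpha_s$ is automatically canonical (a simple root of $\RS$ cannot be an interior combination of other positive roots), and here one must check directly that the ``problematic'' inequality $\br{x,s\gamma'}\le0$ reduces on $\beta_t^\perp$ to the inequality $\br{x,\alpha_s}\le 0$.

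Granting the cut analysis, $\Sh(j)$ is cut out by the same inequalities as $s\cdot\Sh(sj)$ plus, at most, the single inequality $\br{x,\alpha_s}\le 0$. This immediately gives $\Sh(j)\subseteq s\cdot\Sh(sj)$, and intersecting both sides with $\{x:\br{x,\alpha_s}\le0\}$ installs the missing inequality, yielding the claimed equality on that half-space. The main technical obstacle will be the explicit verification in the rank-2 case where $\alpha_s$ belongs to the subsystem: one needs to track the action of $s$ carefully using the classification of rank-2 root systems together with the biconvexity of inversion sets from \cref{biconv} to confirm that the problematic inequality indeed coincides with $\br{x,\alpha_s}\le 0$ on $\beta_t^\perp$.
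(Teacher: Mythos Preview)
Your proposal is correct and takes essentially the same approach as the paper's proof: compare the explicit inequality descriptions from \cref{shard ineq}, transport via the identity $\inv(sj)=s(\inv(j)\setminus\{\alpha_s\})$, and analyze how the cutting relation behaves under $s$, with the rank-$2$ subsystem $\RS'$ through $\alpha_s$ as the only subtle case. Your plan to use \cref{biconv} there is exactly right; just note that biconvexity is needed on \emph{both} sides---on the $\Sh(j)$ side to show the other canonical root $\gamma_0$ of $\RS'$ is not in $\inv(j)$ (else $\alpha_s,\gamma_0\in\inv(j_*)$ would force $\beta_t\in\inv(j_*)$), and on the $s\cdot\Sh(sj)$ side for the reduction you describe---and with that your argument is in fact a bit more careful than the paper's terse treatment of the same point.
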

\begin{proof}
Since $\pm\alpha_s$ are the only roots that change sign under the action of $s$, for any rank-$2$ subsystem $\RS'$ that does not contain $\alpha_s$, the map $\beta\mapsto s\beta$ preserves the cutting relation in $\RS'$.
Thus in particular, for some root $\gamma$, if the rank-$2$ subsystem $\RS'$ containing $\beta_t$ and $\gamma$ has $\alpha_s\not\in\RS'$, then $\gamma$ cuts $\beta_t$ if and only if $s\gamma$ cuts $s\beta_t$.
Thus $s\cdot\cut(s\beta_t)$ and $\cut(\beta_t)$ agree except possibly that one or the other may contain $\alpha_s$.

Applying $s$ to $\Sh(sj)$ yields a piece of the hyperplane $\beta_t^\perp$.
Proposition~\ref{shard ineq} says that $\Sh(j)$ is defined by the inequalities $\br{x,\gamma}\le0$ for all $\gamma\in\cut(\beta_t)\cap\inv(j)$.
Since $s<j$, $\inv(sj)=s\cdot(\inv(j)\setminus\set{\alpha_s})$.
Again by Proposition~\ref{shard ineq} and by the relationship between $\inv(sj)$ and $\inv(j)$ and between $\cut(\beta_t)$ and $\cut(s\beta_t)$, we see that the inequalities defining $\Sh(sj)$ are obtained from the inequalities defining $\Sh(j)$ as follows:
Throw away the inequality $\br{x,\alpha_s}\le0$ if it is present, and then replace each inequality $\br{x,\gamma}\le0$ with $\br{x,s\gamma}\le0$.
We see that $s\cdot\Sh(sj)$ is defined by a weakly smaller set of inequalities than $\Sh(j)$, but that intersecting both $s\cdot\Sh(sj)$ and $\Sh(j)$ with $\set{x\in V^*:\br{x,\alpha_s}\le0}$ yields the same set.
\end{proof}

\begin{proof}[Proof of Proposition~\ref{out c}]
Suppose $\cov(j)=\set{t}$.
By Lemma~\ref{Sigma s}, the cone $\Sh(j)$ is not all of $\beta_t^\perp$, so that it is at most half of a hyperplane.
Thus the cone $\Sh(j)$ cannot have both $\omega_c(\,\cdot\,,\beta_t)\in\Sh(j)$ and $-\omega_c(\,\cdot\,,\beta_t)\in\relint(\Sh(j))$, so it is enough to prove that $-\omega_c(\,\cdot\,,\beta_t)\in\relint(\Sh(j))$.
We do so by induction on $\ell(j)$ and the rank of $W$.
Let $s$ be initial in~$c$.

If $s\not\le j$, then $j\in W_\br{s}$ and, by induction on rank, $-\omega_{sc}(\,\cdot\,,\beta_t)\in\relint(\Sh_\br{s}(j))$.
By Lemma~\ref{OmegaRestriction} and Proposition~\ref{Sigma init para}, we see that $-\omega_c(\,\cdot\,,\beta_t)\in\relint(\Sh(j))$.

If $s<j$ and $sj=r$ for some $r\in S$, then $j=sr$, and since $j$ is join-irreducible, $r$ and $s$ do not commute.
Since $t=srs$, also $s$ and $t$ do not commute, so that $K(\alpha_s,\beta_t)\neq0$.
But $sr$ is the $c$-sorting word for $j$ and $\alpha_s,\beta_t$ is the inversion sequence of that word, so Proposition~\ref{sort omega} implies that $-\omega_c(\alpha_s,\beta_t)<0$.
Also $\cut(\beta_t)\cap\inv(j)=\set{\alpha_s}$, so $\Sh(j)=\set{x\in\beta_t^\perp:\br{x,\alpha_s}\le0}$ by Proposition~\ref{shard ineq}.
We see that $-\omega_c(\,\cdot\,,\beta_t)\in\relint(\Sh(j))$.

If $s<j$ and $\ell(sj)>1$, then by induction on $\ell(j)$, we have $-\omega_{scs}(\,\cdot\,,\beta_{sts})\in\relint(\Sh(sj))$.
Lemma~\ref{OmegaInvariance} implies that $-\omega_c(\,\cdot\,,\beta_t)\in\relint(s\cdot\Sh(sj))$.
Since $s$ is initial in $c$, by definition of $\omega_c$, we have $-\omega_c(\alpha_s,\beta_t)\le0$, with equality if and only if $t$ is contained in the parabolic subgroup generated by the elements of $S$ that commute with $s$.
However, that parabolic subgroup is reducible, with a component $W'$ generated by the singleton $\set{s}$ and one or more other components.
If $t$ is in the parabolic subgroup, then it is supported in only one of the components.
Since $j\neq s$, we see that $t$ is in a component other than $W'$.
But then $j$ is also in that component by Lemma~\ref{cover para}, contradicting the assumption that $s<j$.
We conclude that $-\omega_c(\alpha_s,\beta_t)<0$, so that $-\omega_c(\,\cdot\,,\beta_t)\in\relint\bigl((s\cdot\Sh(sj))\cap\set{x\in V^*:\br{x,\alpha_s}\le0}\bigr)$.
By Proposition~\ref{Sigma sj}, $-\omega_c(\,\cdot\,,\beta_t)\in\relint(\Sh(j))$.
\end{proof}

\begin{proof}[Proof of Proposition~\ref{out cinv}]
Again, suppose $\cov(j)=\set{t}$.
By Proposition~\ref{out c}, with $c^{-1}$ replacing $c$, we see that $-\omega_{c^{-1}}(\,\cdot\,,\beta_t)$ is in the relative interior of $\Sh(j)$ and that $\omega_{c^{-1}}(\,\cdot\,,\beta_t)$ is not in $\Sh(j)$.
But $\omega_{c^{-1}}(\,\cdot\,,\beta_t)=-\omega_c(\,\cdot\,,\beta_t)$, so $(-\Sh(j),f_j)$ is outgoing and gregarious.
\end{proof}

\section{The doubled Cambrian fan}\label{DC sec}
To complete the proof of Theorem~\ref{DCScat} and the other results on affine cluster scattering diagrams, we need the affine-type version of the doubled Cambrian fan.
This fan coincides with the $\g$-vector fan (see Theorem~\ref{Affine g}, below) and thus constitutes much of the cluster scattering fan.
Indeed, we will see below in Theorem~\ref{DFc complement} that the affine doubled Cambrian fan covers all of $V^*$ except for a certain codimension-$1$ cone, which is precisely the wall $\d_\infty$ of $\DCScat(A,c)$.

\subsection{The doubled Cambrian fan in general}\label{DFc gen}
We now describe the Cambrian fan and doubled Cambrian fan construction from \cite{camb_fan,typefree,framework,afframe}.
The ambient space for both fans is $V^*$. 

We associate a set $C_c(v)$ to each $c$-sortable element $v$, by setting $C_c(v)$ to be $\set{\alpha_1,\ldots,\alpha_n}$ (the set of simple roots) when $v$ is the identity element, and otherwise, for $s$ initial in $c$, setting  
\[C_c(v)=\begin{cases}
C_{sc}(v)\cup\set{\alpha_s}&\text{if }v\not\ge s,\text{ or}\\
sC_{scs}(sv)&\text{if }v\ge s.
\end{cases}\]
Here $C_{sc}(v)$ is the analogous set of roots, with $v$ considered as an element of $W_{\br{s}}$.

For each $c$-sortable element $v$, let $\Cone_c(v)=\set{x\in V^*:\br{x,\beta} \geq 0,\,\forall\beta \in C_c(v)}$.
This is a simplicial cone with inward-facing normal vectors $C_c(v)$.
The \newword{$c$-Cambrian fan} $\F_c$ is the set of cones $\Cone_c(v)$ where $v$ runs over all $c$-sortable elements.
The \newword{doubled $c$-Cambrian fan} $\DF_c=\F_c\cup(-\F_{c^{-1}})$ is the simplicial fan consisting of all cones of $\F_c$ and all antipodal images of cones in $\F_{c^{-1}}$.
(This is a fan by \cite[Theorem~3.24]{afframe}.)
When $W$ is of finite type, $\DF_c$ is a complete fan and coincides with $\F_c$.
When $W$ is of affine type, $\DF_c$ is not complete but, as we will see below, its support is all of $V^*$ except for a codimension-$1$ cone.
When $W$ is infinite but not affine, the support of $\DF_c$ is not dense in $V^*$.

Given a set of $c$-sortable elements of $W$, if the set has a join in the weak order, then that join is also $c$-sortable \cite[Theorem~7.1]{typefree}.
As a consequence, for any $w\in W$, there is a unique maximal $c$-sortable element that is below $w$ in the weak order.
We call this element $\pidown^c(w)$.
Thus $w$ is $c$-sortable if and only if $\pidown^c(w)=w$, and otherwise, $\pidown^c(w)<w$.
The following theorem, which is \cite[Theorem 6.3]{typefree}, shows that the geometry of the $c$-Cambrian fan is closely tied to the combinatorics of $c$-sortable elements.
It also shows that the support of $\F_c$ contains $\Tits(A)$.

\begin{theorem}\label{pidown cone}
Let $v$ be $c$-sortable. 
Then $\pidown^c(w)=v$ if and only if $wD\subseteq\Cone_c(v)$.
\end{theorem}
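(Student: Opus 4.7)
The plan is to prove both implications by a double induction on the rank $n$ of $W$ and, for fixed $n$, on $\ell(w)$. The base case $w=e$ is immediate: $\pidown^c(e)=e$ is $c$-sortable, $\Cone_c(e)=D=eD$, and $D$ lies in no other $\Cone_c(v)$ since $D$ is a maximal chamber of $\{wD:w\in W\}$.

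For the inductive step, I would choose $s$ initial in $c$ and exploit the parallel recursive structures built into both $\Cone_c$ and $\pidown^c$. Geometrically, when $v\not\ge s$ (so that $v$ is an $sc$-sortable of $W_{\br{s}}$), the definition of $C_c(v)$ yields $\Cone_c(v)=\Cone_{sc}(v)\cap\{x:\br{x,\alpha_s}\ge 0\}$, with $\Cone_{sc}(v)$ built from roots in $V_{\br{s}}$; when $v\ge s$, it yields $\Cone_c(v)=s\cdot\Cone_{scs}(sv)$. Combinatorially, the $c$-sortables $\not\ge s$ coincide with the $sc$-sortables of $W_{\br{s}}$, while those $\ge s$ correspond via $v\mapsto sv$ to the $scs$-sortables. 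Each side thus offers a reduction either in rank or in length.

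If $w\not\ge s$, then $\alpha_s\notin\inv(w)$, so $wD$ lies in $\{x:\br{x,\alpha_s}\ge 0\}$; every $c$-sortable $\le w$ also lies $\not\ge s$, so $\pidown^c(w)=\pidown^{sc}(w_{\br{s}})$, and the rank induction inside $W_{\br{s}}$ gives $w_{\br{s}}D_{\br{s}}\subseteq\Cone_{sc}(\pidown^{sc}(w_{\br{s}}))$, which pulls back under $\Proj_{\br{s}}$ to the desired containment. If $w\ge s$, then $\ell(sw)<\ell(w)$ and $scs$ is still a Coxeter element; the length induction gives $(sw)D\subseteq\Cone_{scs}(\pidown^{scs}(sw))$, so $wD\subseteq s\cdot\Cone_{scs}(\pidown^{scs}(sw))$. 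When $s\le\pidown^c(w)$, this is already $\Cone_c(\pidown^c(w))$ via the recursive identity $\pidown^c(w)=s\cdot\pidown^{scs}(sw)$; when $\pidown^c(w)\not\ge s$, one reduces back through the $w\not\ge s$ case applied to $w_{\br{s}}$.

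The main obstacle will be this last sub-case, where $w\ge s$ but the sortable projection drops below $s$: it requires the precise recursive identity linking $\pidown^c$, $\pidown^{scs}$, and $\pidown^{sc}$, together with a geometric verification that $wD$ actually lands in the smaller cone $\Cone_c(v)$ rather than only in the ambient $s\cdot\Cone_{scs}(\cdot)$. Once the forward implication is established, the converse is essentially automatic: each $wD$ is full-dimensional, the cones $\Cone_c(v)$ for distinct $c$-sortable $v$ then have disjoint relative interiors (because $\pidown^c(w)$ is uniquely determined by $w$), and so $wD\subseteq\Cone_c(v)$ forces $v=\pidown^c(w)$.
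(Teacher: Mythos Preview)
The paper does not contain a proof of this theorem: it is quoted verbatim as \cite[Theorem~6.3]{typefree} and used as a black box. There is therefore nothing in the present paper to compare your proposal against.

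For what it is worth, your inductive outline is the natural one and is essentially how the result is proved in \cite{typefree}: one fixes $s$ initial in $c$ and splits on whether $w\ge s$, using the recursion for $C_c(v)$ on one side and the recursion $\pidown^c(w)=s\cdot\pidown^{scs}(sw)$ (for $w\ge s$) or $\pidown^c(w)=\pidown^{sc}(w_{\br{s}})$ (for $w\not\ge s$) on the other. One remark: the sub-case you flag as the ``main obstacle'' ($w\ge s$ but $\pidown^c(w)\not\ge s$) does not actually arise. When $w\ge s$ the recursion gives $\pidown^c(w)=s\cdot\pidown^{scs}(sw)$, and since $s$ is final in $scs$, \cref{ji s final} (or rather its underlying fact that an $scs$-sortable element $\ge s$ must lie in $W_{\br{s}}\cdot s$) forces $s\le\pidown^c(w)$; so the two-way split is clean. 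Your converse argument via disjointness of interiors is fine once the forward direction is in hand.
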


The following is \cite[Proposition~6.13]{typefree}.
\begin{proposition}\label{pidown para}
Let $I\subseteq S$ and let $c'$ be the restriction of $c$ to $W_I$.
Then $\pidown^{c'}(w_I)=\pidown^c(w)_I$ for any $w\in W$.
\end{proposition}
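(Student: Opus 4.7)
The plan is to argue by induction on the pair $(n, \ell(w))$ in lexicographic order, where $n=|S|$ is the rank of $W$. The base cases are trivial: if $W$ is the trivial group, or $w$ is the identity, or $I = S$ (so that $c' = c$ and $w_I = w$), the claim is immediate. For the inductive step I would fix $s$ initial in $c$ and case-split on whether $s \in I$ and whether $s \le w$.

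The cleanest case is $s \in I$ with $s \le w$. Since $s \in I$, the letter $s$ remains initial in $c'$ (the restriction of $c$ to $W_I$ is obtained by deleting letters outside $I$ from a reduced word for $c$ starting with $s$). Also $\alpha_s \in \inv(w) \cap \RS_I = \inv(w_I)$, so $s \le w_I$. The recursion for $\pidown$ then gives
\[
\pidown^c(w) = s \cdot \pidown^{scs}(sw), \qquad \pidown^{c'}(w_I) = s \cdot \pidown^{sc's}(sw_I).
\]
One checks from reduced words that $sc's = (scs)'$ (restriction commutes with conjugation by $s$ when $s \in I$) and from inversion sets that $sw_I = (sw)_I$ (since $\RS_I$ is $s$-stable when $s \in I$). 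The inductive hypothesis at $\ell(sw) < \ell(w)$ then yields $\pidown^{(scs)'}((sw)_I) = \pidown^{scs}(sw)_I$, and left multiplication by $s$ closes the subcase. When $s \in I$ and $s \not\le w$, both sides fail to exceed $s$; the recursion gives $\pidown^c(w) = \pidown^{sc}(w_\br{s})$ and $\pidown^{c'}(w_I) = \pidown^{sc'}((w_I)_{I\setminus\{s\}})$, and one observes $(w_I)_{I\setminus\{s\}} = (w_\br{s})_{I\setminus\{s\}}$ (both have inversion set $\inv(w) \cap \RS_{I\setminus\{s\}}$). The inductive hypothesis inside $W_\br{s}$, a Coxeter group of strictly smaller rank, then closes the subcase.

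When $s \notin I$, the letter $s$ is absent from $c'$, so we run the recursion only on the $c$ side. In the subcase $s \le w$, since $\alpha_s \notin \RS_I$ the operations of left multiplication by $s$ and of restriction to $I$ commute trivially, giving $c' = (scs)'$ and $w_I = (sw)_I$. Hence
\[
\pidown^c(w)_I = \bigl(s \cdot \pidown^{scs}(sw)\bigr)_I = \pidown^{scs}(sw)_I = \pidown^{(scs)'}((sw)_I) = \pidown^{c'}(w_I),
\]
where the third equality is the inductive hypothesis at $\ell(sw) < \ell(w)$. In the subcase $s \not\le w$, we have $I \subseteq \br{s}$, so $w_I = (w_\br{s})_I$ and $(sc)' = c'$, and the inductive hypothesis in $W_\br{s}$ finishes the case.

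The main obstacle is purely technical: the verification of the compatibility identities $sw_I = (sw)_I$, $sc's = (scs)'$, $(w_I)_{I\setminus\{s\}} = (w_\br{s})_{I\setminus\{s\}}$, and $(sc)' = c'$ for $s \notin I$. Each follows from a short inversion-set or reduced-word check, but the case analysis (whether $s \in I$, whether $s \le w$) must be carried out carefully so that the correct instance of the recursion is applied on each side.
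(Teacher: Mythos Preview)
The paper does not give a proof of this proposition; it simply cites it as \cite[Proposition~6.13]{typefree}.  So your proposal is an attempt to supply a proof where the paper provides none.

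Your induction scheme is the natural one, and three of the four subcases are handled correctly.  However, the subcase $s\notin I$ and $s\le w$ contains a genuine error.  You assert that ``since $\alpha_s\notin\RS_I$ the operations of left multiplication by $s$ and of restriction to $I$ commute trivially, giving \ldots\ $w_I=(sw)_I$'', and in your displayed chain you use $(s\cdot v)_I=v_I$ for $v=\pidown^{scs}(sw)$.  Both identities are false in general: when $s\notin I$ the reflection $s$ does \emph{not} preserve $\RS_I$, so $\inv(w)\cap\RS_I=s\cdot\inv(sw)\cap\RS_I$ need not equal $\inv(sw)\cap\RS_I$.  A concrete counterexample: take $W=S_3$, $c=s_1s_2$, $I=\{s_2\}$, and $w=s_1s_2$.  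Then $s_1$ is initial in $c$, $s_1\notin I$, and $s_1\le w$; but $\inv(w)\cap\RS_I=\{\alpha_1,\alpha_1+\alpha_2\}\cap\{\alpha_2\}=\emptyset$ so $w_I=e$, whereas $sw=s_2$ gives $(sw)_I=s_2$.  In your displayed chain, both the second and the fourth equalities fail for this example (the final answer happens to come out right because the two errors cancel here, but they will not cancel in general).

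This subcase genuinely requires a different idea; the naive recursion on the $c$-side does not reduce both occurrences of $w$ simultaneously when $s\notin I$.  One standard route (used in \cite{typefree}) is to argue more structurally: show first that $\pidown^{c'}(w_I)\le\pidown^c(w)_I$ from the definition of $\pidown$ as a maximum, and then prove separately that $v$ $c$-sortable implies $v_I$ is $c'$-sortable --- but note that the latter lemma runs into the same problematic subcase if attacked by the same recursion, so a further device is needed.  You should consult the proof in \cite{typefree} to see how this case is actually dispatched.
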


The following is an immediate consequence of \cite[Lemma~3.17]{afframe}.
\begin{proposition}\label{ab bel}
For any $s\in S$, every cone in $\DF_c$ is either above $\alpha_s^\perp$ or below~$\alpha_s^\perp$.
Specifically, if $v$ is $c$-sortable, then $\Cone_c(v)$ is above $\alpha_s^\perp$ if and only if $v\ge s$, and if $u$ is $c^{-1}$-sortable, then $-\Cone_{c^{-1}}(u)$ is above $\alpha_s^\perp$ if and only if $u\not\ge s$.
\end{proposition}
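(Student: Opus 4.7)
The plan is to proceed by double induction on the rank of $W$ and on $\ell(v)$, using the recursive definition of $C_c(v)$. The claim can be recast algebraically: because $\Cone_c(v)$ is the simplicial cone dual to the basis $C_c(v)$ of $V$, it lies below $\alpha_s^\perp$ (respectively above) if and only if the expansion of $\alpha_s$ in the basis $C_c(v)$ has all nonnegative (respectively nonpositive) coefficients. Thus the claim reduces to an all-one-sign statement about the coordinates of $\alpha_s$ relative to $C_c(v)$.

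Let $s'\in S$ be initial in $c$. If $v\not\geq s'$, the recursion gives $C_c(v)=C_{s'c}(v)\cup\set{\alpha_{s'}}$ and $v\in W_\br{s'}$. When $s=s'$, the root $\alpha_s$ is itself in $C_c(v)$ with coefficient one, matching $v\not\geq s$. When $s\neq s'$, $\alpha_s$ lies in $V_\br{s'}$, so its expansion involves only $C_{s'c}(v)$, and induction on rank in $W_\br{s'}$ yields the conclusion, using that $v\geq s$ in $W$ iff $v\geq s$ in $W_\br{s'}$. If instead $v\geq s'$, the recursion gives $\Cone_c(v)=s'\cdot\Cone_{s'cs'}(s'v)$ with $s'v$ being $s'cs'$-sortable and satisfying $s'v\not\geq s'$ (since $\alpha_{s'}$ has been stripped from the inversion set). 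When $s=s'$, induction on $\ell$ places $\Cone_{s'cs'}(s'v)$ below $\alpha_{s'}^\perp$; applying $s'$ (which sends $\alpha_{s'}$ to $-\alpha_{s'}$) then swaps ``above'' and ``below'' of $\alpha_{s'}^\perp$, yielding $\Cone_c(v)$ above, as required.

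The hard part is the remaining subcase $v\geq s'$, $s\neq s'$, where $s'$ sends $\alpha_s^\perp$ to the hyperplane orthogonal to the non-simple positive root $s'\alpha_s$, so a direct inductive appeal fails. One remedy is to strengthen the inductive statement to track, for every positive root $\beta$, the sign pattern of the expansion of $\beta$ in $C_c(v)$, governed by whether $\beta\in\inv(v)$; this strengthened claim is stable under the recursion. An alternative, more geometric route uses \cref{pidown cone} together with the easy identity $\pidown^c(w)\geq s\Leftrightarrow w\geq s$ for $s\in S$ (which holds because $s$ is itself $c$-sortable, so $s\leq w$ gives $s\leq\pidown^c(w)$, while $\pidown^c(w)\leq w$ gives the converse): this forces the fiber of $\pidown^c$ over $v$ to lie uniformly in $\set{w:w\geq s}$ or uniformly in its complement, so the chambers $wD$ filling the interior of $\Cone_c(v)\cap\Tits(A)$ sit uniformly above or uniformly below $\alpha_s^\perp$, and one extends to the whole cone by convexity of the simplicial cone $\Cone_c(v)$. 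Finally, the claim for $u\in W$ that is $c^{-1}$-sortable follows by applying the just-proved $c$ case with $c^{-1}$ in place of $c$ and noting that the antipodal map exchanges ``above'' and ``below'' of $\alpha_s^\perp$, so $-\Cone_{c^{-1}}(u)$ is above $\alpha_s^\perp$ iff $\Cone_{c^{-1}}(u)$ is below $\alpha_s^\perp$ iff $u\not\geq s$.
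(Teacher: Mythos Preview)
The paper does not give a self-contained argument here; it simply records the proposition as an immediate consequence of \cite[Lemma~3.17]{afframe}.  Your attempt at a direct proof sets up the right recursive framework and correctly isolates the difficult subcase ($v\ge s'$ with $s\neq s'$), but both of your proposed remedies contain genuine gaps.

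Your first remedy---strengthening the inductive claim from simple roots to all positive roots $\beta$, with the sign pattern governed by whether $\beta\in\inv(v)$---is based on a false assertion.  Cambrian cones do straddle hyperplanes $\beta^\perp$ for non-simple positive roots.  Concretely, take $W=S_4$, $c=s_1s_2s_3$, and $v=s_2$.  One computes $C_c(s_2)=\{\alpha_1,\,-\alpha_2,\,\alpha_2+\alpha_3\}$, and the positive root $\alpha_1+\alpha_2$ expands as $1\cdot\alpha_1+(-1)\cdot(-\alpha_2)+0\cdot(\alpha_2+\alpha_3)$, with mixed signs.  Geometrically, $\Cone_c(s_2)$ contains both $s_2D$ (below $(\alpha_1+\alpha_2)^\perp$) and $s_2s_1D$ (above it), since $\pidown^c(s_2s_1)=s_2$.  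The recursion also fails mechanically: in the case $v\not\ge s'$ with $\beta\notin\RS_{\br{s'}}$, decomposing $\beta$ against $C_{s'c}(v)\cup\{\alpha_{s'}\}$ leaves a component in $V_{\br{s'}}$ that is not a root, so the rank-induction hypothesis for $W_{\br{s'}}$ does not apply.

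Your second remedy, via $\pidown^c$ and \cref{pidown cone}, is correct up to the last step.  The equivalence $\pidown^c(w)\ge s\Leftrightarrow w\ge s$ is valid, and it does show that $\Cone_c(v)\cap\Tits(A)$---a union of chambers $wD$---lies entirely on one side of $\alpha_s^\perp$.  But ``convexity of the simplicial cone'' does not extend this conclusion to all of $\Cone_c(v)$.  In finite type this is harmless because $\Tits(A)=V^*$, but in infinite (and in particular affine) type, cones of $\F_c$ can and do extend past $\overline{\Tits(A)}$; indeed the whole point of forming $\DF_c=\F_c\cup(-\F_{c^{-1}})$ is that some maximal cones of $\F_c$ coincide with cones of $-\F_{c^{-1}}$ and cross $\partial\Tits(A)$.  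A full-dimensional convex cone can satisfy $\br{x,\alpha_s}\le 0$ on its intersection with a half-space while having points with $\br{x,\alpha_s}>0$ outside that half-space, so nothing in your argument rules this out.  Closing this gap requires direct information about $C_c(v)$ beyond what the recursion on a single initial letter provides; that is what the cited lemma supplies.
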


The following proposition is a restatement of \cite[Proposition~5.2]{typefree}.
(The proposition in \cite{typefree} is a statement about $C_c(v)$, but here we restate it as an assertion about $\Cone_c(v)$.)

\begin{proposition}\label{wall cov}
Let $v$ be $c$-sortable. 
Then $t\in\cov(v)$ if and only if $\Cone_c(v)$ is above $\beta_t^\perp$ and $\Cone_c(v)\cap\beta_t^\perp$ is a facet of $\Cone_c(v)$.
\end{proposition}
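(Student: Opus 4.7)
The plan is to deduce the statement directly from \cite[Proposition~5.2]{typefree}, which (as the parenthetical in the excerpt indicates) characterizes $\cov(v)$ in terms of membership in $C_c(v)$ rather than in terms of the geometry of $\Cone_c(v)$. The result in question should say, for $v$ a $c$-sortable element and $t$ a reflection, that $t\in\cov(v)$ if and only if $-\beta_t\in C_c(v)$. The minus sign is natural: $\beta_t\in\inv(v)$ whenever $t\in\cov(v)$, and the recursion defining $C_c(v)$ (starting from $C_c(e)=\Simples$ and twisting by $s$ whenever $s\le v$) mirrors the usual recursive production of $\inv(v)$, so positive simple roots get flipped to their negatives exactly in the directions recorded by cover reflections.

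Given this, the rest is a direct translation to the geometry of $\Cone_c(v)$. Since $\Cone_c(v)=\set{x\in V^*:\br{x,\beta}\ge0,\,\forall\beta\in C_c(v)}$ is simplicial with inward-facing normals $C_c(v)$, its facets are in bijection with $C_c(v)$ via $\beta\mapsto\Cone_c(v)\cap\beta^\perp$. If $-\beta_t\in C_c(v)$, then the defining inequality $\br{x,-\beta_t}\ge0$, equivalently $\br{x,\beta_t}\le0$, places $\Cone_c(v)$ on the side of $\beta_t^\perp$ opposite $D$ (recall $D$ satisfies $\br{x,\beta_t}\ge0$ for the positive root $\beta_t$). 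Hence $\Cone_c(v)$ lies above $\beta_t^\perp$ in the convention fixed in \cref{shard sec}, with $\Cone_c(v)\cap\beta_t^\perp$ as the corresponding facet.

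For the converse, if $\Cone_c(v)\cap\beta_t^\perp$ is a facet of $\Cone_c(v)$, then the simplicial structure forces exactly one element of $C_c(v)$ to span the line $\reals\beta_t$; since $C_c(v)$ consists of roots, this element is either $\beta_t$ or $-\beta_t$. The case $\beta_t\in C_c(v)$ would place $\Cone_c(v)$ on the $D$-side of $\beta_t^\perp$, contradicting the hypothesis that $\Cone_c(v)$ is above $\beta_t^\perp$. Hence $-\beta_t\in C_c(v)$, and therefore $t\in\cov(v)$ by \cite[Proposition~5.2]{typefree}.

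The only mildly delicate point is the sign bookkeeping: verifying that the element of $C_c(v)$ indexing a cover reflection $t$ is $-\beta_t$ and not $\beta_t$, and that this aligns with the ``above $\beta_t^\perp$'' convention of \cref{shard sec}. Both are clean once one tracks the base case $C_c(e)=\Simples$ through the twisting recursion, and since the substantive combinatorial content has already been established in \cite[Proposition~5.2]{typefree}, no further work beyond this translation is required.
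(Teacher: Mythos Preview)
Your proposal is correct and matches the paper's approach exactly: the paper does not give a proof at all but simply declares the proposition to be a restatement of \cite[Proposition~5.2]{typefree} in geometric language, and you have supplied precisely that translation. The sign bookkeeping you flag (that the relevant element of $C_c(v)$ is $-\beta_t$, aligning with the ``above $\beta_t^\perp$'' convention) is handled correctly.
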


When $s$ is initial in $c$, the element $scs$ is another Coxeter element for $W$ and $sc$ is a Coxeter element of $W_\br{s}$.
We quote a result that relates $\DF_c$ to $\DF_{scs}$ and $\DF_{sc}$ (the latter being constructed in $V^*_\br{s}$. 
For each $s$ in $S$, let $\DF_c^{\ab(s)}$ denote the set of cones in $\DF_c$ that are above $\alpha_s^\perp$ and let $\DF_c^{\bel(s)}$ denote the set of cones in $\DF_c$ below $\alpha_s^\perp$.
As before, for any $I\subseteq S$, let $\Proj_I$ be the surjection from $V^*$ onto $(V_I)^*$ that is dual to the inclusion of $V_I$ into $V$.
The following is \cite[Proposition~3.28]{afframe}.  

\begin{proposition}\label{recursive fan}
Let $s$ be initial in $c$.
Then    
\begin{enumerate}[label=\bf\arabic*., ref=\arabic*]   
\item $\DF_c=\DF_c^{\ab(s)}\cup\DF_c^{\bel(s)}$.
\item \label{s DF}
$\DF_c^{\ab(s)}=s\bigl(\DF_{scs}^{\bel(s)}\bigr)$.
\item \label{DF below}
The map $F\mapsto \Proj_{\br{s}}^{-1}(F)\cap\set{x\in V^*: \br{x,\alpha_s}\ge 0}$ is a bijection from the set of maximal cones of $\DF_{sc}$ to the set of maximal cones of $\DF_c^{\bel(s)}$.
\item The map $F\mapsto \Proj_{\br{s}}^{-1}(F)\cap\set{x\in V^*: \br{x,\alpha_s}\le 0}$ is a bijection from the set of maximal cones of $\DF_{sc}$ to the set of maximal cones of $\DF_{scs}^{\ab(s)}$. 
\end{enumerate}
\end{proposition}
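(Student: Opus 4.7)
The plan is to verify each of the four parts separately, exploiting the recursive structure of $c$-sortable elements (and symmetrically $c^{-1}$-sortable elements) together with the recursive definition of $C_c(v)$ and Propositions~\ref{ab bel} and~\ref{wall cov}.

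Part 1 is immediate: \cref{ab bel} asserts that every cone of $\DF_c$ is either above $\alpha_s^\perp$ or below it, so the two sub-collections exhaust $\DF_c$.

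For part 2, I would split $\DF_c^{\ab(s)}$ according to which half of $\DF_c=\F_c\cup(-\F_{c^{-1}})$ a cone comes from. A $c$-sortable element $v$ contributes $\Cone_c(v)\in\F_c^{\ab(s)}$ precisely when $s\le v$, by \cref{ab bel}. For such $v$, the recursive definition gives $C_c(v)=sC_{scs}(sv)$ with $sv$ an $scs$-sortable element, and one checks that $s\le v$ forces $s\not\le sv$ (because $\inv(sv)=s\cdot(\inv(v)\setminus\set{\alpha_s})$ cannot contain $\alpha_s$). Thus \cref{ab bel} applied to $scs$ shows $\Cone_{scs}(sv)$ lies below $\alpha_s^\perp$, and the correspondence $v\leftrightarrow sv$ sets up the bijection $\F_c^{\ab(s)}=s\bigl(\F_{scs}^{\bel(s)}\bigr)$. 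The antipodal half is handled symmetrically, using the dual (final-letter) version of the sortable recursion to replace $c$ with $c^{-1}$; note that $s$ is initial in $c$ iff $s$ is final in $c^{-1}$, so the same recursive identity holds with the appropriate sign.

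For parts 3 and 4, I would work directly from the recursive definition in the case $s\not\le v$: here $v$ is $c$-sortable with $s\not\le v$ precisely when $v\in W_{\br{s}}$ is $sc$-sortable, and $C_c(v)=C_{sc}(v)\cup\set{\alpha_s}$. Hence $\Cone_c(v)=\Proj_{\br{s}}^{-1}(\Cone_{sc}(v))\cap\set{x:\br{x,\alpha_s}\ge0}$, proving the $\F_c$ portion of part 3. The $-\F_{c^{-1}}$ contribution to $\DF_c^{\bel(s)}$ (consisting of $-\Cone_{c^{-1}}(u)$ for $u\ge s$ in the $c^{-1}$ sense) must be matched with the $-\F_{(sc)^{-1}}$ part of $\DF_{sc}$ using the dual recursion; the resulting identification combines with the $\F_c$ piece to give the claimed bijection. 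Part 4 follows from part 3 by applying $s$, or equivalently by combining part 3 with part 2.

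The principal obstacle is managing both halves of the doubled fan simultaneously in part 2, where the action of $s$ not only swaps $\F_c$ with $\F_{scs}$ but must also carry $-\F_{c^{-1}}$ to $-\F_{(scs)^{-1}}=-\F_{sc^{-1}s}$ consistently. Keeping track of which sortable elements index which pieces, verifying compatibility with the sign in $-\F_{c^{-1}}$, and confirming that "initial in $c$" passes to "final in $c^{-1}$" at each stage requires careful bookkeeping. A secondary subtlety is that the standard sortable recursion is phrased with initial letters, so one must invoke (or derive) the final-letter analog to treat the $c^{-1}$-sortable side cleanly.
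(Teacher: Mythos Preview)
The paper does not prove this proposition at all: it is quoted verbatim as \cite[Proposition~3.28]{afframe}, so there is no in-paper argument to compare against. Your outline is essentially the natural route to a direct proof of the cited result, and it is sound: Part~1 is indeed immediate from \cref{ab bel}; Parts~2 and~3 follow by splitting into the $\F_c$ and $-\F_{c^{-1}}$ halves and applying, respectively, the initial-letter recursion $C_c(v)=sC_{scs}(sv)$ and the final-letter recursion of \cref{Cc final} (since $s$ initial in $c$ is final in $c^{-1}$, and $c^{-1}s=(sc)^{-1}$). The bookkeeping you flag is genuine but routine.

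One small correction: Part~4 does \emph{not} follow from Part~3 ``by applying $s$'', because $s$ does not preserve the fibres of $\Proj_{\br{s}}$ (the kernel of $\Proj_{\br{s}}$ is spanned by $\rho_s\ck$, and $s\rho_s\ck\neq\pm\rho_s\ck$ in general). Nor does combining Parts~2 and~3 give it directly, since Part~2 relates $\DF_c^{\ab(s)}$ to $\DF_{scs}^{\bel(s)}$, not to $\DF_{scs}^{\ab(s)}$. Instead, prove Part~4 by the same direct method as Part~3, swapping the roles of the two recursions: for $\Cone_{scs}(v')$ above $\alpha_s^\perp$ (so $v'\ge s$, with $s$ final in $scs$) use \cref{Cc final} to get $C_{scs}(v')=\{-\alpha_s\}\cup C_{sc}(v'_{\br{s}})$; for $-\Cone_{sc^{-1}s}(u')$ above $\alpha_s^\perp$ (so $u'\not\ge s$, with $s$ initial in $sc^{-1}s$) use the standard recursion to get $C_{sc^{-1}s}(u')=\{\alpha_s\}\cup C_{(sc)^{-1}}(u')$.
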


For each $I\subseteq S$, write $c'$ for the restriction of $c$ to $W_I$ and write $\F_{c'}$ for the $c'$-Cambrian fan defined in $V^*_I$.
We now quote and prove some results that relate the $c$-Cambrian fan to the $c'$-Cambrian fan.
The following is \cite[Proposition~9.4]{typefree}.
\begin{proposition}\label{Fc para}
Let $I\subseteq S$ and let $c'$ be the restriction of $c$ to $W_I$.
Then $\Proj_I^{-1}(\F_{c'})\cap\Tits(A)$ is a coarsening of $\F_c\cap\Tits(A)$.
\end{proposition}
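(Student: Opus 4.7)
The strategy is to prove, for each $c$-sortable $v\in W$, the containment $\Cone_c(v)\cap\Tits(A)\subseteq\Proj_I^{-1}(\Cone_{c'}(v_I))$, where $v_I\in W_I$ is the element characterized by $\inv(v_I)=\inv(v)\cap\RS_I$. Since the cones $\Cone_c(v)\cap\Tits(A)$ are the maximal cones of $\F_c\cap\Tits(A)$ and the right-hand sides are cones of $\Proj_I^{-1}(\F_{c'})$, this containment is exactly what it means for $\Proj_I^{-1}(\F_{c'})\cap\Tits(A)$ to coarsen $\F_c\cap\Tits(A)$.

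I would reduce the containment to a chamberwise claim using \cref{pidown cone}: the theorem identifies $\Cone_c(v)\cap\Tits(A)$ with the closure of the union of those chambers $wD$ with $\pidown^c(w)=v$, and since $\Proj_I^{-1}(\Cone_{c'}(v_I))$ is closed, it suffices to show each such $wD$ lies in it. \cref{pidown para} gives $\pidown^{c'}(w_I)=\pidown^c(w)_I=v_I$, so applying \cref{pidown cone} inside $W_I$ yields $w_I D_I\subseteq\Cone_{c'}(v_I)$. Everything therefore reduces to the projection lemma $\Proj_I(wD)\subseteq w_I D_I$, valid for every $w\in W$.

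For the projection lemma, the target containment unpacks into the inequalities $\br{x,w_I\alpha_i}\ge0$ for all $i\in I$ and all $x\in wD$. Since $x\in wD$ means $w^{-1}x\in D$, a pairing $\br{x,\eta}$ is nonnegative precisely when $w^{-1}\eta$ is a positive root; thus the lemma reduces to proving $w^{-1}(w_I\alpha_i)>0$ for each $i\in I$. Since $w_I\in W_I$, the root $w_I\alpha_i$ lies in $\RS_I$, and a short case analysis on whether $\alpha_i\in\inv(w_I)$ (which determines the sign of $w_I\alpha_i$), combined with the defining identity $\inv(w_I)=\inv(w)\cap\RS_I$, pins down whether $w_I\alpha_i$ or $-w_I\alpha_i$ lies in $\inv(w)$, giving the required sign in both cases.

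The main obstacle is the projection lemma. Although the inversion-set manipulation above is elementary, getting the right formulation is what does the work: one must recognize that $w_I$ captures exactly the data of $w$ needed to describe the projection $\Proj_I(wD)\subseteq V_I^*$, and this rests on combining the characterization $\inv(w_I)=\inv(w)\cap\RS_I$ with the fact that elements of $W_I$ preserve $V_I$. Once the projection lemma is in hand, the rest of the proof is a formal assembly of \cref{pidown cone} and \cref{pidown para}.
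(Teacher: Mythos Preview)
Your argument is correct. The paper does not actually prove \cref{Fc para}; it simply quotes it as \cite[Proposition~9.4]{typefree}. That said, your approach is exactly the one the paper uses to establish the closely related \cref{para refine}: there the paper writes $\Cone_c(v)\cap\Tits(A)$ as the union of chambers $wD$ with $\pidown^c(w)=v$ via \cref{pidown cone}, applies \cref{pidown para} to pass to $w_I$, and invokes the projection fact $\Proj_I(wD)\subseteq w_I D_I$ (stated as background just before that proof, together with \eqref{Tits decomp}) to land inside $\Proj_I^{-1}(\Cone_{c'}(v_I))$. Your proof is essentially this argument stripped of the $\beta_t^\perp$ intersection, and your inversion-set verification of the projection lemma is a clean way to supply that one missing detail.
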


We prove the following proposition.
\begin{proposition}\label{para refine}
Let $I\subseteq S$ and let $c'$ be the restriction of $c$ to $W_I$.
If $F$ is a codimension-$1$ face of $\F_c$ contained in $\beta_t^\perp$ with $t\in W_I$, then $F\cap\Tits(A)$ is contained in $\Proj_I^{-1}(F')\cap\Tits(A)$ for some face $F'$ of $\F_{c'}$ of codimension $1$ in~$V^*_I$.
\end{proposition}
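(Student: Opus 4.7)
The plan is to apply the coarsening supplied by Proposition~\ref{Fc para} and then rule out the possibility that the containing cone of the coarsening is top-dimensional by a contradiction using $\beta_t\in\RS_I$.

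In the main case $F$ is the shared facet of two maximal cones $\Cone_c(v)$ and $\Cone_c(v')$ for $c$-sortable $v,v'$, and by Proposition~\ref{wall cov} the cover reflection $t$ belongs to $\cov(v)$, so $\inv(v)\setminus\inv(v')=\set{\beta_t}$. By Proposition~\ref{Fc para} there is a unique cone $G$ of $\Proj_I^{-1}(\F_{c'})\cap\Tits(A)$ with $\relint(F\cap\Tits(A))\subseteq\relint(G)$, and I write $G=\Proj_I^{-1}(F')\cap\Tits(A)$ for the corresponding cone $F'$ of the $c'$-Cambrian fan in $V^*_I$. Since $\dim(F\cap\Tits(A))=n-1$, necessarily $\dim G\ge n-1$, so $F'$ has codimension at most $1$ in $V^*_I$. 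The substance of the proof is therefore to rule out the case where $F'$ is maximal.

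Suppose for contradiction that $F'=\Cone_{c'}(v_0)$ is maximal in the $c'$-Cambrian fan. Then $G$ is top-dimensional and $\relint(G)$ is open in $V^*$, so the two maximal cones $\Cone_c(v)$ and $\Cone_c(v')$ have relative interior points arbitrarily close to $F\cap\Tits(A)$ lying inside $\relint(G)$. By the coarsening this forces $\Cone_c(v),\Cone_c(v')\subseteq G$. Combining Proposition~\ref{pidown cone} with Proposition~\ref{pidown para} identifies the unique maximal cone of $\Proj_I^{-1}(\F_{c'})\cap\Tits(A)$ containing $\Cone_c(v)$ as $\Proj_I^{-1}(\Cone_{c'}(v_I))\cap\Tits(A)$, and similarly for $v'$. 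Thus $v_I=v_0=v'_I$. But $t\in W_I$ gives $\beta_t\in\RS_I$, so $\inv(v_I)\setminus\inv(v'_I)=\set{\beta_t}$, forcing $v_I\neq v'_I$. This contradiction finishes the proof.

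The main obstacle, to my mind, is pinning down cleanly the identification of the unique maximal coarse cone containing a given $\Cone_c(v)$ with $v_I$; this requires Propositions~\ref{pidown cone} and~\ref{pidown para} in tandem and is where care is needed. A minor subtlety is the degenerate situation where $F$ is a codimension-$1$ face lying on the boundary of $|\F_c|$ rather than shared between two maximal cones; then $F\cap\Tits(A)$ has dimension smaller than $n-1$ and the conclusion follows directly from the coarsening with no separate argument.
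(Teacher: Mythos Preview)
Your approach is correct in spirit and genuinely different from the paper's. The paper proceeds directly: starting from the single upper cone $\Cone_c(v)$ with $t\in\cov(v)$, it uses Theorem~\ref{pidown cone}, Proposition~\ref{pidown para}, and~\eqref{Tits decomp} to show $F\cap\Tits(A)\subseteq\Proj_I^{-1}(\Cone_{c'}(v_I))\cap\Tits(A)$, and then checks that $t\in\cov(v_I)$ so that $F'=\Cone_{c'}(v_I)\cap\beta_t^\perp$ is already a codimension-$1$ face. Your argument instead exploits \emph{both} adjacent maximal cones and reaches the conclusion by contradiction: if the containing coarse cone were maximal you would get $v_I=v'_I$, contradicted by $\beta_t\in\inv(v_I)\setminus\inv(v'_I)$. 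The paper's route is more economical (one cone suffices) and identifies $F'$ explicitly; yours is more structural and avoids the chamber-by-chamber computation with~\eqref{Tits decomp}.

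Two small points. First, the equality $\inv(v)\setminus\inv(v')=\{\beta_t\}$ is generally false, since the lower adjacent cone is $\Cone_c(\pidown^c(tv))$ and $\pidown^c(tv)$ may lie strictly below $tv$; but you only need $\beta_t\in\inv(v)$ and $\beta_t\notin\inv(v')$, which does hold and is enough to force $v_I\neq v'_I$. Second, your disposal of the ``degenerate case'' is not a valid argument as stated: even when $F\cap\Tits(A)$ has dimension below $n-1$, the coarsening alone does not hand you a codimension-$1$ face $F'$. In fact the degenerate case does not occur---every codimension-$1$ face of $\F_c$ contained in a reflecting hyperplane is shared by two maximal Cambrian cones (once $t\in\cov(v)$, the cone $\Cone_c(\pidown^c(tv))$ is adjacent across that facet)---so the case is vacuous, but the reason you give for dismissing it is not the right one.
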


For the proof, we will need some more background.  
Recall that $D_I$ is $\Proj_I(D)$.
For each $w\in W$, we have $\Proj_I(w D)\subseteq w_I(D_I)$ and
\begin{equation}\label{Tits decomp}
\bigcup_{\substack{u\in W\\u_I=w_I}}uD=\Proj_I^{-1}(w_ID_I)\cap\Tits(A).
\end{equation}
Let $A_I$ be the Cartan matrix obtained by deleting from $A$ the rows and columns indexed by $S\setminus I$, and let $\RS_I$ be the corresponding sub root system of $\RS$.
As a consequence of \eqref{Tits decomp}, we have $\Tits(A)\subseteq \Proj_I^{-1}(\Tits(A_I))$.

\begin{proof}[Proof of Proposition~\ref{para refine}]
The face $F$ is $\Cone_c(v)\cap\beta_t^\perp$ for a $c$-sortable element $v$ and $t\in\cov(v)$.
Thus by Theorem~\ref{pidown cone}, 
\[F\cap\Tits(A)=\beta_t^\perp\cap\!\!\bigcup_{\substack{w\in W\\\pidown^c(w)=v}}\!\!wD.\]
By Proposition~\ref{pidown para}, $v_I$ is $c'$-sortable and if $\pidown^c(w)=v$ then $\pidown^{c'}(w_I)=v_I$.
Thus 
\[F\cap\Tits(A)\subseteq\beta_t^\perp\cap\!\!\!\bigcup_{\substack{w\in W\\\pidown^{c'}(w_I)=v_I}}\!\!\!wD=\beta_t^\perp\cap\!\!\bigcup_{\substack{y\in W_I\\\pidown^{c'}(y)=v_I}}\,\,\bigcup_{\substack{w\in W\\w_I=y}}wD.\]
For any $y\in W_I$, \eqref{Tits decomp} says $\bigcup_{\substack{w\in W\\w_I=y}}wD=\Proj_I^{-1}(yD_I)\cap\Tits(A)$.
Thus $F\cap\Tits(A)$ is contained in 
\[
\beta_t^\perp\cap\!\!\!\bigcup_{\substack{y\in W_I\\\pidown^{c'}(y)=v_I}}\!\!\!\!\Proj_I^{-1}(yD_I)\cap\Tits(A)
=\beta_t^\perp\cap\Proj_I^{-1}(\Cone_{c'}(v_I))\cap\Tits(A),
\]
because $\Tits(A)\subseteq \Proj_I^{-1}(\Tits(A_I))$. 
Since $t$ is a cover reflection of $v$, we have $\inv(tv)=\inv(v)\setminus\set{t}$, and since $t\in W_I$, we have $\inv((tv)_I)=\inv(v_I)\setminus\set{t}$, and thus $t\in\cov(v_I)$.
Therefore, $\Cone_{c'}(v_I)$ has a facet $F'$ of codimension $1$ in $V^*_I$ contained in $\beta_t^\perp\subseteq V_I^*$.
Now $\Proj_I^{-1}$ applied to $\beta_t^\perp\subseteq V_I^*$ is $\beta_t^\perp\subseteq V^*$.
Thus $\beta_t^\perp\cap\Proj_I^{-1}(\Cone_{c'}(v_I))\cap\Tits(A)$ is $\Proj_I^{-1}(F')\cap\Tits(A)$.
\end{proof}

\subsection{The affine doubled Cambrian fan}
When $A$ is of affine type, $\Tits(A)$ is the union of the open halfspace $\set{x\in V^*:\br{x,\delta}>0}$ with the zero vector in $V^*$.
The fact that $\Tits(A)$ is essentially a halfspace in affine type, together with the fact that $\F_c$ covers $\Tits(A)$ in general, means that the doubled Cambrian fan is a particularly useful model in affine type.
We quote some results from \cite{afframe} on $\DF_c$ in the affine case, beginning with \cite[Corollary~1.3]{afframe}:

\begin{theorem}\label{Affine g}
If $B$ is acyclic and of affine type, then $\DF_c$ coincides with the $\g$-vector fan associated to $B$.
\end{theorem}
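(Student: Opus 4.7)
The plan is to match maximal cones of $\DF_c$ with $\g$-vector cones by induction on the rank of $W$, using the recursive splitting from Proposition~\ref{recursive fan}. For a simple reflection $s$ initial in $c$, the doubled Cambrian fan $\DF_c$ decomposes into a piece above $\alpha_s^\perp$ (obtained by applying $s$ to part of $\DF_{scs}$) and a piece below (pulled back from $\DF_{sc}$, which lives in the strictly smaller-rank group $W_{\br{s}}$). On the cluster-algebra side, the $\g$-vector fan of a principal-coefficients cluster algebra behaves compatibly with mutation at the source $s$: mutation reflects the $\g$-vector fan across $\alpha_s^\perp$, while restriction to $\alpha_s^\perp$ yields the $\g$-vector fan of the smaller-rank exchange matrix obtained by deleting the $s$-row and $s$-column of $B$. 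These two operations mirror the Cambrian recursion exactly, which is the engine of the induction.

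The base cases are the finite-type standard parabolics, where $\F_c$ equals the $\g$-vector fan by the classical Reading--Speyer theorem, together with rank-$2$ affine type $A_1^{(1)}$, which can be written down explicitly. Carrying out the induction requires tracking, for each $c$-sortable $v$, a specific cluster whose $\g$-vectors span $\Cone_c(v)$; the recursion builds this cluster by mutating from the initial seed in the order dictated by the $c$-sorting word of $v$, so that the recursive formula for $C_c(v)$ in Section~\ref{DFc gen} becomes the recursion for $\g$-vectors under mutation. The antipodal half $-\F_{c^{-1}}$ is handled by symmetry: the involution $c \leftrightarrow c^{-1}$ corresponds to reversing the orientation of~$B$ and negating $\g$-vectors, and this carries the Cambrian half of the $c^{-1}$ story onto the transjective clusters that sit on the opposite side of $\delta^\perp$.

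The hard part will be the behavior at $\delta^\perp$, where in affine type $\F_c$ covers only the closed half-space $\Tits(A)$ rather than all of $V^*$. Both $\F_c$ and $-\F_{c^{-1}}$ have infinitely many cones accumulating on $\delta^\perp$, and one must show that they interlock correctly to form a genuine fan whose complement in $V^*$ is a single codimension-$1$ cone lying in $\delta^\perp$ (the same cone that will ultimately carry the imaginary wall $\d_\infty$ of $\DCScat(A,c)$). This limiting analysis — identifying the ``transjective spine'' along which the Cambrian combinatorics on both sides agree, and controlling the accumulation of cones near $\delta^\perp$ — is what really distinguishes the affine case from finite type, and it is the technical heart of the argument carried out in \cite{afframe}.
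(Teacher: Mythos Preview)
The paper does not prove this theorem at all: it is simply quoted as \cite[Corollary~1.3]{afframe}, with no argument given in the present paper. So there is no ``paper's own proof'' to compare against.

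Your sketch is a plausible outline of how the argument in \cite{afframe} proceeds, and the structural ingredients you name (the recursive decomposition of $\DF_c$ via an initial $s$, matching this against mutation of $\g$-vectors at a source, handling the antipodal piece by the $c\leftrightarrow c^{-1}$ symmetry, and the delicate limiting behavior at $\delta^\perp$) are the right ones. But you yourself close by saying that the technical heart ``is what really distinguishes the affine case from finite type, and it is the technical heart of the argument carried out in \cite{afframe}.'' That is an accurate self-assessment: what you have written is a roadmap, not a proof, and it defers the genuinely hard step---showing that $\F_c$ and $-\F_{c^{-1}}$ glue to a fan whose complement is the single codimension-$1$ cone $\d_\infty$, and that this fan coincides with the $\g$-vector fan---to the very reference the paper cites. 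Since the paper treats \cref{Affine g} as an imported black box, your proposal is in the same spirit; just be aware that as written it is not an independent proof.
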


Recall that $V_\fin$ is the subspace of $V$ spanned by $\set{\alpha_i:i\neq\aff}$. 
We identify the dual space $V^*_\fin$ with $\partial\Tits(A)=\delta^\perp$ by the inclusion that is dual to the projection map from $V$ to $V_\fin$ with kernel $\reals\delta$.  
(This projection is not the same as $\Proj_\br{s_\aff}$.  
See \cite[p. 1455]{afframe}.)
The Coxeter fan in $V^*_\fin$ is the fan defined by the hyperplanes $\beta^\perp$ for $\beta\in\RSfin$.
By the identification of $V^*_\fin$ with $\delta^\perp$, the Coxeter fan in $V^*_\fin$ defines a fan in $\delta^\perp$, which we call the \newword{Coxeter fan in $\delta^\perp$}.
The Coxeter fan in $\delta^\perp$ coincides with the fan in $\delta^\perp$ defined by the intersections with $\delta^\perp$ of the hyperplanes $\beta^\perp$ for $\beta\in\RSfin$, or equivalently the intersections with hyperplanes $\beta^\perp$ for $\beta\in\RSre$.
(These two descriptions are equivalent because every $\beta\in\RSre$ is a scalar multiple of $\phi+k\delta$ for some $\phi\in\RSfin$ and $k\in\integers$, and for $x\in\delta^\perp$, we have $\br{x,\phi+k\delta}=\br{x,\phi}$.)

We define $x_c$ to be $-\omega_c(\,\cdot\,,\delta)\in V^*$.
Since $\omega_c$ is skew-symmetric, $x_c$ is in $\delta^\perp\subset V^*$, which we identified with $V_\fin^*$.
\begin{lemma}\label{nu delta}  
The vector $\nu_c(\delta)\in V^*$ is equal to $\frac12x_c$.
\end{lemma}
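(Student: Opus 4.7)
The plan is to evaluate both $\nu_c(\delta)$ and $x_c$ on a simple coroot $\alpha_k^\vee$ and check that the resulting scalars differ by a factor of $2$, using the key fact that $\delta$ lies in the radical of the bilinear form $K$.

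First, since $\delta\in\RSpos$ lies in the nonnegative span of the simple roots, the formula from \cref{cox sort sec} for $\nu_c$ on such vectors gives
\[\nu_c(\delta)=-\sum_{i=1}^n E_c(\alpha_i^\vee,\delta)\,\rho_i=-E_c(\,\cdot\,,\delta).\]
By definition, $x_c=-\omega_c(\,\cdot\,,\delta)$. So, using the dual pairing with simple coroots (which recovers the fundamental-weight coordinates), the lemma is equivalent to the identity
\[2\,E_c(\alpha_k^\vee,\delta)=\omega_c(\alpha_k^\vee,\delta)\qquad\text{for all }k\in\set{1,\dots,n}.\]

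Next, I would expand both sides using the definitions of $E_c$ and $\omega_c$ and the expression $\delta=\sum_j d_j\alpha_j$. Directly from the two case-defined formulas,
\[E_c(\alpha_k^\vee,\delta)=d_k+\sum_{j<k}d_j\,a_{kj},\qquad \omega_c(\alpha_k^\vee,\delta)=\sum_{j<k}d_j\,a_{kj}-\sum_{j>k}d_j\,a_{kj}.\]
Subtracting, the desired identity $2E_c(\alpha_k^\vee,\delta)=\omega_c(\alpha_k^\vee,\delta)$ is equivalent to
\[2d_k+\sum_{j<k}d_j\,a_{kj}+\sum_{j>k}d_j\,a_{kj}=0,\]
and since $a_{kk}=2$, this is precisely $\sum_{j=1}^n d_j\,a_{kj}=0$, i.e., $K(\alpha_k^\vee,\delta)=0$.

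Finally, this last equality is the defining property of the imaginary root $\delta$ in affine type: the Cartan matrix $A$ of affine type is positive semidefinite of corank one and $\delta$ spans the radical of $K$ (see for example \cite[Chapter~4]{Kac90}, cited in \cref{affdenom sec}). This holds for every $k$, so the two vectors $2\nu_c(\delta)$ and $x_c$ agree on every simple coroot, hence are equal in $V^*$. No single step here looks like a serious obstacle; the argument is essentially bookkeeping once one recognizes that the discrepancy between $E_c$ and $\tfrac12\omega_c$ (evaluated against $\delta$) is a multiple of $K(\,\cdot\,,\delta)$, which vanishes on $\delta$ in affine type.
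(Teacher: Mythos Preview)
Your proof is correct and follows essentially the same approach as the paper: both reduce the claim to the identity $2E_c(\,\cdot\,,\delta)=\omega_c(\,\cdot\,,\delta)$ via $K(\,\cdot\,,\delta)=0$. The only difference is cosmetic---the paper invokes the coordinate-free identity $\omega_c+K=2E_c$ (equivalently $\omega_c=E_c-E_{c^{-1}}$ and $K=E_c+E_{c^{-1}}$) directly, while you expand in simple-coroot coordinates to reach the same conclusion.
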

\begin{proof}
Since $\delta$ is positive, $\nu_c(\delta)=-E_c(\,\cdot\,,\delta)$.
Thus for any $\gamma\in V$, we have $\br{\nu_c(\delta),\gamma}=-E_c(\gamma,\delta)$.
Since $K(\gamma,\delta)=0$, we see that $\omega_c(\gamma,\delta)$ equals
\[\omega_c(\gamma,\delta)+K(\gamma,\delta)=(E_c(\gamma,\delta)-E_c(\delta,\gamma))+(E_c(\gamma,\delta)+E_c(\delta,\gamma))=2E_c(\gamma,\delta).\qedhere\]
\end{proof}

Recall that $\RSfin^{\omega_c+}$ is the set $\set{\beta\in\RSfin:\omega_c(\beta,\delta)>0}$.
We write $|\DF_c|$ for the union of the cones of $\DF_c$.
The following is \cite[Corollary~4.9]{afframe}. 
\begin{theorem} \label{DFc complement} 
If $A$ is of affine type, then $V^*\setminus|\DF_c|$ is a nonempty open cone in $\partial\Tits(A)$, and thus is of codimension $1$ in $V^*$.
This cone is
\[ V^*\setminus|\DF_c|=\bigcap_{\beta \in \RSfin^{\omega_c+}} \{ x \in \partial \Tits(A) : \br{x,\beta} < 0 \}. \]
Also, $V^* \setminus |\DF_c|$ is the union of the relative interiors of those cones in the Coxeter fan in $\delta^\perp$ that contain $x_c$.  
\end{theorem}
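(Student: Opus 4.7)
The plan is to localize the uncovered region to $\delta^\perp=\partial\Tits(A)$, realize its intersection with $\delta^\perp$ as a union of cones of the Coxeter fan of $W_\fin$, and then identify exactly which cones appear.

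First, since $|\F_c|\supseteq\Tits(A)=\set{x\in V^*:\br{x,\delta}>0}\cup\set{0}$ (from the discussion around \cref{pidown cone}) and, symmetrically, $|{-}\F_{c^{-1}}|\supseteq-\Tits(A)$, the complement $V^*\setminus|\DF_c|$ is contained in $\delta^\perp$. Every real root of $\RS$ has the form $\beta_\fin+k\delta$ with $\beta_\fin\in\RSfin$ and $k\in\integers$, so the defining inequality $\br{x,\beta}\ge0$ of any Cambrian cone restricts on $\delta^\perp$ to the finite-root inequality $\br{x,\beta_\fin}\ge0$. Each $\Cone_c(v)\cap\delta^\perp$ and each $-\Cone_{c^{-1}}(u)\cap\delta^\perp$ is therefore a cone of the Coxeter fan of $W_\fin$ on $\delta^\perp=V_\fin^*$, and $|\DF_c|\cap\delta^\perp$ is a union of such cones.

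The main step is a bidirectional identification of the uncovered region as $\bigcap_{\beta\in\RSfin^{\omega+}}\set{x\in\delta^\perp:\br{x,\beta}<0}$. For non-coverage, I would suppose that a point $x$ in this open cone lies in the relative interior of some $\Cone_c(v)\cap\delta^\perp$ and derive a contradiction. The key input is that $C_c(v)$ arises from the inversion sequence of the $c$-sorting word of $v$, and by \cref{sort omega} the values $\omega_c$ takes on ordered pairs from this sequence are nonnegative; applying this together with $x_c=-\omega_c(\,\cdot\,,\delta)$ should force the finite part of some normal in $C_c(v)$ to lie in $\RSfin^{\omega+}$ with an orientation incompatible with the assumed strict inequalities on $x$. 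A parallel argument handles cones of $-\F_{c^{-1}}$, using $\omega_{c^{-1}}=-\omega_c$. For coverage, I would induct on rank via \cref{recursive fan}: for $s$ initial in $c$, the parabolic $W_{\br{s}}$ is of finite type (since removing a node from an affine Dynkin diagram yields a finite-type diagram), so $\DF_{sc}$ is complete in $V_{\br{s}}^*$; lifting through $\Proj_{\br{s}}^{-1}$ by parts 3 and 4 of \cref{recursive fan}, combined with the transport via $s$ in part 2, should produce Cambrian cones of $\DF_c$ meeting every Coxeter chamber on $\delta^\perp$ whose closure avoids $x_c$.

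I expect the main obstacle to be the non-coverage direction. Pulling a usable linear inequality for $\br{x,\beta_\fin}$ out of the $\omega_c$-positivity of $c$-sortable inversion sequences, when $\delta$ evaluates to zero, requires pinning down which normal in $C_c(v)$ produces the obstruction. \cref{OmegaInvariance}, together with an induction on $\ell(v)$ in the spirit of the proof of \cref{out c}, should make this tractable.
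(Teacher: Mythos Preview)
The paper does not prove this statement; it is quoted verbatim as \cite[Corollary~4.9]{afframe} and used as a black box throughout. So there is no ``paper's own proof'' to compare against here, and you should be aware that the actual proof lives in \cite{afframe}, which uses affine-specific machinery (the action of $c$ on $\delta^\perp$, the explicit description of $x_c$, and the limiting behavior of Cambrian cones approaching $\partial\Tits(A)$) that is mostly not reproduced in the present paper.

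On the merits of your sketch: the localization to $\delta^\perp$ and the observation that $|\DF_c|\cap\delta^\perp$ is a union of Coxeter-fan cones are correct and are the right starting point. But your ``non-coverage'' direction has a genuine gap. You propose to use \cref{sort omega} on the inversion sequence of $v$, yet the inward normals $C_c(v)$ are \emph{not} the inversion sequence of the $c$-sorting word; they are defined by a separate recursion and in general contain negative roots as well as roots not in $\inv(v)$. So the $\omega_c$-positivity of the inversion sequence does not directly constrain the finite parts of the normals in $C_c(v)$, and ``should force the finite part of some normal \ldots\ to lie in $\RSfin^{\omega+}$'' is not an argument but a hope. For the ``coverage'' direction, your use of \cref{recursive fan} is plausible in spirit, but note that lifting via $\Proj_{\br{s}}^{-1}$ produces cones that extend in the $\rho_s$-direction, and the intersection of those lifts with $\delta^\perp$ is not obviously the whole of the claimed region; you would still need to understand how $\d_\infty$ itself transforms under the recursion (which is exactly what \cite{afframe} works out). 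If you want to reconstruct a proof, you should look at \cite[Section~4]{afframe} rather than try to improvise from the tools in this paper.
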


Theorem~\ref{DFc complement} says that the imaginary wall $\d_\infty$ coincides with the closure of $V^*\setminus|\DF_c|$, which is the boundary of $|\DF_c|$.
We can combine results of \cite{affdenom} to describe $\d_\infty$ in another way.
According to \cite[Proposition~6.13]{affdenom}, the union of the cones $\Cone(C)$ for imaginary clusters $C$ is a cone whose extreme rays are spanned by $\SimplesT{c}$.
By Theorem~\ref{nu thm}, $\nu_c$ takes that cone to the closure of the complement of the $\g$-vector fan, which according to Theorem~\ref{Affine g} is the closure of $V^*\setminus|\DF_c|$.
We record this fact as the following theorem.

\begin{theorem}\label{d inf extreme}
The extreme rays of the cone $\d_\infty$ are spanned by the vectors $\nu_c(\SimplesT{c})$.
\end{theorem}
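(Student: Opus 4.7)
The plan is to combine the description of $\d_\infty$ given by \cref{DFc complement} with the known structure of the imaginary cones in $\Fan_c(\RS)$ from \cite{affdenom}, transported via $\nu_c$. The key observation is that both the defining inequalities of $\d_\infty$ (namely $\br{x,\beta}\le 0$ for $\beta\in\RSfin^{\omega+}$ restricted to $\partial\Tits(A)$) and the complement $V^*\setminus|\DF_c|$ from \cref{DFc complement} use the \emph{same} set of inequalities, just with strict versus non-strict signs. So $\d_\infty$ is exactly the closure of $V^*\setminus|\DF_c|$, equivalently the topological boundary of $|\DF_c|$ inside $\partial\Tits(A)$.

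First I would identify, using \cref{Affine g}, the region $|\DF_c|$ with the support of the $\g$-vector fan. Next, by \cref{nu thm}, the map $\nu_c$ restricts to an isomorphism from $\Fan_c^\re(\RS)$ onto the $\g$-vector fan, so the closure of the complement of the $\g$-vector fan is exactly $\nu_c$ applied to the union of the imaginary cones of $\Fan_c(\RS)$ (i.e.\ those $\Cone(C)$ with $\delta\in C$). Then I would invoke \cite[Proposition~6.13]{affdenom}, which states that this union of imaginary cones is itself a single cone whose extreme rays are spanned by the vectors in $\SimplesT{c}$.

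The last step is to transfer the extreme-ray description through $\nu_c$. Since $\nu_c$ is linear on each cone of $\Fan_c(\RS)$ by \cref{nu thm}, and since each $\simpleT\in\SimplesT{c}$ lies in the nonnegative span of the simple roots (so in a region where $\nu_c$ acts by a single linear formula, as recorded in the parenthetical following the statement of \cref{nu thm}), the image $\nu_c$ of the union of imaginary cones is again a cone and the images $\nu_c(\simpleT)$ for $\simpleT\in\SimplesT{c}$ span its extreme rays. Combining these identifications gives that the extreme rays of $\d_\infty$ are spanned by $\SimplesT{c}$ (interpreting these as vectors in $V^*$ under the identification used throughout this subsection).

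The main obstacle I anticipate is purely bookkeeping: making sure the identifications between $V_\fin^*$ and $\delta^\perp\subset V^*$ and the linearity domain of $\nu_c$ line up consistently, so that the extreme rays of the imaginary-cone region and those of $\d_\infty$ correspond exactly rather than only up to some rescaling or projection. Once these identifications are matched carefully against the conventions set up in \cref{cox sort sec,affdenom sec}, the result falls out directly from the cited facts without further computation.
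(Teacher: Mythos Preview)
Your proposal is correct and follows essentially the same route as the paper: the paper's argument (the paragraph immediately preceding the theorem) likewise combines \cite[Proposition~6.13]{affdenom}, \cref{nu thm}, and \cref{Affine g} (together with \cref{DFc complement}) to identify $\d_\infty$ with the $\nu_c$-image of the union of imaginary cones, whose extreme rays are spanned by $\SimplesT{c}$. Your added remarks about the linearity of $\nu_c$ on the positive span of the simple roots and the $V$/$V^*$ bookkeeping are exactly the care the paper leaves implicit (indeed, the paper later uses the theorem in the form ``extreme rays spanned by $\nu_c(\gamma)$ for $\gamma\in\SimplesT{c}$'').
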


The following theorems are (part of) \cite[Corollary~4.18]{afframe} and (all of) \cite[Corollary~4.19]{afframe}.
\begin{theorem} \label{finite type interior}  
Suppose $A$ is of affine type and let $F$ be a face of $\DF_c$ of dimension $n-2$. 
Then the maximal cones of $\DF_c$ containing $F$ form either a finite cycle of adjacent cones or an infinite path of adjacent cones.
If the relative interior of $F$ is in the interior of $|\DF_c|$, then they form a cycle. 
Otherwise they form a path.
\end{theorem}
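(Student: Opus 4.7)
The plan is to analyze the star of $F$ in $\DF_c$ by projecting to the $2$-dimensional quotient $V^*/\Span(F)$. Since $\DF_c$ is simplicial, each maximal cone containing $F$ has $F$ as a codimension-$2$ face and projects to a $2$-dimensional cone in the quotient; consecutive maximal cones around $F$ share a codimension-$1$ wall of $\DF_c$ that projects to a ray. Every such wall is contained in a hyperplane $\beta^\perp$ for some real root $\beta$ in the $2$-dimensional subspace $V_F=\{\beta\in V : \langle x,\beta\rangle=0 \text{ for all } x\in F\}$, so the walls through $F$ are indexed by a subset of the positive real roots of the rank-$2$ sub-root system $\RS'=\RS\cap V_F$. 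The analysis splits along whether $\delta\in V_F$, which controls whether $\RS'$ is of finite or affine type.

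Suppose first that the relative interior of $F$ lies in the interior of $|\DF_c|$. Then $\DF_c$ is locally finite near $F$: by \cref{pidown cone} the cones of $\F_c$ are indexed by $c$-sortable elements via $\pidown^c$, and only finitely many such cones meet any fixed compact subset of the open Tits cone; the same holds for $-\F_{c^{-1}}$. Hence only finitely many maximal cones of $\DF_c$ contain $F$, and their projections to $V^*/\Span(F)$ form a complete simplicial fan in a $2$-dimensional space, which is precisely a finite cycle. Suppose instead that the relative interior of $F$ is not in the interior of $|\DF_c|$. Then some relative interior point of $F$ lies in the closure of $V^*\setminus|\DF_c|$, which by \cref{DFc complement} sits inside $\partial\Tits(A)=\delta^\perp$. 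Since $F$ is a convex cone whose relative interior meets the linear subspace $\delta^\perp$, we must have $F\subset\delta^\perp$; hence $\delta\in V_F$ and $\RS'$ is an affine rank-$2$ root system with infinitely many positive real roots accumulating toward $\delta$. These roots produce an infinite sequence of walls through $F$ in the $2$-dimensional star, and the rays corresponding to the boundary of $|\DF_c|$ are excluded, so the sequence cannot wrap into a cycle but instead forms an infinite path.

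The main obstacle is to identify the walls of $\DF_c$ through $F$ precisely with positive real roots in $\RS'$, and to verify that the combinatorial sequencing around $F$ matches the accumulation pattern of those roots. I would handle this by invoking \cref{wall cov} to characterize walls as cover-reflection hyperplanes of $c$- or $c^{-1}$-sortable elements whose simplicial cones contain $F$, combined with the parabolic recursion of \cref{recursive fan,Fc para} to reduce the rank-$2$ local structure to an explicit computation inside either a finite rank-$2$ Coxeter group (in the cycle case) or the affine type $\tilde A_1$ Coxeter group (in the path case), where the sortable elements and their cover reflections can be enumerated directly. Local finiteness of $\DF_c$ in the interior of $|\DF_c|$, which underwrites the finiteness of the cycle, reduces to local finiteness of the Cambrian fan in the open Tits cone and is the one ingredient most in need of a clean standalone statement.
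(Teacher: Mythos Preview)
The paper does not prove this theorem; it is quoted as (part of) \cite[Corollary~4.18]{afframe}. So there is no in-paper argument to compare against, and your proposal is an attempt to reconstruct a proof from scratch.

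Your overall strategy---pass to the rank-$2$ quotient $V^*/\Span(F)$, identify the walls through $F$ with a rank-$2$ sub-root-system $\RS'=\RS\cap V_F$, and distinguish finite versus affine $\RS'$---is the right shape. But there is a genuine gap. You write: ``Since $F$ is a convex cone whose relative interior meets the linear subspace $\delta^\perp$, we must have $F\subset\delta^\perp$.'' That inference is false for cones in general (take $F$ the cone in $\reals^3$ spanned by $(1,0,1)$ and $(1,0,-1)$, and $\delta^\perp=\{z=0\}$), and nothing you have said rules it out here. Faces of $\DF_c$ are cut out by real-root hyperplanes, not by $\delta^\perp$, and maximal cones of $\DF_c$ can straddle $\delta^\perp$ (this is exactly what makes the doubling $\F_c\cup(-\F_{c^{-1}})$ nontrivial). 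To get $F\subset\delta^\perp$ from ``$\relint(F)$ meets $\d_\infty$'' you need an additional argument---for instance, showing that if $F\not\subset\delta^\perp$ then the maximal cones about $F$ already surround $F$ inside $|\DF_c|$, so $\relint(F)$ cannot touch the boundary $\d_\infty$. That is close to proving the cycle case directly and is not the one-liner you give. Conversely, once you do have $F\subset\delta^\perp$, you could appeal to \cref{2face in boundary} to get $F\subset\d_\infty$, but you still need to argue that the star really is an infinite path rather than, say, a finite path---this requires producing infinitely many distinct walls through $F$, which you only sketch.

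You correctly flag local finiteness of $\DF_c$ in the interior of $|\DF_c|$ as needing a clean statement; that is indeed a separate ingredient and is handled in \cite{afframe}. But the $F\subset\delta^\perp$ step is a gap you did not flag, and it is where your argument as written fails.
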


\begin{theorem}\label{2face in boundary}
If $A$ is of affine type, then every $(n-2)$-dimensional face of $\DF_c$ contained in $\partial\Tits(A)$ is in $\d_\infty$.
\end{theorem}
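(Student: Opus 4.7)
Let $F$ be a face of $\DF_c$ of dimension $n-2$ contained in $\delta^\perp=\partial\Tits(A)$. The plan is to apply \cref{finite type interior}, reducing the problem to ruling out the cycle alternative. Indeed, \cref{DFc complement} identifies the interior of $|\DF_c|$ with $V^*\setminus\d_\infty$, so in the path case $\relint(F)$ must have a point in $\d_\infty$; and since whether a point of $\relint(F)$ lies in the interior of $|\DF_c|$ depends only on the collection of maximal cones of $\DF_c$ containing $F$ (which is the same at every point of $\relint(F)$), this condition is locally constant on $\relint(F)$. Hence $\relint(F)\subseteq\d_\infty$, and closing yields $F\subseteq\d_\infty$.

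Suppose for contradiction that the maximal cones of $\DF_c$ containing $F$ form a finite cycle $C_1,\ldots,C_m,C_1$. Each $C_i$ is either $\Cone_c(v_i)\subseteq\overline{\Tits(A)}=\{x:\br{x,\delta}\ge 0\}$ for some $c$-sortable $v_i$, or $-\Cone_{c^{-1}}(u_i)\subseteq\{x:\br{x,\delta}\le 0\}$ for some $c^{-1}$-sortable $u_i$. Passing to the two-dimensional quotient $V^*/\Span(F)$, the images of $C_1,\ldots,C_m$ tile a full turn around the origin, and the image of $\delta^\perp$ is a line splitting the plane into two closed half-planes separating the $\F_c$-images from the $-\F_{c^{-1}}$-images. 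A full turn cannot fit inside a single closed half-plane, so the cycle must include a pair of consecutive cones $C_i=\Cone_c(v)$ and $C_{i+1}=-\Cone_{c^{-1}}(u)$ drawn from opposite families; their shared facet then lies in $\overline{\Tits(A)}\cap\overline{-\Tits(A)}=\{x:\br{x,\delta}= 0\}=\delta^\perp$.

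The contradiction follows from the description of Cambrian facets: a facet of $\Cone_c(v)$ equals $\Cone_c(v)\cap\beta^\perp$ for some $\beta\in C_c(v)$, which is a real root, hence not proportional to the imaginary root $\delta$. Thus $\beta^\perp\neq\delta^\perp$, and an $(n-1)$-dimensional subset of the $(n-1)$-dimensional hyperplane $\beta^\perp$ cannot simultaneously lie in the distinct $(n-1)$-dimensional hyperplane $\delta^\perp$. The only nontrivial step is the elementary topological observation that a cyclic arrangement of sectors tiling a full turn around the origin in $\reals^2$ cannot remain in a single closed half-plane; everything else is a direct application of \cref{DFc complement} and \cref{finite type interior}.
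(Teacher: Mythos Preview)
Your argument has a genuine error in the cycle-ruling-out step. You assert that every maximal cone of $\DF_c$ satisfies either $\Cone_c(v)\subseteq\overline{\Tits(A)}$ or $-\Cone_{c^{-1}}(u)\subseteq\overline{-\Tits(A)}$, but this containment is false. Already in rank~$2$ of type $A_1^{(1)}$ with $c=s_1s_2$, one computes $C_c(s_2)=\{\alpha_1,-\alpha_2\}$, so $\Cone_c(s_2)=\{x:\br{x,\alpha_1}\ge0,\ \br{x,\alpha_2}\le0\}$, which meets both open half-spaces $\{\br{\,\cdot\,,\delta}>0\}$ and $\{\br{\,\cdot\,,\delta}<0\}$. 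In general, Cambrian cones freely cross $\delta^\perp$ (indeed, many cones belong to both $\F_c$ and $-\F_{c^{-1}}$), so the half-plane separation you invoke in the quotient $V^*/\Span(F)$ simply does not hold, and no contradiction follows from having a cycle.

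There is also a gap in the path-case paragraph. \cref{finite type interior} gives the dichotomy ``$\relint(F)\subseteq\operatorname{int}|\DF_c|$ iff cycle'', so in the path case you only obtain $\relint(F)\cap\d_\infty\neq\emptyset$, not $\relint(F)\subseteq\d_\infty$. Your local-constancy claim (``whether $p$ lies in $\operatorname{int}|\DF_c|$ depends only on the maximal cones containing $F$'') presupposes that $|\DF_c|$ near $p$ is exhausted by the cones containing $F$; that is a local-finiteness statement about $\DF_c$ away from $\d_\infty$ which is true in affine type but is not among the results available in this paper and is not argued. Without it, infinitely many cones not containing $F$ could in principle fill the gap in the link and place $p$ in the interior of $|\DF_c|$.

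For context, the paper does not prove \cref{2face in boundary}: it is quoted as \cite[Corollary~4.19]{afframe}, and the companion statement \cref{finite type interior} is \cite[Corollary~4.18]{afframe}. Deriving one from the other using only what is stated here would require, at minimum, a correct replacement for the failed half-space argument (for instance, an argument that a finite cycle around an $(n-2)$-face forces a Cambrian star of finite type, incompatible with $F\subseteq\delta^\perp$) together with a justified local-finiteness step.
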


The following easy consequence of Theorem~\ref{DFc complement} will also be important.
\begin{lemma}\label{G face}
Every cone of the Coxeter fan in $\delta^\perp$ that is not contained in $\d_\infty$ is contained in some face of $\DF_c$.
\end{lemma}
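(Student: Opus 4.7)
The plan is to deduce the lemma from two ingredients: the identification in \cref{DFc complement} of $V^*\setminus|\DF_c|$ as a union of relative interiors of Coxeter fan cones in $\delta^\perp$, and the fact that every face of $\DF_c$, when intersected with $\delta^\perp$, is cut out by hyperplanes of the Coxeter fan.

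First I would use \cref{DFc complement} to show that $\sigma\subseteq|\DF_c|$. Because $\RSfin$ is finite, the Coxeter fan in $\delta^\perp$ is finite, and \cref{DFc complement} describes $V^*\setminus|\DF_c|$ as the union of relative interiors of those (finitely many) Coxeter cones in $\delta^\perp$ that contain $x_c$. Taking the closure, $\d_\infty$ is exactly the union of the corresponding closed Coxeter cones. Thus the hypothesis $\sigma\not\subseteq\d_\infty$ forces $x_c\notin\sigma$, and hence $x_c$ lies in no face of $\sigma$; since $V^*\setminus|\DF_c|$ is a union of relative interiors of Coxeter cones containing $x_c$, it is therefore disjoint from $\sigma$, giving $\sigma\subseteq|\DF_c|$.

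The heart of the argument is showing that for every face $F$ of $\DF_c$, the (closed) intersection $F\cap\delta^\perp$ is a union of closed Coxeter fan cones in $\delta^\perp$. Every top-dimensional cone of $\DF_c$ is of the form $\pm\Cone_{c^{\pm1}}(v)$, which is simplicial with inward-facing normals in $C_{c^{\pm1}}(v)\subseteq\RSre$. Consequently any face $F$ of $\DF_c$ is defined by equalities and inequalities $\br{\,\cdot\,,\beta}=0$ or $\br{\,\cdot\,,\beta}\ge 0$ for real roots $\beta$. In affine type, every real root is a nonzero scalar multiple of $\phi+k\delta$ for some $\phi\in\RSfin$ and $k\in\integers$, so $\beta^\perp\cap\delta^\perp=\phi^\perp\cap\delta^\perp$ is one of the hyperplanes of the Coxeter fan in $\delta^\perp$. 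Because the relative interiors of Coxeter fan cones partition $\delta^\perp$ and each such relative interior lies on a single side of every Coxeter hyperplane, $F\cap\delta^\perp$ is a union of relative interiors of Coxeter fan cones, hence (being closed) a union of closed Coxeter fan cones.

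To finish, I would pick $y\in\relint(\sigma)$; since $\sigma\subseteq|\DF_c|$, there is a unique face $F$ of $\DF_c$ with $y\in\relint(F)$. Then $y\in F\cap\delta^\perp$, and by the previous paragraph this intersection is a union of closed Coxeter cones in $\delta^\perp$, so it must contain the unique Coxeter cone whose relative interior contains $y$, namely $\sigma$. Hence $\sigma\subseteq F$. The main obstacle is the second step: one needs to know that every wall of $\DF_c$ lies on a real-root hyperplane, which follows from the explicit description of the simplicial normals $C_c(v)$, together with the affine-type fact that real-root hyperplanes restrict to finite-root hyperplanes on $\delta^\perp$.
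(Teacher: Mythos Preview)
Your proof is correct and uses essentially the same two ingredients as the paper: \cref{DFc complement} to place the Coxeter cone inside $|\DF_c|$, and the fact that faces of $\DF_c$ are cut out by real-root hyperplanes, which restrict to Coxeter hyperplanes on $\delta^\perp$. The paper's argument is slightly more compressed---it works directly with \emph{maximal} Coxeter cones (showing each one not in $\d_\infty$ lies in a unique maximal $\DF_c$-cone, then passing to faces), whereas you argue the dual statement that $F\cap\delta^\perp$ is a union of Coxeter cones for any face $F$ of $\DF_c$; but the content is the same.
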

\begin{proof}
Theorem~\ref{DFc complement} implies that every maximal cone $F$ in the Coxeter fan in $\delta^\perp$ is either in $|\DF_c|$ or has lower-dimensional intersection with $|\DF_c|$.
Since all cones in $\DF_c$ are defined by real roots and because the Coxeter fan is the fan defined in $\delta^\perp$ by all of the reflecting hyperplanes of $W$, if $F$ is contained in $|\DF_c|$ then it is contained in a unique maximal cone of $\DF_c$.
Thus every cone of the Coxeter fan, if it is not contained in $\d_\infty$, is contained in the intersection of one or more maximal cones of $\DF_c$.
\end{proof}

As part of \cite[Theorem~1.1]{afframe}, it is proved that the dual graph (i.e.\ adjacency graph of maximal cones) of $\DF_c$ forms what is called a reflection framework.
We will not need full details on reflection frameworks here, but in particular, the dual graph satisfies the ``reflection condition''.
A weak form of this condition can be rephrased as the following property of $\DF_c$.

\begin{proposition}\label{ref cond}
Suppose $C$ and $C'$ are adjacent maximal cones of $\DF_c$ with shared facet $F$ and let $\beta=\pm\beta_t$ be the inward-facing normal to $C$ at the facet $F$.
Suppose $G\neq F$ is a facet of $C$ with inward-facing normal $\gamma$, let $G'\neq F$ be the facet of $C'$ with $G\cap F=G'\cap F$, and let $\gamma'$ be the inward-facing normal of $C'$ at the facet~$G'$.
Then
\[\gamma'=\left\lbrace\begin{array}{ll}
t\gamma&\mbox{if }\omega_c(\beta_t,\gamma)\ge 0,\mbox{ or}\\
\gamma&\mbox{if }\omega_c(\beta_t,\gamma)<0.\\
\end{array}\right.\]
\end{proposition}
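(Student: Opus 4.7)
The plan is to derive this proposition directly from the reflection framework structure of $\DF_c$ established in \cite[Theorem~1.1]{afframe}. The statement here is explicitly flagged in the text as a weak form of the Reflection condition in the definition of a reflection framework, so the main work is to match the geometric data (the inward-facing normals $\beta$, $\gamma$, $\gamma'$ to facets of maximal cones in $\DF_c$) with the combinatorial data (the tuples of local roots attached to vertices of the dual graph) used in \cite{afframe}.

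The first step is to identify the local roots. By \cite[Theorem~1.1]{afframe}, the dual graph of $\DF_c$ carries the structure of a reflection framework with respect to $\omega_c$. Unpacking this, each maximal cone $C$ of $\DF_c$ is a simplicial cone whose local roots are precisely its inward-facing facet normals, and the edge between $C$ and an adjacent maximal cone $C'$ is labeled by the reflection $t$ with $\beta_t$ equal (up to sign) to the shared facet normal. Thus the normal $\beta$ to the shared facet $F$ in the present setup matches the framework edge label, and the normals $\gamma$ and $\gamma'$ at the paired facets $G$, $G'$ match a local root of $C$ and the corresponding local root of $C'$ under the framework identification. The pairing $G\leftrightarrow G'$ via the shared codimension-$2$ face $F\cap G = F\cap G'$ is the one used to identify local roots of adjacent cones in the framework.

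The second step is to apply the Reflection condition directly. For reflection frameworks, this condition specifies that when one crosses the edge labeled $t$, the local root at each other facet either gets sent to $t\gamma$ or is left unchanged, with the dichotomy controlled by the sign of $\omega_c(\beta_t,\gamma)$: positive (or zero) signs give a reflection, strictly negative signs give no change. This is exactly the formula in the proposition. The combinatorics is set up symmetrically enough that the tie-breaking at $\omega_c(\beta_t,\gamma)=0$ goes into the reflection case, consistent with the proposition as stated; this in any case is harmless, since when $\omega_c(\beta_t,\gamma)=0$, \cref{sort omega} (used back in the construction) implies $K(\beta_t,\gamma)=0$ as well, and so $t\gamma=\gamma$.

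The main obstacle, and the reason I would not just wave at the citation in a final write-up, is the bookkeeping at the boundary of $|\DF_c|$. By \cref{finite type interior}, the maximal cones of $\DF_c$ around an $(n-2)$-face can form either a finite cycle or an infinite path, so in principle one must be careful that the pairing $G\leftrightarrow G'$ and the adjacency are well-defined across the whole fan, not just in finite-type subconfigurations. However, the hypothesis of the proposition already gives us a specific adjacent pair $C, C'$ in $\DF_c$ with a specified shared facet $F$, and \cref{2face in boundary} ensures that if the $(n-2)$-face $F\cap G$ lies on $\partial\Tits(A)$ then it lies in $\d_\infty$, which lies outside $|\DF_c|$ and thus does not obstruct the local computation at that face. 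So the Reflection condition applies verbatim to the edge $C\dashname{}C'$ and to the partner facets $G$, $G'$, yielding the claim.
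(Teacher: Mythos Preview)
Your approach is essentially the same as the paper's: the paper does not give a proof of this proposition at all, but simply states it as a rephrasing of the Reflection condition, which holds because \cite[Theorem~1.1]{afframe} shows that the dual graph of $\DF_c$ is a reflection framework. Your first two paragraphs accomplish exactly this identification.

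One small correction: your appeal to \cref{sort omega} in the tie-breaking remark is not quite legitimate. That proposition concerns consecutive roots in the inversion sequence of a $c$-sorting word, not arbitrary pairs of roots (or facet normals) $\beta_t,\gamma$; in general $\omega_c(\beta_t,\gamma)=0$ does not force $K(\beta_t,\gamma)=0$. Fortunately this side remark is unnecessary for the proposition, which simply records the Reflection condition as stated (with the $=0$ case grouped with the reflection case). Your third paragraph about boundary bookkeeping is also unnecessary: the Reflection condition is a purely local statement about a single edge of the dual graph, so once you have an adjacent pair $C,C'$ there is nothing further to check.
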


In affine type, we can make the following statement analogous to Proposition~\ref{para refine}.
(The following proposition applies to $\DF_c$ rather than $\F_c$ and also avoids the intersections with $\Tits(A)$.)

\begin{proposition}\label{DFc para}
Suppose $W$ is of affine type.
Let $I\subsetneq S$ and let $c'$ be the restriction of $c$ to $W_I$.
\begin{enumerate}[label=\bf\arabic*., ref=\arabic*]  
\item
Every cone in $\DF_c$ is contained in a cone in $(\Proj_I)^{-1}(\F_{c'})$.
\item
If $F$ is a codimension-$1$ face of $\DF_c$ contained in $\beta_t^\perp$ with $t\in W_I$, then $F$ is contained in $(\Proj_I)^{-1}(F')$ for some face $F'$ of $\F_{c'}$ of codimension~$1$ in~$V^*_I$.
\end{enumerate}
\end{proposition}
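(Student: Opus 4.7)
The plan is to prove Part 1 first by induction on $|S \setminus I|$, using the finite-type version \cref{Fc para} as a black box whenever possible. For the inductive step $|S \setminus I| \ge 2$, I pick any $s \in S \setminus I$ and set $I' = I \cup \set{s}$. Since $W$ is affine and $I' \subsetneq S$, the parabolic $W_{I'}$ is of finite type, so $\Tits(A_{I'}) = V^*_{I'}$ and \cref{Fc para} inside $W_{I'}$ asserts that $\Proj_{I, I'}^{-1}(\F_{c'})$ coarsens $\F_{c_{I'}}$, where $c_{I'}$ and $c'$ denote the restrictions of $c$ to $W_{I'}$ and $W_I$ respectively. Combining with the inductive hypothesis applied to $I'$ (in $W$) and the factorization $\Proj_I = \Proj_{I, I'} \circ \Proj_{I'}$, every cone of $\DF_c$ lies in some cone of $\Proj_I^{-1}(\F_{c'})$, reducing the problem to the base case $|S \setminus I| = 1$.

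For the base case $I = \br{s}$, I will appeal to \cref{recursive fan}. When $s$ is initial in $c$, Part 3 of \cref{recursive fan} directly shows that every maximal cone of $\DF_c^{\bel(s)}$ has the form $\Proj_\br{s}^{-1}(F) \cap \set{\br{x, \alpha_s} \ge 0}$ for some maximal cone $F$ of $\F_{c'} = \F_{sc}$, so is contained in $\Proj_\br{s}^{-1}(F)$. For the $\DF_c^{\ab(s)}$ half, I combine Part 2 of \cref{recursive fan} (which identifies $\DF_c^{\ab(s)}$ with $s(\DF_{scs}^{\bel(s)})$) with a parallel description of $\DF_{scs}^{\bel(s)}$ as cylinders over cones of $\F_{c'}$ (using that the restriction of $scs$ to $W_\br{s}$ also equals $c'$), and with \cref{Fc para} inside $\Tits(A)$ to pin down the action of $s$. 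When $s$ is final in $c$ but not initial, the same plan runs for $c^{-1}$ (for which $s$ is initial) and transfers back via the antipodal identity $\DF_{c^{-1}} = -\DF_c$.

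Part 2 will follow directly from Part 1 together with a short geometric argument. Given a codimension-$1$ face $F$ of $\DF_c$ in $\beta_t^\perp$ with $t \in W_I$, let $C$ and $C'$ be the two maximal cones of $\DF_c$ adjacent along $F$. By Part 1, $C \subseteq \Proj_I^{-1}(K)$ and $C' \subseteq \Proj_I^{-1}(K')$ for some maximal cones $K, K'$ of $\F_{c'}$. Since $\beta_t \in V_I$, the hyperplane $\beta_t^\perp$ is transverse to $\ker \Proj_I$, so $\Proj_I(F)$ has codimension $1$ in $V^*_I$, forcing $K \neq K'$. Their common intersection $K \cap K'$ is then a codimension-$1$ face $F'$ of $\F_{c'}$, and $F \subseteq \Proj_I^{-1}(F')$ as required.

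The main obstacle is the base case of Part 1 when $s$ is neither initial nor final in $c$: no single application of \cref{recursive fan} with this $s$ is available, and the naive attempt to push Part 2 of \cref{recursive fan} through yields a ``sheared'' cylinder that a priori need not lie in a single cylinder $\Proj_\br{s}^{-1}(F)$. My planned workaround is to pick some $s^* \in S$ that is initial in $c$ and apply \cref{recursive fan} with $s^*$, placing each maximal cone of $\DF_c^{\bel(s^*)}$ inside a cylinder over a max cone of $\F_{s^*c}$ in the finite-type parabolic $W_\br{s^*}$, and then refine further via \cref{Fc para} inside $W_\br{s^*}$ to land in $\Proj_\br{s}^{-1}(\F_{c'})$; symmetric reasoning handles the other side of $\alpha_{s^*}^\perp$. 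If this combinatorial route becomes cumbersome, an alternative is to invoke the Reflection condition \cref{ref cond} to propagate the containment from cones of $\DF_c$ with full-dimensional intersection with $\Tits(A)$ (where \cref{Fc para} gives it for free) across adjacencies to the remaining cones of $\DF_c$.
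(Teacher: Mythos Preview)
Your approach is considerably more complicated than the paper's, and it has genuine gaps.

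\textbf{The paper's argument for Part~1 is much shorter.} The key observation you are missing is that since $I\subsetneq S$ and $W$ is affine, the parabolic $W_I$ is finite, so $\F_{c'}=\DF_{c'}=-\F_{(c')^{-1}}$. Given a cone $F$ of $\DF_c$, apply \cref{Fc para} once to $\F_c$ to place $F\cap\Tits(A)$ in some cone of $\Proj_I^{-1}(\F_{c'})$, and apply it once more (to $\F_{c^{-1}}$, then negate) to place $F\cap(-\Tits(A))$ in some cone of $\Proj_I^{-1}(-\F_{(c')^{-1}})=\Proj_I^{-1}(\F_{c'})$. Finally, because $I\subsetneq S$, each cylinder $\Proj_I^{-1}(\cdot)$ contains the line $\reals\rho_s$ for any $s\in S\setminus I$, which crosses $\partial\Tits(A)$; this forces the two containing cones to coincide. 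No induction on $|S\setminus I|$, no case analysis on whether $s$ is initial or final, and no appeal to \cref{recursive fan} is needed.

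\textbf{Your base case for Part~1 does not close.} When $s$ is initial, you assert a ``parallel description of $\DF_{scs}^{\bel(s)}$ as cylinders over cones of $\F_{c'}$,'' but \cref{recursive fan} is stated only for $s$ initial in the Coxeter element, and $s$ is \emph{final} in $scs$; Parts~3 and~4 give you $\DF_{scs}^{\ab(s)}$, not $\DF_{scs}^{\bel(s)}$. When $s$ is neither initial nor final, your workaround picks $s^*$ initial and tries to ``refine further via \cref{Fc para} inside $W_{\br{s^*}}$ to land in $\Proj_{\br{s}}^{-1}(\F_{c'})$''; but $\br{s}\not\subseteq\br{s^*}$, so \cref{Fc para} inside $W_{\br{s^*}}$ cannot project to $W_{\br{s}}$. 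You could only reach $W_{\br{s}\cap\br{s^*}}$, and there is no way to go back up to $W_{\br{s}}$ using that proposition.

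\textbf{Your Part~2 argument has a gap.} You claim that $\Proj_I(F)$ having codimension~$1$ in $V^*_I$ ``forces $K\neq K'$.'' It does not: a maximal cone of $\F_{c'}$ can straddle the hyperplane $\beta_t^\perp$ in $V^*_I$. Already in type $A_2$ with $c'=s_1s_2$, the cone $\Cone_{c'}(s_2)=s_2D_I\cup s_2s_1D_I$ has interior points on both sides of $(\alpha_1+\alpha_2)^\perp$. What is needed is the more refined fact (proved in \cref{para refine} via \cref{pidown para}) that the specific cone $K=\Cone_{c'}(v_I)$ containing $\Proj_I(C)$ actually has $t\in\cov(v_I)$, hence has a genuine facet in $\beta_t^\perp$ and does not straddle it. The paper's proof of Part~2 simply invokes \cref{para refine} directly.
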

\begin{proof}  
Since $I\subsetneq S$ and $W$ is of affine type, $W_I$ is finite, so the doubled $c'$-Cambrian fan coincides with $\F_{c'}$ and with $-\F_{(c')^{-1}}$.

Let $F$ be a cone in $\DF_c$.
Proposition~\ref{Fc para} implies that $F\cap\Tits(A)$ is contained in a cone $F'$ of $(\Proj_I)^{-1}(\F_{c'})$ and that $F\cap(-\Tits(A))$ is contained in a cone $F''$ of $(\Proj_I)^{-1}(-\F_{(c')^{-1}})=(\Proj_I)^{-1}(\F_{c'})$.
Since $I\subsetneq S$, there exists $s\in S\setminus I$, and $(\Proj_I)^{-1}(F')$ contains the line spanned by $\rho_s$ and, in particular, it crosses $\partial\Tits(A)$.
Thus we can take $(\Proj_I)^{-1}(F')=(\Proj_I)^{-1}(F'')$, so that $F'=F''$.

If $F$ is a codimension-$1$ face of $\DF_c$ contained in $\beta_t^\perp$ with $t\in W_I$, then Proposition~\ref{para refine} says that we can take $F'$ to have codimension $1$ in $V^*_I$.
\end{proof}

We now prove several propositions connecting the roots $\AP{c}$, the join-irredubible sortable elements, and the Cambrian fans/doubled Cambrian fans in the affine case.

\begin{proposition}\label{c beta adj}
Let $\RS$ be a root system of affine type and let $c$ be a Coxeter element of the corresponding Weyl group.
Given a hyperplane $H$ in $V^*$, the following are equivalent.
\begin{enumerate}[label=\rm(\roman*), ref=\roman*] 
\item \label{c para}
$H=\beta^\perp$ for a real root $\beta=c^k\gamma$ such that $k\ge0$ and $\gamma$ is contained in a proper parabolic subsystem of $\RS$.
\item \label{camb adj}
There exist adjacent cones of the Cambrian fan $\F_c$ whose shared facet is in~$H$.
\item \label{camb ji}
There exists a $c$-sortable join-irreducible element $j$ with $\cov(j)=\set{t}$ and $H=\beta_t^\perp$.  
\end{enumerate}
The element $j$ in \eqref{camb ji} is unique if it exists.
\end{proposition}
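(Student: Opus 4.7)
My plan is to first obtain the uniqueness of $j$ in (iii) directly from \cref{ji ref}, and then prove the three-way equivalence via the implications (iii) $\Rightarrow$ (ii), (ii) $\Rightarrow$ (iii), and (iii) $\Leftrightarrow$ (i). The implication (iii) $\Rightarrow$ (ii) is the most direct: by \cref{wall cov}, the cone $\Cone_c(j)$ has a codimension-one facet $F$ in $\beta_t^\perp$; since $\beta_t$ is a real root, $F$ is not contained in $\delta^\perp=\partial\Tits(A)$, so its relative interior meets the interior of the Tits cone. Because $\F_c$ covers $\Tits(A)$ by \cref{pidown cone}, some other maximal cone of $\F_c$ must share $F$ as a facet, yielding adjacency across $H$.

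For (ii) $\Rightarrow$ (iii), suppose $\Cone_c(v)$ and $\Cone_c(v')$ are adjacent in $\F_c$ with a facet in $H=\beta_t^\perp$. By \cref{wall cov} applied to the upper cone, $t$ is a cover reflection of one of $v$, $v'$, say $v$. Using semidistributivity of the weak-order interval $[e,v]$ (which is a finite semidistributive lattice as noted in the excerpt), the canonical join representation expresses $v$ as an irredundant join of join-irreducibles whose cover reflections are precisely $\cov(v)$. Standard results on $c$-sortable elements (in the spirit of those of \cite{typefree} cited in the excerpt) show that these canonical joinands are themselves $c$-sortable, so the joinand $j$ with $\cov(j)=\set{t}$ is the desired $c$-sortable join-irreducible.

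The equivalence (i) $\Leftrightarrow$ (iii) I would prove by simultaneous induction on the rank of $W$ and on $\ell(j)$ (equivalently, on $k$), leveraging the recursive definition of $c$-sortable elements and the decomposition in \cref{recursive fan} relative to a simple reflection $s$ that is initial in $c$. For (iii) $\Rightarrow$ (i), when $s \not\leq j$, the element $j$ is an $sc$-sortable join-irreducible in the proper parabolic $W_\br{s}$, so by rank induction $\beta_t$ has the required form inside $W_\br{s}$, and any proper parabolic of $W_\br{s}$ is also proper in $W$. When $s \leq j$, write $j = sj'$ with $j'$ an $scs$-sortable join-irreducible of smaller length whose cover reflection is $sts$; length induction gives $\beta_{sts} = (scs)^k\gamma$, and combined with the identity $s(scs)^k = c^k s$ this yields $\beta_t = c^k(s\gamma)$. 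For (i) $\Rightarrow$ (iii), I would reverse these steps, starting from $\beta = c^k\gamma$ with $\gamma \in \RS_I$ for some proper $I$, using the base case ($k=0$, induction on rank to lift a join-irreducible from $W_I$ to $W$, via \cref{Sigma para}) and an inductive step applying a suitable $s$-conjugation to reduce $k$.

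The main obstacle will be the bookkeeping in the $s \leq j$ case (and symmetrically when reducing $k$): the root $s\gamma$ may fail to lie in the parabolic $\RS_I$ containing $\gamma$, and when $I\cup\set{s}=S$ it may even have full support in $S$. Here the affine hypothesis is essential: proper parabolic subsystems $W_I$ are all finite, and the control of $c$-orbits on real roots provided by \cref{DFc complement} together with the $\APT{c}$ formalism of \cref{affdenom sec} allows one to re-express such a full-support root as $c^{k'}\gamma'$ for a new $\gamma'$ in a (possibly different) proper parabolic. Verifying this consistently across all inductive branches, and checking that the resulting $k$'s and proper parabolics line up with those produced by the sortable recursion, will constitute the technical heart of the argument.
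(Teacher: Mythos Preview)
Your uniqueness argument and the direction (ii)$\Rightarrow$(iii) are fine in outline, though you are working harder than necessary: once \cref{wall cov} gives $t\in\cov(v)$ for some $c$-sortable $v$, the existence of a $c$-sortable join-irreducible $j$ with $\cov(j)=\{t\}$ is precisely \cref{exists j}, so there is no need to invoke canonical join representations. For (iii)$\Rightarrow$(ii) the paper is also more direct than your Tits-cone coverage argument: since $j$ is $c$-sortable and $j_*$ is not (as $\pidown^c(j_*)\le j_*<j$), \cref{pidown cone} places $jD$ and $j_*D$ in the distinct adjacent maximal cones $\Cone_c(j)$ and $\Cone_c(\pidown^c(j_*))$, whose shared facet lies in $\beta_t^\perp$.

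The substantive difficulty is in (i)$\Leftrightarrow$(iii), and the obstacle you identify is real while your proposed fix is too vague to count as a proof. The paper avoids the problem by arguing quite differently in both directions. For (iii)$\Rightarrow$(i) it does not induct one letter at a time; instead it invokes the global shape of $c$-sorting words recorded at the end of \cref{cox sort sec}: the $c$-sorting word of $j$ is $k$ full copies of $s_1\cdots s_n$ followed by the $c$-sorting word of some $c$-sortable $j'$ in a proper standard parabolic. This yields $t=c^k t'c^{-k}$ with $t'$ the cover reflection of $j'$ (or $t'=s_n$ if $j'=e$), hence $\beta_t=c^k\gamma$ with $\gamma$ in a proper parabolic subsystem directly, and the ``$s\gamma$ has full support'' issue never arises. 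For (i)$\Rightarrow$(iii), two points: first, your base case $k=0$ cannot use rank induction on this proposition, since proper parabolics of an affine $W$ are finite, not affine (and \cref{Sigma para} concerns shards, not existence of join-irreducibles); the paper instead uses the existence half of the finite-type \cref{ji ref} together with \cref{sort restrict}. Second, the inductive step uses $s$ \emph{final} in $c$, not initial. The key is \cref{ji s final}: if $j'$ is $c$-sortable join-irreducible and $s\le j'$ with $s$ final, then $j'=s$. This gives a clean dichotomy --- either $j'=s$ (and $st's=s=t'$), or $s\not\le j'$, in which case $sj'$ is $scs$-sortable with $st's\in\cov(sj')$ and \cref{exists j} supplies the required $scs$-sortable join-irreducible. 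Iterating $n$ times carries $t'$ to $ct'c^{-1}$, and $k$ rounds reach $c^kt'c^{-k}$.
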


In the proof of Proposition~\ref{c beta adj}, we will use the following three propositions.
The first is an immediate consequence of \cite[Theorem~8.1]{typefree} and \cite[Proposition~8.2]{typefree}, the second is \cite[Proposition 2.30]{typefree}, and the third is an immediate consequence of \cite[Proposition~5.3]{typefree}.  

\begin{proposition}\label{exists j}
If $v$ is a $c$-sortable element and $t\in\cov(v)$, then there exists a $c$-sortable join-irreducible element $j$ with $\cov(j)=\set{t}$.
\end{proposition}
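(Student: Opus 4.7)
The plan is to take $j$ to be the minimum $c$-sortable element below $v$ that has $t$ as a cover reflection, and then verify that this $j$ is join-irreducible. Let $X=\set{u\le v:u\text{ is }c\text{-sortable and }t\in\cov(u)}$, which is nonempty since $v\in X$. The ambient interval $[e,v]$ in the weak order is a finite semidistributive lattice by the result quoted in the paragraph just before \cref{biconv}.

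For existence of the minimum, I would show that $X$ is closed under meet. Weak-order meets are given by intersection of inversion sets, and for $c$-sortable $u_1,u_2$ below a common $c$-sortable upper bound $v$, the meet $u_1\meet u_2$ is itself $c$-sortable (a standard fact in the theory of sortable elements). Given $u_1,u_2\in X$, the inclusion $\beta_t\in\inv(u_1\meet u_2)$ is immediate; the fact that $\beta_t$ remains a cover reflection of $u_1\meet u_2$ follows from \cref{biconv} by a rank-$2$ analysis: if $\beta_t$ failed to be a cover reflection of the meet, then some rank-$2$ subsystem would contain $\beta_t$ as a non-canonical root together with another element of $\inv(u_1\meet u_2)$, and this would contradict $t\in\cov(u_i)$ via the same subsystem inside $\inv(u_i)$.

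Having obtained the minimum $j$ of $X$, the verification of join-irreducibility follows the strategy already used in the proof of \cref{U Sigma ji}. Suppose for contradiction that $j$ has a second cover reflection $t'\ne t$. Then \cref{biconv} forces $\beta_t$ and $\beta_{t'}$ to be the canonical roots of their common rank-$2$ subsystem $\RS'$, with $\RS'\subseteq\inv(j)$. The set $I=(\inv(j)\setminus\RS')\cup\set{\beta_t}$ is then the inversion set of some element $w<j$ with $t\in\cov(w)$. Since $w$ need not be $c$-sortable, I would replace $w$ by $\pidown^c(w)\le w<j$ and show that $t$ remains a cover reflection of $\pidown^c(w)$, contradicting the minimality of $j$ in $X$.

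The main obstacle is this last step: verifying $t\in\cov(\pidown^c(w))$. Projecting via $\pidown^c$ can disturb inversions in subtle ways, and one must ensure $\beta_t$ does not get pushed into the interior of the resulting inversion set. I would handle this by induction on $\ell(v)$ and on the rank of $W$, case-splitting on whether an initial simple reflection $s$ of $c$ satisfies $s\le w$, and exploiting the recursive behaviors of $\pidown^c$ via \cref{pidown para}, of $\cov$ via \cref{cover para}, and of the rank-$2$ subsystem under parabolic restriction---with an additional case distinction depending on whether $\alpha_s\in\RS'$.
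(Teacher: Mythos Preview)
The paper does not prove this proposition; it records it as an immediate consequence of \cite[Theorem~8.1]{typefree} and \cite[Proposition~8.2]{typefree}.  Those results say, in effect, that in the semidistributive interval $[e,v]$ the element $v$ has a canonical join representation $v=\bigvee_{t\in\cov(v)}j_t$, and that when $v$ is $c$-sortable each canonical joinand $j_t$ is again $c$-sortable.  So the intended argument is purely lattice-theoretic and essentially a citation.

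Your route is different---you try to locate $j$ as the minimum of $X=\set{u\le v:u\text{ is }c\text{-sortable},\ t\in\cov(u)}$---but there are two genuine problems.

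First, the sentence ``weak-order meets are given by intersection of inversion sets'' is false in general: the intersection of two inversion sets need not be biconvex, so $\inv(u_1\meet u_2)$ can be strictly smaller than $\inv(u_1)\cap\inv(u_2)$.  Consequently the assertion ``$\beta_t\in\inv(u_1\meet u_2)$ is immediate'' is unjustified, and your rank-$2$ argument for $t\in\cov(u_1\meet u_2)$ does not get off the ground.  (It may still be true that $X$ is closed under meet, but this requires the $c$-sortable machinery---essentially the same lattice-homomorphism property of $\pidown^c$ that underlies the cited results---not a bare inversion-set computation.)

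Second, the step you flag as the ``main obstacle'' really is one.  Passing from $w$ to $\pidown^c(w)$ can remove $\beta_t$ from the inversion set, or keep it but destroy its status as a cover reflection; the inductive sketch you give (split on $s\le w$, invoke \cref{pidown para} and \cref{cover para}) does not control this.  In fact, once you know enough about $\pidown^c$ to push $t\in\cov(w)$ through to $\pidown^c(w)$, you are essentially re-proving the content of \cite[Proposition~8.2]{typefree}.  A cleaner path, if you want a self-contained argument, is to use \cref{U Sigma ji}: the shard $\Sigma$ containing $vD\cap tvD$ has a unique minimal upper element $j=\ji(\Sigma)$, which is join-irreducible with $\cov(j)=\{t\}$ and satisfies $j\le v$; the remaining task is then to show that this particular $j$ is $c$-sortable, which again comes down to the canonical-join-representation fact from \cite{typefree}.
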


\begin{proposition}\label{sort restrict}
Suppose $I\subseteq S$ and let $c'$ be the restriction of $c$ to $W_I$. 
Then an element $v\in W_I$ is $c$-sortable if and only if it is $c'$-sortable.
\end{proposition}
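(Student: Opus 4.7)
The plan is to argue by double induction on the pair $(|S|, \ell(v))$, ordered lexicographically with $|S|$ as the primary parameter. The base cases are immediate: if $|S|=0$ or $v=e$, both sides of the equivalence hold trivially. For the inductive step, fix $v \in W_I$ of positive length, choose a simple reflection $s$ that is initial in $c$, and divide into three cases according to whether $s$ lies in $I$ and whether $s \leq v$.

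Case 1: $s \in I$ and $s \leq v$. Then $s$ is also initial in $c'$, since deleting the letters of $S \setminus I$ from a reduced word for $c$ beginning with $s$ leaves $s$ at the front. The recursive clause of the sortability definition replaces the question by asking whether $sv$ is $scs$-sortable in $W$ versus whether $sv$ is $sc's$-sortable in $W_I$. A direct computation with reduced words shows that when $s \in I$, the restriction of $scs$ to $W_I$ is exactly $sc's$. Since $\ell(sv) < \ell(v)$, the inner induction closes this case.

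Case 2: $s \in I$ and $s \not\leq v$. The recursion reduces $c$-sortability of $v$ to $sc$-sortability in $W_{\br{s}}$, while $c'$-sortability reduces to $sc'$-sortability in $W_{I \setminus \{s\}}$. Standard parabolic facts give $W_{\br{s}} \cap W_I = W_{I \setminus \{s\}}$, and a reduced-word check identifies $sc'$ as the restriction of $sc$ to $W_{I \setminus \{s\}}$. Since the ambient rank has strictly decreased, the outer induction on $|S|$ applies.

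Case 3: $s \notin I$. Then $v \in W_I \subseteq W_{\br{s}}$, so $s \not\leq v$ automatically, and the recursion shows $v$ is $c$-sortable iff $v$ is $sc$-sortable in $W_{\br{s}}$. Since $s \notin I$, deleting $s$ from a reduced word for $c$ and then restricting to $I$ yields the same word as restricting $c$ directly to $I$, so the restriction of $sc$ to $W_I$ is $c'$. Outer induction on $|S|$ closes the case. The main obstacle is bookkeeping rather than any deeper difficulty: one must verify at each step that initial reflections transfer correctly between $c$ and its restrictions, that the parabolic membership conditions appearing in the definition of sortability are consistent whether applied inside $W$ or inside $W_I$, and that the weak order and length function on $W_I$ agree whether computed in $W_I$ or in $W$. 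All three are standard properties of parabolic subgroups, so once the case split above is laid out, the argument is essentially formal.
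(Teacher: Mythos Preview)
Your argument is correct. The paper does not give its own proof of this proposition; it is quoted verbatim as \cite[Proposition~2.30]{typefree}. Your double induction on $(|S|,\ell(v))$ is exactly the natural way to unwind the recursive definition of $c$-sortability, and the three cases you isolate are the right ones. The bookkeeping you flag---that restrictions of Coxeter elements behave as claimed (e.g.\ $(scs)\vert_I = sc's$ when $s\in I$, and $(sc)\vert_I = c'$ when $s\notin I$), that $W_{\br{s}}\cap W_I = W_{I\setminus\{s\}}$, and that the membership conditions in the recursive definition match up on both sides---is all routine, and you have identified it correctly. This is presumably essentially the same argument given in \cite{typefree}.
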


\begin{proposition}\label{ji s final}
If $j$ is a $c$-sortable join-irreducible element, if $s$ is final in $c$, and if $s\le j$, then $s=j$.
\end{proposition}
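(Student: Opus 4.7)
The plan rests on one elementary observation: for a simple reflection $s \in S$ and any $w \in W$, we have $s \le w$ in weak order if and only if $s$ is a cover reflection of $w$. Both conditions amount to $\alpha_s \in \inv(w)$, since the cover $sw \covered w$ is precisely what removes $\alpha_s$ from the inversion set. In particular, because $j$ is join-irreducible, $\cov(j)$ is a singleton, so the assumption $s \le j$ already pins down $s$ as the \emph{unique} cover reflection of $j$.

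With this in hand, I would induct on the rank of $W$. The base case of rank one is immediate: $c = s$ and $j = s$. For the inductive step, consider first the subcase where $s$ is also initial in $c$. Being both initial and final forces $s$ to be both a source and a sink in the Coxeter-element orientation of the Dynkin diagram, so $\{s\}$ is an isolated node and $W$ decomposes as a direct product $W_\br{s} \times \br{s}$. Accordingly $c = c's = sc'$ with $c'$ a Coxeter element of $W_\br{s}$ commuting with $s$; any $c$-sortable join-irreducible $j$ with $s \le j$ must then have its $\br{s}$-factor join-irreducible and its $s$-factor equal to $s$, and the only way to be join-irreducible with a nontrivial $s$-factor is to sit entirely in $\br{s}$, giving $j = s$.

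In the remaining subcase $s$ is not initial in $c$, so one can pick an initial simple reflection $t$ of $c$ with $t \ne s$. If $t \le j$ then, by the key observation, $t$ would also be a cover reflection of $j$, and join-irreducibility would force $t = s$, a contradiction. Hence $t \not\le j$, and the recursive definition of $c$-sortability places $j$ in $W_\br{t}$. By \cref{sort restrict}, $j$ is $(tc)$-sortable, and \cref{cover para} shows $j$ remains join-irreducible in $W_\br{t}$. Since removing an initial letter from $c$ preserves the orientation of the rest of the Dynkin diagram, $s$ remains final in $tc$, so the inductive hypothesis in the lower-rank group $W_\br{t}$ yields $j = s$. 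The only genuine subtlety is checking that finality of $s$ survives the passage from $c$ to $tc$; this is a standard fact about sources and sinks in Coxeter-element orientations, and it is really the only step where one has to be careful.
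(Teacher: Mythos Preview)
Your ``elementary observation'' is false, and the whole argument rests on it. You claim that for a simple reflection $s$ and any $w$, the condition $s\le w$ in weak order is equivalent to $s$ being a cover reflection of $w$. In the right weak order used throughout this paper, cover relations are $x\covered xr$ for $r\in S$, so $s\in\cov(w)$ means $sw\covered w$, which requires $sw=wr$ for some \emph{simple} $r$; this is strictly stronger than the left-descent condition $\alpha_s\in\inv(w)$. A small example: in type $A_2$ with $j=s_1s_2$, we have $s_1\le j$ (since $\alpha_1\in\inv(j)$), but $s_1j=s_2$ is not covered by $j$ in right weak order, so $s_1\notin\cov(j)$. Your first paragraph therefore does not establish $\cov(j)=\{s\}$, and in the inductive step your contradiction ``$t\le j\Rightarrow t\in\cov(j)$'' for an initial $t\neq s$ simply does not follow.

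The intended argument is a one-line application of \cref{Cc final}: if $s$ is final in $c$ and $j$ is $c$-sortable with $s\le j$, then $j=s\join j_{\br{s}}$. Join-irreducibility forces $j=s$ or $j=j_{\br{s}}\in W_{\br{s}}$, and the latter contradicts $s\le j$. Your induction-on-rank scheme is not unreasonable, but to make it work you would need a correct reason why the case ``$t$ initial, $t\le j$'' leads to a contradiction (or else push that case through by induction on $\ell(j)$ after replacing $j$ by $tj$ and $c$ by $tct$, which requires checking that $s\le tj$ and that $tj$ remains join-irreducible---neither of which you address).
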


\begin{proof}[Proof of Proposition~\ref{c beta adj}]
We will prove \eqref{c para}$\iff$\eqref{camb ji} and \eqref{camb adj}$\iff$\eqref{camb ji}.
The uniqueness of $j$ follows from Proposition~\ref{ji ref}.

First, suppose \eqref{c para} and let $t'$ be such that $\gamma=\beta_{t'}$.
Since $\gamma$ is in a proper subsystem of $\RS$, $t'$ is in a proper parabolic subgroup $W_I$ of $W$.
Write $c'$ for the restriction of $c$ to $W_I$.
Since $W$ is affine, $W_I$ is finite.
By the finite case of Proposition~\ref{ji ref}, there exists a $c'$-sortable join-irreducible element $j'$ with ${\cov(j')=t'}$, and Proposition~\ref{sort restrict} says that $j'$ is also a $c$-sortable element of $W$.

We will prove the following claim:  If there exists a $c$-sortable join-irreducible $j'$ with $\cov(j')=\set{t'}$ then there exists a $c$-sortable join-irreducible $j''$ with $\cov(j'')=\set{ct'c^{-1}}$.
For an easier induction, we prove a stronger claim:
If there exists a $c$-sortable join-irreducible $j'$ with $\cov(j')=\set{t'}$ and if $s$ is final in $c$, then there exists an $scs$-sortable join-irreducible $j''$ with $\cov(j'')=\set{st's}$.
(The claim about $ct'c^{-1}$ follows by $n$ applications of the stronger claim.)

If $s$ is final in $c$ and $s\le j'$, then Proposition~\ref{ji s final} says that $s=j'$, so $j'$ is also $scs$-sortable.
In this case, $t'=s$, so we can take $j''=j'=s$, and $\cov(j'')=\set{s}=\set{st's}$ as desired.
If $s\not\le j'$, then $s\le sj'$, which is $scs$-sortable and has $st's\in\cov(sj')$.
Now Proposition~\ref{exists j} says that there exists an $scs$-sortable join-irreducible element $j''$ with $\cov(j'')=\set{st's}$ as desired.

We have proved the claim.
Applying the claim $k$ times, we conclude that there exists a join-irreducible element $j$ with $\cov(j)=\set{c^kt'c^{-k}}$.
Now $\beta=c^k\gamma=\beta_{c^kt'c^{-k}}$, and we have established \eqref{camb ji}.

Conversely, suppose \eqref{camb ji}.
We will prove \eqref{c para} by showing that $\beta_t=c^k\gamma$ for some $\gamma$ contained in a proper subsystem of $\RS$.
Writing a reduced word $s_1\cdots s_n$ for $c$, recall that the $c$-sorting word for $j$ consists of $k$ copies of $s_1\cdots s_n$ with $k\ge0$ followed by the $c$-sorting word for a $c$-sortable element $j'$ contained in a standard parabolic subgroup.
If $j'$ is the identity, then $t=c^ks_nc^{-k}$ and we are done, with $\gamma=-\alpha_n$.
If $j'$ is not the identity, then $j'$ is join-irreducible.
Let $t'$ be its unique cover reflection.
Since $j'$ is in a proper parabolic subgroup, Lemma~\ref{cover para} implies that $t'$ is in the same proper parabolic subgroup, so $\beta_{t'}$ is in a proper subsystem of $\RS$, and we are done, with $\gamma=\beta_{t'}$.

Now, assume \eqref{camb adj}.
Then there is a $c$-sortable element $v$ such that $H$ defines a facet of $\Cone_c(v)$ and $\Cone_c(v)$ is above $H$.
Thus $H=\beta_t$ for some reflection $t$, and Proposition~\ref{wall cov} says that $t\in\cov(v)$.
Now Proposition~\ref{exists j} says that there is a $c$-sortable join-irreducible element $j$ with $\cov(j)=\set{t}$.
Conversely, assume \eqref{camb ji}.
Theorem~\ref{pidown cone} and the $c$-sortability of $j$ imply that $jD$ and $j_*D$ are in distinct cones of $\F_c$, and Theorem~\ref{pidown cone} implies that these cones are adjacent, and are $\Cone_c(j)$ and $\Cone_c(\pidown^c(j_*))$.
Proposition~\ref{wall cov} says that $H=\beta_t^\perp$ defines a facet $F$ of $\Cone_c(j)$.
Since $F$ contains a shared facet of $jD$ and $tjD=j_*D$ and since $j_*D$ is contained in $\Cone_c(\pidown^c(j_*))$, the cone $\Cone_c(\pidown^c(j_*))$ also has $F$ as a facet.
\end{proof}

The following proposition is part of \cite[Proposition~4.4]{afframe}.
\begin{proposition}\label{codim 1 delta}
No codimension-$1$ face of $\DF_c$ is contained in $\delta^\perp$.
\end{proposition}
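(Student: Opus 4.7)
The plan is short: read off the normals to codimension-$1$ faces of $\DF_c$ from the sets $C_c(v)$ and observe that they are all real roots, while $\delta$ is imaginary. Every codimension-$1$ face of $\DF_c$ arises as a facet $\Cone_c(v) \cap \beta^\perp$ of some maximal cone, either in $\F_c$ with $\beta \in C_c(v)$ an inward normal to a facet, or (by the antipodal symmetry $\DF_c = \F_c \cup (-\F_{c^{-1}})$) in $-\F_{c^{-1}}$ with $\beta \in C_{c^{-1}}(u)$. If such a facet were contained in $\delta^\perp$, then since the facet has codimension $1$ in $V^*$ and sits inside both $\beta^\perp$ and $\delta^\perp$, these two hyperplanes must coincide, forcing $\beta$ to be a nonzero scalar multiple of $\delta \in V$.

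To rule this out, I would verify by induction on $\ell(v)$ that $C_c(v) \subseteq \RSre$ for every $c$-sortable element $v$. The base case $C_c(e) = \Simples$ is clear. For the inductive step, pick $s$ initial in $c$: either $C_c(v) = C_{sc}(v) \cup \set{\alpha_s}$ when $v \not\ge s$, in which case the inductive hypothesis (applied in the proper parabolic $W_\br{s}$) gives a set of real roots augmented by the simple root $\alpha_s$; or $C_c(v) = sC_{scs}(sv)$ when $v \ge s$, and the action of the simple reflection $s \in W$ preserves $\RSre$. Either way, every element of $C_c(v)$ is real, and the same argument with $c$ replaced by $c^{-1}$ handles the $-\F_{c^{-1}}$ component. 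Since $\delta$ is imaginary, no real root can be a scalar multiple of $\delta$, contradicting the existence of a codimension-$1$ face of $\DF_c$ inside $\delta^\perp$.

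There is no real obstacle here: the entire content is the observation that Cambrian maximal cones are simplicial cones cut out by real-root hyperplanes, so their facet hyperplanes cannot equal the imaginary hyperplane $\delta^\perp$.
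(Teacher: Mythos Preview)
Your argument is correct. The paper itself does not prove this proposition; it is quoted as part of \cite[Proposition~4.4]{afframe}. Your self-contained proof works: the recursive definition of $C_c(v)$ shows that every element of $C_c(v)$ is a real root, and since the facets of the simplicial cone $\Cone_c(v)$ span their defining hyperplanes $\beta^\perp$ with $\beta\in C_c(v)$, no such facet can lie in $\delta^\perp$, because $\delta$ is isotropic for $K$ while real roots have positive squared length. One small clarification: your induction should be stated as a double induction on the rank of $W$ and on $\ell(v)$, since the branch $v\not\ge s$ passes to the proper parabolic $W_\br{s}$ without decreasing $\ell(v)$; you clearly intend this, but it is worth saying explicitly.
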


As an easy consequence, we obtain the following proposition.

\begin{proposition}\label{facet F DF}
Every pair of adjacent maximal cones in $\DF_c$ is either a pair of adjacent maximal cones in $\F_c$ or a pair of adjacent maximal cones in $-\F_{c^{-1}}$, or both.
\end{proposition}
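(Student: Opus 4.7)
The plan is to rule out, by contradiction, the possibility that a pair of adjacent maximal cones in $\DF_c$ straddles the two halves, using \cref{codim 1 delta} as the key input.

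First, I would record the basic dichotomy on maximal cones. In affine type, $\overline{\Tits(A)}$ is the closed half-space $\set{x\in V^*:\br{x,\delta}\ge0}$, so every maximal cone of $\F_c$ lies in this half-space, and dually every maximal cone of $-\F_{c^{-1}}$ lies in $\set{x\in V^*:\br{x,\delta}\le0}$. Maximal cones of $\F_c$ and of $-\F_{c^{-1}}$ are simplicial and full-dimensional, hence none of them is contained in $\delta^\perp$. In particular, no single cone can lie in both half-spaces, so the two sets of maximal cones are disjoint, and every maximal cone of $\DF_c=\F_c\cup(-\F_{c^{-1}})$ belongs to exactly one of $\F_c$ or $-\F_{c^{-1}}$.

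Next, given adjacent maximal cones $C,C'$ of $\DF_c$ with shared facet $F$, I would argue as follows. Suppose for contradiction that $C\in\F_c$ while $C'\in-\F_{c^{-1}}$. Then
\[F\;\subseteq\;C\cap C'\;\subseteq\;\set{x:\br{x,\delta}\ge0}\cap\set{x:\br{x,\delta}\le0}\;=\;\delta^\perp.\]
Since $C$ and $C'$ are full-dimensional, $F$ has codimension $1$ in $V^*$, so $F$ is a codimension-$1$ face of $\DF_c$ contained in $\delta^\perp$, directly contradicting \cref{codim 1 delta}. Therefore $C$ and $C'$ must both belong to $\F_c$ or both belong to $-\F_{c^{-1}}$. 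In either case, their shared codimension-$1$ face is still a facet of both cones inside the corresponding subfan, so they are adjacent maximal cones of $\F_c$ or of $-\F_{c^{-1}}$ respectively (possibly both, although by the dichotomy of the previous paragraph this actually cannot occur).

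The proof rests essentially on a single nontrivial ingredient, \cref{codim 1 delta}, so I do not anticipate any serious obstacle; everything else is a dimension count together with the description of $\overline{\Tits(A)}$ in affine type.
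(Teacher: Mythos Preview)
Your argument rests on a false premise: the claim that every maximal cone of $\F_c$ lies in the closed half-space $\set{x:\br{x,\delta}\ge0}$. The paper establishes only that the support of $\F_c$ \emph{contains} $\Tits(A)$ (a consequence of \cref{pidown cone}), not that it is contained in $\overline{\Tits(A)}$, and individual cones $\Cone_c(v)$ routinely cross $\delta^\perp$. For instance, in type $A_1^{(1)}$ with $c=s_1s_2$ one computes $C_c(s_2)=\set{\alpha_1,-\alpha_2}$, so $\Cone_c(s_2)=\set{x:\br{x,\alpha_1}\ge0,\,\br{x,\alpha_2}\le0}$ contains both $\rho_1$ (with $\br{\rho_1,\delta}>0$) and $-\rho_2$ (with $\br{-\rho_2,\delta}<0$). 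Your derived claim that the maximal cones of $\F_c$ and of $-\F_{c^{-1}}$ are disjoint is likewise false---in this same example $\Cone_c(s_2)=-\Cone_{c^{-1}}(s_1)$---so the ``or both'' in the statement is not vacuous, contrary to your parenthetical remark.

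The paper's argument runs in the opposite direction. From \cref{codim 1 delta}, the shared facet $F$ is not contained in $\delta^\perp$, so $F$ (and hence each of the two adjacent maximal cones) meets the interior of $\Tits(A)$ or of $-\Tits(A)$. If, say, both cones meet the interior of $\Tits(A)$, one uses that $\F_c$ \emph{covers} $\Tits(A)$: a point interior to $C$ and lying in the interior of $\Tits(A)$ belongs to some maximal cone of $\F_c\subseteq\DF_c$, which must therefore equal $C$. Thus $C$, and similarly $C'$, is a maximal cone of $\F_c$. The covering property, not any containment of $|\F_c|$ in a half-space, is what carries the argument.
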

\begin{proof}
Proposition~\ref{codim 1 delta} implies that no shared facet of maximal cones in $\DF_c$ is contained in $\delta^\perp$, and as a consequence, given a pair of adjacent maximal cones in $\DF_c$, either both cones intersect the interior of $\Tits(A)$ or both cones intersect the interior of $-\Tits(A)$.
Since the support of $\F_c$ contains $\Tits(A)$ and the support of $-\F_c$ contains $-\Tits(A)$, the proposition follows.
\end{proof}

Recall that $\APre{c}$ is the set $-\Simples\cup(\RSpos\setminus \eigenspace{c})\cup\APTre{c} = \AP{c}\setminus\set\delta$ of real roots in $\AP{c}$.
The following proposition is a rephrasing of \cite[Proposition~3.13(4)]{affdenom}.
\begin{proposition}\label{pmPhic c}
The set of positive real roots in $\AP{c}$ and their negations is the set of all roots $c^k\gamma$ such that $\gamma$ is in a proper subsystem of $\RS$ and $k\in\integers$.
\end{proposition}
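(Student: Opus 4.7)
The plan is to unpack the definition of $\AP{c}$ and match it term by term against the orbit description, relying on the structural results from \cite{affdenom} about real roots in the affine almost-positive model. Since the proposition is flagged as a rephrasing of \cite[Proposition~3.13(4)]{affdenom}, most of the work amounts to a bookkeeping translation, but there are a few points that should be spelled out.

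First I would observe that by definition
\[
\APre{c}\cup(-\APre{c})
=\pm\Simples\,\cup\,\pm(\RSpos\setminus\eigenspace{c})\,\cup\,\pm\APTre{c},
\]
and that $\pm\Simples$ is already contained in $\pm(\RSpos\setminus\eigenspace{c})$ except when a simple root happens to lie in $\eigenspace{c}$; in the latter case such a simple root is already in $\SimplesT{c}\subseteq\APTre{c}$, so the first term is redundant. The task is then to show that the union $\pm(\RSpos\setminus\eigenspace{c})\cup\pm\APTre{c}$ equals the set $\set{c^k\gamma:\gamma\in\RS_I\text{ for some }I\subsetneq S,\,k\in\integers}$.

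For the inclusion from orbits into $\pm\APre{c}$, I would argue by the dichotomy of $c$-orbits on $\RSre$. A root $\gamma$ in a proper parabolic subsystem $\RS_I$ is real (since $W_I$ is finite in affine type for $I\subsetneq S$) and satisfies either $\gamma\in\eigenspace{c}$ or $\gamma\notin\eigenspace{c}$. In the first case, $\gamma$ has a finite $c$-orbit and, after projecting to $V_\fin$, sits in $\RSTfin{c}$; the full orbit $\set{c^k\gamma}$ is then by definition a union of $\pm$-pairs in $\APTre{c}$. In the second case, $\gamma\notin\eigenspace{c}$, and the orbit $\set{c^k\gamma}$ consists of real roots none of which lie in $\eigenspace{c}$; I would invoke the fact (from \cite{affdenom}) that each such orbit contains exactly one positive representative outside any bounded ``slab'' around $\RSfin$, so $c^k\gamma$ is either $\pm$(a positive root outside $\eigenspace{c}$).

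For the reverse inclusion I would run the argument backwards. Every root $\beta\in\APTre{c}$ is by definition in the $c$-orbit of a positive root in $\RSTfin{c}\subseteq V_\fin$, which lies in the proper parabolic $\RS_{\br{s_\aff}}$. Every $\beta\in\RSpos\setminus\eigenspace{c}$ needs to be shown to be of the form $c^k\gamma$ with $\gamma$ in a proper subsystem: here I would use the standard fact that in affine type each real root $\beta$ has the form $\phi+m\delta$ with $\phi\in\RSfin$ and $m\in\integers$, and that iterating $c$ or $c^{-1}$ shifts $m$ in a controlled way, eventually producing a representative supported in some $\RS_I$ with $I\subsetneq S$ (for instance in $\RSfin=\RS_{\br{s_\aff}}$).

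The main obstacle is the last step: verifying that every $c$-orbit of a real root not in $\eigenspace{c}$ meets some proper parabolic subsystem and that, conversely, the orbits of roots in proper subsystems exactly fill out $\pm(\RSpos\setminus\eigenspace{c})\cup\pm\APTre{c}$. This is essentially the content of \cite[Proposition~3.13]{affdenom}, and I would cite that result rather than redo the $c$-orbit analysis, using its notation to line up with ours via the identifications in \cref{rs sec,affdenom sec}.
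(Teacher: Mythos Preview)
The paper gives no proof of this proposition; it is simply stated as a rephrasing of \cite[Proposition~3.13(4)]{affdenom} and left at that. Your proposal ultimately does the same thing: after some unpacking, you defer the substantive step to \cite[Proposition~3.13]{affdenom}. So in that sense your approach matches the paper's.

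That said, the explanatory scaffolding you add has a few imprecisions worth flagging. When $\gamma$ is a root in a proper parabolic subsystem $\RS_I$ and also in $\eigenspace{c}$, you say it ``after projecting to $V_\fin$, sits in $\RSTfin{c}$''; but there is no projection involved in the definition of $\RSTfin{c}=\RS\cap\eigenspace{c}\cap V_\fin$, and a root in $\RS_I$ with $\aff\in I$ need not be in $V_\fin$. What one actually needs is that every $c$-orbit of real roots in $\eigenspace{c}$ meets $\RSTfin{c}$, which is part of the content of the cited result rather than an obvious bookkeeping step. Similarly, the claim that iterating $c$ on a root $\phi+m\delta$ ``shifts $m$ in a controlled way'' and eventually lands in a proper parabolic is true, but the mechanism depends on the decomposition of $\RSpos\setminus\eigenspace{c}$ into the orbits of $\TravProj{c}$ and $\TravInj{c}$ (as in \cref{useful} and the surrounding discussion), not on a generic shift argument. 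None of this is fatal since you cite the source for the real work, but if you want the exposition to stand on its own, those are the places to tighten.
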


\begin{proposition}\label{DC Phic}
Let $\RS$ be a root system of affine type and let $c$ be a Coxeter element of the corresponding Weyl group.
Given a hyperplane $H$ in $V^*$, the following are equivalent.
\begin{enumerate}[label=\rm(\roman*), ref=\roman*] 
\item \label{in Phic}
There exists a positive root $\beta\in\APre{c}$ such that $H=\beta^\perp$.
\item \label{adjacent}
There exist adjacent maximal cones in the doubled Cambrian fan $\DF_c$ whose shared facet is in $H$.
\item \label{either or}
There exist adjacent maximal cones in the Cambrian fan $\F_c$ whose shared facet is in $H$ or there exist adjacent maximal cones in the opposite Cambrian fan $-\F_{c^{-1}}$ whose shared facet is in $H$ (or both).
\item \label{either or j}
There exists a $c$-sortable join-irreducible element $j$ with $\cov(j)=\set{t}$ and ${H=\beta_t^\perp}$ or there exists a $c^{-1}$-sortable join-irreducible element $j'$ with \linebreak ${\cov(j')=\set{t}}$ and $H=\beta_t^\perp$ (or both).
\end{enumerate}
In \textrm{\eqref{either or j}}, $j$ is unique if it exists and $j'$ is unique if it exists.
\end{proposition}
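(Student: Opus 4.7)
The plan is to establish the cycle of implications by combining four earlier results: \cref{c beta adj}, \cref{pmPhic c}, \cref{facet F DF}, and \cref{ji ref}. The uniqueness claims about $j$ and $j'$ in \eqref{either or j} are immediate from \cref{ji ref}, so only the equivalences require argument.

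First I would handle \eqref{either or}$\iff$\eqref{either or j}. Since $-\F_{c^{-1}}$ is just the antipodal image of $\F_{c^{-1}}$ and $H$ is a linear hyperplane, adjacent maximal cones in $-\F_{c^{-1}}$ share a facet in $H$ if and only if the corresponding adjacent maximal cones in $\F_{c^{-1}}$ do. Then one applies the equivalence \eqref{camb adj}$\iff$\eqref{camb ji} of \cref{c beta adj} to $c$ (yielding a $c$-sortable join-irreducible $j$) and to $c^{-1}$ (yielding a $c^{-1}$-sortable join-irreducible $j'$). This step is routine.

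Next I would handle \eqref{adjacent}$\iff$\eqref{either or}. The forward direction is exactly \cref{facet F DF}. The reverse direction holds because $\F_c$ and $-\F_{c^{-1}}$ are both subfans of $\DF_c$, so any pair of adjacent maximal cones in either is a pair of adjacent maximal cones in $\DF_c$.

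Finally, for \eqref{in Phic}$\iff$\eqref{either or j}, I would use the \eqref{c para}$\iff$\eqref{camb ji} equivalence of \cref{c beta adj} together with \cref{pmPhic c}. Applied to $c$, condition \eqref{camb ji} characterizes $H$ as $\beta^\perp$ for some $\beta$ of the form $c^k\gamma$ with $k\ge 0$ and $\gamma$ in a proper parabolic subsystem; applied to $c^{-1}$, it characterizes $H$ as $\beta^\perp$ for some $\beta$ of the form $c^{-k}\gamma'$ with $k\ge 0$ and $\gamma'$ in a proper parabolic subsystem. The union of these two families is the set $\{c^k\gamma : k\in\integers,\ \gamma \text{ in a proper parabolic subsystem}\}$, which by \cref{pmPhic c} equals $\APre{c}\cup(-\APre{c})$. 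Since $H=\beta^\perp=(-\beta)^\perp$, we may as well take the representative $\beta$ to be positive, recovering \eqref{in Phic}.

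The main thing to verify carefully is the sign bookkeeping in the last step: namely that passing from $c$ to $c^{-1}$ in \cref{c beta adj} really does swap the sign of the exponent $k$, so that combining the two families yields \emph{all} integer exponents. Everything else in the argument is formal manipulation of the quoted lemmas, so I do not anticipate any substantive obstacle.
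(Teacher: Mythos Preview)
Your proposal is correct and takes essentially the same approach as the paper's proof: the paper derives \eqref{in Phic}$\iff$\eqref{either or} from \cref{pmPhic c} and \cref{c beta adj}, \eqref{adjacent}$\iff$\eqref{either or} from \cref{facet F DF}, and \eqref{either or}$\iff$\eqref{either or j} (with uniqueness) from \cref{c beta adj}. Your route differs only cosmetically---you link \eqref{in Phic} directly to \eqref{either or j} rather than to \eqref{either or}, and you cite \cref{ji ref} for uniqueness rather than \cref{c beta adj} (which in turn invokes \cref{ji ref})---but the substance is identical.
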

\begin{proof}
The equivalence of \eqref{in Phic} and \eqref{either or} follows from Propositions~\ref{c beta adj} and~\ref{pmPhic c}.
Proposition~\ref{facet F DF} implies that \eqref{adjacent} and \eqref{either or} are equivalent.
The equivalence of \eqref{either or} and \eqref{either or j} follows from Proposition~\ref{c beta adj}.
The uniqueness of $j$ and $j'$ in \eqref{either or j} also follows from Proposition~\ref{c beta adj}.
\end{proof}

\begin{remark}
Some of the equivalences in Proposition~\ref{DC Phic} can also be obtained, in the skew-symmetric case, by combining the main result of \cite{Najera12} with results of \cite{afframe}.
\end{remark}

\section{Completing the cluster scattering diagram proofs}\label{scat proofs 2}
We now complete the proof of Theorem~\ref{DCScat} and the other results on affine cluster scattering fans.
To begin, there is one more wall that must be shown to be outgoing and gregarious.

\begin{proposition}\label{out inf}
The wall $(\d_\infty,f_\infty)$ is outgoing and gregarious.
\end{proposition}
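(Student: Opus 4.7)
The plan is to verify both conditions directly from the definitions. Since the primitive normal associated to $\d_\infty$ is $\delta$, I need to show that $-\omega_c(\,\cdot\,,\delta)=x_c$ lies in $\relint(\d_\infty)$ (gregarious) and that $\omega_c(\,\cdot\,,\delta)=-x_c$ does \emph{not} lie in $\d_\infty$ (outgoing). The central observation is that $x_c$ is tailor-made to satisfy every defining inequality of $\d_\infty$ strictly, while $-x_c$ violates all of them.

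First I would record that, since $\omega_c$ is skew-symmetric, $\omega_c(\delta,\delta)=0$, so $x_c\in\delta^\perp=\partial\Tits(A)$. For each $\beta\in\RSfin^{\omega+}$, the definition of the set gives
\[\br{x_c,\beta}=-\omega_c(\beta,\delta)<0,\]
strict inequality. Thus $x_c$ lies in the open subset of $\partial\Tits(A)$ cut out by these strict inequalities, which by \cref{DFc complement} is exactly $V^*\setminus|\DF_c|$ and is nonempty.

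Next I would identify $\relint(\d_\infty)$ with $V^*\setminus|\DF_c|$. The cone $\d_\infty\subseteq\delta^\perp$ is cut out by the non-strict versions of those same inequalities, and the existence of a point (say $x_c$) where all inequalities are simultaneously strict means $\d_\infty$ has nonempty interior in $\delta^\perp$. Its affine hull is therefore $\delta^\perp$ and its relative interior is precisely $V^*\setminus|\DF_c|$. This places $x_c\in\relint(\d_\infty)$, establishing gregariousness.

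For outgoing, I would first note that $\RSfin^{\omega+}$ is nonempty: by \cref{2face in boundary} (or equivalently \cref{d inf extreme}) the set $|\DF_c|\cap\delta^\perp$ is nonempty, so $V^*\setminus|\DF_c|$ is a proper subset of $\delta^\perp$, forcing at least one defining inequality to be present. Fixing any $\beta\in\RSfin^{\omega+}$, one computes $\br{-x_c,\beta}=\omega_c(\beta,\delta)>0$, so $-x_c$ violates a defining inequality and therefore lies outside $\d_\infty$. There is no genuine obstacle in this proposition: the real content is packaged in \cref{DFc complement}, and once that theorem is available the verification reduces to a one-line computation from the definitions of $\omega_c$, $x_c$, and $\d_\infty$.
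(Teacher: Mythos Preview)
Your proof is correct and follows essentially the same approach as the paper's: both arguments reduce immediately to \cref{DFc complement}, with the paper using the Coxeter-fan description of $\d_\infty$ (as the union of cones containing $x_c$) while you use the equivalent inequality description to verify gregariousness and outgoingness directly. One small quibble: your appeal to \cref{2face in boundary} for the nonemptiness of $\RSfin^{\omega+}$ is misplaced (that result does not assert $|\DF_c|\cap\delta^\perp\neq\emptyset$), but the point is trivial anyway since $0\in|\DF_c|\cap\delta^\perp$, and your citation of \cref{d inf extreme} does the job.
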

\begin{proof}
By Theorem~\ref{DFc complement}, $\d_\infty$ is the union of the cones in the Coxeter fan in $\delta^\perp$ that contain $-\omega_c(\,\cdot\,,\delta)$.
Thus the wall $(\d_\infty,f_\infty)$ is gregarious.
Since $\RSfin$ has rank at least $1$, no cone of the Coxeter fan contains both $\omega_c(\,\cdot\,,\delta)$ and $-\omega_c(\,\cdot\,,\delta)$.
We conclude that $(\d_\infty,f_\infty)$ is also outgoing.
\end{proof}

The last step in the proof of Theorem~\ref{DCScat}, which will take up most of Section~\ref{scat proofs 2}, is to prove the following proposition:

\begin{proposition}\label{consist}  
$\DCScat(A,c)$ is consistent.
\end{proposition}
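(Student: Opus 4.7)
The plan is to verify consistency locally at each codimension-$2$ joint $J$ of the scattering fan of $\DCScat(A,c)$, using the standard reduction (implicit in~\cite{GHKK}) that a scattering diagram is consistent if and only if the path-ordered product around a small loop about every codimension-$2$ joint equals the identity. The joints split into two classes: \emph{interior joints}, whose relative interiors lie in the interior of $|\DF_c|$, and \emph{boundary joints}, which lie in $\d_\infty\subseteq\delta^\perp$.

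For interior joints, \cref{finite type interior} guarantees that only finitely many maximal cones of $\DF_c$ meet $J$, so only finitely many walls of $\DCScat(A,c)$ pass through $J$, and all of them are real (since $\d_\infty$ does not meet the interior of $|\DF_c|$). Their normal roots lie in a rank-$2$ subsystem $\RS'$ of $\RS$ which, being finite, is contained (after translation by a suitable $w\in W$) in a proper parabolic subsystem $\RS_I$ with $W_I$ finite. Using \cref{DFc para,Sigma para} together with the compatibility of $c$-sortable join-irreducibles with parabolic restriction (\cref{sort restrict,pidown para}), the local picture at $J$ is identified with the pullback under $\Proj_I$ of the local picture of the corresponding finite-type diagram $\DCScat(A_I,c')$ for $W_I$ and the restricted Coxeter element $c'$. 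Consistency at $J$ then follows from the finite-type result alluded to earlier in the paper, namely the analogue of~\cite[Proposition~4.10]{scatcomb}.

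For boundary joints $J\subseteq\d_\infty$, \cref{2face in boundary} tells us that $\d_\infty$ is itself one of the walls through $J$. The positive roots $\beta$ with $J\subseteq\beta^\perp$ form a rank-$2$ sub-root-system $\RS'$ of $\RS$ containing $\delta$; hence $\RS'$ is of affine type, specifically $A_1^{(1)}$ in general and $A_2^{(2)}$ when $W$ is of type $A_{2k}^{(2)}$. I would then show that the walls of $\DCScat(A,c)$ through $J$ consist of $\d_\infty$ (with scattering term $f_\infty$) together with two cascades of real walls, one from $c$-sortable shards $\Sh(j)$ on one side of $\d_\infty$ and one from negated $c^{-1}$-sortable shards $-\Sh(j')$ on the other side, indexed by the positive real roots of $\RS'$. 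The gregariousness established in \cref{out c,out cinv,out inf} places these walls on the correct sides. Consistency at $J$ then reduces to the known consistency of the rank-$2$ affine cluster scattering diagram of the appropriate type computed in~\cite{GHKK}, where the specific form of $f_\infty$ in~\eqref{f inf def} is precisely the central scattering term required to balance the two cascades.

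The hard part is the boundary case: cleanly identifying the set of real walls through a given boundary joint $J$ and matching them bijectively with the two infinite cascades of positive real roots in $\RS'$. Concretely, for each real root $\phi\in\RS'\cap\APre{c}$ I would need to produce a unique join-irreducible (either $c$-sortable or $c^{-1}$-sortable) whose shard passes through $J$, and verify via the sign of $\omega_c(\phi,\delta)$ that it lies on the correct side of $\d_\infty$. The interior case, by contrast, is a relatively direct parabolic reduction to finite type, with the main content already packaged in the restriction lemmas quoted above.
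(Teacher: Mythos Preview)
Your overall architecture matches the paper's: localize at codimension-$2$ intersections, then reduce to a rank-$2$ scattering diagram. However, there are genuine gaps in both cases.

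\textbf{Interior case.} Your reduction via parabolic restriction does not work as stated. The rank-$2$ subsystem $\RS'$ orthogonal to $J$ is almost never contained in a standard parabolic $\RS_I$; it is only a $w$-conjugate of one. But the lemmas you invoke (\cref{DFc para,Sigma para,sort restrict,pidown para}) all concern standard parabolic subgroups, and neither shards nor $c$-sortability behave well under conjugation by an arbitrary $w$. The paper does not use parabolic reduction here at all. Instead, it invokes \cite[Theorem~9.8]{typefree}, which says directly that the star of $J$ in $\F_c$ is a finite rank-$2$ Cambrian fan for the restriction of $\omega_c$ to the plane $J^\perp$; consistency is then immediate from the finite rank-$2$ case. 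You should replace the parabolic argument with this citation.

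\textbf{Boundary case.} Your dichotomy omits a subcase the paper handles separately: when $\omega_c(\beta,\delta)=0$ for the canonical root $\beta\in\RSfin\cap\RS'$. Here the restriction of $\omega_c$ to $\RS'$ vanishes, so this is \emph{not} a rank-$2$ affine scattering diagram, and your reduction to the rank-$2$ affine formula does not apply. The paper treats this case by showing (via \cref{sortable is aligned}) that at most the two canonical roots of $\RS'$ can appear as normals, that the corresponding walls contain $G$ in their relative interiors, and that all wall-crossings commute because $\omega_c$ restricts to zero. More broadly, even in the nondegenerate case $\omega_c(\beta,\delta)\neq0$, you do not identify the tool that determines which shards actually pass through $G$: this is again \cref{sortable is aligned} (the $c$-alignment characterization of sortability), which forces the shards for non-canonical roots of $\RS'$ to lie below $\beta^\perp$. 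Gregariousness alone does not give this. Finally, the paper also handles boundary faces $G\subseteq\delta^\perp$ that are \emph{not} in $\d_\infty$ (showing via \cref{G face} that such $G$ sit inside a single $\DF_c$-cone of dimension $\ge n-1$, so at most one wall passes through and consistency is trivial); your dichotomy implicitly assumes every boundary joint lies in $\d_\infty$, which needs justification.
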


\subsection{Rank-2 subsystems of affine type}
At the heart of the proof of Proposition~\ref{consist} is a reduction to the finite or affine rank-$2$ case.
We need to differentiate between type $A_{2k}^{(2)}$ and all other types.
(Accordingly, the definition of $\DCScat(A,c)$ differentiates between these two possibilities.)
The following lemma tells which affine rank-$2$ case we need to consider for type $A_{2k}^{(2)}$ and for all other types.

\begin{lemma}\label{22 14}
Suppose $\beta\in\RSfin^{\omega_c+}$ and suppose $\d_\infty\cap\beta^\perp$ is a facet of $\d_\infty$.
If $\RS$ is not of type $A_{2k}^{(2)}$, then the rank-$2$ subsystem $\RS'$ containing $\beta$ and $\delta$ is of type $A_1^{(1)}$.
If $\RS$ is of type $A_{2k}^{(2)}$, then $\RS'$ is of type $A_2^{(2)}$.
\end{lemma}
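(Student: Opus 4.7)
The plan is to analyze the rank-$2$ subsystem $\RS' = \RS \cap \Span(\beta,\delta)$ directly, using the classification of rank-$2$ affine root systems.

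First, I would observe that since $\RS'$ lies in a $2$-dimensional subspace of $V$ and contains the imaginary root $\delta$, it is itself a rank-$2$ affine root system. Up to isomorphism, the only such systems are $A_1^{(1)}$ and $A_2^{(2)}$, and they are distinguished by whether $\RS'$ contains a real root of the form $a\beta + b\delta$ with $|a|\geq 2$: such roots exist only in type $A_2^{(2)}$.

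Second, for $\RS$ of any affine type other than $A_{2k}^{(2)}$, I would appeal to the standard description of real roots in affine root systems. In every other affine type (untwisted and the remaining twisted types $A_{2n-1}^{(2)}$, $D_{n+1}^{(2)}$, $E_6^{(2)}$, $D_4^{(3)}$), the real roots can be written as $\phi + k\delta$ (or $\phi + k\delta/r$ for a suitable rescaling depending on the length of $\phi$), where the finite part $\phi$ lies in a reduced finite root system. Thus any real root in $\Span(\beta,\delta)$ has finite part $\pm\beta$, so no "doubled" root appears, and $\RS' \cong A_1^{(1)}$. Note that the facet hypothesis is not needed in this branch of the argument.

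Third, for $\RS$ of type $A_{2k}^{(2)}$, I would use the fact that its underlying finite root system is non-reduced (of $BC$-type), so that real roots of $\RS$ include, in addition to roots of the form $\pm\phi + k\delta$ for $\phi\in\RSfin$, also doubled short roots of the form $\pm 2\phi_s + (2k+1)\delta$. If $\beta\in\RSfin^{\omega+}$ is a short root, then $\Span(\beta,\delta)$ contains both $\pm\beta + k\delta$ and $\pm 2\beta + (2k+1)\delta$ for appropriate $k$, and these assemble into an $A_2^{(2)}$ subsystem. It remains to deduce from the facet hypothesis that $\beta$ must be short. This is where \cref{d inf extreme} enters: the extreme rays of $\d_\infty$ are spanned by $\SimplesT{c}$, so $\d_\infty$ is simplicial, and $\beta^\perp$ being a facet forces $\beta$ to be (up to scalar) orthogonal to all-but-one element of $\SimplesT{c}$. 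Combining this with the explicit description of $\SimplesT{c}$ in type $A_{2k}^{(2)}$ (whose Dynkin cycles sit inside the $BC_k$-type finite diagram in a specific way dictated by $c$), this pins $\beta$ down as a short root of $\RSfin$.

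The main obstacle will be this last step: translating the facet condition, expressed via \cref{d inf extreme} in terms of $\SimplesT{c}$, into the length classification of $\beta$. I expect this to reduce to a direct case-analysis using the Dynkin diagram of $A_{2k}^{(2)}$, together with the behavior of Coxeter elements on the $A$-type subsystem $\RSTfin{c}$.
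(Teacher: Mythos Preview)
Your argument for the first assertion (non-$A_{2k}^{(2)}$) is correct and essentially the paper's: both observe that $\RS'$ is rank-$2$ affine, hence $A_1^{(1)}$ or $A_2^{(2)}$, and rule out $A_2^{(2)}$ by the root-length ratios available in $\RS$.

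The $A_{2k}^{(2)}$ case has a genuine error. In the paper's conventions $\RSfin$ is the \emph{parabolic} subsystem obtained by deleting $\alpha_\aff$, which is a reduced root system (of type $C_{n-1}$), not the non-reduced $BC$ system you invoke; in particular your ``doubled short roots $2\phi_s$'' for $\phi_s$ short in $\RSfin$ are not roots of $\RS$ at all. More importantly, your target is backwards: the facet-defining $\beta$ must be \emph{long} in $\RSfin$, not short. In the paper's explicit computation the facet is defined by the longest simple root $\alpha_1$, and $\RS'$ is of type $A_2^{(2)}$ because it also contains the short root $\gamma=\sum_{i\ge 2}\alpha_i=(\delta-\alpha_1)/2$; by contrast, for $\beta$ short in $\RSfin$ the plane $\Span(\beta,\delta)$ contains real roots of only one length, giving $A_1^{(1)}$. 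The paper's route is also structurally different from yours: rather than analyzing facets via \cref{d inf extreme}, it first reduces to a single Coxeter element (since $c\mapsto scs$ replaces $\d_\infty$ by $s\cdot\d_\infty$, preserving root lengths and hence the type of each $\RS'$), computes $x_c$ explicitly for that $c$ to identify one facet and its $\RS'$, and then uses the transitive $(W_\fin)_{\br{s_1}}$-action on the maximal Coxeter-fan cones containing $x_c$ to transport the conclusion to every facet. Your approach via \cref{d inf extreme} might be salvageable once the long/short target is corrected, but the case analysis you defer is the entire content of the argument, and carrying it out would require computing $\nu_c$ on $\SimplesT{c}$ for every choice of $c$.
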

\begin{proof}
The restriction of $A$ to the span of $\beta$ and $\delta$ is positive semidefinite but not positive definite and not zero.  
Since the restriction is of rank $2$, it is of affine type, and therefore $\RS'$ is of type $A_1^{(1)}$ or $A_2^{(2)}$.
If $\RS'$ is of type $A_2^{(2)}$, then it contains a pair of roots whose ratio of squared lengths is $2$.
But only $A_{2k}^{(2)}$ contains roots with that ratio of squared lengths, and the first assertion of the lemma follows.

Suppose $\RS$ is of type $A_{2k}^{(2)}$.
We will show that for every $\beta$ such that $\d_\infty\cap\beta^\perp$ is a facet of $\d_\infty$, the rank-$2$ subsystem $\RS'$ containing $\beta$ and $\delta$ is of type $A_2^{(2)}$.
If $k=1$, then $\RS'=\RS$, so assume $k>1$.
We first claim that it is enough to consider a single choice of $c$:
Proposition~\ref{recursive fan}.\ref{DF below} implies that, for $s$ initial in $c$, the wall $\d_\infty$ is above the hyperplane $\alpha_s^\perp$.
Thus by Proposition~\ref{recursive fan}.\ref{s DF}, the imaginary wall, as defined for $scs$, is related to the imaginary wall, as defined for $c$, by the action of $s$.
Since the action of $s$ preserves the lengths of roots, it also preserves the type of $\RS'$.
The Dynkin diagram of type $A_{2k}^{(2)}$ is a tree (in fact a path), so all Coxeter elements are related by sequences of moves of the form $c\leftrightarrow scs$ for $s$ initial or final in $c$.

Having proved the claim, we choose a Coxeter element $c=s_1\cdots s_n$ with each $s_i$ and $s_{i+1}$ not commuting, taking $\alpha_1$ to be the longest root.  
Thus, $\alpha_n$ is the shortest root and $\aff=n$.
Then $\delta=\alpha_1+2\sum_{i=2}^n\alpha_i$ and
\[B=\begin{bsmallmatrix*}[r]
0&1\\
-2&0&1\\
&\!-1&\cdot&\\
&&&\,\cdot&\\
&&&&\cdot\,&1\\
&&&&\!\!\!-1&0&1\\
&&&&&\!\!-2&0&\\
\end{bsmallmatrix*}.\]
Thus in the basis of fundamental weights, $x_c=-\omega_c(\,\cdot\,,\delta)$ is the transpose of 
\[-\begin{bsmallmatrix*}[r]
0&1\\
-2&0&1\\
&\!-1&\cdot&\\
&&&\,\cdot&\\
&&&&\cdot\,&1\\
&&&&\!\!\!-1&0&1\\
&&&&&\!\!-2&0&\\
\end{bsmallmatrix*}
\begin{bsmallmatrix*}[r]
1\\2\\2\\\cdot\\\cdot\\\cdot\\2
\end{bsmallmatrix*}
=
\begin{bsmallmatrix*}[r]
-2\\0\\\cdot\\\cdot\\\cdot\\0\\4
\end{bsmallmatrix*}
\]  
Recall that the dual space $V_\fin^*$ for the finite root system is identified with the boundary of the Tits cone by the inclusion dual to the projection map $\pi:V\to V_\fin$ with kernel $\reals\delta$.  
Thus $x_c$ is $\lambda\circ\pi$ for some unique $\lambda\in V^*_\fin$.
Since $\pi$ is the identity on $V_\fin$, we can determine $\lambda$ by how $x_c$ acts on simple roots in $\RSfin$.
We see that $\lambda=-2\rho_1$.
(Recall that $\rho_1$ is the fundamental weight corresponding to $\alpha_1\ck$.)

Theorem~\ref{DFc complement} says that $\d_\infty$ is the union of all cones in the Coxeter fan in $\delta^\perp$ that contain $x_c$.
One of these cones is spanned by (the inclusion of) $\rho_1,\ldots,\rho_{n-1}$.
The normal to the facet opposite $\rho_1$ in this cone is $\alpha_1$, which is a long root.
We see that $\alpha_1$ defines a facet of the imaginary wall $\d_\infty$.
The subsystem $\RS'$ containing $\alpha_1$ and $\delta$ has canonical roots $\alpha_1$ and $\gamma=\sum_{i=2}^n\alpha_i$.
We compute that $\gamma=s_2s_3\cdots s_{n-1}\alpha_n$.
In particular, since $\alpha_n$ is a short root, so is $\gamma$.
Now, $\RS'$ is of affine type because it contains $\delta$, and it is of type $A_2^{(2)}$ because it contains real roots of two different lengths.

We have proved that $\RS'$ is of type $A_2^{(2)}$ when $\beta=\alpha_1$.
Since $\d_\infty$ is the union of all maximal cones of the Coxeter arrangement containing $x_c=-2\rho_1$ and since all of these maximal cones are related by the action of $(W_\fin)_\br{s_1}$, in particular, all of the root subsystems $\RS'$ contemplated in the second assertion of the lemma are related by the action of $W$.
Thus for any $\beta$ as in the lemma, $\RS'$ is of type $A_2^{(2)}$.
\end{proof}

The rank-$2$ scattering diagrams of affine type are known.
The following theorem is \cite[Section~6]{Reineke} in the skew-symmetric case and \cite[Theorem~3.4]{scatcomb} in the non-skew-symmetric case.

\begin{thm}\label{rk2 aff formula}
The function on the limiting wall of $\Scat^T\bigl(\begin{bsmallmatrix}\,\,\,\,\,0&\,2\\-2&\,0\end{bsmallmatrix}\bigr)$ is $\displaystyle\frac1{(1-\hy_1\hy_2)^2}$.
The function on the limiting wall of $\Scat^T\bigl(\begin{bsmallmatrix}\,\,\,\,\,0&\,1\\-4&\,0\end{bsmallmatrix}\bigr)$ is $\displaystyle\frac{1+\hy_1\hy_2^2}{(1-\hy_1\hy_2^2)^2}$.
\end{thm}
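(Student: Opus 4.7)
The plan is to verify consistency of an explicit scattering diagram in each rank-$2$ affine case by directly checking the wall-crossing identity around a loop enclosing the origin.

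First I would identify the full set of walls. In both cases, the walls are supported on the hyperplanes $\beta^\perp$ for $\beta$ a positive root of the associated affine root system $\RS$: two initial walls $(\alpha_i^\perp, 1+\hy_i)$ for $i=1,2$; an infinite sequence of real-root walls with functions $1+\hy^\beta$ for $\beta \in \RSre \cap \RSpos \setminus \set{\alpha_1,\alpha_2}$, accumulating toward $\delta^\perp$ from both sides; and a single limiting wall on $\delta^\perp$ carrying the claimed function $f_\delta$. The goal is to show that these walls, equipped with these functions, form a consistent scattering diagram, and in particular that no other function on $\delta^\perp$ is compatible.

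Consistency in rank~$2$ reduces to a single algebraic identity in the completed pro-unipotent group acting on $\k[[\hy_1,\hy_2]]$: the product of wall-crossing automorphisms taken in clockwise order around the origin must equal the identity modulo $\m^{k+1}$ for every $k$. The infinite tails of real-root walls on each side of $\delta^\perp$ contribute only finitely many nontrivial terms modulo $\m^{k+1}$, so at each stage the identity is a finite composition. In the skew-symmetric case, this is exactly the Hall-algebra identity for the $2$-Kronecker quiver worked out by Reineke: the real-root walls correspond to the preprojective and preinjective rigid indecomposables of dimension vectors $(k,k+1)$ and $(k+1,k)$, while the limiting wall records the contribution of the $\PP^1$-family of semistable representations of imaginary dimension $(1,1)$. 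The exponent $-2$ in $(1-\hy_1\hy_2)^{-2}$ matches the negative Euler characteristic $-\chi(\PP^1)=-2$ of this moduli space. For the non-skew-symmetric case, the same strategy applies to the valued quiver (species) of type $A_2^{(2)}$; alternatively, one can fold a larger skew-symmetric scattering diagram along a diagram automorphism and read off the invariant walls. The extra numerator factor $1+\hy_1\hy_2^2$ then reflects the presence of a short simple root and produces the short-imaginary-root contribution characteristic of type $A_{2k}^{(2)}$.

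The main obstacle is the verification of the infinite wall-crossing identity without representation-theoretic input. A purely combinatorial check would proceed by induction on the $\m$-adic degree: starting from $(1+\hy_1, 1+\hy_2)$, one repeatedly applies the rank-$2$ wall-crossing relation to swap adjacent walls, tracking the new walls produced and showing that they assemble into the claimed list with exactly the stated function left over on $\delta^\perp$ in the limit. This bookkeeping is delicate because each swap can introduce further walls to analyze, and one must carefully justify passage to the limit as the walls approach $\delta^\perp$. Reineke's representation-theoretic shortcut converts this combinatorial problem into a motivic count, directly yielding the closed form on $\delta^\perp$, and is the cleanest route in the literature.
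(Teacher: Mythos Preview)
The paper does not prove this theorem at all: it is stated as a quoted result, attributed to \cite[Section~6]{Reineke} for the skew-symmetric case and to \cite[Theorem~3.4]{scatcomb} for the non-skew-symmetric case. So there is no ``paper's own proof'' to compare against.

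Your proposal is a reasonable high-level sketch of what those references in fact do. In particular, your account of the skew-symmetric case --- real-root walls from preprojective and preinjective indecomposables, the limiting wall from the $\PP^1$-family of semistables of dimension $(1,1)$, the exponent $-2$ as $-\chi(\PP^1)$ --- is exactly Reineke's Hall-algebra argument. For the $A_2^{(2)}$ case you gesture at folding or direct verification; the cited reference \cite{scatcomb} carries this out. That said, what you have written is an outline rather than a proof: you correctly identify the ``main obstacle'' (the infinite wall-crossing identity) and then defer to Reineke rather than resolving it. If the intent was to supply a self-contained argument, that gap remains; if the intent was to indicate why the result is true and where to find it, your sketch matches the literature the paper cites.
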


\subsection{Sortable elements and shards}
The proof of Theorem~\ref{DCScat} will also need some facts about shards associated to $c$-sortable join-irreducible elements.
We begin by quoting a fundamental fact about $c$-sortable elements.

We say that $w\in W$ is \newword{$c$-aligned} with respect to a rank-$2$ subsystem $\Phi'$ if one of the following cases holds (where $\beta$ and $\gamma$ are the canonical roots of $\Phi'$):
\begin{enumerate}[label=\rm(\roman*), ref=\roman*] 
\item $\omega_c(\beta,\gamma)=0$ and $\inv(w)\cap\Phi'\subseteq\set{\beta, \gamma}$.
\item $\omega_c(\beta,\gamma)>0$ and either $\gamma\not\in\inv(w)$ or $\inv(w)\cap\Phi'=\set{\gamma}$ or $\inv(w) \supseteq \Phi'\cap\RSpos$.
\item $\omega_c(\beta,\gamma)<0$ and either $\beta\not\in\inv(w)$ or $\inv(w)\cap\Phi'=\set{\beta}$  or $\inv(w) \supseteq \Phi'\cap\RSpos$.
\end{enumerate} 
The following is a weaker version of \cite[Theorem~4.3]{typefree}.
\begin{theorem} \label{sortable is aligned}
An element~$w$ of~$W$ is $c$-sortable if and only if $w$ is $c$-aligned with respect to every rank-$2$ subsystem of $\Phi$.
\end{theorem}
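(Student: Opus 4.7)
The plan is to prove both implications by induction on $\ell(w)$ with a secondary induction on the rank of $W$, mirroring the recursive definition of $c$-sortable elements. The base case $w = e$ is trivial: the identity has empty inversion set, which satisfies case (i), (ii), or (iii) of $c$-alignment for every rank-$2$ subsystem. For the inductive step, fix a simple reflection $s$ that is initial in $c$, and analyze separately the cases $s \not\le w$ and $s \le w$, using that $c$-sortability is characterized recursively by passage to $W_{\br{s}}$ with Coxeter element $sc$ (in the first case) or by the cover-reduction $w \mapsto sw$ with Coxeter element $scs$ (in the second).

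For the forward direction, assume $w$ is $c$-sortable. If $s \not\le w$, then $w \in W_{\br{s}}$ and is $sc$-sortable, so by the rank induction $w$ is $sc$-aligned with respect to every rank-$2$ subsystem of $\Phi_{\br{s}}$. By \cref{OmegaRestriction}, $c$-alignment and $sc$-alignment agree on $\Phi_{\br{s}}$, so the only remaining subsystems to check are those $\Phi'$ that contain $\alpha_s$; since $\alpha_s$ is simple, it must be a canonical root of any such $\Phi'$, and because $\inv(w) \subseteq \Phi_{\br{s}}$ the intersection $\inv(w) \cap \Phi'$ is forced into the ``$\alpha_s \not\in \inv(w)$'' branch of the appropriate alignment case. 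If $s \le w$, then $sw$ is $scs$-sortable, hence by the length induction $scs$-aligned. Transport alignment across $s$ using \cref{OmegaInvariance} (so that $\omega_c(\beta,\gamma) = \omega_{scs}(s\beta, s\gamma)$) and the identity $\inv(w) = s\cdot\inv(sw) \cup \set{\alpha_s}$: for a rank-$2$ subsystem $\Phi'$ not containing $\alpha_s$, the action of $s$ is a bijection on $\Phi'$ preserving canonical roots, and the alignment conditions transfer. For $\Phi'$ containing $\alpha_s$ as a canonical root, one checks directly, using the sign of $\omega_c(\alpha_s,\gamma)$ forced by $s$ being initial, that the condition holds.

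For the converse, assume $w$ is $c$-aligned with respect to every rank-$2$ subsystem. If $s \not\le w$, show that $\inv(w) \subseteq \Phi_{\br{s}}$: if some positive root $\beta \notin \Phi_{\br{s}}$ lay in $\inv(w)$, it would lie in a rank-$2$ subsystem $\Phi'$ containing $\alpha_s$, and since $\alpha_s$ is a canonical root of $\Phi'$, the $c$-alignment condition (combined with the sign of $\omega_c$ determined by $s$ being initial) would force $\alpha_s \in \inv(w)$, contradicting $s \not\le w$. Thus $w \in W_{\br{s}}$, and by \cref{OmegaRestriction} $w$ is $sc$-aligned in $\Phi_{\br{s}}$, hence $sc$-sortable by the rank induction, hence $c$-sortable. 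If $s \le w$, apply $s$ to $\inv(w)\setminus\set{\alpha_s}$ to obtain $\inv(sw)$, transport alignment via \cref{OmegaInvariance}, and conclude $sw$ is $scs$-aligned; the length induction gives $scs$-sortability of $sw$, hence $c$-sortability of $w$.

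The principal obstacle will be the rank-$2$ subsystems $\Phi'$ that contain the simple root $\alpha_s$: alignment conditions for these are not simply transported by the action of $s$ (since $s$ flips the sign of $\alpha_s$), and the analysis must use the specific fact that $s$ is \emph{initial} in $c$ to pin down the sign of $\omega_c(\alpha_s,\gamma)$ for the other canonical root $\gamma$ of $\Phi'$, and then combine this with the structural constraint imposed by $s \le w$ or $s \not\le w$. Handling these subsystems uniformly—both in the forward direction (where they are the subsystems not accounted for by the parabolic induction) and in the reverse direction (where they are the obstruction to concluding $w \in W_{\br{s}}$)—is where the bulk of the case analysis lives.
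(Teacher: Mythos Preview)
The paper does not prove this theorem at all: it is quoted as ``a weaker version of \cite[Theorem~4.3]{typefree}'' and used as a black box. So there is no paper-proof to compare against; your inductive scheme (on $\ell(w)$ and rank, splitting on $s\le w$ versus $s\not\le w$ for $s$ initial) is indeed the strategy used in \cite{typefree}, and the forward direction goes essentially as you sketch.

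There is, however, a genuine gap in your converse direction. In the case $s\not\le w$ you claim that any $\beta\in\inv(w)\setminus\Phi_{\br{s}}$ yields a contradiction via the rank-$2$ subsystem $\Phi'$ containing $\alpha_s$ and $\beta$, because ``the $c$-alignment condition would force $\alpha_s\in\inv(w)$.'' This fails precisely when $\beta$ is the \emph{other} canonical root $\gamma$ of $\Phi'$: then $\inv(w)\cap\Phi'=\{\gamma\}$ is one of the allowed configurations in case~(ii), and no contradiction results. And $\gamma$ can have $\alpha_s$ in its support. For instance, in type $B_3$ with simple roots $\alpha_1,\alpha_2,\alpha_3$ ($\alpha_3$ short), the rank-$2$ subsystem through $\alpha_2$ and $\alpha_1+2\alpha_2+2\alpha_3$ has canonical roots $\alpha_2$ and $\gamma=\alpha_1+\alpha_2+2\alpha_3$; here $\gamma\notin\Phi_{\br{s_2}}$, so your one-subsystem argument does not rule out $\gamma\in\inv(w)$ while $\alpha_2\notin\inv(w)$.

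Closing this gap requires more than you indicate: one must either choose $\beta$ minimally (e.g.\ of minimal height in $\inv(w)\setminus\Phi_{\br{s}}$) and then, when $\beta=\gamma$, pass to a \emph{second} rank-$2$ subsystem (through $\gamma$ and a different simple root) and invoke alignment together with biconvexity (\cref{biconv}) to force a smaller offending root---or use the reduced-word analysis carried out in \cite{typefree}. Your final paragraph correctly flags the subsystems containing $\alpha_s$ as the crux, but the specific mechanism you propose does not suffice.
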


\begin{lemma}\label{a cap inv}  
Suppose $j$ is a $c$-sortable join-irreducible element with ${\cov(j)=\set{t}}$.
For $\gamma\in\cut(\beta_t)$, the following are equivalent.
\begin{enumerate}[label=\rm(\roman*), ref=\roman*] 
\item $\gamma\in\inv(j)$.  \label{gam inv}
\item $\omega_c(\gamma,\beta_t)>0$.  \label{om gam bet}
\item $\omega_c(\gamma,\gamma')>0$, where $\gamma'$ is the other canonical root in the rank-$2$ subsystem containing $\gamma$ and $\beta_t$.  \label{om gam prime}
\end{enumerate}
\end{lemma}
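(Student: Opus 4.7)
The plan is to prove the three equivalences by splitting into two parts: the equivalence of (ii) and (iii) is pure linear algebra, while (i) is tied to (iii) via the $c$-alignment characterization of sortable elements (\cref{sortable is aligned}) together with the inversion-set combinatorics of \cref{biconv}.

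First I would handle (ii)$\iff$(iii). Since $\gamma$ cuts $\beta_t$, the roots $\gamma$ and $\gamma'$ are the canonical roots of the rank-$2$ subsystem $\Phi'$ containing $\beta_t$, and $\beta_t$ is a non-canonical positive root in $\Phi'$. Thus $\beta_t=a\gamma+b\gamma'$ with $a,b>0$, and the skew-symmetry of $\omega_c$ gives
\[
\omega_c(\gamma,\beta_t)=a\,\omega_c(\gamma,\gamma)+b\,\omega_c(\gamma,\gamma')=b\,\omega_c(\gamma,\gamma'),
\]
so the two signs coincide.

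Next I would prove (iii)$\Rightarrow$(i). Since $j$ is $c$-sortable, \cref{sortable is aligned} says $j$ is $c$-aligned with respect to $\Phi'$. Because $\beta_t\in\inv(j)\cap\Phi'$ and $\beta_t$ is not canonical, \cref{biconv}(1,2) guarantee $\inv(j)$ contains at least one of $\gamma,\gamma'$ but cannot equal either singleton. If $\omega_c(\gamma,\gamma')>0$, the $c$-alignment clause forces either $\gamma'\notin\inv(j)$ (in which case \cref{biconv}(2) applied to $\beta_t$ forces $\gamma\in\inv(j)$) or $\inv(j)\supseteq\Phi'$ (in which case also $\gamma\in\inv(j)$). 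Either way, (i) holds.

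For the contrapositive (i)$\Rightarrow$(iii), suppose $\omega_c(\gamma,\gamma')\le 0$. The case $\omega_c(\gamma,\gamma')=0$ cannot occur: by the nondegeneracy remark accompanying \cref{sort omega}, this would force $K(\gamma,\gamma')=0$, making $\Phi'$ of type $A_1\times A_1$, so $\beta_t\in\{\gamma,\gamma'\}$, contradicting that $\beta_t$ is non-canonical. So $\omega_c(\gamma,\gamma')<0$, and $c$-alignment forces $\gamma\notin\inv(j)$ or $\inv(j)\supseteq\Phi'$. The main obstacle is ruling out the latter; I would do so using the cover-reflection hypothesis. Since $t=\cov(j)$, $\inv(j)\setminus\{\beta_t\}=\inv(tj)$ is biconvex. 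But if $\inv(j)\supseteq\Phi'$, then $\gamma$ and $\gamma'$ both lie in $\inv(j)\setminus\{\beta_t\}$, and \cref{biconv}(3) applied to the positive combination $\beta_t=a\gamma+b\gamma'$ forces $\beta_t\in\inv(j)\setminus\{\beta_t\}$, a contradiction. Hence $\gamma\notin\inv(j)$, i.e.\ (i) fails, completing the cycle of implications.
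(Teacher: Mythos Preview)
Your overall strategy---using \cref{sortable is aligned} ($c$-alignment) together with \cref{biconv}---is sound and gives a clean alternative to the paper's argument, which instead works directly with the inversion sequence of the $c$-sorting word via \cref{sort omega}.  The equivalence (ii)$\iff$(iii) and the implication (iii)$\Rightarrow$(i) are fine, as is the ruling-out of $\inv(j)\supseteq\Phi'$ in the last step (using that $\inv(j_*)=\inv(j)\setminus\{\beta_t\}$ is itself an inversion set, so \cref{biconv}(3) applies).

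There is, however, a genuine gap in your treatment of the case $\omega_c(\gamma,\gamma')=0$.  You invoke ``the nondegeneracy remark accompanying \cref{sort omega}'' to conclude $K(\gamma,\gamma')=0$, but \cref{sort omega} only says this for a pair of roots \emph{both appearing in the inversion sequence} of a single $c$-sortable element, with the first preceding the second.  At this point in your argument you have not established that either of $\gamma,\gamma'$ lies in $\inv(j)$ (indeed, you are trying to show $\gamma\notin\inv(j)$), so the hypothesis of \cref{sort omega} is not available.  The fix is immediate and uses the very tool you are already exploiting: case~(i) of the $c$-alignment definition says that if $\omega_c(\gamma,\gamma')=0$ then $\inv(j)\cap\Phi'\subseteq\{\gamma,\gamma'\}$, which is impossible since $\beta_t\in\inv(j)\cap\Phi'$ is neither $\gamma$ nor~$\gamma'$.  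With this correction your proof is complete.

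By contrast, the paper proves (i)$\Rightarrow$(ii)/(iii) directly: assuming $\gamma=\beta_i\in\inv(j)$ and using that $\beta_t=\beta_k$ is the \emph{last} root in the inversion sequence (by join-irreducibility), \cref{sort omega} gives $\omega_c(\gamma,\beta_t)\ge0$; when equality holds it produces an auxiliary root $\gamma''$ between $\gamma$ and $\beta_t$ to force $\omega_c(\gamma,\gamma')>0$.  Your alignment-based route avoids this auxiliary-root step, at the cost of needing the extra observation about $\inv(j_*)$ to exclude $\inv(j)\supseteq\Phi'$.
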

\begin{proof}
Write $a_1\cdots a_k$ for the $c$-sorting word for $j$ and let $\beta_1,\ldots,\beta_k$ be the inversion sequence of $a_1\cdots a_k$.
Crucially, since $j$ is join-irreducible, $\beta_t=\beta_k$.

Since $\gamma\in\cut(\beta_t)$, we know that $\gamma$ is a canonical root in the rank-$2$ subsystem~$\RS'$ containing $\gamma$ and $\beta_t$ and that $\beta_t$ is not a canonical root in $\RS'$.
Thus, if $\gamma'$ is the other canonical root in $\RS'$, then $\beta_t$ is a positive linear combination of $\gamma$ and $\gamma'$.
Since $\omega_c$ is skew-symmetric, we see that \eqref{om gam bet} and \eqref{om gam prime} are equivalent.

If $\omega_c(\gamma,\gamma')>0$, then since $\beta_t$ is a positive combination of $\gamma$ and $\gamma'$, we see that $\omega_c(\beta_t,\gamma')>0$ and thus $\omega_c(\gamma',\beta_t)<0$.
Since $\beta_t=\beta_k$, Proposition~\ref{sort omega} implies that $\gamma'$ is not in the inversion sequence $\beta_1,\ldots,\beta_k$.
But then Lemma~\ref{biconv} implies that $\gamma\in\inv(j)$.
We see that \eqref{om gam prime} implies \eqref{gam inv}.

Suppose $\gamma\in\inv(j)$.
Then $\gamma=\beta_i$ for some $i\le k$.
Since $\gamma\in\cut(\beta_t)$, $i<k$ and $\gamma$ is a canonical root in the rank-$2$ subsystem $\RS'$ containing $\gamma$ and $\beta_t$.
Moreover, $\beta_t$ is not a canonical root in $\RS'$.
In particular, $\RS'$ has at least $3$ positive roots, and thus, if $\gamma'$ is the other canonical root in $\RS'$, we have $K(\gamma,\gamma')\neq0$.
Proposition~\ref{sort omega} says that $\omega_c(\gamma,\beta_t)\ge0$.
If $\omega_c(\gamma,\beta_t)>0$, then we have established \eqref{om gam bet}.
If $\omega_c(\gamma,\beta_t)=0$, then Proposition~\ref{sort omega} also says that $K(\gamma,\beta_t)=0$.
Since $K(\gamma,\gamma')\neq0$, there exists a root $\gamma''$ in the positive linear span of $\gamma$ and $\beta_t$ with $K(\gamma,\gamma'')\neq0$.
Lemma~\ref{biconv} implies that $\gamma''$ appears between $\gamma$ and $\beta_t$ in the inversion sequence $\beta_1,\ldots,\beta_k$, so Proposition~\ref{sort omega} implies that $\omega_c(\gamma,\gamma'')>0$.
Since $\gamma''$ is a positive combination of $\gamma$ and $\gamma'$, we conclude that $\omega_c(\gamma,\gamma')>0$, and we have established \eqref{om gam prime}.
We have shown that \eqref{gam inv} implies~\eqref{om gam bet} or \eqref{om gam prime}.
\end{proof}

As an immediate consequence of Proposition~\ref{shard ineq} and Lemma~\ref{a cap inv}, we have the following description of shards for $c$-sortable join-irreducible elements.
\begin{proposition}\label{shard ineq sort}
Suppose $c$ is a Coxeter element of $W$ and let $t$ be a reflection in~$W$.
If $j$ is a $c$-sortable join-irreducible element of $W$ and $\cov(j)=\set{t}$, then~$\Sh(j)$ is
\[\set{x\in V^*:\br{x,\beta_t}=0\text{ and }\br{x,\gamma}\le0\text{ for all }\gamma\in\cut(\beta_t)\text{ with }\omega_c(\gamma,\beta_t)>0}.\] 
\end{proposition}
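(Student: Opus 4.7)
The plan is straightforward: the statement is an immediate consequence of \cref{shard ineq} and \cref{a cap inv}, and I would present it as such.

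First, I would invoke \cref{shard ineq}, which gives
\[\Sh(j)=\set{x\in V^*:\br{x,\beta_t}=0\text{ and }\br{x,\gamma}\le0\text{ for all }\gamma\in\cut(\beta_t)\cap\inv(j)}.\]
This is a general fact about join-irreducible elements of $W$ and requires no sortability.

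Next, I would use the hypothesis that $j$ is $c$-sortable to replace the condition ``$\gamma\in\cut(\beta_t)\cap\inv(j)$'' with an intrinsic condition on $\gamma$ that does not refer to $j$. Specifically, by the equivalence of \eqref{gam inv} and \eqref{om gam bet} in \cref{a cap inv}, for any $\gamma\in\cut(\beta_t)$ we have $\gamma\in\inv(j)$ if and only if $\omega_c(\gamma,\beta_t)>0$. Substituting this equivalence into the inequality description from \cref{shard ineq} yields exactly the desired formula.

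Since the proof is purely a combination of two earlier results, there is no real obstacle; the substantive work is already contained in \cref{a cap inv}, whose proof relied on \cref{sort omega} and \cref{biconv}. The point of recording \cref{shard ineq sort} separately is that it expresses the shard $\Sh(j)$ entirely in terms of the root $\beta_t$ and the form $\omega_c$, with no reference to inversion sets of $j$; this is the form in which the description will be used later (in particular, in matching the shard picture to the cones $\d_\beta$ from \eqref{d beta} appearing in \cref{easy scat}).
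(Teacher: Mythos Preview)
Your proposal is correct and matches the paper's own treatment exactly: the paper states this proposition as ``an immediate consequence of \cref{shard ineq,a cap inv}'' without further elaboration, which is precisely the two-step substitution you describe.
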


As another consequence of Lemma~\ref{a cap inv}, we have the following relationship between shards for $c$-sortable and $c^{-1}$-sortable join-irreducible elements.

\begin{proposition}\label{Sigma j j'}  
Suppose $c$ is a Coxeter element of $W$ and let $t$ be a reflection in $W$.
If there exist both a $c$-sortable join-irreducible element $j$ with $\cov(j)=\set{t}$ and a $c^{-1}$-sortable join-irreducible element $j'$ with $\cov(j')=\set{t}$, then $\Sh(j')=-\Sh(j)$.
\end{proposition}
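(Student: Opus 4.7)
The plan is to apply \cref{shard ineq sort} to both $j$ and $j'$ and then use the fact that reversing the Coxeter element negates the form $\omega_c$, together with a simple observation about the two canonical roots of a rank-$2$ subsystem that cuts $\beta_t^\perp$.

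First I would invoke \cref{shard ineq sort} twice. For $j$ it gives
\[\Sh(j)=\set{x\in V^*:\br{x,\beta_t}=0,\, \br{x,\gamma}\le0\,\forall\,\gamma\in\cut(\beta_t)\text{ with }\omega_c(\gamma,\beta_t)>0},\]
and for $j'$, since $j'$ is $c^{-1}$-sortable, the same proposition applied with $c^{-1}$ in place of $c$ gives the same description but with $\omega_{c^{-1}}$ in place of $\omega_c$. Since the definition of $\omega_c$ is directly antisymmetric in the order of the letters of $c$, we have $\omega_{c^{-1}}=-\omega_c$ (a fact already used in the proof of \cref{out cinv}). Hence
\[\Sh(j')=\set{x\in V^*:\br{x,\beta_t}=0,\, \br{x,\gamma}\le0\,\forall\,\gamma\in\cut(\beta_t)\text{ with }\omega_c(\gamma,\beta_t)<0}.\]

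Next I would observe that the cutting roots come in canonical pairs: if $\gamma\in\cut(\beta_t)$, then $\gamma$ is a canonical root of some rank-$2$ subsystem $\RS'$ containing $\beta_t$ (with $\beta_t$ not canonical), and the other canonical root $\gamma'$ of $\RS'$ also lies in $\cut(\beta_t)$. Writing $\beta_t=a\gamma+b\gamma'$ with $a,b>0$ and using skew-symmetry of $\omega_c$, we have $\omega_c(\gamma,\beta_t)=b\,\omega_c(\gamma,\gamma')$ and $\omega_c(\gamma',\beta_t)=-a\,\omega_c(\gamma,\gamma')$, so the two scalars have opposite strict signs (they are nonzero because $\gamma\in\cut(\beta_t)$ implies $\omega_c(\gamma,\gamma')\ne 0$ by the equivalence (ii)$\Leftrightarrow$(iii) in \cref{a cap inv}, which applies since such a $j$ exists by hypothesis). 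Thus the defining inequalities of $\Sh(j)$ and $\Sh(j')$ select opposite canonical roots from each rank-$2$ subsystem that cuts $\beta_t^\perp$.

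Finally, I would convert one description into the other on the hyperplane $\beta_t^\perp$. On that hyperplane the relation $a\br{x,\gamma}+b\br{x,\gamma'}=0$ makes $\br{x,\gamma}\le 0$ equivalent to $\br{x,\gamma'}\ge 0$. Consequently, the inequality $\br{x,\gamma}\le 0$ in the description of $\Sh(j')$ (with $\omega_c(\gamma,\beta_t)<0$) is the same as $\br{x,\gamma'}\ge 0$ for the paired root $\gamma'$ satisfying $\omega_c(\gamma',\beta_t)>0$. Replacing $x$ by $-x$ in the description of $\Sh(j)$ then produces exactly the description of $\Sh(j')$, giving $\Sh(j')=-\Sh(j)$.

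The argument is essentially bookkeeping once one has \cref{shard ineq sort,a cap inv}; the main thing to take care of is that the canonical-root pairing $\gamma\leftrightarrow\gamma'$ is an involution on the set of roots in $\cut(\beta_t)$ relevant to each defining inequality, and that the sign swap induced by passing from $\omega_c$ to $\omega_{c^{-1}}$ matches the sign swap produced by $x\mapsto -x$ on $\beta_t^\perp$. No deeper obstacle arises, because \cref{a cap inv} guarantees that the relevant $\omega_c$-values are strictly nonzero precisely when $\gamma$ cuts $\beta_t$.
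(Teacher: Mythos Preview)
Your proposal is correct and follows essentially the same route as the paper's proof: apply \cref{shard ineq sort} to both $j$ and $j'$, use $\omega_{c^{-1}}=-\omega_c$, pair each $\gamma\in\cut(\beta_t)$ with its companion canonical root $\gamma'$, and then convert $\br{x,\gamma'}\le 0$ into $\br{x,\gamma}\ge 0$ on $\beta_t^\perp$. One small remark: your appeal to the equivalence (ii)$\Leftrightarrow$(iii) of \cref{a cap inv} alone does not quite force $\omega_c(\gamma,\gamma')\neq 0$; what actually gives strict nonvanishing is that exactly one of $\gamma,\gamma'$ lies in $\inv(j)$ (by \cref{biconv} and the join-irreducibility of $j$), whence (i)$\Leftrightarrow$(iii) forces $\omega_c(\gamma,\gamma')>0$ for that one---but this is harmless, since even if both were zero the pair would simply contribute no inequality to either shard and the conclusion would be unaffected.
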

\begin{proof}
Consider the inequalities of Proposition~\ref{shard ineq sort} for both $\Sh(j)$ and $\Sh(j')$.
For any $\gamma\in\cut(\beta_t)$, take $\gamma'$ as in Lemma~\ref{a cap inv}.
Since $\omega_{c^{-1}}(\gamma,\gamma')=-\omega_c(\gamma,\gamma')$, we see that the inequalities for $\Sh(j')$ are obtained from the inequalities for $\Sh(j)$ by replacing each $\gamma$ by $\gamma'$.
Since $\gamma$ and $\gamma'$ are the canonical roots in a rank-$2$ subsystem in which $\beta_t$ is not a canonical root, for $x\in\beta_t^\perp$, the inequality $\br{x,\gamma'}\le0$ is equivalent to $\br{x,\gamma}\ge0$.
We conclude that $\Sh(j')=-\Sh(j)$.
\end{proof}

\begin{proposition}\label{j j' sort}
Suppose $W$ is of finite or affine type. 
Suppose $j$ is join-irreducible and $c$-sortable and $\Sh(j)$ intersects the interior of $-\Tits(A)$.
Then there exists a $c^{-1}$-sortable element $j'$ such that $\Sh(j)=-\Sh(j')$.
\end{proposition}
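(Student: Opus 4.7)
The plan is to locate a $c^{-1}$-sortable join-irreducible element $j'$ with $\cov(j')=\{t\}=\cov(j)$ and then apply \cref{Sigma j j'} to conclude $\Sh(j)=-\Sh(j')$. When $W$ is finite the hypothesis is automatic since $-\Tits(A)=V^*$, and the finite-type assertion of \cref{ji ref} provides the unique such $j'$; note that the $j'$ produced this way is automatically join-irreducible with cover reflection $t$.

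For $W$ affine, the strategy is to invoke \cref{DC Phic}: exhibiting $j'$ is equivalent to producing two adjacent maximal cones of $-\F_{c^{-1}}$ sharing a facet in $\beta_t^\perp$. I would choose a generic point $p\in\Sh(j)$ in the interior of $-\Tits(A)$, lying on $\beta_t^\perp$ but on no other reflecting hyperplane of $W$; such a $p$ exists because the hypothesis forces $\Sh(j)\cap\{x:\langle x,\delta\rangle<0\}$ to be an $(n-1)$-dimensional relatively open subset of $\beta_t^\perp$, while the finitely many other reflecting hyperplanes that meet $\beta_t^\perp$ do so in lower-dimensional loci (and $\delta^\perp\cap\beta_t^\perp$ is likewise codimension $2$). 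By \cref{DFc complement}, $p$ lies in $|\DF_c|$, and since $p$ lies in the interior of $-\Tits(A)$, every maximal cone of $\DF_c$ meeting a neighborhood of $p$ must lie in $-\F_{c^{-1}}$. The Coxeter fan has exactly two cones $-w_1D$ and $-w_2D=-tw_1D$ meeting at $p$ along a facet contained in $\beta_t^\perp$.

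The main obstacle is showing that these two negated cones belong to distinct chambers of $-\F_{c^{-1}}$, so that $p$ lies on a genuine codim-$1$ face of $-\F_{c^{-1}}$ rather than in the interior of a single chamber cut transversally by $\beta_t^\perp$. I plan to handle this by transporting the $\F_c$-facet in $\beta_t^\perp$ on the $\Tits(A)$-side---which exists by \cref{wall cov} applied to $\Cone_c(j)$, since $t\in\cov(j)$---across $\delta^\perp$ to the desired $-\F_{c^{-1}}$-facet. A generic path inside $\Sh(j)$ from a point of $\Cone_c(j)\cap\beta_t^\perp$ to $p$ must cross $\delta^\perp$ at a codim-$2$ face $G$ of $\DF_c$, and $G\subseteq\d_\infty$ by \cref{2face in boundary}. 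The local structure of $\DF_c$ around $G$ is an infinite path of maximal cones by \cref{finite type interior}, and this path carries the $\F_c$-facet in $\beta_t^\perp$ adjacent to $\Cone_c(j)$ through $\d_\infty$ into a $-\F_{c^{-1}}$-facet in $\beta_t^\perp$ on the opposite side of $\delta^\perp$, producing the two distinct $-\F_{c^{-1}}$-chambers near $p$. The join-irreducible element $j'$ then supplied by \cref{DC Phic} has cover reflection $t$, and \cref{Sigma j j'} delivers $\Sh(j)=-\Sh(j')$.
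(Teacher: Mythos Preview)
Your finite-type case is fine, and the overall strategy for the affine case---reduce via \cref{DC Phic} to producing two adjacent maximal cones of $-\F_{c^{-1}}$ sharing a facet in $\beta_t^\perp$---is reasonable.  But the ``transport'' paragraph, which carries the whole argument, has two genuine gaps.

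First, the claim that a generic path in $\Sh(j)$ crosses $\delta^\perp$ at a codimension-$2$ face $G$ of $\DF_c$ is not justified.  The crossing point lies in $\beta_t^\perp\cap\delta^\perp$, but it may lie in the relative interior of $\d_\infty$, which is \emph{outside} $|\DF_c|$; nothing you have said rules this out.  (A side remark: ``finitely many other reflecting hyperplanes'' is wrong in affine type---there are infinitely many---though local finiteness in the interior of $-\Tits(A)$ does salvage the existence of a generic~$p$.)

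Second, and more seriously, even granting such a $G$, the phrase ``carries the $\F_c$-facet in $\beta_t^\perp$ \ldots\ through $\d_\infty$ into a $-\F_{c^{-1}}$-facet in $\beta_t^\perp$'' does not correspond to any actual mechanism.  Around a codimension-$2$ face $G\subseteq\beta_t^\perp\cap\delta^\perp$, the facet hyperplanes between consecutive maximal cones in the infinite path of \cref{finite type interior} are orthogonal to the real roots of the rank-$2$ subsystem containing $\beta_t$ and $\delta$, and each such root contributes exactly one facet.  In particular there is a \emph{single} facet in $\beta_t^\perp$ adjacent to $G$, lying on one side of $\delta^\perp$; if it is on the $\Tits(A)$ side (which is where your path approaches from), there is no second one on the $-\Tits(A)$ side to receive the ``transport''.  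So the argument does not produce the required pair of $-\F_{c^{-1}}$-cones.

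The paper's proof is entirely different: it is an induction on $\ell(j)$ and on the rank of $W$, following the standard recursion for $c$-sortable elements.  For $s$ initial in $c$ with $s<j$, one applies the inductive hypothesis to the $scs$-sortable element $sj$ to obtain an $(scs)^{-1}$-sortable element $u$ with $sts\in\cov(u)$, upgrades $u$ so that additionally $s\le u$ (using \cref{exists j}, a join with $s$, and \cite[Lemma~2.8]{sortcamb}), and then observes that $su$ is $c^{-1}$-sortable with $t\in\cov(su)$.  This combinatorial/inductive route avoids the geometric obstruction above entirely.
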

\begin{proof}
Let $t$ be such that $\cov(j)=\set{t}$.
By Proposition~\ref{Sigma j j'}, it is enough to show that there exists a $c^{-1}$-sortable join-irreducible element $j'$ with $\cov(j')=\set{t}$.
Then by Proposition~\ref{exists j}, it is enough to show that there exists a $c^{-1}$-sortable element~$v$ with $t\in\cov(v)$.
Let $s$ be initial in $c$.

If $s\not\le j$, then $j\in W_\br{s}$, and thus $\beta_t$ is in a proper subsystem of $\RS$, so Proposition~\ref{c beta adj} implies that there exists a $c^{-1}$-sortable join-irreducible element $j'$ with $\cov(j')=\set{t}$.

If $s=j$, then $t=s$, and $j'=s$ is a join-irreducible $c^{-1}$-sortable element with $\cov(j')=\set{t}$.

If $s<j$, then $sj$ is a join-irreducible $scs$-sortable element with $\cov(sj)=\set{sts}$ and $sts\neq s$.
By Proposition~\ref{Sigma sj}, we have $\Sh(sj)\supseteq(s\cdot\Sh(j))$, and since the action of $s$ preserves $-\Tits(A)$, we see that $\Sh(sj)$ intersects the interior of $-\Tits(A)$.
By induction on $\ell(j)$, there exists an $(scs)^{-1}$-sortable element $u$ with $sts\in\cov(u)$.

We will show, more strongly, that there is an $(scs)^{-1}$-sortable element $u$ with $sts\in\cov(u)$ and also $s\le u$.
Since there exists an $(scs)^{-1}$-sortable element with $sts$ as a cover reflection, Proposition~\ref{exists j} says that there exists a join-irreducible $(scs)^{-1}$-sortable element $j''$ with $\cov(j'')=\set{sts}$.
If $s\not\le j''$, then $j''\in W_\br{s}$. 

We will show that the join $s\join j''$ exists.
Since $\Sh(j)$ intersects the interior of $-\Tits(A)$, there exists an element $w\in W$ such that $w(-D)$ is separated from $-D$ by the hyperplane $\beta_{sts}^\perp$ and intersects $\Sh(j)$ in a facet of $w(-D)$.
Thus $wD$ is separated from $D$ by $\beta_{sts}^\perp$ and intersects $-\Sh(j)$ in a facet of $wD$.
In other words, $w$ is an upper element of the shard $-\Sh(j)$.
Since $\Sh(sj)\supseteq(s\cdot\Sh(j))$, we see that $sw$ is an upper element of the shard $-\Sh(sj)$.
Since $j''$ is the unique minimal upper element of $-\Sh(sj)$, we have $sw\ge j''$.
Also $sw\ge s$, because $w\not\ge s$.
So $sw$ is an upper bound for $j''$ and $s$.
Thus $s\join j''$ exists.

Since $j''$ and $s$ are both $(scs)^{-1}$-sortable, $s\join j''$ is also $(scs)^{-1}$-sortable.  
Now \mbox{\cite[Lemma~2.8]{sort_camb}} says that if $x\in W_\br{s}$, then $\cov(x\join s)=\cov(x)\cup\set{s}$.
Thus $u=s\join j''$ is the desired $(scs)^{-1}$-sortable element with $sts\in\cov(u)$ and also $s\le u$.

Now $s$ is final in $scs$ and thus initial in $(scs)^{-1}$.
Since $u$ is $(scs)^{-1}$-sortable and $s\le u$, also $su$ is $c^{-1}$-sortable.
Because $sts\in\cov(u)$, there exists $r\in S$ such that $(sts)u=ur\covered u$.
Since $sts\neq s$, we have $s\le ur$, so $s(ur)\covered su$.
That is, $t(su)=(su)r\covered su$, and thus $t\in\cov(su)$.
We have constructed a $c^{-1}$-sortable element having $t$ as a cover reflection.
\end{proof}

The following proposition relates the shards associated to $c$- and $c^{-1}$-sortable elements to the fan $\nu_c(\Fan_c^\re(\RS))$ in affine type.
Once we have proved our main results, we will be able to improve on this proposition (as Corollary~\ref{explicit shards cor}).

\begin{proposition}\label{explicit shards}
Let $\RS$ be a root system of affine type and let $c$ be a Coxeter element of the corresponding Weyl group.
\begin{enumerate}[label=\bf\arabic*., ref=\arabic*]  
\item \label{explicit j}
If $j$ is a $c$-sortable join-irreducible element with $\cov(j)=\set{t}$, then every $(n-1)$-dimensional face of $\nu_c(\Fan_c(\RS))$ contained in $\beta_t^\perp$ is contained in $\Sh(j)$ and $\Sh(j)\cap(\Tits(A)\cup(-\Tits(A)))$ is covered by $(n-1)$-dimensional faces of $\nu_c(\Fan_c(\RS))$.
\item \label{explicit j'}
If $j'$ is a $c^{-1}$-sortable join-irreducible element with $\cov(j')=\set{t'}$, then every $(n-1)$-dimensional face of $\nu_c(\Fan_c(\RS))$ contained in $\beta_t^\perp$ is contained in $-\Sh(j')$ and $-\Sh(j')\cap(\Tits(A)\cup(-\Tits(A)))$ is covered by $(n-1)$-dimensional faces of $\nu_c(\Fan_c(\RS))$.
\end{enumerate}
\end{proposition}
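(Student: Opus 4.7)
The plan is to first reduce to analyzing facets of $\DF_c$ in $\beta_t^\perp$, then use the alignment characterization of sortable elements for the containment direction, and finally a dimension count for the covering direction, with Part~\ref{explicit j'} following by exchanging $c$ and $c^{-1}$ throughout.

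By \cref{nu thm,Affine g,d inf extreme}, the fan $\nu_c(\Fan_c(\RS))$ is the union of $\DF_c$ with a subdivision of the $(n-1)$-dimensional cone $\d_\infty\subseteq\delta^\perp$ by the images of the imaginary cluster cones.  Since $\delta$ and any real root $\beta_t$ are linearly independent, $\delta^\perp\cap\beta_t^\perp$ has codimension two in $V^*$, so every $(n-1)$-dimensional face of $\nu_c(\Fan_c(\RS))$ contained in $\beta_t^\perp$ must be a facet of $\DF_c$, and by \cref{facet F DF} such a facet arises from $\F_c$, from $-\F_{c^{-1}}$, or from both.

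For the containment direction of Part~\ref{explicit j}, consider a facet of $\F_c$ in $\beta_t^\perp$, which by \cref{wall cov} has the form $\Cone_c(v)\cap\beta_t^\perp$ for a $c$-sortable $v$ with $t\in\cov(v)$.  We verify the inequalities defining $\Sh(j)$ in \cref{shard ineq sort}: fix $\gamma\in\cut(\beta_t)$ with $\omega_c(\gamma,\beta_t)>0$ and let $\gamma'$ be the other canonical root of the rank-two subsystem $\RS'$ containing $\gamma$ and $\beta_t$; as in the proof of \cref{a cap inv}, $\omega_c(\gamma,\gamma')>0$.  Because $\beta_t\in\inv(v)\cap(\RS'\setminus\set{\gamma,\gamma'})$, \cref{biconv} forces $\inv(v)$ to meet $\set{\gamma,\gamma'}$, and then applying the alignment condition of \cref{sortable is aligned} to $v$ and $\RS'$ yields $\gamma\in\inv(v)$ in both possible cases.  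By \cref{pidown cone}, $\Cone_c(v)$ is a union of chambers $wD$ with $v\le w$, each of which has $\gamma\in\inv(v)\subseteq\inv(w)$, so $\Cone_c(v)\subseteq\set{x:\br{x,\gamma}\le0}$ and thus $\Cone_c(v)\cap\beta_t^\perp\subseteq\Sh(j)$.  If the facet instead comes from $-\F_{c^{-1}}$, then \cref{c beta adj} provides a $c^{-1}$-sortable join-irreducible $j'$ with $\cov(j')=\set{t}$, and \cref{Sigma j j'} gives $\Sh(j)=-\Sh(j')$; applying the argument above with $c$ replaced by $c^{-1}$ and negating completes the containment.

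For the covering direction of Part~\ref{explicit j}, take an upper element $w$ of $\Sh(j)$ and set $v:=\pidown^c(w)$.  Since $j\le w$ by \cref{U Sigma ji} and $j$ is $c$-sortable, $v\ge j$, so $\beta_t\in\inv(v)$.  The halfspace argument above (with $\gamma=\beta_t$) shows $\Cone_c(v)\subseteq\set{x:\br{x,\beta_t}\le0}$, making $F:=\Cone_c(v)\cap\beta_t^\perp$ a face of $\Cone_c(v)$.  Because $wD\subseteq\Cone_c(v)$ contributes the $(n-1)$-dimensional piece $wD\cap\beta_t^\perp$ to $F$ while $F\subseteq\beta_t^\perp$ has dimension at most $n-1$, we conclude $\dim F=n-1$, so $F$ is a facet of $\Cone_c(v)$; \cref{wall cov} then gives $t\in\cov(v)$, certifying $F$ as an $(n-1)$-dimensional face of $\F_c$ containing $wD\cap\beta_t^\perp$.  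Taking unions over $w\in\Upper(\Sh(j))$ covers $\Sh(j)\cap\Tits(A)$.  For $\Sh(j)\cap(-\Tits(A))$, if $\Sh(j)$ meets the interior of $-\Tits(A)$ then \cref{j j' sort} yields a $c^{-1}$-sortable $j'$ with $\Sh(j)=-\Sh(j')$, and the symmetric argument applied to $j'$ covers it after negation; otherwise $\Sh(j)\cap(-\Tits(A))=\set{0}$ is trivially covered.  The principal obstacle is the dimension count that produces $t\in\cov(\pidown^c(w))$, which rests on $\beta_t\in\inv(\pidown^c(w))$ via $j\le\pidown^c(w)$.
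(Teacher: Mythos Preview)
Your direct strategy—verifying the inequalities of \cref{shard ineq sort} via the alignment characterization—is appealing and genuinely different from the paper's induction on $\ell(j)$ and rank, but there is a real gap at the step ``$\Cone_c(v)$ is a union of chambers $wD$ with $v\le w$, so $\Cone_c(v)\subseteq\set{x:\br{x,\gamma}\le0}$.''  In affine type this premise is false: \cref{pidown cone} only identifies $\Cone_c(v)\cap\Tits(A)$ with that union of chambers, and Cambrian cones can (and by \cref{codim 1 delta} some must) cross $\delta^\perp$ into $-\Tits(A)$.  So your chamber argument only yields the inclusion on $\Tits(A)$.  The full halfspace inclusion happens to be true, but proving it requires essentially the same induction the paper runs (with \cref{ab bel} supplying the case $\gamma=\alpha_s$), so you have not avoided that work.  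This same gap recurs in your covering argument when you invoke the halfspace inclusion with $\gamma=\beta_t$ to force $t\in\cov(\pidown^c(w))$.

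For comparison, the paper's containment proof inducts directly on $\ell(j)$, using \cref{ab bel} and \cref{Cc final} to pin each facet above $\alpha_s^\perp$ before applying $s$.  For the covering direction the paper takes a much shorter route than yours: given $w\covers tw$ with $wD\cap twD\subseteq\Sh(j)$, it observes $j\le w$ but $j\not\le tw$ (since $\beta_t\notin\inv(tw)$), whence $\pidown^c(w)\neq\pidown^c(tw)$, so $wD$ and $twD$ lie in distinct adjacent maximal cones of $\F_c$—their shared facet is the required codimension-$1$ face, with no need to identify a cover reflection of $\pidown^c(w)$.  Finally, your reduction of Part~\ref{explicit j'} to Part~\ref{explicit j} by swapping $c\leftrightarrow c^{-1}$ yields a statement about $\nu_{c^{-1}}(\Fan_{c^{-1}}(\RS))$; you still need \cref{nu cinv} to identify this with $-\nu_c(\Fan_c(\RS))$, as the paper does.
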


For the proof of Proposition~\ref{explicit shards}, we will use the following less detailed version of \cite[Proposition~5.3]{typefree}.

\begin{proposition}\label{Cc final}
Let~$s$ be final in~$c$ and let $v$ be $c$-sortable with $v \geq s$.
Then $v=s\join v_{\br{s}}$ and $C_c(v)=\set{-\alpha_s}\cup C_{cs}(v_\br{s})$.
\end{proposition}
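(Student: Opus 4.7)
I propose to prove both assertions by joint induction on the rank of $W$ and on $\ell(v)$, using the recursive definition of $c$-sortability and of $C_c(v)$ via an initial letter $s'$ of $c$. The base case $v=s$ gives $v_{\br{s}}=e$ and $s\join v_{\br{s}}=s=v$, and the identity $C_c(s)=\{-\alpha_s\}\cup C_{cs}(e)$ follows by unwinding the $C_c$-recursion, using a reduced word $c=s'c''s$ (with $s'$ initial and $s$ final, possible via a topological sort of the Dynkin orientation defining $c$).

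The clean inductive case is when some initial letter $s'\ne s$ of $c$ satisfies $s'\not\le v$. Then $v\in W_{\br{s'}}$ is $s'c$-sortable, and the reduced word $c=s'c''s$ gives $s'c=c''s$ and $cs=s'c''$; in particular $s$ remains final in the Coxeter element $s'c$ of $W_{\br{s'}}$, and $s'$ is initial in the Coxeter element $cs$ of $W_{\br{s}}$. Rank-induction applied in $W_{\br{s'}}$ yields $v=s\join v_{\br{s}}$ (parabolic joins agree with ambient joins) and $C_{s'c}(v)=\{-\alpha_s\}\cup C_{s'cs}(v_{\br{s}})$. Combining this with $C_c(v)=C_{s'c}(v)\cup\{\alpha_{s'}\}$ (the outer recursion) and $C_{cs}(v_{\br{s}})=C_{s'cs}(v_{\br{s}})\cup\{\alpha_{s'}\}$ (a second recursion inside $W_{\br{s}}$, valid because $s'\not\le v_{\br{s}}$) yields the desired identity. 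The marginal case $s'=s$ arises only when $s$ is both initial and final in $c$, hence commutes with every other generator of $W$; then $v=s\cdot v_{\br{s}}$ as a commuting product (which is the join), and the recursion $C_c(v)=s\cdot C_c(sv)=s\cdot(C_{cs}(v_{\br{s}})\cup\{\alpha_s\})$ together with the fact that $s$ fixes every root in $\RS_{\br{s}}$ gives the claim.

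The main obstacle is the remaining case: every initial letter $s'\ne s$ of $c$ satisfies $s'\le v$. Here the recursion sends $v\mapsto s'v$ and $c\mapsto s'cs'$, but $s$ need not remain final in $s'cs'$ (as already fails in type $A_2$, where conjugating $c=s_1s_2$ by $s_1$ gives $s_2s_1$, in which $s_2$ is no longer final). To handle this I would mirror the argument of \cite[Proposition~5.3]{typefree}, of which the present statement is explicitly said to be a less-detailed version: first verify $s'v\ge s$ by a careful analysis of the $c$-sorting word of $v$ (which begins with $s'$ and contains an occurrence of $s$ at a later position that survives the peel-off of the leading $s'$), and then exploit a strengthened inductive hypothesis that tracks how the recursion interacts with a distinguished final letter through iterated conjugations. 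Since conjugation by initial letters cycles through a finite set of reduced words for Coxeter elements of $W$ and $\ell(v)$ strictly decreases at each step, the process terminates at either the base case or the clean case after finitely many iterations.
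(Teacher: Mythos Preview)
The paper does not prove this proposition at all: it is stated as ``a less detailed version of \cite[Proposition~5.3]{typefree}'' and is simply quoted from that reference. So there is no proof in the paper to compare against, only a citation.

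Your attempt is therefore already more ambitious than what the paper does. The easy cases you handle (the base case $v=s$, the case where some initial $s'\ne s$ has $s'\not\le v$, and the degenerate case where $s$ is isolated) are argued correctly.

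However, your treatment of the remaining case---where every initial $s'\ne s$ satisfies $s'\le v$---is not a proof but a plan. You correctly diagnose the obstacle (after conjugating $c\mapsto s'cs'$, the letter $s$ need not remain final, so the statement as phrased cannot be invoked inductively), but your resolution is to ``mirror the argument of \cite[Proposition~5.3]{typefree}'' with an unspecified ``strengthened inductive hypothesis''. That is precisely the content you would need to supply, and you have not supplied it. The termination remark at the end does not help: the issue is not termination but that after one step you are no longer in the setting of the proposition you are proving. In effect, for the substantive case you defer to the same source the paper cites, so your proposal reduces to the paper's own citation rather than constituting an independent proof.
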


We will also use the following proposition to show that the two assertions of Proposition~\ref{explicit shards} are equivalent.

\begin{proposition}\label{nu cinv}
$\nu_{c^{-1}}(\Fan_{c^{-1}}(\RS))=-\nu_c(\Fan_c(\RS))$.
\end{proposition}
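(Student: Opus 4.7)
The plan is to decompose each of $\Fan_c(\RS)$ and $\Fan_{c^{-1}}(\RS)$ into its real and imaginary sub-fans and verify the asserted equality on each piece separately.

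For the real sub-fans, combining \cref{nu thm} with \cref{Affine g} gives $\nu_c(\Fan_c^\re(\RS))=\DF_c$ and $\nu_{c^{-1}}(\Fan_{c^{-1}}^\re(\RS))=\DF_{c^{-1}}$. The definition $\DF_c=\F_c\cup(-\F_{c^{-1}})$, applied in both forms, immediately yields $\DF_{c^{-1}}=\F_{c^{-1}}\cup(-\F_c)=-\DF_c$, so the real parts of the two sides of the desired equation match.

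For the imaginary sub-fans, I would first argue that $\Fan_c(\RS)$ and $\Fan_{c^{-1}}(\RS)$ in fact share exactly the same collection of imaginary cones in $V$. Since $c$ and $c^{-1}$ share the same fixed space and generalized $1$-eigenspace (one checks $\gamma_{c^{-1}}=-\gamma_c$), we get $\eigenspace{c^{-1}}=\eigenspace{c}$, and consequently $\SimplesT{c^{-1}}=\SimplesT{c}$ and $\APT{c^{-1}}=\APT{c}$, as the $c$- and $c^{-1}$-orbits of any $c$-stable subset coincide. By \cite[Proposition~6.13]{affdenom} imaginary clusters lie inside $\APT{c}$, and inspection of \eqref{compat U} and \eqref{compat delta U} shows that the restriction of the compatibility degree to $\APT{c}\times\APT{c}$ is computed from $\SuppT$ and the adjacency predicate $\simpleT_j\in\SuppT(c\alpha)\cup\SuppT(c^{-1}\alpha)\setminus\SuppT(\alpha)$, both visibly symmetric under the interchange $c\leftrightarrow c^{-1}$; hence imaginary $c$-clusters and imaginary $c^{-1}$-clusters coincide as sets of roots. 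The supports match as well: from $\omega_{c^{-1}}=-\omega_c$ (immediate from the definition) and the description of $\d_\infty$ in \cref{DFc complement}, one gets $\d_\infty(c^{-1})=-\d_\infty(c)$.

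The hard part will be verifying that the two subdivisions of $-\d_\infty(c)$, obtained by applying $-\nu_c$ on one side and $\nu_{c^{-1}}$ on the other to the common collection of imaginary cones in $V$, coincide cone-by-cone. On the convex cone $\Cone(\SimplesT{c}\cup\{\delta\})$ where the imaginary cones live, both $\nu_c$ and $\nu_{c^{-1}}$ restrict to linear maps, but the identity $E_c+E_{c^{-1}}=K$ (immediate from the formula defining $E_c$) gives $\nu_{c^{-1}}(\beta)+\nu_c(\beta)=-K(\,\cdot\,,\beta)$, which does not vanish identically; so the two maps are not negatives of one another and the matching of subdivisions must arise from a nontrivial bijection on the imaginary cones themselves. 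I would handle this by decomposing $\RST{c}$ into its components, each of affine type $\tilde A$, on whose cycle of simple roots $c$ and $c^{-1}$ act as opposite rotations, and exploiting the resulting reflection symmetry of the cycle together with the degeneration of $K$ along $\reals\delta$ to intertwine $-\nu_c$ with $\nu_{c^{-1}}$ after applying this symmetry. Combined with the matching of the real parts, this yields the desired equality of fans.
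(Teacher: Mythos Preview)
Your real-part argument is correct and tidy: $\nu_c(\Fan_c^\re(\RS))=\DF_c$ by \cref{nu thm} and \cref{Affine g}, and $\DF_{c^{-1}}=\F_{c^{-1}}\cup(-\F_c)=-\DF_c$ straight from the definition.

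The imaginary part, however, has a genuine gap. You correctly set things up: the imaginary cones in $V$ are the same for $c$ and $c^{-1}$, and $\nu_c$ and $-\nu_{c^{-1}}$ are genuinely different linear maps on $\Cone(\SimplesT{c}\cup\{\delta\})$ because $E_c+E_{c^{-1}}=K$. So you are right that some nontrivial bijection on imaginary cones is needed. But your plan to produce it from a ``reflection symmetry of the cycle together with the degeneration of $K$ along $\reals\delta$'' is only a gesture: you have not exhibited the bijection, nor checked that it intertwines $-\nu_c$ with $\nu_{c^{-1}}$ cone by cone. A reflection of the cycle reverses the orientation that distinguishes $c$ from $c^{-1}$, which is promising, but turning that into the precise linear identity you need is real work you have not done.

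The paper sidesteps this entirely and does not split into real and imaginary parts. It first quotes from \cite{affdenom} that $\AP{c}=\AP{c^{-1}}$ and that $c$-compatibility equals $c^{-1}$-compatibility, so $\Fan_c(\RS)=\Fan_{c^{-1}}(\RS)$ globally. Then it uses that $\tau_c$ is an automorphism of this common fan (by \eqref{compat tau}) together with \cite[Lemma~9.3]{affdenom}, which identifies $\tau_c$ with $\nu_{c^{-1}}^{-1}\circ(-\nu_c)$ on $\AP{c}$. The proposition follows immediately. In short, the ``nontrivial bijection'' you are searching for is exactly $\tau_c$, and the intertwining property you would have to prove by hand is precisely the content of \cite[Lemma~9.3]{affdenom}.
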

\begin{proof}
\cite[Proposition~3.2]{affdenom} says that $\AP{c}=\AP{c^{-1}}$.
Furthermore, \cite[Proposition~4.10]{affdenom} says that $c$-compatibility and $c^{-1}$-compatibility coincide.
As an immediate consequence, $\Fan_c(\RS)=\Fan_{c^{-1}}(\RS)$.
Now \eqref{compat tau} implies that the map $\tau_c$ on~$\AP{c}$ induces a piecewise linear automorphism of $\Fan_c(\RS)$, and \cite[Lemma~9.3]{affdenom} says that~$\tau_c$ coincides with the map $\beta\mapsto\nu_{c^{-1}}^{-1}(-\nu_c(\beta))$.
\end{proof}

\begin{proof} [Proof of Proposition~\ref{explicit shards}]
By Proposition~\ref{nu cinv}, it is enough to prove Assertion~\ref{explicit j}.
Assertion~\ref{explicit j'} then follows by replacing $c$ with $c^{-1}$.

Suppose $j$ is a $c$-sortable join-irreducible element and $\cov(j)=\set{t}$.
Recall from Theorem~\ref{nu thm} that $\nu_c(\Fan_c^\re(\RS))$ equals $\DF_c$ and from Theorem~\ref{DFc complement} that the closure of the complement of $|\DF_c|$ is $\d_\infty$.
Since $\d_\infty\subseteq\delta^\perp=\partial(\Tits(A))$, we can replace $\nu_c(\Fan_c(\RS))$ by $\DF_c$ throughout Assertion~\ref{explicit j}.

Let $F$ be an $(n-1)$-dimensional face of $\DF_c$ contained in $\beta_t^\perp$.
We will prove by induction on $\ell(j)$ and on the rank of $W$ that $F\subseteq\Sh(j)$.
Let $s$ be initial in $c$.

If $s\not\le j$, then $j$ is an $sc$-sortable join-irreducible element of $W_\br{s}$.
Thus also $t\in W_\br{s}$. 
By Proposition~\ref{DFc para}, $F$ is contained in $(\Proj_\br{s})^{-1}(F')$ for some face $F'$ of $\F_{sc}$ of dimension~$n-2$ in~$V^*_\br{s}$.
By induction on rank, $F'$ is contained in $\Sh(j)^\br{s}$.
Now Proposition~\ref{Sigma init para} implies that $F$ is contained in $\Sh(j)$.

If $s=j$, then $t=s$.
Also, $\Sh(j)=\beta_t^\perp$ by Lemma~\ref{Sigma s}. 
Thus $F\subseteq\Sh(j)$.

If $s<j$, then $j\not\in W_\br{s}$, so $t\not\in W_\br{s}$.
We will show that $F$ is above $\alpha_s^\perp$.
If $F$ is a face of $\F_c$ then it is $\Cone_c(v)\cap\beta_t^\perp$ for some $c$-sortable element $v$ with $t\in\cov(v)$.
In particular, $v$ is not in $W_\br{s}$, and thus $v\ge s$.
By Proposition~\ref{ab bel}, $\Cone_c(v)$ is above $\alpha_s^\perp$, so $F$ is above $\alpha_s^\perp$.
If $F$ is not a face of $\F_c$, then it is a face of $-\F_{c^{-1}}$.
In this case, by Proposition~\ref{ab bel}, if $F$ is not above $\alpha_s^\perp$ then it is $-\Cone_{c^{-1}}(u)\cap\beta_t^\perp$ for some $c^{-1}$-sortable element $u$ with $u\ge s$.
Then Proposition~\ref{Cc final} implies that every element of $C_{c^{-1}}(u)$ is either $-\alpha_s$ or is a root in $\RS_\br{s}$.
But then $\beta_t\not\in\pm C_{c^{-1}}(u)$, contradicting the fact that $-\Cone_{c^{-1}}(u)$ has a facet defined by $\beta_t^\perp$.
We conclude that $F$ is above $\alpha_s^\perp$ in either case.

Observe that since $s<j$, the element $sj$ is join-irreducible and has $\cov(sj)=\set{sts}$.
Thus by Proposition~\ref{recursive fan}.\ref{s DF} and induction on $\ell(j)$, $sF\subseteq\Sh(sj)$.
Now, by the equality in Proposition~\ref{Sigma sj}, since $F$ is above $\alpha_s^\perp$, $F$ is contained in $\Sh(j)$.

To complete the proof of Assertion~\ref{explicit j}, we show that $\Sh(j)\cap(\Tits(A)\cup(-\Tits(A)))$ is covered by $(n-1)$-dimensional faces of $\DF_c$.
Suppose $w$ and $x$ are elements of $W$ such that $wD\cap xD$ has codimension $1$ in $V^*$.
We must show that if $wD\cap xD$ is contained in $\Sh(j)$, then there exists a codimension-$1$ cone of $\F_c$ containing $wD\cap xD$.
We must also show that if $-(wD\cap xD)$ is contained in $\Sh(j)$, then there exists a codimension-$1$ cone of $-\F_{c^{-1}}$ containing $-(wD\cap xD)$.

First, suppose $wD\cap xD$ is contained in $\Sh(j)$.
Without loss of generality, $w\covers x$ in the weak order.
By Proposition~\ref{U Sigma ji}, $w\ge j$ in the weak order.
It is also immediate that $x\not\ge j$, since $\beta_t\not\in\inv(x)$.
Since $\pidown^c(w)$ is the unique largest $c$-sortable element below $w$, and similarly for $\pidown^c(x)$, we conclude that $\pidown^c(w)\neq\pidown^c(x)$.
Thus $wD$ and $xD$ are in different cones of $\F_c$.
These cones intersect in a codimension-$1$ face of $\F_c$ containing $wD\cap xD$.

Now suppose that $-(wD\cap xD)$ is contained in $\Sh(j)$.
In particular, $\Sh(j)$ intersects the interior of $-\Tits(A)$, so Proposition~\ref{j j' sort} says that there is a $c^{-1}$-sortable join-irreducible element $j'$ with $\Sh(j)=-\Sh(j')$.
Thus $-(wD\cap xD)$ is contained in $-\Sh(j')$.
By the case proved above (replacing $c$ by $c^{-1}$), there exists a codimension-$1$ face of $-\F_{c^{-1}}$ containing $-(wD\cap xD)$ and we are done.
\end{proof}

\subsection{Consistency}
In this section, we prove Proposition~\ref{consist}.
To reduce the number of cases, we will use the following basic fact about $\ScatTB$.
(See, for example, \cite[Proposition~2.4]{scatcomb}.)

\begin{proposition}\label{scat antip}
For any exchange matrix $B$, 
\[\Scat^T(-B)=\set{(-\d,f_\d((\hy')^\beta)):\,(\d,f_\d(\hy^\beta))\in\ScatTB},\]
where as usual $\hy_i=y_ix_1^{b_{1i}}\cdots x_n^{b_{ni}}$ while $\hy'_i=y_ix_1^{-b_{1i}}\cdots x_n^{-b_{ni}}$. 
\end{proposition}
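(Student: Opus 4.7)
The plan is to verify that $\D:=\set{(-\d,f_\d((\hy')^\beta)):\,(\d,f_\d(\hy^\beta))\in\ScatTB}$ satisfies the three properties that, by \cite[Theorem~1.12]{GHKK}, characterize $\Scat^T(-B)$ up to equivalence: it is a scattering diagram containing the prescribed initial walls, all remaining walls are outgoing, and it is consistent. Since the map $(\d,f_\d)\mapsto(-\d,f_\d((\hy')^\beta))$ is a bijection and preserves the property of having minimal support, this equivalence will in fact be the asserted equality of specific representatives.

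The first two requirements are essentially formal. The finiteness condition transfers because the scattering term $f_\d((\hy')^\beta)$ fails to be $1$ modulo $\m^{k+1}$ exactly when $f_\d(\hy^\beta)$ does. The initial walls $(\alpha_i^\perp,1+\hy_i)$ of $\ScatTB$ satisfy $\alpha_i^\perp=-\alpha_i^\perp$ and map under the bijection to $(\alpha_i^\perp,1+\hy'_i)$, which are exactly the initial walls of $\Scat^T(-B)$. For the outgoing condition, the skew-symmetric form $\omega$ for $-B$ is the negative of the form $\omega$ for $B$; hence a non-initial wall $(\d,f_\d)$ with primitive normal $\beta$ satisfies $\omega(\,\cdot\,,\beta)\notin\d$ if and only if $-\omega(\,\cdot\,,\beta)\notin-\d$, which is the outgoing condition of the wall $(-\d,f_\d((\hy')^\beta))$ in $\Scat^T(-B)$.

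The substantive step is consistency. Consider the $\k$-algebra automorphism $\sigma$ of $\k[x_1^{\pm1},\ldots,x_n^{\pm1}][[y_1,\ldots,y_n]]$ defined by $\sigma(x_i)=x_i^{-1}$ and $\sigma(y_i)=y_i$; this interchanges $\hy_i$ with $\hy'_i$ and sends $x^\lambda$ to $x^{-\lambda}$. Given a generic path $\gamma'$ for $\D$, set $\gamma(t):=-\gamma'(t)$; this is a generic path for $\ScatTB$ that crosses the antipodal walls in the same temporal order, but with each crossing direction relative to $\beta\ck$ reversed. Applying $\sigma$ to \eqref{theta def x}--\eqref{theta def hat y} and using the identity $\omega_{-B}=-\omega_B$, a direct calculation on the generators $x^\lambda$ and $\hy^\phi$ shows that each single wall-crossing automorphism $\p'_{\gamma',-\d}$ of $\D$ is conjugated by $\sigma$ to the wall-crossing automorphism $\p_{\gamma,\d}$ of $\ScatTB$; the two sign flips (one from $\gamma'\leftrightarrow\gamma$ reversing the crossing direction, one from $\omega_{-B}=-\omega_B$) exactly cancel. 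Because $\sigma$-conjugation respects composition, taking the path-ordered product (as a limit of finite compositions modulo $\m^{k+1}$) gives $\sigma\circ\p_{\gamma',\D}\circ\sigma^{-1}=\p_{\gamma,\ScatTB}$. Since $\ScatTB$ is consistent and $\gamma(0),\gamma(1)$ are determined by $\gamma'(0),\gamma'(1)$, consistency of $\D$ follows.

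The main obstacle is simply the sign bookkeeping in the consistency step: one must check that the three interacting sign choices --- the antipodal map on $V^*$ reversing $\pm\beta\ck$ in \eqref{theta def x}--\eqref{theta def hat y}, the identity $\omega_{-B}=-\omega_B$, and the swap $\sigma(\hy^\phi)=(\hy')^\phi$, $\sigma(x^\lambda)=x^{-\lambda}$ --- cancel in exactly the right combination so that the conjugated wall-crossing is the original (rather than its inverse or some unrelated automorphism). Once this calculation is done on a single wall, the full statement assembles from the uniqueness of \cite[Theorem~1.12]{GHKK}.
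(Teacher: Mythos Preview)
Your argument is correct. The paper does not actually prove this proposition; it merely quotes it as a ``basic fact'' and refers to \cite[Proposition~2.4]{scatcomb}. So there is no proof in the paper to compare against, and your write-up supplies a valid direct proof via the GHKK characterization: initial walls map to initial walls, outgoing walls to outgoing walls (using $\omega_{-B}=-\omega_B$), and consistency transfers via conjugation by the involution $\sigma$ sending $x_i\mapsto x_i^{-1}$, $y_i\mapsto y_i$. The sign bookkeeping you flag as the main obstacle is handled correctly: on $x^\lambda$ the sign flip from $\gamma\mapsto-\gamma$ cancels against the sign flip from $\sigma(x^\lambda)=x^{-\lambda}$, and on the $\hy$-side the sign flip from $\gamma\mapsto-\gamma$ cancels against $\omega_{-B}=-\omega_B$.

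One small remark on your final sentence about equality versus equivalence: since $\ScatTB$ is only well-defined up to equivalence, the statement should really be read as ``applying the transformation to any representative of $\ScatTB$ yields a representative of $\Scat^T(-B)$,'' which is exactly what your three-step verification establishes. Your comment about minimal support being preserved is correct but not strictly needed for the statement as the paper uses it.
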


Given an $(n-2)$-dimensional intersection $F$ of walls of the scattering diagram $\DCScat(A,c)$, we use the phrase \newword{small loop about $F$} to denote a limit of closed curves about $F$, each passing exactly once through the relative interior of every wall containing $F$ and, in the limit, intersecting no other wall.
Such a limiting loop can be obtained by starting with a closed curve about $F$ close to a point in the relative interior of $F$ and performing a dilation that limits to that relative interior point.
A small loop about $F$ has a well defined path-ordered product, because for any $k\ge 1$, once the loop is dilated small enough, its path-ordered product relative to $\DCScat_k(A,c)$ does not change.
The path-ordered product also does not depend on the precise choice of a small loop (except up to reversing direction).

\begin{proof}[Proof of Proposition~\ref{consist}]  
The proof proceeds by checking consistency locally, as we now explain.
Let $F$ be an $(n-2)$-dimensional intersection of walls of $\DCScat(A,c)$.
It is enough to check that for $\gamma$ a small loop about $F$, the path-ordered product $\p_\gamma$ is trivial.
As pointed out in the proof of \cite[Proposition~4.7]{scatcomb} (the finite-type analog of this theorem), passing from the rank-$2$ case to a loop about $F'$ is a simple change of variables.
Specifically, the normal vectors to walls containing $F$ are all contained in the $2$-dimensional plane (in $V$) orthogonal to $F$.
Thus these normal vectors are roots in a rank-$2$ subsystem $\RS'$ of $\RS$.

By \cite[Proposition~2.2]{scatcomb} (which rephrases \cite[Proposition~2.5]{scatfan} in the conventions of this paper), to check consistency, it is enough to compute path-ordered products applied to monomials $x^\lambda$ for vectors $\lambda\in P$.
However, for a small loop about $F$, inspection of the wall-crossing automorphisms as described in \eqref{theta def x} and \eqref{theta def hat y} reveals that we can ignore the component of $\lambda$ that is in the linear span of $F$ (because that component is orthogonal to $\RS'$), and that we only ever consider the restriction of $\omega_c$ to the plane in $V$ orthogonal to $F$.
Thus it is enough in every case to verify that the rank-$2$ scattering diagram induced on a plane complementary to $F$ is consistent, relative to the restriction of $\omega_c$.

We will consider three cases for how $F$ is situated relative to $\pm\Tits(A)$:
Either $F\cap\Tits(A)$ is $(n-2)$-dimensional, and/or $F\cap(-\Tits(A))$ is $(n-2)$-dimensional, or $F$ is contained in the boundary of $\Tits(A)$.
In light of Proposition~\ref{scat antip}, applying the antipodal map and replacing $c$ by $c^{-1}$ turns the case where $F\cap(-\Tits(A))$ is $(n-2)$-dimensional into the case where $F\cap(\Tits(A))$ is $(n-2)$-dimensional, so we only argue one of these cases.

The case where $F\cap\Tits(A)$ is $(n-2)$-dimensional is analogous to the finite-type case treated in \cite[Proposition~4.7]{scatcomb}, and the argument given there extends without alteration to the affine case (because, although $\DF_c$ is infinite, the set of cones of $\DF_c$ containing $F$ is finite).

More specifically, if $F\cap\Tits(A)$ is $(n-2)$-dimensional, then $F$ contains an $(n-2)$-dimensional face $F'$ of $\F_c$.
Theorem~\ref{finite type interior} implies that a small loop $\gamma$ about $F'$ intersects finitely many $(n-1)$-dimensional faces of $\F_c$ (or equivalently of $\DF_c$).
By Propositions~\ref{c beta adj} and~\ref{explicit shards}, these intersections with faces of $\DF_c$ are precisely the intersections with the walls of $\DCScat(A,c)$.  
\cite[Theorem~9.8]{typefree} describes the star of $F$ in $\DF_c$ as a finite rank-$2$ Cambrian fan.
The rank-$2$ exchange matrix associated to this fan is the matrix describing the restriction of $\omega_c$ in the basis of canonical roots of $\RS'$.
(See \cite[Theorem~2.3(1)]{afframe}.)
Thus the consistency of a small loop about $F$ is equivalent to the consistency of rank-$2$ Cambrian scattering diagrams of finite type, just as in the proof of \cite[Proposition~4.7]{scatcomb}.

We have completed the argument when $F\cap\Tits(A)$ and/or $F\cap(-\Tits(A))$ is $(n-2)$-dimensional.
It remains to consider the case where $F$ is in $\partial\Tits(A)$.
Since each wall of $\DCScat(A,c)$ is defined by hyperplanes orthogonal to roots, in this case $F$ is a finite union of faces of the Coxeter fan in $\delta^\perp$ that are $(n-2)$-dimensional.
For simplicity, we will check consistency for a small loop about every $(n-2)$-dimensional face $G$ of the Coxeter fan.

Let $\RS'$ be the rank-$2$ subsystem of $\RS$ consisting of roots that are orthogonal to~$G$.
Write $\beta$ and $\gamma$ for the canonical roots of $\RS'$.
Without loss of generality, we can take $\beta\in\RSfin$.  
To see why, take any real root in $\RS'$.
This root is a positive scalar multiple of $\phi+k\delta$ for some $\phi\in\RSfin$ and $k\in\integers$.
We take $\beta$ to be whichever of $\phi$ or $-\phi$ is positive.
Since $\delta$ is orthogonal to $G$, so is $\beta$, and thus $\beta\in\RS'\cap\RSfin$.
Every root in $\RS'$ is a linear combination of $\beta$ and $\delta$, and every positive root in $\RS'\setminus\RSfin$ has a positive coefficient of $\alpha_\aff$ in its expansion in the basis of simple roots.
We conclude that $\beta$ is not a nonnegative linear combination of other positive roots in $\RS'$.
In other words, $\beta$ is a canonical root of $\RS'$.

If $\omega_c(\beta,\delta)=0$, then $\omega_c(\beta,\gamma)=0$.
In this case, Theorem~\ref{sortable is aligned} implies that there are at most two $c$-sortable join-irreducible elements whose unique cover reflection corresponds to a root in $\RS'$, and each such root is a canonical root in $\Phi'$.
Thus each associated shard is not cut along $\delta^\perp$, so if it contains $G$, it intersects $-\Tits(A)$.
Similarly, there are at most two $c^{-1}$-sortable join-irreducible elements whose unique cover reflection corresponds to a root in $\RS'$, and each is a canonical root in $\Phi'$.
Now Proposition~\ref{j j' sort} implies that any wall of $\DCScat(A,c)$ that contains $G$ and is orthogonal to a real root must contain $G$ in its relative interior.
But also, if $\d_\infty$ contains $G$, then $G$ is in the relative interior of $\d_\infty$ by Theorem~\ref{DFc complement}, because $\omega_c(\beta,\delta)=0$.
We see that a small loop about $G$ crosses $1$, $2$, or $3$ walls twice each, once in each direction.
Since the restriction  of $\omega_c$ to the span of $\RS'$ is zero, all wall crossings commute and thus the path-ordered product about the loop is trivial.

If $\omega_c(\beta,\delta)<0$, then $G$ is not in the relative interior of $\d_\infty$.
To see why, recall that $\d_\infty$ is the closure of the cone $V^*\setminus|\DF_c|$ described in Theorem~\ref{DFc complement}.
The root $-\beta$ is in $\RSfin^{\omega_c+}$ because ${\omega_c(\beta,\delta)<0}$, so $\d_\infty$ is contained in $\set{x\in\partial\Tits(A):\br{x,\beta}\ge0}$, while $G$ is contained in $\beta^\perp$.
Suppose $G$ is not contained in $\d_\infty$.
Then Lemma~\ref{G face} says that $G$ is contained in some face $G'$ of $\DF_c$, and we may as well take $G'$ minimal, so that $G$ is not in the relative boundary of $G'$.
If $G'$ is $(n-2)$-dimensional, then since $G$ is $(n-2)$-dimensional and in $\partial\Tits(A)$, also $G'$ is in $\partial\Tits(A)$.  
But now Theorem~\ref{2face in boundary} says that $G'$ is in $\d_\infty$.
This contradicts our supposition that $G$ is not in $\d_\infty$, and we conclude that $G'$ has dimension $>n-2$.
If $G'$ is $n$-dimensional, then since $G$ is not in the boundary of $G'$, there is a loop around $G$ that is contained in the interior of $G'$.
Thus Proposition~\ref{explicit shards} implies that a small loop around $G$ intersects no walls of $\DCScat(A,c)$, so that consistency about the loop is trivial.
If $G'$ is $(n-1)$-dimensional, then Propositions~\ref{DC Phic}, \ref{j j' sort}, and~\ref{explicit shards} combine to imply that a small loop  about $G$ intersects only a single wall of $\DCScat(A,c)$, so that consistency is again trivial.

Continuing in the case where $\omega_c(\beta,\delta)<0$, it remains to consider the possibility that $G$ is in $\d_\infty$ (specifically in the proper face $\d_\infty\cap\beta^\perp$ of $\d_\infty$).
Since $\beta$ and $\gamma$ are canonical in the rank-$2$ subarrangement $\RS'$, the hyperplanes $\beta^\perp$ and $\gamma^\perp$ cut all other hyperplanes containing $G$ (the hyperplanes orthogonal to roots in $\RS'\setminus\set{\beta,\gamma}$).
The rank-$2$ subarrangement $\RS'$ is infinite because it contains an infinite collection of roots that are scalings of roots $\beta+k\delta$ for integers $k$.
Thus Theorem~\ref{sortable is aligned} says that if $j$ is a $c$-sortable join-irreducible element with $\cov(j)=\set{t}$ such that $\beta_t$ is a non-canonical root in $\RS'$, then $\beta\not\in\inv(j)$.
Thus for such $j$, the shard $\Sh(j)$ is below~$\beta^\perp$.
Similarly, if $j$ is a $c^{-1}$-sortable join-irreducible element with $\cov(j)=\set{t}$ such that $\beta_t$ is a non-canonical root in $\RS'$, then $\Sh(j)$ is below $\gamma^\perp$ and thus $-\Sh(j)$ is below $\beta^\perp$. 
We have already seen that $\d_\infty$ is below $\beta^\perp$.
Thus the hyperplanes orthogonal to roots in $\RS'\setminus\set{\beta,\gamma}$ contain no walls of $\DCScat(A,c)$ on the side of $G$ opposite $\d_\infty$.
Since $G$ is in the intersection of two or more walls, and since $\DF_c$ is a fan, we conclude that both the wall contained in $\beta^\perp$ and the wall contained in $\gamma^\perp$ contain $G$.
Both of these walls extend above and below $G$ into $\Tits(A)$ and $-\Tits(A)$.

Now Theorem~\ref{finite type interior} says that the set of maximal cones of $\DF_c$ containing $G$ forms a doubly infinite sequence, with each maximal cone adjacent to the cones before and after it.
Propositions~\ref{c beta adj} and~\ref{explicit shards} imply that the intersection of each adjacent pair in the sequence is contained in a wall of $\DCScat(A,c)$, defining a doubly infinite sequence of walls containing $G$.
Since $G$ is contained in the wall in $\beta^\perp$ and in the wall in $\gamma^\perp$, there is a maximal cone of $\DF_c$ containing $G$ and having $\beta$ and $\gamma$ as inward-facing normals.
The normal vectors to all of the walls in the doubly infinite sequence are thus determined from $\beta$ and $\gamma$ by Proposition~\ref{ref cond} and the restriction of $\omega_c$ to $G^\perp$ (the span of $\RS'$).
In both directions, these normal vectors limit to $\pm\delta$, and the imaginary wall $\d_\infty$ also contains $G$.

Now we can check consistency about $G$ by considering the two-dimensional doubled Cambrian fan constructed in the span of $\RS'$, using the restriction of $\omega_c$.
That is, take $B'=\begin{bsmallmatrix}0&\omega_c(\beta\ck,\gamma)\\\omega_c(\gamma\ck,\beta)&0\end{bsmallmatrix}$, take $A'=\begin{bsmallmatrix}2&-|\omega_c(\beta\ck,\gamma)|\\-|\omega_c(\gamma\ck,\beta)|&2\end{bsmallmatrix}$, and take $c'=s_\gamma s_\beta$. 
We obtain the same sequence of normal vectors from Proposition~\ref{ref cond} as we obtained inside $\RS$.
We insert one additional limiting ray to obtain the cluster scattering diagram.
This additional ray corresponds to $\d_\infty$.
Lemma~\ref{22 14} and Theorem~\ref{rk2 aff formula} imply that, with the scattering terms induced by $\DCScat(A,c)$, this rank-$2$ scattering diagram is the cluster scattering diagram, and in particular consistent.
The consistency of the rank-$2$ cluster scattering diagram implies that the path-ordered product on a small loop about $G$ is trivial.

If $\omega_c(\beta,\delta)>0$, then replacing $c$ with $c^{-1}$ in the argument for the case $\omega_c(\beta,\delta)<0$, we again conclude that the path-ordered product for a small loop around $G$ is trivial.
\end{proof}

This completes the proof of Theorems~\ref{DCScat}, \ref{aff greg}, and~\ref{aff greg shard}.
By Proposition~\ref{DC Phic} and the fact that $\delta$ (the normal vector to $\d_\infty$) is the unique imaginary root in~$\AP{c}$, Theorem~\ref{scat Schur} follows.
We now prove our last result on affine cluster scattering diagrams.

\begin{proof}[Proof of Theorem~\ref{easy scat}]
We argue that $\set{(\d_\beta,f_\beta):\beta\in\AP{c}\cap\RSpos}$ is precisely equal to $\DCScat(A,c)$, which is $\ScatTB$ by Theorem~\ref{DCScat}.
Indeed, Proposition~\ref{shard ineq sort} says that each shard $\Sh(j)$ for $j$ a $c$-sortable join-irreducible element is $\d_{\beta_t}$, where ${\cov(j)=\set{t}}$.
Proposition~\ref{shard ineq sort} also implies that for $j$ a $c^{-1}$-sortable join-irreducible element with $\cov(j)=\set{t}$, the wall $-\Sh(j)$ is
\[\set{x\in V^*:\br{x,\beta_t}=0\text{ and }\br{x,\gamma}\ge0\text{ for all }\gamma\in\cut(\beta_t)\text{ with }\omega_{c^{-1}}(\gamma,\beta_t)>0}.\] 
But for each $\gamma\in\cut(\beta_t)$, there exists $\gamma'\in\cut(\beta_t)$ such that the signs of $\omega_{c^{-1}}(\gamma',\beta)$ and $\omega_{c^{-1}}(\gamma,\beta_t)$ are strictly opposite and, for $x\in\beta_t^\perp$, the signs of $\br{x,\gamma}$ and $\br{x,\gamma'}$ are weakly opposite.
Thus $-\Sh(j)$ equals $\d_{\beta_t}$.

We have seen that every wall of $\DCScat(A,c)$ is of the form $(\d_\beta,f_\beta)$ for $\beta\in\AP{c}\cap\RSpos$.
Theorem~\ref{scat Schur} implies that every wall $(\d_\beta,f_\beta)$ is in $\DCScat(A,c)$.
\end{proof}

\subsection{The finite-type construction}
Although the focus of this paper is affine type, we pause to mention what this paper adds to the finite-type results already established in \cite[Section~4]{scatcomb}.

When $\RS$ is of finite type, the doubled Cambrian fan is still a fan, but coincides with the Cambrian fan because $\Tits(A)$ is all of $V^*$ and $\F_c=-\F_{c^{-1}}$ in this case.
Thus the proof we give for Proposition~\ref{explicit shards} can be simplified to give a proof of the following result.
(This result is a consequence of the earlier results, but seems not to have been explicitly stated before.
It is, however, precisely equivalent to \mbox{\cite[Proposition~8.15]{shardint}}, with some translation required to see the equivalence.)

\begin{proposition}\label{explicit shards fin}   
Let $\RS$ be a root system of finite type and let $c$ be a Coxeter element of the corresponding Weyl group.
If $j$ is a $c$-sortable join-irreducible element with $\cov(j)=\set{t}$, then $\Sh(j)$ is the union of all faces of $\F_c$ contained in $\beta_t^\perp$.
\end{proposition}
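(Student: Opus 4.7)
The plan is to prove this as the natural finite-type specialization of \cref{explicit shards}(1). In finite type, $\Tits(A)=V^*$, so $\DF_c=\F_c=-\F_{c^{-1}}$ and there is no imaginary wall or boundary to worry about; thus the inequality $\Sh(j)\cap(\Tits(A)\cup(-\Tits(A)))\subseteq\cdots$ in \cref{explicit shards} becomes simply a statement about $\Sh(j)$, and the affine-only appeals to \cref{finite type interior}, \cref{2face in boundary}, and \cref{j j' sort} are not needed. What remains is exactly the ``Tits-cone portion'' of the proof of \cref{explicit shards}, which I would organize in two halves.

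For the inclusion $\supseteq$ (every codimension-$1$ face of $\F_c$ contained in $\beta_t^\perp$ lies in $\Sh(j)$), I would induct on $\ell(j)$ and the rank of $W$, letting $s$ be initial in $c$. If $s\not\le j$, then $j\in W_\br{s}$ is $sc$-sortable and $t\in W_\br{s}$ (by \cref{cover para}); the induction hypothesis on rank gives the statement for $\Sh_\br{s}(j)$ inside $\F_{sc}$, and then \cref{Fc para} together with \cref{Sigma init para} lift it back to $\F_c$ and $\Sh(j)$. If $s=j$, then $t=s$ and $\Sh(j)=\alpha_s^\perp$ by \cref{Sigma s}, which is a union of codimension-$1$ faces of $\F_c$. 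If $s<j$, then $sj$ is $scs$-sortable join-irreducible with cover reflection $sts$; any codimension-$1$ face $F=\Cone_c(v)\cap\beta_t^\perp$ of $\F_c$ satisfies $t\in\cov(v)$, and since $t\not\in W_\br{s}$ we get $v\ge s$, so $F$ is above $\alpha_s^\perp$ by \cref{ab bel}. Applying \cref{recursive fan}(\ref{s DF}) transfers $F$ via $s$ to a codimension-$1$ face of $\F_{scs}$ in $\beta_{sts}^\perp$; by the induction hypothesis on $\ell$, the image lies in $\Sh(sj)$, and \cref{Sigma sj} yields $F\subseteq\Sh(j)$.

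For the inclusion $\subseteq$, it suffices to show that any codimension-$1$ piece of $\Sh(j)$ of the form $wD\cap xD$ (with $w\covers x$ in the weak order) is contained in a codimension-$1$ face of $\F_c$. By \cref{U Sigma ji}, $w\in\Upper(\Sh(j))$ implies $w\ge j$, while $\beta_t\not\in\inv(x)$ forces $x\not\ge j$. Hence $\pidown^c(w)\ne\pidown^c(x)$, and \cref{pidown cone} says that $wD$ and $xD$ lie in distinct maximal cones of $\F_c$, whose intersection is a codimension-$1$ face containing $wD\cap xD$. The main (really, only) obstacle here is organizing the bookkeeping cleanly: once the affine-specific features are removed, every step is a direct induction using structural facts about $\F_c$ that were already available before this paper, which is consistent with the remark that this proposition is precisely equivalent to \cite[Proposition~8.15]{shardint}.
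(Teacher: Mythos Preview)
Your proposal is correct and matches the paper's own approach exactly: the paper states that this proposition follows by simplifying the proof of \cref{explicit shards}, and your sketch carries out precisely that simplification, with the same induction structure and the same case split on an initial $s$. One minor note: in the $s\not\le j$ case, the precise tool you need is \cref{para refine} (which in finite type holds without the $\Tits(A)$ intersection) rather than the coarser \cref{Fc para}.
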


Our proof of Theorem~\ref{DCScat} simplifies to give an alternate proof of the following result, which is \cite[Corollary~4.10]{scatcomb}.
\begin{theorem}\label{CScat}
If $B$ is an acyclic exchange matrix of finite type, then $\ScatTB$ is $\set{(\Sh(j),f_j):j\in\JIrr_c(W)}$.
\end{theorem}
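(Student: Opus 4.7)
The strategy is to mirror the proof of \cref{DCScat}, but in finite type the argument simplifies substantially because $\Tits(A)=V^*$ and there is no imaginary root, hence no imaginary wall. I will verify the four properties that together characterize $\ScatTB$ up to equivalence and minimal support, namely that $\set{(\Sh(j),f_j):j\in\JIrr_c(W)}$ (i) satisfies the finiteness condition, (ii) contains the incoming walls $(\alpha_i^\perp,1+\hy_i)$, (iii) has all other walls outgoing, and (iv) is consistent.

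Properties (i)--(iii) are essentially automatic from results already developed. Finiteness is trivial since $W$ is finite and so $\JIrr_c(W)$ is finite. The incoming walls are present because each simple reflection $s_i$ is $c$-sortable and join-irreducible with $\cut(\alpha_i)=\emptyset$, so $\Sh(s_i)=\alpha_i^\perp$. For (iii), \cref{out c} (whose proof nowhere uses that $W$ is affine) shows directly that for $j\in\JIrr_c(W)\setminus S$ the wall $(\Sh(j),f_j)$ is outgoing and indeed gregarious. Moreover, \cref{ji ref} says that distinct $j\in\JIrr_c(W)$ have distinct cover reflections and hence distinct associated hyperplanes, so no two walls are parallel and the collection has minimal support.

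The remaining task is consistency. Let $F$ be an $(n-2)$-dimensional intersection of walls; it suffices to show that the path-ordered product of a small loop about $F$ is trivial. Since $\Tits(A)=V^*$ in finite type, the ``in-boundary'' case of the proof of \cref{consist} never arises, so only the generic case has to be treated. By \cref{explicit shards fin} (the finite-type specialization of \cref{explicit shards}), the walls of $\set{(\Sh(j),f_j):j\in\JIrr_c(W)}$ containing $F$ are exactly the $(n-1)$-dimensional faces of the Cambrian fan $\F_c$ containing an $(n-2)$-dimensional face $F'\subseteq F$, and by \cite[Theorem~9.8]{typefree} the star of $F'$ in $\F_c$ is a finite rank-2 Cambrian fan whose associated rank-2 exchange matrix is determined by the restriction of $\omega_c$ to the plane spanned by the rank-2 subsystem $\RS'$ orthogonal to $F$. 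As in the proof of \cref{consist}, computing path-ordered products on the monomials $x^\lambda$ reduces to computations in this induced rank-2 scattering diagram, and so consistency is equivalent to the consistency of rank-2 Cambrian scattering diagrams of finite type.

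The main obstacle in the affine proof---the analysis around the imaginary wall, needing \cref{rk2 aff formula} and \cref{22 14} to handle the affine rank-2 reductions, plus the careful study via \cref{finite type interior,2face in boundary,G face,ref cond} of how $\DF_c$ meets $\partial\Tits(A)$---is entirely absent here. What remains is exactly the rank-2 consistency step that forms the core of \cite[Proposition~4.7]{scatcomb}, so the finite-type case can be closed by quoting that check verbatim.
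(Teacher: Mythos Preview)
Your proposal is correct and takes essentially the same approach as the paper. The paper does not write out a separate proof of \cref{CScat} but simply observes that the proof of \cref{DCScat} simplifies to give it; you have spelled out exactly that simplification, noting that in finite type $\Tits(A)=V^*$ so the boundary case of \cref{consist} disappears and only the rank-2 finite-type consistency check (as in \cite[Proposition~4.7]{scatcomb}) is needed.
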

Indeed, the proof given here is an extension of the proof given in \cite{scatcomb}.
But this paper augments the result because Proposition~\ref{shard ineq sort} provides explicit inequalities defining the walls $\Sh(j)$ of $\ScatTB$.

Finally, our proof of Proposition~\ref{out c} is valid for completely general Coxeter groups, without the assumption of affine type.
In particular, it fills in the details of the finite-type version \cite[Proposition~4.12]{scatcomb}, whose proof was only lightly sketched in~\cite{scatcomb}.

\section{Three fans}\label{fan sec}
In this section, we prove Theorems~\ref{fans thm} and~\ref{gen fans thm}, about coincidences between scattering fans, mutation fans, and generalized associahedron fans in affine type.

\subsection{More on the generalized associahedron fan}\label{gen assoc fan sec}
In preparation for the proof of Theorem~\ref{fans thm}, we will prove the following two facts about the generalized associahedron fan.

\begin{proposition}\label{cluster limit}
Let $\RS$ be a root system of affine type, let $c$ be a Coxeter element, let $C$ be an imaginary $c$-cluster, and let $C'=C\setminus\set{\delta}$.
Then there exists a bi-infinite sequence $(C_i:i\in\integers)$ of distinct real $c$-clusters ${C_i=C'\cup\set{\beta_i,\beta'_i}}$ with
\[\lim_{i\to\infty}\reals_{\ge0}\beta_i=\lim_{i\to-\infty}\reals_{\ge0}\beta_i=\lim_{i\to\infty}\reals_{\ge0}\beta'_i=\lim_{i\to-\infty}\reals_{\ge0}\beta'_i=\reals_{\ge0}\delta.\]
Furthermore, for large enough $i$, the roots $\beta_i$ and $\beta'_i$ are both on one side of $\eigenspace{c}$, while $\beta_{-i}$ and $\beta'_{-i}$ are both on the opposite side of $\eigenspace{c}$.
\end{proposition}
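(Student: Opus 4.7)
The plan is to translate the statement into the image of $\nu_c$, where the structure of the doubled Cambrian fan $\DF_c$ near its boundary takes over. First, by \cref{nu thm}, the map $\nu_c$ is linear on the imaginary cone $\Cone(C)$, sending $\Cone(C')$ to an $(n-2)$-dimensional cone $F$ inside $\d_\infty\subset\delta^\perp=\partial\Tits(A)$. Using \cref{codim 1 delta} to rule out strict containment in a codimension-$1$ face of $\DF_c$ (which cannot lie in $\delta^\perp$), one checks that $F$ is itself a codimension-$2$ face of $\DF_c$.

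Next, I would apply \cref{finite type interior}. Since $F\subset\partial\Tits(A)$, its relative interior is not in the interior of $|\DF_c|$, so the maximal cones of $\DF_c$ containing $F$ form a bi-infinite path $(D_i)_{i\in\integers}$ of pairwise adjacent $n$-dimensional cones. Combining \cref{Affine g} with \cref{nu thm}, each $D_i$ has the form $\nu_c(\Cone(C_i))$ for a unique real $c$-cluster $C_i\supset C'$, and the two rays of $D_i$ not in $F$ correspond to positive real roots $\beta_i,\beta'_i\in\APre{c}$ with $C_i=C'\cup\set{\beta_i,\beta'_i}$. Distinctness of the $C_i$ follows from distinctness of the $D_i$.

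To establish the limits, I would work in the 2-dimensional star of $F$: the rank-$2$ subsystem $\RS'$ of roots orthogonal to $F$ contains $\delta$ and, by an argument as in the proof of \cref{22 14}, is of affine type. The hyperplanes $\beta^\perp$ for real positive roots $\beta\in\RS'$ therefore accumulate at $\delta^\perp$ from both sides, and tracking the shared-facet normals between consecutive $D_i$ via \cref{ref cond} shows that the two extra rays $\nu_c(\beta_i),\nu_c(\beta'_i)$ of $D_i$ must also approach $\reals_{\ge0}\nu_c(\delta)=\reals_{\ge0}\frac{1}{2}x_c$ as $|i|\to\infty$. Applying $\nu_c^{-1}$, the rays $\reals_{\ge0}\beta_i$ and $\reals_{\ge0}\beta'_i$ all limit to $\reals_{\ge0}\delta$.

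For the separation by $\eigenspace{c}$, I would use the structure of the affine rank-$2$ subsystem $\RS'$: its two asymptotic families of positive real roots, one on each side of $\reals\delta$, are distinguished by the sign of $K(\gamma_c,\,\cdot\,)$ and therefore lie on opposite sides of $\eigenspace{c}$. Hence for $i\gg0$ both $\beta_i$ and $\beta'_i$ lie on one side of $\eigenspace{c}$, while for $i\ll0$ they lie on the opposite side. The main obstacle will be the third step: one must verify carefully, using \cref{ref cond} inside the affine rank-$2$ picture, that it is the extra rays (not merely the shared facet normals) of the $D_i$ that accumulate at the $\delta$-ray, and correctly match each of the two asymptotic directions of the path with the corresponding side of $\eigenspace{c}$.
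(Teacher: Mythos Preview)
Your approach is genuinely different from the paper's and, while plausible in outline, has a real gap in the last step. The paper never passes through $\DF_c$ or \cref{finite type interior}; instead it uses the $\tau_c$-action directly. Since $C'\subseteq\APTre{c}$ is finite and $c$-stable, some $c^m$ fixes $C'$ pointwise; choosing (via \cref{Q n} and \cref{clus def OK}) one real $c$-cluster $C_0=C'\cup\{\beta,\beta'\}$ and setting $C_i=\tau_c^{mi}(C_0)$ already gives the bi-infinite family. The limits and the separation by $\eigenspace{c}$ are then obtained by writing $\beta_{\pm1}=a\gamma_c+b\delta+x$ with $x\in\eigenspace{c}\cap V_\fin$, computing $c^{mi}$ on this explicitly, and invoking \cref{useful}. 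Your geometric route via the star of $F$ is attractive, but what it buys you (the path structure from \cref{finite type interior}) is not needed for the statement, and the price is the delicate passage from facet normals to extreme rays that you flag.

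The unacknowledged gap is in your separation argument. The roots $\beta_i,\beta'_i$ are the $\dd$-vectors labelling the extra rays $\nu_c(\beta_i),\nu_c(\beta'_i)$ of $D_i$; they are \emph{not} the inward facet normals of $D_i$, and in general they do not lie in the rank-$2$ subsystem $\RS'=\RS\cap F^\perp$. Membership in $\RS'$ would require $E_c(\beta_i,\gamma)=0$ for every $\gamma\in C'$, which $c$-compatibility does not give. So the sentence ``its two asymptotic families of positive real roots \ldots\ are distinguished by the sign of $K(\gamma_c,\cdot)$, hence $\beta_i,\beta'_i$ lie on one side of $\eigenspace{c}$'' does not follow: you have said something about roots in $\RS'$ and concluded something about the $\beta_i$. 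To repair this you would need an independent link between the side of $\delta^\perp$ on which $\nu_c(\beta_i)$ sits and the sign of $K(\gamma_c,\beta_i)$; the paper supplies that link by showing each $\beta_i$ lies in the $c$-orbit of some $\psi^\proj_{c;j}$ or $\psi^\inj_{c;j}$ and then applying \cref{useful}.
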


Recall that $\APTre{c}$ is the set of real roots in $\AP{c}$ that are in finite $\tau_c$-orbits.
Recall also the definition of $\d_\beta$ in \eqref{d beta}.

\begin{proposition}\label{walls in d infty2}
Suppose $\RS$ is of affine type and $c$ is a Coxeter element.
If $C$ is a collection of $n-2$ pairwise $c$-compatible roots in $\AP{c}$ with $\delta\in C$, then there exists a root $\beta$ in $\APTre{c}$ with $\nu_c(C)\subset\d_\beta$.
\end{proposition}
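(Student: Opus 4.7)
The plan is to reduce the statement to an $E_c$-orthogonality condition and then pick the candidate $\beta$ by extending $C$ to a maximal pairwise $c$-compatible subset of $\APT{c}$.

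First, I would show that $C\setminus\{\delta\}\subseteq\APTre{c}$, so that $C\subseteq\APT{c}\subseteq\eigenspace{c}$. Any $v\in C\setminus\{\delta\}$ must be $c$-compatible with $\delta$, and by \eqref{compat base} the compatibility $\cm{-\alpha_i}{\delta}{c}=\br{\rho_i\ck,\delta}$ is strictly positive in affine type, so $v\notin-\Simples$. Using \eqref{compat tau} together with the $\tau_c$-orbit analysis of \cite{affdenom}, every root in $\RSpos\setminus\eigenspace{c}$ can be moved into $-\Simples$ by a suitable power of $\tau_c$, so $v\notin\RSpos\setminus\eigenspace{c}$ either. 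Consequently $\Cone(C)$ lies in the nonnegative span of simple roots, $\nu_c(v)=-E_c(\cdot,v)$ for every $v\in C$, and the conclusion $\nu_c(C)\subseteq\beta^\perp$ is equivalent to $E_c(\beta,v)=0$ for every $v\in C$.

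The case $v=\delta$ is then automatic: for any $\beta\in\APTre{c}$, the $c$-orbit of $\beta$ is finite and its sum is $c$-invariant, hence a scalar multiple of $\delta$ (the $c$-fixed line in $V$). Since $\omega_c$ is $c$-invariant (a standard property of Coxeter elements, derivable by iterating \cref{OmegaInvariance}), the pairing $\omega_c(\beta,\delta)$ equals the orbit average $\omega_c(\sum_k c^k\beta,\delta)/N$, which vanishes by skew-symmetry applied to a multiple of $\delta$. Combined with $K(\cdot,\delta)=0$, this gives $E_c(\beta,\delta)=0$ for every $\beta\in\APTre{c}$. To choose the candidate $\beta$, observe that elements of $\AP{c}\setminus\APT{c}$ are never $c$-compatible with $\delta$, so maximal pairwise $c$-compatible subsets of $\APT{c}$ containing $\delta$ coincide with imaginary $c$-clusters, and these have size $n-1$ by \cref{cluster limit}. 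Since $|C|=n-2<n-1$, we may extend $C$ by some $\beta\in\APTre{c}$ that is pairwise $c$-compatible with every element of $C$.

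The main obstacle is showing that for $\beta,v\in\APTre{c}$, pairwise $c$-compatibility implies $E_c(\beta,v)=0$. I plan to argue component by component: roots in distinct $\tilde{A}_k$ components of $\RST{c}$ are $K$-orthogonal because the components have disjoint simple-root supports, and their $\omega_c$-pairings vanish by the same orbit-averaging argument used above for $\delta$. Within a single component, the explicit adjacency-count description of $\cmcirc{\beta}{v}{c}$ given by \eqref{compat U}, together with a parallel formula for $E_c(\beta,v)$ on $c$-orbits of cyclically arranged simple roots, shows that both quantities measure the same combinatorial ``support overlap'' in $\SimplesT{c}$, so they vanish simultaneously. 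Applying this to every $v\in C\setminus\{\delta\}$ gives the required $E_c$-orthogonality, completing the proof.
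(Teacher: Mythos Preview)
Your reduction to an $E_c$-orthogonality condition on $\APTre{c}$ is exactly how the paper proceeds (this is Proposition~\ref{walls in d infty1}), and your treatment of the $v=\delta$ case is fine (the paper invokes Proposition~\ref{E delta in tubes}).  The fatal gap is the claim that for $\beta,v\in\APTre{c}$, $c$-compatibility implies $E_c(\beta,v)=0$.  This is false.  Take a single $\tilde A_3$ component with cyclic simple roots $\xi_1,\xi_2,\xi_3,\xi_4$ and $c$ acting by $\xi_i\mapsto\xi_{i+1}$.  Let $v$ have $\SuppT(v)=\{\xi_1\}$ and $\beta$ have $\SuppT(\beta)=\{\xi_4,\xi_1\}$; then $\SuppT(v)\subsetneq\SuppT(\beta)$, so $\cmcirc{\beta}{v}{c}=0$ and they are $c$-compatible.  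But by Proposition~\ref{compute in tubes}, $E_c(v\ck,\beta)=|\{\xi_1\}\cap\{\xi_4,\xi_1\}|-|\{\gamma\in\{\xi_1\}:c\gamma\in\{\xi_4,\xi_1\}\}|=1-0=1$, since $c\xi_1=\xi_2\notin\SuppT(\beta)$.  More generally, nested pairs can have nonzero $E_c$: the compatibility degree and $E_c$ do \emph{not} ``measure the same combinatorial support overlap.''

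Consequently, simply extending $C$ to an imaginary $c$-cluster and taking the added root as $\beta$ does not work.  The paper's proof of Proposition~\ref{walls in d infty1} does extend $C'$ to a cluster by some root $\alpha$, but then uses $\alpha$ only as scaffolding: it carries out an explicit endpoint-and-vertex construction inside the cycle of the relevant $\tilde A$ component (see Figures~\ref{case1fig} and~\ref{case2fig}) to produce a \emph{different} root $\beta$ whose support is positioned so that $|\SuppT(\beta)\cap\SuppT(\beta')|=|\SuppT(\beta)\cap\SuppT(c\beta')|$ for every $\beta'\in C'$.  That combinatorial step is the real content of the argument and cannot be replaced by a compatibility claim.  (Minor point: the size-$(n-1)$ fact you want is Proposition~\ref{Qc n-1}, not Proposition~\ref{cluster limit}.)
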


For the proofs of these propositions, we quote and prove some additional background.
The following four propositions are parts of \cite[Proposition~5.14]{affdenom}.
In this subsection, we assume $\RS$ to be of affine type.

\begin{proposition}\label{Q n}
If $C$ is a real $c$-cluster, then $C$ consists of $n$ linearly independent roots.
\end{proposition}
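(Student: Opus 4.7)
The plan is to translate the statement through $\nu_c$ into an assertion about the $\g$-vector fan, where linear independence is classical. First, I would observe that because $C$ is a set of pairwise $c$-compatible real roots in $\AP{c}$, the cone $\Cone(C)$ is a cone of the real affine $c$-cluster fan $\Fan_c^\re(\RS)$. By \cref{nu thm}, $\nu_c$ is linear on $\Cone(C)$ and carries $\Fan_c^\re(\RS)$ isomorphically onto the $\g$-vector fan of the acyclic principal-coefficients cluster algebra associated to $B$. In particular, $\nu_c(\Cone(C))$ is a cone of the $\g$-vector fan with extreme rays spanned by the vectors of $\nu_c(C)$.

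Next, I would argue that maximality of $C$ as a pairwise $c$-compatible subset of $\AP{c}$ forces $\nu_c(\Cone(C))$ to be a full-dimensional (maximal) cone of the $\g$-vector fan. Indeed, if $\nu_c(\Cone(C))$ were a proper face of some maximal $\g$-vector cone $\sigma$, then pulling back through the isomorphism $\nu_c$ of \cref{nu thm} would realise $\Cone(C)$ as a proper face of a cone of $\Fan_c^\re(\RS)$. By construction, that larger cone is generated by a strictly larger set of pairwise $c$-compatible roots in $\APre{c}$, contradicting the maximality of $C$. (A small bookkeeping check is needed to ensure that distinct elements of $\APre{c}$ span distinct rays of $\Fan_c^\re(\RS)$, so that enlarging the cone really corresponds to enlarging the set $C$; this follows from the fact that all elements of $\AP{c}$ are primitive roots.)

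Finally, I would invoke the standard fact that maximal cones of the $\g$-vector fan of a principal-coefficients cluster algebra are simplicial and $n$-dimensional: each such cone is spanned by the $n$ linearly independent $\g$-vectors of the cluster variables of a single cluster. Applied to $\nu_c(\Cone(C))$, this shows $\nu_c(C)$ consists of exactly $n$ linearly independent vectors in $V^*$. Since $\nu_c$ is linear and injective on $\Cone(C)$, pulling back yields that $C$ itself is a set of $n$ linearly independent roots, as desired.

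The main obstacle is the middle step: cleanly passing from set-theoretic maximality of $C$ in the $c$-compatibility relation to cone-theoretic maximality of $\Cone(C)$ in $\Fan_c^\re(\RS)$. Everything else is a transport of structure along the piecewise linear homeomorphism $\nu_c$, using that its restriction to the single cone $\Cone(C)$ is a genuine linear isomorphism.
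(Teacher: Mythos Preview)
The paper does not prove this proposition; it simply quotes it from \cite[Proposition~5.14]{affdenom}. So there is no proof in the paper to compare your approach to.

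Your argument rests on \cref{nu thm}, which the present paper likewise imports from \cite{affdenom} (it is essentially \cite[Theorem~1.1(1)]{affdenom}). This creates a real circularity risk: in \cite{affdenom} the structural fact you are trying to establish---that real $c$-clusters consist of $n$ linearly independent roots, which is what makes $\Fan_c(\RS)$ simplicial---is almost certainly used in proving that $\nu_c$ carries $\Fan_c^\re(\RS)$ isomorphically onto the $\g$-vector fan. From the standpoint of the present paper both results are black boxes, so your argument is not formally circular \emph{here}, but it does not constitute an independent proof.

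More seriously, the step you yourself flag as ``the main obstacle'' is a genuine gap. Suppose $\Cone(C)$ is a proper face of a maximal cone $\Cone(C')$ of $\Fan_c^\re(\RS)$. Then $\Cone(C)=\Cone(C'')$ for some $C''\subsetneq C'$ (namely $C''=C'\cap\Cone(C)$), and $C'$ is a compatible set strictly containing $C''$. But you need to contradict maximality of $C$, not of $C''$. From $\Cone(C)=\Cone(C'')$ you cannot conclude $C=C''$ unless you already know that no element of a compatible set lies in the nonnegative span of the remaining elements---which is essentially the linear independence you are trying to prove. Your proposed ``bookkeeping check'' (that distinct elements of $\APre{c}$ span distinct rays) does not suffice: distinct rays can still be linearly dependent, and an element of $C$ not among the extreme rays of $\Cone(C)$ would be invisible to the cone. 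So the passage from set-theoretic maximality of $C$ to cone-theoretic maximality of $\Cone(C)$ presupposes exactly what is to be shown.
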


\begin{proposition}\label{at least 2}
If $C$ is a real $c$-cluster, then $C$ contains at least $2$ roots in the $\tau_c$-orbits of negative simple roots.
\end{proposition}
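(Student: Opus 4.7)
The plan is a dimension count supplemented by a structural step. By \cref{Q n}, $C$ consists of $n$ linearly independent roots; since $C$ is a real $c$-cluster, $\delta\notin C$. Granting the routine fact that the union of the $\tau_c$-orbits of the negative simple roots is precisely $-\Simples\cup(\RSpos\setminus\eigenspace{c})$, i.e., $\AP{c}\setminus(\APTre{c}\cup\set{\delta})$ (which follows from the observation that the finite $\tau_c$-orbits in $\AP{c}$ live inside the finite set $\APT{c}$), it suffices to show $|C\cap\APTre{c}|\le n-2$.

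Because $\APTre{c}\subseteq\eigenspace{c}$, a hyperplane of dimension $n-1$, linear independence immediately yields the weaker bound $|C\cap\APTre{c}|\le n-1$; this already produces one root of $C$ in the union of $\tau_c$-orbits of negative simple roots. The task is to rule out the boundary case $|C\cap\APTre{c}|=n-1$. I would argue by contradiction: suppose $C_\fin:=C\cap\APTre{c}$ has $n-1$ elements. Then $C_\fin$ is a basis of $\eigenspace{c}$, so $\delta\in\Span(C_\fin)$, and by \eqref{compat delta U} the set $C_\fin\cup\set{\delta}$ consists of $n$ pairwise $c$-compatible roots; hence it is an imaginary $c$-cluster.

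Let $\eta$ denote the unique element of $C\setminus C_\fin$. Then $\eta$ must be $c$-compatible with every root of $C_\fin$ and lie outside $\eigenspace{c}$. I would derive a contradiction from the structure of the imaginary cone $\d_\infty$: by \cref{d inf extreme}, the extreme rays of $\d_\infty$ are spanned by $\SimplesT{c}$, and any basis of $\eigenspace{c}$ drawn from pairwise $c$-compatible roots in $\APTre{c}$ should select a top-dimensional face of the imaginary subdivision, whose only $c$-compatible extension to a $c$-cluster is by $\delta$ itself. This would forbid the existence of $\eta$ and prove $|C\cap\APTre{c}|\le n-2$.

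The main obstacle will be making the last step rigorous, namely showing that $C_\fin$ spanning $\eigenspace{c}$ forces $\delta$ as its only $c$-compatible completion. The cleanest route is simply to observe that this proposition is one component of \cite[Proposition~5.14]{affdenom}; otherwise the argument requires a careful unpacking of the $c$-compatibility degrees \eqref{compat U} and \eqref{compat delta U} on $\APTre{c}\cup\set{\delta}$ together with the classification of imaginary $c$-clusters in affine type.
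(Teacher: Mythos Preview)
The paper does not give its own proof of this proposition; it is quoted directly from \cite[Proposition~5.14]{affdenom}, so there is nothing to compare against on that front.

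Your strategy is correct, but you stop short of the contradiction you have already reached and then detour into an unnecessary and unfinished argument. Once you assume $|C\cap\APTre{c}|=n-1$ and observe (via \eqref{compat delta U}) that $C_\fin\cup\{\delta\}$ is a set of $n$ pairwise $c$-compatible roots containing $\delta$, you are done: any such set is contained in some imaginary $c$-cluster, but \cref{Qc n-1} says every imaginary $c$-cluster has exactly $n-1$ elements. That is the contradiction. Your sentence ``hence it is an imaginary $c$-cluster'' is already the endpoint---you just need to notice that a set of $n$ pairwise compatible roots containing $\delta$ cannot sit inside an $(n-1)$-element maximal set.

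Everything that follows---the introduction of $\eta$, the appeal to \cref{d inf extreme}, the attempt to show that $C_\fin$ admits only $\delta$ as a $c$-compatible extension---is superfluous, and as you yourself note, the last step is not made rigorous. Drop that paragraph entirely; the dimension count plus \cref{Qc n-1} is a complete argument. (If you are worried about circularity because \cref{Qc n-1} and \cref{at least 2} come from the same cited proposition, note that the paper treats all four parts of \cite[Proposition~5.14]{affdenom} as independent imports; there is no internal circularity here.)
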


\begin{proposition}\label{Qc n-1}
If $C$ is an imaginary $c$-cluster, then $C$ consists of $n-1$ linearly independent roots.
\end{proposition}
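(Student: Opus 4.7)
The plan is to combine \cref{cluster limit} with \cref{Q n} and a short compatibility analysis.

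First, I would establish that $C \setminus \set{\delta} \subseteq \APTre{c}$. Since $C \subseteq \AP{c} = -\Simples \cup (\RSpos \setminus \eigenspace{c}) \cup \APT{c}$, it suffices to rule out $c$-compatibility with $\delta$ for any root in the first two pieces. For a negative simple root $-\alpha_i$, \eqref{compat base} gives $\cm{-\alpha_i}{\delta}{c} = \br{\rho_i\ck,\delta} > 0$ because $\delta$ has strictly positive simple-root coordinates. For $\alpha \in \RSpos \setminus \eigenspace{c}$, the $\tau_c$-orbit of $\alpha$ is infinite and, by structural results from \cite{affdenom}, meets $-\Simples$; combining this with \eqref{compat tau} and the fact that $\tau_c\delta = \delta$ reduces to the previous case.

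Next, set $C' = C \setminus \set{\delta}$ and invoke \cref{cluster limit} to obtain real $c$-clusters $C_i = C' \cup \set{\beta_i, \beta_i'}$ extending $C'$. By \cref{Q n}, each $C_i$ consists of $n$ linearly independent roots, so $|C'| = n - 2$ and $C'$ is itself linearly independent. In particular $|C| = n - 1$.

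It remains to show $\delta \notin \Span(C')$, so that $C$ is linearly independent. The cleanest approach is to use \cref{nu thm} together with \cref{d inf extreme}: the map $\nu_c$ is linear on $\Cone(C)$, and the union of the images $\nu_c(\Cone(C))$ over all imaginary $c$-clusters covers $\d_\infty$ by the result of \cite[Proposition~6.13]{affdenom} recalled just before \cref{d inf extreme}. Maximality of $C$ as a pairwise $c$-compatible set forces $\Cone(C)$ to be a maximal-dimensional cone of the induced subdivision of this union, and $\d_\infty$ is $(n-1)$-dimensional by \cref{d inf extreme}, so $\dim\Cone(C) = n-1$ and therefore $\dim\Span(C) = n-1$.

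The main obstacle is establishing that the collection $\set{\Cone(C) : C \text{ imaginary } c\text{-cluster}}$ genuinely forms a subdivision of $\bigcup_C \Cone(C) \subseteq \delta^\perp$ whose maximal cones all have dimension $n-1$. An alternative, more elementary route would argue directly from \cref{cluster limit} by writing each real affine root $\beta_i = \phi_i + k_i\delta$ with $\phi_i \in \RSfin$ and $k_i \in \integers$, passing to subsequences where $\phi_i$ equals a constant $\phi^+$ for $i \gg 0$ and $\phi^-$ for $i \ll 0$ (these constants lying on opposite sides of $\eigenspace{c}$ by the final assertion of \cref{cluster limit}), and extracting a contradiction in the quotient $V/\Span(C')$ from the assumption $\delta \in \Span(C')$.
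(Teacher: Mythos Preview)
Your approach has a fatal circularity. In this paper, \cref{Qc n-1} is not proved at all: it is imported verbatim from \cite[Proposition~5.14]{affdenom}, as stated in the sentence introducing it. More importantly, the proof of \cref{cluster limit} given in this paper \emph{uses} \cref{Qc n-1} explicitly (to conclude that the span of $C$ is the hyperplane $\eigenspace{c}$, which requires knowing $C$ spans an $(n-1)$-dimensional space). So invoking \cref{cluster limit} to deduce \cref{Qc n-1} is circular within the logical structure of this paper.

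Your alternative route via \cref{d inf extreme} and the fan structure has the same problem one level removed: the description of the imaginary part of $\Fan_c(\RS)$ that you want to use (from \cite[Proposition~6.13]{affdenom}) is part of the same body of results in \cite{affdenom} that includes \cite[Proposition~5.14]{affdenom}, and you would need to check that it does not itself rely on the cardinality/independence statement you are trying to prove. You also correctly flag that you have not established the needed fan/subdivision structure. In short, any self-contained argument for \cref{Qc n-1} must go back to the combinatorics of $\APTre{c}$ and the nested/spaced compatibility of \cref{compatible in tubes}, not forward to consequences like \cref{cluster limit} that already presuppose the result.
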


\begin{proposition}\label{in the cone}
If $C$ is an imaginary $c$-cluster, then $C\setminus\set{\delta}$ consists of roots in~$\APTre{c}$.
\end{proposition}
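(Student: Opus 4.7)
The plan is to exploit the $\tau_c$-invariance of the $c$-compatibility degree, together with the fact that $\delta$ is $\tau_c$-fixed, in order to reduce the question of which $\alpha\in\AP{c}$ can be $c$-compatible with $\delta$ to the case of negative simple roots. Since $C\setminus\set{\delta}$ consists of $n-2$ roots in $\AP{c}$ that are $c$-compatible with $\delta$, it will suffice to rule out elements of $-\Simples$ and of $\RSpos\setminus\eigenspace{c}$.

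First I would verify that $\tau_c(\delta)=\delta$. In affine type $\delta$ spans the radical of the symmetric form $K$, so $K(\alpha\ck_i,\delta)=0$ and $s_i(\delta)=\delta$ for every simple reflection $s_i$. Since $\sigma_s$ acts as $s$ on every root outside $-\Simples\setminus\set{-\alpha_s}$, we have $\sigma_s(\delta)=\delta$ for each $s$ initial or final in $c$, and composing gives $\tau_c(\delta)=\delta$. Next I would compute $\cm{-\alpha_i}{\delta}c$ from \eqref{compat base}: it equals $\br{\rho\ck_i,\delta}$, which is the coefficient of $\alpha_i$ in the expansion of $\delta$ in simple roots. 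In an affine root system every such coefficient is strictly positive, so no negative simple root is $c$-compatible with $\delta$; this eliminates the $-\Simples$ contribution.

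To handle a hypothetical $\alpha\in\RSpos\setminus\eigenspace{c}$ in $C$, I would invoke \eqref{compat tau}: if $\alpha$ were $c$-compatible with $\delta$ then, since $\tau_c^k\delta=\delta$, every $\tau_c^k\alpha$ would also be $c$-compatible with $\delta$. The structural input needed here, which belongs to the affine almost-positive roots picture of \cite{affdenom,afforb}, is that $-\Simples\cup(\RSpos\setminus\eigenspace{c})$ decomposes into $\tau_c$-orbits each of which meets $-\Simples$; equivalently, a root of $\AP{c}$ has an infinite $\tau_c$-orbit exactly when it lies outside $\APT{c}$, and every infinite $\tau_c$-orbit passes through a negative simple root. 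With this in hand, such an $\alpha$ would force $\tau_c^k\alpha=-\alpha_i$ to be $c$-compatible with $\delta$ for some $k$ and some $i$, contradicting the previous paragraph. Combining both cases shows $C\setminus\set{\delta}\subseteq\APT{c}$, and since the elements of $C$ are distinct, $C\setminus\set{\delta}\subseteq\APTre{c}$.

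The main obstacle is extracting the orbit-structure fact above in the precise form needed. If a clean citation is not readily available, an equivalent route is a short induction on the rank using the same initial-reflection reduction that appears throughout the paper: for $s$ initial in $c$ the map $\sigma_s$ intertwines $\AP{c}$ with $\AP{scs}$ and conjugates $\tau_c$ to $\tau_{scs}$, so $\tau_c$-orbits in $\AP{c}$ correspond to $\tau_{scs}$-orbits in $\AP{scs}$; the base of the induction is the rank-$2$ affine cases $A_1^{(1)}$ and $A_2^{(2)}$, where the statement is verified by direct computation.
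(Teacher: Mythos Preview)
The paper does not actually prove this statement: it is quoted (together with the surrounding three propositions) as part of \cite[Proposition~5.14]{affdenom}.  Your main argument---show $\tau_c(\delta)=\delta$, compute $\cm{-\alpha_i}{\delta}c=\br{\rho_i\ck,\delta}>0$ from \eqref{compat base}, and then use \eqref{compat tau} together with the fact that every infinite $\tau_c$-orbit in $\AP{c}$ meets $-\Simples$---is correct, and the orbit-structure fact you need is indeed available in \cite{affdenom} (the present paper uses it without further comment in the proof of \cref{cluster limit}).  So your primary route is sound and is presumably close in spirit to the argument in \cite{affdenom}.

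Your fallback ``induction on rank'' sketch, however, does not work as written.  The map $\sigma_s$ for $s$ initial in $c$ sends $\AP{c}$ to $\AP{scs}$ for a different Coxeter element of the \emph{same} affine Weyl group $W$; it does not pass to a parabolic subgroup or otherwise lower the rank.  Thus repeatedly applying $\sigma_s$ never reaches the rank-$2$ base cases you propose.  The inductive mechanism used elsewhere in the paper combines two moves---$c\mapsto scs$ (same rank) and $c\mapsto sc$ restricted to $W_{\br{s}}$ (rank drops)---and it is the second that carries the induction.  If you want a self-contained argument for the orbit fact, you would need to bring in that parabolic restriction, not just the $\sigma_s$ conjugation.
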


The following theorem is \cite[Theorem~5.5]{affdenom}.

\begin{theorem}\label{clus def OK}
A set $C\subseteq\AP{c}$ is a real $c$-cluster if and only if it is a maximal set of pairwise $c$-compatible roots in $\APre{c}$.
\end{theorem}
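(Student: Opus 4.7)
The plan is to prove the two directions separately, with the forward direction being essentially trivial and the backward direction requiring a careful case split that will be resolved by Proposition~\ref{cluster limit}.

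For the forward direction, suppose $C$ is a real $c$-cluster. Then by definition $C$ is a maximal pairwise $c$-compatible set in $\AP{c}$ with $\delta\notin C$, so $C\subseteq\APre{c}$. Since any proper superset of $C$ in $\APre{c}$ would also be a proper superset of $C$ in $\AP{c}$, maximality in $\APre{c}$ follows immediately.

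For the backward direction, suppose $C\subseteq\APre{c}$ is a maximal pairwise $c$-compatible set in $\APre{c}$. I would split into two subcases according to whether $\delta$ is $c$-compatible with every element of $C$. If some $\alpha\in C$ fails to be $c$-compatible with $\delta$, then no root in $\AP{c}\setminus C=(\APre{c}\setminus C)\cup\{\delta\}$ can be added to $C$: no real root can be added by the hypothesis, and $\delta$ cannot be added because of the incompatibility with $\alpha$. Thus $C$ is a $c$-cluster not containing $\delta$, i.e.\ a real $c$-cluster, as desired.

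The remaining subcase is where $\delta$ is $c$-compatible with every element of $C$. Then $C\cup\{\delta\}$ is pairwise $c$-compatible, and by the maximality of $C$ in $\APre{c}$ together with the fact that $\delta$ is the only element of $\AP{c}\setminus\APre{c}$, no further element of $\AP{c}$ can be added. Hence $C\cup\{\delta\}$ is an imaginary $c$-cluster. By Proposition~\ref{Qc n-1}, $|C|=n-2$, and by Proposition~\ref{in the cone}, $C\subseteq\APTre{c}$. Now I apply Proposition~\ref{cluster limit} to the imaginary $c$-cluster $C\cup\{\delta\}$: this yields a real $c$-cluster of the form $C\cup\{\beta_i,\beta'_i\}$ for some $i$. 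Since real clusters do not contain $\delta$, the root $\beta_i$ lies in $\APre{c}$, and since it is $c$-compatible with every element of $C$ but not an element of $C$ itself (the cardinalities differ), its existence contradicts the maximality of $C$ in $\APre{c}$. This rules out the second subcase, completing the proof.

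The main (really the only nontrivial) obstacle is the rank-count in the imaginary case, which is handled entirely by invoking Propositions~\ref{Qc n-1}, \ref{in the cone}, and \ref{cluster limit}; these package all the hard combinatorics of the affine cluster complex needed here, so the argument itself remains short and essentially formal.
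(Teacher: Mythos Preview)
Your argument is circular. The key step in the backward direction invokes Proposition~\ref{cluster limit}, but if you look at the proof of Proposition~\ref{cluster limit} in the paper, the very first substantive step reads: ``\cref{Q n} and \cref{clus def OK} imply that there exist real roots $\beta$ and $\beta'$ such that $C'\cup\{\beta,\beta'\}$ is a $c$-cluster.'' In other words, Proposition~\ref{cluster limit} already presupposes Theorem~\ref{clus def OK} in order to know that the set $C'$ (of $n-2$ pairwise compatible real roots coming from an imaginary cluster) can be enlarged to a \emph{real} $c$-cluster. That is exactly the content you are trying to extract from Proposition~\ref{cluster limit}, so the appeal is illegitimate.

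Note also that the paper does not give its own proof of Theorem~\ref{clus def OK}; it is quoted as \cite[Theorem~5.5]{affdenom}. So there is no internal proof to compare against, and the results proved \emph{in this paper} that depend on it (such as Proposition~\ref{cluster limit}) cannot be used to recover it. If you want a self-contained argument, you would need to go back to the combinatorics of $\APTre{c}$ and show directly that any maximal pairwise compatible set of real roots contained in $\APTre{c}$ admits a further compatible real root outside $\APTre{c}$; this is where the actual work lies, and it is carried out in \cite{affdenom}, not here.
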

Theorem~\ref{clus def OK} is not obvious, because conceivably there might exist a maximal set of pairwise $c$-compatible roots in $\APre{c}$, each of which is $c$-compatible with $\delta$.
Such a set would not be a $c$-cluster.

For $j=1,\ldots,n$, define $\psi^\proj_{c;j}=s_1\cdots s_{j-1}\alpha_j$ and $\psi^\inj_{c;j}=s_n\cdots s_{j+1}\alpha_j$. 
Further, define $\TravProj{c}=\set{\psi^\proj_{c;j}:j=1,\ldots,n}$ and $\TravInj{c}=\set{\psi^\inj_{c;j}:j=1,\ldots,n}$.
Recall that $\gamma_c$ is the unique vector in $V_\fin$ with $c\gamma_c=\delta+\gamma_c$ and that $\eigenspace{c}$ is the hyperplane $\set{v\in V:K(\gamma_c,v)=0}$ in~$V$.
The set $\TravProj{c}\cup\TravInj{c}$ has $2n$ distinct elements, the $c$-orbits of these elements are disjoint, and the union of these orbits is $\RSpos\setminus\eigenspace{c}$. 
The following propositions are \cite[Proposition~4.3]{afforb} and \cite[Proposition~3.13(2)]{affdenom}.

\begin{proposition}\label{useful}
The $c$-orbits of roots in $\TravProj{c}$ are separated from the $c$-orbits of roots in $\TravInj{c}$ by the hyperplane $\eigenspace{c}$.
Specifically, $K(\gamma_c,\beta)>0$ for $\beta\in c^{m}\TravProj{c}$ and $m\in\mathbb{Z}$, while 
$K(\gamma_c,\beta)<0$ for $\beta\in c^{m}\TravInj{c}$ and $m\in\mathbb{Z}$.
\end{proposition}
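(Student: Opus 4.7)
The plan is to reduce the claim to a finite check on representatives by exploiting the $c$-invariance of the linear functional $K(\gamma_c,\,\cdot\,)$, and then to carry out that check by a telescoping argument using $c = s_1 \cdots s_n$.

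First I would establish that $K(\gamma_c, c\beta) = K(\gamma_c, \beta)$ for every $\beta \in V$. This uses two facts: $K$ is $W$-invariant, and in affine type $\delta$ spans the radical of $K$, so $K(\delta, v) = 0$ for all $v \in V$. Combined with the defining relation $c\gamma_c = \gamma_c + \delta$, which gives $c^{-1}\gamma_c = \gamma_c - \delta$, we get
\[
K(\gamma_c, c\beta) = K(c^{-1}\gamma_c, \beta) = K(\gamma_c, \beta) - K(\delta, \beta) = K(\gamma_c, \beta).
\]
Consequently, $K(\gamma_c, \beta)$ depends only on the $c$-orbit of $\beta$, and it suffices to check the signs on the representatives $\psi^\proj_{c;j}$ and $\psi^\inj_{c;j}$.

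For the proj side, set $u_j = s_{j-1}\cdots s_1 \gamma_c$, so by $W$-invariance $K(\gamma_c, \psi^\proj_{c;j}) = K(u_j, \alpha_j)$. Then $u_1 = \gamma_c$, $u_{n+1} = s_n \cdots s_1 \gamma_c = c^{-1}\gamma_c = \gamma_c - \delta$, and $u_{j+1} = u_j - K(\alpha_j^\vee, u_j)\,\alpha_j$. Summing the differences telescopes to
\[
\delta \;=\; u_1 - u_{n+1} \;=\; \sum_{j=1}^{n} K(\alpha_j^\vee, u_j)\,\alpha_j.
\]
Expanding $\delta = \sum_j n_j \alpha_j$ with all marks $n_j > 0$ (since $\delta$ is the minimal positive imaginary root in affine type and has full support), linear independence of the simple roots forces $K(\alpha_j^\vee, u_j) = n_j$. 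Using that $K$ is symmetric and $\alpha_j^\vee = d_j^{-1}\alpha_j$, this yields $K(u_j, \alpha_j) = d_j n_j > 0$. The parallel computation for the inj side uses $v_j = s_{j+1}\cdots s_n \gamma_c$, with $v_n = \gamma_c$ and $v_0 = c\gamma_c = \gamma_c + \delta$; the same telescoping gives $K(\alpha_j^\vee, v_j) = -n_j$, hence $K(\gamma_c, \psi^\inj_{c;j}) = -d_j n_j < 0$.

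The main obstacle is really just setting up the bookkeeping: making sure the two telescoping identities are oriented correctly so that the comparison with $\delta = \sum_j n_j \alpha_j$ produces strictly positive (respectively, strictly negative) coefficients. Once $c$-invariance of $K(\gamma_c,\,\cdot\,)$ has been observed, the whole proof collapses to these two finite-length telescopings and the standard fact that $\delta$ lies in the radical of $K$ and has strictly positive expansion in the simple roots.
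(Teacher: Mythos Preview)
The paper does not prove this proposition; it merely quotes it as \cite[Proposition~4.3]{afforb}. So there is no in-paper proof to compare against.

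Your argument is correct. The key reduction---that $K(\gamma_c,\,\cdot\,)$ is constant on $c$-orbits because $c^{-1}\gamma_c=\gamma_c-\delta$ and $\delta$ lies in the radical of $K$---is exactly right, and the telescoping computation is clean. A couple of small remarks: you implicitly use that the symmetrizing constants $d_j$ are positive, which follows from the paper's convention that each $d_j^{-1}$ is a positive integer; and you use that the expansion $\delta=\sum_j n_j\alpha_j$ has all $n_j>0$, which is the standard fact that the minimal positive imaginary root in affine type has full support. Both are unproblematic. Your proof is self-contained and entirely elementary, which is a nice bonus given that the paper outsources the result.
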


\begin{proposition}\label{apSchur2}
The set of positive roots in $\APre{c}$ is 
\[\set{c^{-k}\psi^\inj_{c;j}:k\ge0,\,1\le j\le n}\cup\set{c^k\psi^\proj_{c;j}:k\ge0,\,1\le j\le n}\cup\APTre{c}.\]
\end{proposition}

We now prove the first result of this section.

\begin{proof}[Proof of Proposition~\ref{cluster limit}]
Proposition~\ref{in the cone} says that $C'\subseteq\APTre{c}$.
Since $\APT{c}$ is finite and fixed (as a set) by the action of $c$, there is some integer $m>0$ such that $c^m$ fixes $C'$ pointwise.
Since $c$ fixes $\delta$ also, by Proposition~\ref{Qc n-1} we see that $c^m$ fixes the span of $C$, which is $\eigenspace{c}$.

Proposition~\ref{Q n} and Theorem~\ref{clus def OK} imply that there exist real roots $\beta$ and $\beta'$ such that $C'\cup\set{\beta,\beta'}$ is a $c$-cluster.
Neither $\beta$ nor $\beta'$ is in $\APTre{c}$.
(If, say, $\beta$ is in $\APTre{c}$, then by \eqref{compat delta U}, $C\cup\set{\beta}$ is a set of pairwise $c$-compatible roots, contradicting the fact that $C$ is a $c$-cluster.)

Let $\beta_i=\tau_c^{mi}(\beta)$, $\beta'_i=\tau_c^{mi}(\beta')$, and $C_i=\tau_c^{mi}(C'\cup\set{\beta,\beta'})=C'\cup\set{\beta_i,\beta'_i}$ for each $i\in\integers$.
By \eqref{compat tau}, each $C_i$ is a $c$-cluster.
By Proposition~\ref{at least 2}, there exists $k\in\integers$ and $j\in\set{1,\ldots,n}$ such that $\tau_c^k(\beta)=-\alpha_j$.
Then also $\tau_c^{k-1}(\beta)=\psi^\inj_{c,j}$ and $\tau_c^{k+1}(\beta)=\psi^\proj_{c,j}$.
Thus we can reindex the sequence so that $\beta_1\in\TravProj{c}$ and $\beta_{-1}\in\TravInj{c}$.

The subspaces $\eigenspace{c}$ and $V_\fin$ are distinct hyperplanes, because $\delta$ is in $\eigenspace{c}$ but not in~$V_\fin$.
Thus $\eigenspace{c}\cap V_\fin$ has codimension $2$.
Since $\gamma_c\not\in\eigenspace{c}$, we can write $\beta_1=a\gamma_c+b\delta+x$ with $x\in \eigenspace{c}\cap V_\fin$.
Since $\beta_1\not\in\eigenspace{c}$, also ${a\neq 0}$.
Now $c$ fixes~$\delta$ and sends $\gamma_c$ to $\gamma_c+\delta$.
Also $c^m$ fixes $x$.
Thus for $i\ge0$, $\tau_c^{mi}$ sends $\beta_i$ to $\beta_{i+1}=c^{mi}\beta_1=a\gamma_c+(b+ami)\delta+x$.
Proposition~\ref{useful} implies that $a>0$, so $\lim_{i\to\infty}\reals_{\ge0}\beta_i=\reals_{\ge0}\delta$.
Similarly, we can write $\beta_{-1}=a'\gamma_c+b'\delta+x'$ with $a'<0$ and compute $\beta_i=a'\gamma_c+(b'+a'mi)\delta+x'$ for $i\le-1$, and conclude that $\lim_{i\to-\infty}\reals_{\ge0}\beta_i=\reals_{\ge0}\delta$.
We obtain the remaining statements about limits by the symmetry between $\beta$ and $\beta'$.

Since $\beta$ and $\beta'$ are not in finite $\tau_c$-orbits, there exist simple roots $\alpha$ and $\alpha'$ and integers $k$ and $k'$ such that $\beta=\tau_c^k(-\alpha)$ and $\beta'=\tau_c^{k'}(-\alpha')$.
Thus for large enough~$i$, $\beta_i$ and $\beta'_i$ are both on the same side of $\eigenspace{c}$, by Proposition~\ref{useful}, and similarly $\beta_{-i}$ and $\beta'_{-i}$ are on the other side.
\end{proof}

Recall from Section~\ref{affdenom sec} that $\RST{c}=\RS\cap\eigenspace{c}$ and that $\SimplesT{c}$ is, in essence, the set of simple roots of $\RST{c}$.
Each root $\beta\in\RST{c}\setminus\set\delta$ has a unique expression as a linear combination of vectors in $\SimplesT{c}$ and $\SuppT(\beta)$ is the set of vectors appearing in this expression with nonzero coefficient.
Recall also that the Dynkin diagram for $\RST{c}$ consists of cycles and that $c$ acts as a rotation on each cycle, moving each root in $\SimplesT{c}$ to an adjacent root in the cycle.

Two roots $\alpha,\beta\in\APTre{c}$ are \newword{nested} if $\SuppT(\alpha)\subseteq\SuppT(\beta)$ or $\SuppT(\beta)\subseteq\SuppT(\alpha)$.
The roots are \newword{spaced} if $\SuppT(c^{-1}\alpha)\cup\SuppT(\alpha)\cup\SuppT(c\alpha)$ is disjoint from $\SuppT(\beta)$.
In particular, roots from distinct components of $\RST{c}$ are spaced.
The following is \cite[Proposition~5.12]{affdenom}.
\begin{proposition}\label{compatible in tubes}
Two distinct roots $\alpha$ and $\beta$ in $\APTre{c}$ are $c$-compatible if and only if they are nested or spaced.
\end{proposition}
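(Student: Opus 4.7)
The plan is to reduce the statement to a purely combinatorial analysis of arcs on a cycle, carried out separately within each component (``tube'') of $\RST{c}$. Recall from the paragraphs defining $\APT{c}$ that each component of $\RST{c}$ is an affine root system of type $\tilde{A}$ whose Dynkin diagram is a cycle, that $c$ rotates the simple roots $\SimplesT{c}$ along this cycle, and that, inside a tube with $k+1$ simple roots, the real positive roots of $\APTre{c}$ correspond bijectively to the proper connected sub-arcs of the $(k+1)$-cycle via $\alpha\mapsto\SuppT(\alpha)$. Under this bijection, $\SuppT(c\alpha)$ and $\SuppT(c^{-1}\alpha)$ are the two one-step rotations of $\SuppT(\alpha)$, so the simple roots adjacent to $\alpha$ are precisely the two vertices of the cycle lying immediately outside $\SuppT(\alpha)$ at its two endpoints.

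I would first dispose of the case where $\alpha$ and $\beta$ lie in different components. Then their supports lie in disjoint portions of $\SimplesT{c}$, so neither is strictly contained in the other, and the two vertices adjacent to $\alpha$ remain in $\alpha$'s component and hence outside $\SuppT(\beta)$. Thus $\adj\alpha\beta=0$, so the ``otherwise'' branch of the definition of $\cmcirc{\alpha}{\beta}{c}$ gives value $0$ and $\alpha,\beta$ are $c$-compatible. At the same time such a pair is automatically spaced, because even $\SuppT(c^{\pm 1}\alpha)\cup\SuppT(\alpha)$ is contained in the simple roots of $\alpha$'s component. Both sides of the biconditional hold simultaneously.

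For $\alpha\ne\beta$ lying in the same tube, the arc bijection forces $\SuppT(\alpha)\ne\SuppT(\beta)$, so, up to swapping the two roots, the arcs fall into exactly one of the following cases: (a) strict nesting $\SuppT(\alpha)\subsetneq\SuppT(\beta)$; (b) the two arcs are disjoint with at least one ``gap'' vertex separating them on each side of the cycle; or (c) the arcs overlap without containment, or are disjoint but abut along a shared endpoint of the cycle. In case (a) the middle branch of the definition gives $\cmcirc{\alpha}{\beta}{c}=0$, matching nestedness. In case (b), the two vertices adjacent to $\alpha$ both lie outside $\SuppT(\beta)$, so $\adj\alpha\beta=0$ and again $\cmcirc{\alpha}{\beta}{c}=0$, matching the spaced condition. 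In case (c), a short case-check in the four sub-configurations (proper overlap from either side, and abutting on either side) shows that at least one endpoint immediately outside $\SuppT(\alpha)$ lies in $\SuppT(\beta)$, so $\adj\alpha\beta\ge 1$; and such a pair is neither nested nor spaced, so both sides of the biconditional fail.

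The technical core is the case analysis in case (c), but this reduces to elementary cycle combinatorics once the dictionary between $\APTre{c}$ and proper sub-arcs of each tube's cycle is in hand. The main obstacle I anticipate is precisely setting up this dictionary together with the rotational action of $c$ on $\SimplesT{c}$, and verifying that the arc-to-root correspondence is a bijection onto $\APTre{c}$ restricted to the tube (so that distinct roots really do have distinct supports). Fortunately these structural facts are already in place in the paper's build-up and in the referenced treatments \cite{afforb,affdenom}, so they can be invoked directly rather than re-derived.
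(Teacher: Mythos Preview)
The paper does not prove this proposition; it is quoted verbatim as \cite[Proposition~5.12]{affdenom}. So there is no ``paper's own proof'' to compare against here. That said, your proposal is a correct argument and is exactly the kind of arc-combinatorics computation one expects (and which the paper itself uses pictorially later, in the proof of \cref{walls in d infty1}).

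A couple of minor points to tighten. First, you invoke the bijection between roots in $\APTre{c}$ (within a tube) and proper arcs to conclude that distinct roots have distinct supports. This is true and is indeed set up in \cite{affdenom}, but you don't actually need it: if $\SuppT(\alpha)=\SuppT(\beta)$ with $\alpha\neq\beta$, then the pair is nested by definition (equality is allowed), and the ``otherwise'' branch gives $\adj\alpha\beta=0$ since the adjacent vertices lie outside $\SuppT(\alpha)=\SuppT(\beta)$; so both sides of the biconditional hold. Second, your case (c) assertion that ``at least one endpoint immediately outside $\SuppT(\alpha)$ lies in $\SuppT(\beta)$'' deserves one more sentence of justification: if the two arcs overlap without containment, connectedness of $\SuppT(\beta)$ forces it to extend past an endpoint of $\SuppT(\alpha)$, hence to contain the next vertex; if they are disjoint but not spaced, then by definition one of the two vertices adjacent to $\SuppT(\alpha)$ already lies in $\SuppT(\beta)$. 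With those small additions the argument is complete.
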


The following is the concatenation of \cite[Lemma~4.7]{affdenom} and \cite[Proposition~4.8]{affdenom}.
\begin{proposition}\label{compute in tubes}
Suppose $\alpha,\beta\in\APTre{c}$.
Then $E_c(\alpha\ck,\beta)$ is the number of roots in $\SuppT(\alpha)\cap\SuppT(\beta)$ minus the number of roots in $\SuppT(c\alpha)\cap\SuppT(\beta)$.  
\end{proposition}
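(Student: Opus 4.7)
The plan is to reduce to computing $E_c(\beta_i\ck,\beta_j)$ for pairs $\beta_i,\beta_j\in\SimplesT{c}$ and then extend by bilinearity. The reduction hinges on three points. First, each component of $\RST{c}$ is of affine type $\tilde A$, hence simply laced, and $c$ acts on $\SimplesT{c}$ as a rotation on each cyclic component. Second, every $\alpha\in\APTre{c}$ has a $0/1$-expansion $\alpha=\sum_{\beta_i\in\SuppT(\alpha)}\beta_i$: this holds for positive roots of the finite type-$A$ sub-system $\RSTfin{c}$ in its own basis of simples (which is a subset of $\SimplesT{c}$), and it is preserved by $c$ because $c$ merely permutes $\SimplesT{c}$. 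Third, a direct check on the basis $(\alpha_i\ck,\alpha_j)$ shows that $E_c=\tfrac{1}{2}(K+\omega_c)$ as bilinear forms on $V$, giving a convenient decomposition for evaluating $E_c$ on pairs in $\SimplesT{c}$.

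The key step is to establish
\[E_c(\beta_i\ck,\beta_j)=\begin{cases}1&\text{if }\beta_i=\beta_j,\\-1&\text{if }c\beta_i=\beta_j,\\0&\text{otherwise,}\end{cases}\]
for $\beta_i,\beta_j\in\SimplesT{c}$. The diagonal case is immediate from $K(\beta_i\ck,\beta_i)=2$ and skew-symmetry of $\omega_c$. For distinct $\beta_i,\beta_j$, the value of $K(\beta_i\ck,\beta_j)$ is read off from the cyclic Dynkin diagram of the $\tilde A$-component containing them (with the special value $-2$ in a $\tilde A_1$-component). The decisive ingredient is pinning down $\omega_c(\beta_i,\beta_j)$: one must show that, within each cyclic component, $\omega_c$ encodes the orientation given by $c$-rotation, taking value $-1$ on forward-$c$-adjacent pairs and vanishing on non-adjacent pairs. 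The $\tilde A_1$-component requires extra care, since there $\omega_c=0$ on the pair and the $-1$ contribution to $E_c$ comes entirely from $K=-2$; nonetheless the tabulated values still hold.

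Given the key step, the proposition follows by bilinearity: since $\alpha\ck$ is proportional to $\alpha$ within each simply-laced component of $\RST{c}$,
\[E_c(\alpha\ck,\beta)=\sum_{\beta_i\in\SuppT(\alpha)}\sum_{\beta_j\in\SuppT(\beta)}E_c(\beta_i\ck,\beta_j),\]
the diagonal contributions produce $|\SuppT(\alpha)\cap\SuppT(\beta)|$, and the $c$-shifted contributions subtract $|\{\beta_i\in\SuppT(\alpha):c\beta_i\in\SuppT(\beta)\}|$, yielding the claimed formula.

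The main obstacle is identifying $\omega_c$ on pairs in $\SimplesT{c}$. The elements of $\SimplesT{c}$ are, in general, positive roots of $\RS$ with intricate expansions in the basis of simple roots of the ambient system, so the naive approach of expanding and summing $b_{ij}$-entries is unwieldy and uninformative. The natural route is to identify the restriction of $c$ to each component of $\RST{c}$ as a rotation and to transport the values of $\omega_c$ along the cycle via the conjugation invariance $\omega_c(\beta,\beta')=\omega_{scs}(s\beta,s\beta')$ from \cref{OmegaInvariance}, combined with an induction that traverses the cycle one step at a time.
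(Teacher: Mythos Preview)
The paper does not prove this proposition; it quotes it as the concatenation of Lemma~4.6 and Proposition~4.17 of \cite{affdenom}. So there is no in-paper argument to compare against, only the cited source.

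Your reduction is the natural one and is almost certainly how \cite{affdenom} proceeds: the identities $E_c=\tfrac12(K+\omega_c)$, the $0/1$-expansion of roots in $\APTre{c}$ in the basis $\SimplesT{c}$, and the fact that all $\beta_i$ in a single $\tilde A$-component share the same $K$-length (so $\alpha\ck=\sum\beta_i\ck$) are all correct. The bilinearity step at the end is fine.

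The gap is in the ``key step.'' Your proposed route---conjugation invariance from \cref{OmegaInvariance}, iterated to give $c$-invariance of $E_c$---only shows that $e_m:=E_c(\beta_i\ck,c^m\beta_i)$ depends on $m$ modulo the cycle length~$\ell$. Combining with the symmetric/antisymmetric decomposition yields $e_m+e_{\ell-m}=K(\beta_i\ck,c^m\beta_i)$, hence $e_0=1$, $e_1+e_{\ell-1}=-1$, and $e_m+e_{\ell-m}=0$ for $2\le m\le\ell-2$. These relations do \emph{not} determine the $e_m$ individually: in particular they cannot distinguish $(e_1,e_{\ell-1})=(-1,0)$ from $(0,-1)$, nor force $e_m=0$ for $2\le m\le\ell-2$. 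The ``induction that traverses the cycle one step at a time'' via \cref{OmegaInvariance} changes $c$ to $scs$ at each step, so it relates values of different forms $E_{c'}$ rather than pinning down a single $E_c(\beta_i\ck,\beta_j)$; it does not by itself supply the missing anchor.

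What is actually needed is one genuine base computation---for example $E_c(\beta_i\ck,c\beta_i)=-1$ for some explicit $\beta_i\in\SimplesT{c}$---and this requires the concrete description of the elements of $\SimplesT{c}$ in terms of the ambient simple roots. That description (and the resulting computation) is precisely the content of \cite[Lemma~4.6]{affdenom}; your outline reaches the point where that lemma is needed but does not reproduce it.
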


\begin{proposition}\label{compute omega in tubes}
Suppose $\alpha,\beta\in\APTre{c}$.
Then $\omega_c(\alpha\ck,\beta)$ is the number of roots in $\SuppT(c\beta)\cap\SuppT(\alpha)$ minus the number of roots in $\SuppT(c\alpha)\cap\SuppT(\beta)$.  
\end{proposition}
\begin{proof}
We have $\omega_c(\alpha\ck,\beta)=E_c(\alpha\ck,\beta)-E_c(\beta,\alpha\ck)$.
By Proposition~\ref{compute in tubes}, if $\alpha$ and~$\beta$ are in different components of $\RST{c}$, both $E_c(\alpha\ck,\beta)$ and $E_c(\beta,\alpha\ck)$ are zero.
If $\alpha$ and~$\beta$ are in the same component of $\RST{c}$, then they have the same length, so $E_c(\beta,\alpha\ck)=E_c(\beta\ck,\alpha)$.
Thus in any case, $\omega_c(\alpha\ck,\beta)=E_c(\alpha\ck,\beta)-E_c(\beta\ck,\alpha)$.
The proposition now follows from Proposition~\ref{compute in tubes}.
\end{proof}

The following is part of \cite[Proposition~2.16]{affdenom}
\begin{proposition}\label{E delta in tubes}
If $\beta$ is a root contained in $\eigenspace{c}$, then $E_c(\beta,\delta)=0$.
\end{proposition}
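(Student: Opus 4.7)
The plan is to reduce the claim to an explicit combinatorial computation via \cref{compute in tubes}. First observe that every root $\beta$ contained in $\eigenspace{c}$ lies in $\RST{c}=\RS\cap\eigenspace{c}$, and is either a real root (in which case, up to sign, $\beta\in\APTre{c}$) or an integer multiple of $\delta$; note that $\delta$ itself lies in $\eigenspace{c}$ because $K(\gamma_c,\delta)=0$ (as $\delta$ spans the radical of $K$ in affine type). Since $E_c$ is linear in its first argument and $E_c(-\beta,\delta)=-E_c(\beta,\delta)$, it suffices to verify $E_c(\beta,\delta)=0$ in the two cases $\beta\in\APTre{c}$ and $\beta=\delta$.

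For $\beta\in\APTre{c}$, recall that each connected component of $\RST{c}$ is a root system of type $\tilde A_k$; within one component, the intersection of $\SimplesT{c}$ with it consists of $k+1$ roots $\beta_1,\ldots,\beta_{k+1}$ forming a cycle under the action of $c$, and $\delta$ equals $\sum_i\beta_i$ inside that component (the standard description of the null root in type $\tilde A$). Fixing the component containing $\beta$, linearity in the second argument gives $E_c(\beta\ck,\delta)=\sum_i E_c(\beta\ck,\beta_i)$. Each summand is computable from \cref{compute in tubes}: since $\SuppT(\beta_i)=\set{\beta_i}$, the formula reduces $E_c(\beta\ck,\beta_i)$ to the indicator of $\beta_i\in\SuppT(\beta)$ minus the indicator of $c^{-1}\beta_i\in\SuppT(\beta)$. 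Summing over $i$ in the cycle, both indicator sums equal $|\SuppT(\beta)|$ (because $c^{-1}$ permutes the cycle), so the total vanishes. The analogous calculation using simples from any other component of $\RST{c}$ gives $0$ termwise, since $\beta$ and those simples are spaced. Thus $E_c(\beta\ck,\delta)=0$ and, by positive rescaling, $E_c(\beta,\delta)=0$.

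The imaginary case $\beta=\delta$ is then a one-line corollary: write $\delta=\sum_i\beta_i$ using any single component of $\RST{c}$ and apply linearity in the first slot to obtain $E_c(\delta,\delta)=\sum_i E_c(\beta_i,\delta)$, every summand of which vanishes by the real case just established, since each $\beta_i\in\SimplesT{c}\subseteq\APTre{c}$.

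The main technical point to track will be the asymmetry between the first slot of $E_c$ (naturally filled by a co-root) and the statement, in which $\beta$ appears directly; this only requires observing that $E_c(\beta,\delta)$ and $E_c(\beta\ck,\delta)$ differ by a positive scalar. Beyond this bookkeeping, the proof is essentially a combinatorial identity on the cyclic action of $c$ on the simple roots of each $\tilde A_k$ component of $\RST{c}$.
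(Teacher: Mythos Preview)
The paper does not prove this proposition; it is quoted as part of \cite[Proposition~2.16]{affdenom}.  So there is no argument in the paper to compare against, and your proposal should be judged on its own.

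Your core computation is correct.  Using \cref{compute in tubes} with the second argument equal to a simple $\beta_i\in\SimplesT{c}$ gives
\[
E_c(\beta\ck,\beta_i)=[\beta_i\in\SuppT(\beta)]-[c^{-1}\beta_i\in\SuppT(\beta)],
\]
and summing over a cycle yields $0$ because $c$ permutes the cycle.  Two small issues deserve attention.

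First, a genuine (but easily repaired) gap: your reduction step asserts that every real root in $\eigenspace{c}$ is, up to sign, in $\APTre{c}$.  That is false.  The set $\APTre{c}$ is finite by construction (it is a finite union of finite $c$-orbits), whereas $\RST{c}$ is an affine root system and therefore contains infinitely many real roots.  Fortunately you do not need this claim.  Your computation already establishes $E_c(\beta_j,\delta)=0$ for every $\beta_j\in\SimplesT{c}$, and $\SimplesT{c}$ spans $\eigenspace{c}$ (the components of $\RST{c}$ are of type $\tilde A_{r_i}$, contributing $r_i+1$ simples subject to one relation each against the common $\delta$, so the span has dimension $(n-2)+p-(p-1)=n-1$).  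By linearity of $E_c$ in the first slot, $E_c(\beta,\delta)=0$ for every $\beta\in\eigenspace{c}$, in particular for every root there.  This replaces both your real case and your imaginary case in one stroke.

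Second, a cosmetic point: you assert that $\sum_i\beta_i=\delta$ within a single component.  What is immediate is that this sum is a positive integer multiple of $\delta$ (it is positive imaginary and $\delta$ is primitive).  That is all you need, so the argument is unaffected.
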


In fact, we can augment Proposition~\ref{E delta in tubes} as follows.
\begin{proposition}\label{E omega delta in tubes}
For a vector $\beta\in V$, the following are equivalent.
\begin{enumerate}[label=\rm(\roman*), ref=\roman*]
\item $\beta\in\eigenspace{c}$.
\item $E_c(\beta,\delta)=0$.
\item $\omega_c(\beta,\delta)=0$.
\end{enumerate}
If $\beta\in\AP{c}$, then these conditions are equivalent to $\beta\in\APT{c}$.
\end{proposition}
\begin{proof}
Since the roots $\APT{c}$ span $\eigenspace{c}$, Proposition~\ref{E delta in tubes} implies that $E_c(\beta,\delta)=0$ for all vectors $\beta\in\eigenspace{c}$.
The matrix for $E_c$ is upper unitriangular, so it has no kernel.
Since $\eigenspace{c}$ is a hyperplane, we conclude that $E_c(\beta,\delta)=0$ if and only if $\beta\in\eigenspace{c}$.
Lemma~\ref{nu delta} says that $-\omega_c(\beta,\delta)=2\br{\nu_c(\delta),\beta}=-2E_c(\beta,\delta)$ for any $\beta$, so also $\omega_c(\beta,\delta)=0$ if and only if $\beta\in\eigenspace{c}$.
The last statement of the proposition holds by definition, and is included for emphasis.
\end{proof}

The following proposition accomplishes most of the work for the proof of Proposition~\ref{walls in d infty2}.
We will illustrate the proof in Figures~\ref{case1fig} and~\ref{case2fig}.
Each figure represents a component $\Lambda'$ of the Coxeter diagram of $\RST{c}$.
The Coxeter diagram of $\Lambda'$ consists of a cycle with no edge labels, so that $\Lambda'$ is a root system of affine type $A^{(1)}$.
The support $\SuppT$ of a root in $\APTre{c}\cap\Lambda'$ is an induced path in the cycle.
We represent roots in $\APTre{c}\cap\Lambda'$ pictorially by circling their supports.
When two supports are nested, we draw the larger support around the smaller support.
We draw the cycle so that the action of $c$ on a root is to move the support of the root one step counterclockwise.
The \newword{counterclockwise endpoint} of a root $\phi\in\APTre{c}\cap\Lambda'$ is the element of $\SuppT(\phi)$ that is taken outside of $\SuppT(\phi)$ by counterclockwise rotation.
The \newword{clockwise endpoint} of $\phi$ is defined analogously.

\begin{proposition}\label{walls in d infty1}
If $C$ is a collection of $n-2$ pairwise $c$-compatible roots in $\APT{c}$ with $\delta\in C$, then there exists a root $\alpha$ in $\APTre{c}$ with the following properties.
\begin{enumerate}[label=\rm(\roman*), ref=\roman*] 
\item \label{beta property 1}
$E_c(\alpha,\beta)=0$ for all $\beta\in C$.
\item \label{beta property 2}
If $\gamma\in\cut(\alpha)\cap\APTre{c}$ and $\omega_c(\gamma,\alpha)>0$, then $E_c(\gamma,\beta)\ge0$ for all $\beta\in C$.
\end{enumerate}
\end{proposition}
\begin{proof}
The roots $\alpha$ and $\gamma$ quantified in the proposition are all in $\APTre{c}$, so Proposition~\ref{E delta in tubes} says that $E_c(\alpha,\delta)=0$ and $E_c(\gamma,\delta)=0$ in every case.
Thus we need to show that for any set $C'$ of $n-3$ pairwise $c$-compatible roots in $\APTre{c}$, there exists a root $\alpha$ in $\APTre{c}$ satisfying conditions \eqref{beta property 1} and \eqref{beta property 2} for all roots $\beta\in C'$.
In light of Proposition~\ref{compute in tubes}, condition \eqref{beta property 1} is that $|\SuppT(\alpha)\cap\SuppT(\beta)|=|\SuppT(c\alpha)\cap\SuppT(\beta)|$ for all $\beta\in C'$.
We will use Proposition~\ref{compatible in tubes} implicitly throughout the proof.

By Propositions~\ref{Qc n-1} and~\ref{in the cone}, there is a root $\phi\in\APTre{c}$ such that $C'\cup\set{\phi,\delta}$ is an imaginary $c$-cluster.
Let $\Lambda'$ be the component of $\RST{c}$ containing $\phi$.
Then the restriction of $C'\cup\set{\phi}$ to $\Lambda'$ is a maximal collection of pairwise $c$-compatible roots in $\APTre{c}\cap\Lambda'$.

First, consider the case where no root $\beta\in C'$ with $\SuppT(\beta)\supsetneq\SuppT(\phi)$ has the same counterclockwise endpoint as $\phi$.
A representative drawing of this case is shown in Figure~\ref{case1fig}, with some roots $\beta\in C'$ omitted.
\begin{figure}
\includegraphics{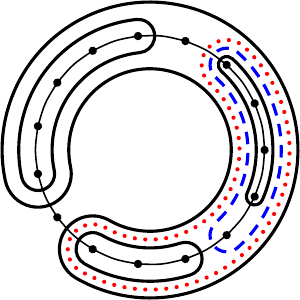}
\caption{An illustration of the proof of Proposition~\ref{walls in d infty1}.}
\label{case1fig}
\end{figure}
In the figure, $\SuppT(\phi)$ is shown with a dotted line to distinguish it from the supports of the roots in $C'$, which are shown with solid lines. 
We now explain how to construct $\alpha$.
There is a unique vertex $v$ of the cycle that is in the support of $\phi$ but is not in the support of any root $\beta\in C'$ with $\SuppT(\beta)\subseteq\SuppT(\phi)$.
Let $\alpha$ be the root whose counterclockwise endpoint is the same as the counterclockwise endpoint of $\phi$, and whose clockwise endpoint is $v$.
In the figure, $\SuppT(\alpha)$ is shown with a dashed line.
We see that $|\SuppT(\alpha)\cap\SuppT(\beta)|=|\SuppT(c\alpha)\cap\SuppT(\beta)|$ for all $\beta\in C'$.
This is condition~\eqref{beta property 1}.

Suppose $\gamma\in\APTre{c}$ cuts $\alpha$ and has $\omega_c(\gamma,\alpha)>0$.
Since $\gamma$ cuts $\alpha$, $\SuppT(\gamma)\subsetneq\SuppT(\alpha)$ and $\SuppT(\gamma)$ contains exactly one endpoint of $\alpha$.
Thus Proposition~\ref{compute omega in tubes} implies that $\SuppT(\gamma)$ contains the counterclockwise endpoint of $\alpha$.
Notice in particular that $\gamma$, $\alpha$, and $\phi$ all have the same counterclockwise endpoint.

Suppose $\beta\in C'$.
If $\beta$ is not in the component $\Lambda'$, then Proposition~\ref{compute in tubes} says that $E_c(\gamma\ck,\beta)=0$.
If $\beta\in\Lambda'$, then Proposition~\ref{compute in tubes} says that $E_c(\gamma\ck,\beta)$ is the number of roots in $\SuppT(\gamma)\cap\SuppT(\beta)$ minus the number of roots in $\SuppT(c\gamma)\cap\SuppT(\beta)$.  
There are several possible relationships betwen $\beta$ and $\phi$.

If $\SuppT(\beta)\supsetneq\SuppT(\phi)$, then $\beta$ does not have the same counterclockwise endpoint as $\phi$.
(Recall that we are in the case where no root $\beta\in C'$ with $\SuppT(\beta)\supsetneq\SuppT(\phi)$ has the same counterclockwise endpoint as $\phi$.)
Thus also ${\SuppT(\beta)\supsetneq\SuppT(\gamma)}$ and $\beta$ does not have the same counterclockwise endpoint as~$\gamma$.
Then $|\SuppT(\gamma)\cap\SuppT(\beta)|=|\SuppT(c\gamma)\cap\SuppT(\beta)|$, so ${E_c(\gamma\ck,\beta)=0}$.

If $\SuppT(\beta)\subseteq\SuppT(\phi)$, then since $\gamma$ and $\phi$ have the same counterclockwise endpoint, $\SuppT(\gamma)\cap\SuppT(\beta)$ is at least as large as $\SuppT(c\gamma)\cap\SuppT(\beta)$, so $E_c(\gamma\ck,\beta)\ge0$.

Otherwise, since $\phi$ is compatible with $\beta$, Proposition~\ref{compatible in tubes} says that $\beta$ and $\phi$ are spaced.
Since $\SuppT(\gamma)\subseteq\SuppT(\alpha)\subseteq\SuppT(\phi)$, also $\beta$ and $\gamma$ are spaced, so Proposition~\ref{compute in tubes} implies that $E_c(\gamma,\beta)=0$.

Next, consider the case where some root $\beta\in C'$ with $\SuppT(\beta)\supsetneq\SuppT(\phi)$ has the same counterclockwise endpoint as $\phi$.
Choose $\beta_0$ to have minimal support among such roots.
Again, there is a unique vertex $v$ of the cycle that is in the support of $\phi$ but is not in the support of any root $\beta$ with $\SuppT(\beta)\subseteq\SuppT(\phi)$.
Let $\phi'$ be the root whose counterclockwise vertex is the vertex adjacent to and clockwise of $v$, and whose clockwise vertex is the clockwise vertex of $\beta_0$.
Figure~\ref{case2fig} shows two representative cases of the construction of $\phi'$.
\begin{figure}
\includegraphics{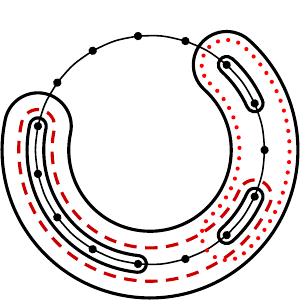}
\quad
\includegraphics{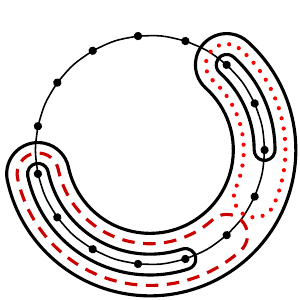}
\caption{Another illustration of the proof of Proposition~\ref{walls in d infty1}.}
\label{case2fig}
\end{figure}
Again, we have circled the support of $\phi$ with a dotted line. 
The support of $\beta_0$ is the largest circled support, and is shown with a solid line.
The support of $\phi'$ is circled with a dashed line.
Again, some of the roots $\beta\in C'$ are not shown.
We replace $\phi$ by $\phi'$, and note that $\phi'$ is $c$-compatible with every $\beta\in C$.
With this replacement, we fall into the previous case, so we construct $\alpha$ as described above.
\end{proof}

To deduce Proposition~\ref{walls in d infty2} from Proposition~\ref{walls in d infty1}, we need the following lemma.

\begin{lemma}\label{rescue}
Suppose that $\beta\in\APTre{c}$ and that $\gamma\in\APre{c}\setminus\APTre{c}$.
If $\omega_c(\gamma,\beta)>0$, then ${\d_\infty\subseteq\set{x\in V^*:\br{x,\gamma}\le0}}$.
\end{lemma}
\begin{proof}
Since $\gamma\in\APre{c}\setminus\APTre{c}$, Proposition~\ref{E omega delta in tubes} says $\omega_c(\gamma,\delta)\neq0$.
Recall that every root $\gamma\in\RSre$ is a scalar multiple of $\phi+k\delta$ for some $\phi\in\RSfin$ and $k\in\integers$.
Since $\d_\infty\subset\delta^\perp$, we have ${\d_\infty\subseteq\set{x\in V^*:\br{x,\gamma}\le0}}$ if and only if ${\d_\infty\subseteq\set{x\in V^*:\br{x,\phi}\le0}}$.
Furthermore, since $\omega_c$ is skew-symmetric, $\omega_c(\gamma,\delta)=\omega_c(\phi,\delta)$.
Thus by Theorem~\ref{DFc complement}, either $\d_\infty\subseteq\set{x\in V^*:\br{x,\gamma}\le0}$ (if $\omega_c(\gamma,\delta)>0$) or $\d_\infty\subseteq\set{x\in V^*:\br{x,\gamma}\ge0}$ (if $\omega_c(\gamma,\delta)<0$).
Therefore, we need to show that $\omega_c(\gamma,\delta)>0$.

Proposition~\ref{apSchur2} says that $\gamma$ is either of the form $c^ks_1\cdots s_{j-1}\alpha_j$ for some $k\ge0$ and $j\in\set{1,\ldots,n}$ or of the form $c^{-k}s_n\cdots s_{j+1}\alpha_j$ for some $k\ge0$ and $j\in\set{1,\ldots,n}$.
Thus we can repeatedly apply simple reflections that are initial or final in $c$ to $\gamma$ and conjugate $c$ by the same simple reflections until, writing $c'$ for the new Coxeter element, $\gamma=\alpha_s$ for $s$ either initial or final in~$c'$.
Lemma~\ref{s pos real in tubes} says that the same sequence of simple reflections applied to $\beta$ yields a root $\beta'\in\APTre{c'}$.
Lemma~\ref{OmegaInvariance} says that $\omega_{c'}(\alpha_s,\beta')=\omega_c(\gamma,\beta)>0$.
But if $s$ is final in $c'$, then $\omega_{c'}(\alpha_s,\beta')\le0$ for any positive root $\beta'$, so $s$ is initial in~$c'$.
Thus also $\omega_{c'}(\alpha_s,\delta)>0$.
By Lemma~\ref{OmegaInvariance} again, $\omega_c(\gamma,\delta)>0$.
\end{proof}

We now show how Proposition~\ref{walls in d infty2} follows from Proposition~\ref{walls in d infty1} and Lemma~\ref{rescue}.

\begin{proof}[Proof of Proposition~\ref{walls in d infty2}]
Since each $\beta'\in C$ is a positive root, each vector $\nu_c(\beta')$ is $-E_c(\,\cdot\,,\beta')$.
Thus, by the definition of $\d_\beta$ in \eqref{d beta}, we need to show that there is a positive root $\beta$ in $\APTre{c}$ satisfying condition \eqref{beta property 1} from Proposition~\ref{walls in d infty1} and a stronger property than condition \eqref{beta property 2}:  
The condition on each $E_c(\gamma,\beta')$ must hold for all $\gamma\in\cut(\beta)$ with $\omega_c(\gamma,\beta)>0$, not only for $\gamma\in\cut(\beta)\cap\APTre{c}$.
Lemma~\ref{rescue} provides the needed strengthening of the property:
Theorem~\ref{d inf extreme} and Proposition~\ref{in the cone} imply that $\nu_c(\beta')\in\d_\infty$ for all $\beta'\in\APT{c}$.
Thus, if $\gamma\in\APre{c}\setminus\APTre{c}$ and $\omega_c(\gamma,\beta)>0$, Lemma~\ref{rescue} says that $\br{\nu_c(\beta'),\gamma}\le0$, or in other words $E_c(\gamma,\beta')\ge0$.
\end{proof}

\subsection{Affine scattering fans and generalized associahedron fans}\label{2 fans sec}
We now prove the part of Theorem~\ref{fans thm} that relates $\ScatFanTB$ with $\nu_c(\Fan_c(\RS))$.  

\begin{theorem}\label{scat nu}
If $B$ is an acyclic exchange matrix of affine type, corresponding to a root system $\RS$ and a Coxeter element $c$, then $\ScatFanTB$ and $\nu_c(\Fan_c(\RS))$ coincide.
\end{theorem}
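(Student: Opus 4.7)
The plan is to verify equality of the two fans by comparing their codimension-$1$ skeletons on each of the two pieces $|\DF_c|$ and $\d_\infty$ that cover $V^*$ by \cref{DFc complement}. Throughout I use \cref{DCScat} to identify $\ScatTB$ with $\DCScat(A,c)$, and \cref{nu thm} together with \cref{Affine g} to identify $\nu_c(\Fan_c^\re(\RS))$ with the $\g$-vector fan $\DF_c$.

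First I would establish agreement on the interior of $|\DF_c|$. By \cref{explicit shards}, each wall $\Sh(j)$ or $-\Sh(j')$ of $\DCScat(A,c)$, intersected with $\Tits(A)\cup(-\Tits(A))$, is a union of codimension-$1$ faces of $\DF_c$; conversely, combining \cref{DC Phic} with \cref{explicit shards} shows that every codimension-$1$ face of $\DF_c$ is contained in some such wall. Hence on the interior of $|\DF_c|$ the codimension-$1$ skeletons of $\ScatFanTB$ and $\nu_c(\Fan_c(\RS))$ coincide, both equal to that of $\DF_c$.

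Second, I would establish agreement on $\d_\infty$. On the associahedron side, $\d_\infty$ is tiled by the $(n-1)$-dimensional imaginary cones $\nu_c(\Cone(C))$ for imaginary $c$-clusters $C$, with $(n-2)$-dimensional facets that, by \cref{walls in d infty2}, lie in hyperplanes $\beta^\perp$ for $\beta\in\APTre{c}$. On the scattering side, each such $\beta$ supports a wall of $\ScatTB$ by \cref{scat Schur}. To match the subdivisions exactly, I would invoke \cref{cluster limit}, which realizes each imaginary cluster $C=\{\delta\}\cup C'$ as a bi-infinite limit of real clusters $C_i=C'\cup\{\beta_i,\beta'_i\}$ with $\beta_i,\beta'_i\to\reals_{\ge0}\delta$: the walls of $\DF_c$ orthogonal to the roots in $C'\subseteq\APTre{c}$ persist throughout this sequence and limit to the facets of $\nu_c(\Cone(C))$ inside $\d_\infty$, while by \cref{finite type interior} the chain of $\DF_c$-cones around the $(n-2)$-dimensional face $\nu_c(\Cone(C'))$ is bi-infinite precisely because $\nu_c(\Cone(C'))$ touches $\d_\infty$.

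The main obstacle will be ruling out extraneous codimension-$1$ subdivisions of $\d_\infty$ coming from walls orthogonal to roots $\beta\in\RSpos\setminus\eigenspace{c}$, whose supporting hyperplanes a priori meet $\d_\infty$ transversely. By \cref{explicit shards}, such a wall is covered by codimension-$1$ faces of $\DF_c$, none of which lies in $\delta^\perp$ by \cref{codim 1 delta}; hence the wall's closure meets $\d_\infty$ only along codimension-$2$ faces of $\DF_c$ in $\delta^\perp$, which by \cref{2face in boundary} lie in $\d_\infty$ and, by the analysis of the previous paragraph, coincide with facets of imaginary cones already identified.
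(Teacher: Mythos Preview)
Your overall decomposition into $|\DF_c|$ and $\d_\infty$, and your use of \cref{explicit shards} together with \cref{DC Phic} on the first piece, match the paper's approach and are essentially correct there. The treatment of $\d_\infty$, however, has a genuine gap.

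In your third paragraph you observe via \cref{walls in d infty2} that each interior facet of an imaginary cone lies in a hyperplane $\beta^\perp$ with $\beta\in\APTre{c}$, and via \cref{scat Schur} that $\beta$ is the normal to some wall $\d_\beta$ of $\ScatTB$. But $\d_\beta$ is a \emph{proper} subset of $\beta^\perp$ (it is a shard, cut by the inequalities coming from $\cut(\beta)$), so you have not yet placed the facet inside the wall. Your limiting argument via \cref{cluster limit} does not close this gap: you speak of ``walls of $\DF_c$ orthogonal to the roots in $C'$,'' but the roots in $C'$ index the \emph{extreme rays} of the cones $\nu_c(\Cone(C_i))$, not the normals to their facets, and there is no reason the normal to the facet $\nu_c(\Cone(C_i\setminus\{\gamma\}))$ should be independent of $i$, so you cannot pass to a limit inside a fixed wall. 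The paper fills this gap with a separate inductive argument (\cref{tech lemma}) showing that whenever $\gamma$ is a nonnegative combination of simple roots and $\nu_c(\gamma)\in\beta_t^\perp$, then already $\nu_c(\gamma)\in\Sh(j)$; applying this to the extreme rays of $\d_\infty$ (given by \cref{d inf extreme}) yields $\Sh(j)\cap\d_\infty=\beta_t^\perp\cap\d_\infty$ (\cref{tech prop}), which is exactly what is needed.

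The paper also deploys \cref{cluster limit} for a different purpose than you do: not to build walls in $\d_\infty$ by a limit, but to rule out extraneous walls there. If a wall $\d\neq\d_\infty$ met the relative interior of an imaginary cone $\nu_c(\Cone(C))$, then since $\d$ is $(n-1)$-dimensional and not contained in $\delta^\perp$, a short segment from that interior point out of $\d_\infty$ stays in $\d$; but the real cones $\nu_c(\Cone(C_i))$ accumulate on $\nu_c(\Cone(C))$ from both sides, forcing $\d$ to meet the interior of some $\nu_c(\Cone(C_i))$, which contradicts the already-established real case. This replaces your fourth paragraph, whose appeal to \cref{explicit shards} only controls $\d$ on $\Tits(A)\cup(-\Tits(A))$ and says nothing directly about $\d\cap\d_\infty$.
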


We now prepare to prove Theorem~\ref{scat nu}.

\begin{proposition}\label{faces in shards}
Every $(n-1)$-dimensional face of $\nu_c(\Fan_c(\RS))$ is contained in a wall of $\DCScat(A,c)$.
\end{proposition}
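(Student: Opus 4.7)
The plan is to partition the $(n-1)$-dimensional faces of $\nu_c(\Fan_c(\RS))$ by their role in the fan, and then apply \cref{DC Phic}, \cref{explicit shards}, and the description of $\d_\infty$ already assembled.

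First I would classify the maximal cones of $\nu_c(\Fan_c(\RS))$. By \cref{nu thm} together with \cref{Affine g}, $\nu_c$ carries real $c$-cluster cones onto the $n$-dimensional maximal cones of $\DF_c$. Imaginary $c$-cluster cones, on the other hand, are $(n-1)$-dimensional by \cref{Qc n-1}, and the discussion leading up to \cref{d inf extreme} identifies the union of their $\nu_c$-images with the imaginary wall $\d_\infty$. Because \cref{codim 1 delta} forbids any codimension-$1$ face of $\DF_c$ from lying in $\delta^\perp$, any $(n-1)$-dimensional face $F$ of $\nu_c(\Fan_c(\RS))$ falls into exactly one of two disjoint cases: either $F$ is a facet of an $n$-dimensional maximal cone (Case A), or $F$ is itself a maximal imaginary cluster cone image contained in $\d_\infty$ (Case B).

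In Case A, $F$ is a codimension-$1$ face of $\DF_c$, so \cref{DC Phic} yields a positive real root $\beta\in\APre{c}$ with $F\subseteq\beta^\perp$ and either a $c$-sortable join-irreducible $j$ with $\cov(j)=\set{t}$ and $\beta_t=\beta$, or the $c^{-1}$-sortable analogue $j'$. I would then invoke \cref{explicit shards} to place $F$ inside $\Sh(j)$ or $-\Sh(j')$, each of which is a wall of $\DCScat(A,c)$ by construction.

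Case B is essentially immediate: $F\subseteq\d_\infty$ and $(\d_\infty,f_\infty)$ is a wall of $\DCScat(A,c)$ by definition. The only piece of real content is the invocation of \cref{explicit shards} in Case A, and I expect no obstacle there, since that proposition was set up precisely to support such a statement; the remainder of the argument is a careful accounting of which cones of $\nu_c(\Fan_c(\RS))$ can contribute an $(n-1)$-dimensional face, together with the observation that the two cases do not overlap.
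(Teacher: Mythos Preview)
Your proposal is correct and follows essentially the same approach as the paper: split into the case where $F\subseteq\d_\infty$ (handled immediately, since $\d_\infty$ is itself a wall) versus the case where $F$ is a codimension-$1$ face of $\DF_c$ (handled by \cref{DC Phic} and \cref{explicit shards}). The paper's version is terser, simply saying ``if $F$ is contained in $\d_\infty$ we are done, and if not, then $F$ is a face of $\DF_c$ by \cref{DFc complement,nu thm,Affine g}''; your additional invocation of \cref{Qc n-1} and \cref{codim 1 delta} to show the cases are disjoint is correct but not strictly needed for the argument.
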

\begin{proof}
Suppose $F$ is an $(n-1)$-dimensional face of $\nu_c(\Fan_c(\RS))$.
If $F$ is contained in $\d_\infty$, then we are done, and if not, then $F$ is a face of $\DF_c$ by Theorems~\ref{nu thm}, \ref{Affine g}, and~\ref{DFc complement}, .
Writing $t$ for the reflection such that $F\subseteq\beta_t^\perp$, condition~\eqref{adjacent} of Proposition~\ref{DC Phic} holds.
Therefore by condition~\eqref{either or j} of Proposition~\ref{DC Phic}, there is a $c$-sortable join-irreducible element $j$ with ${\cov(j)=\set{t}}$ or there is a $c^{-1}$-sortable join-irreducible element $j'$ with ${\cov(j')=\set{t}}$, or both.
Now Proposition~\ref{explicit shards} says that $F$ is contained in $\Sh(j)$ or $F$ is contained in $-\Sh(j')$, or both.
\end{proof}

The following lemma is \cite[Lemma~3.3]{typefree}.

\begin{lemma}\label{Ec invariant}
If~$s$ is initial or final in $c$, then $E_c(\beta,\beta')=E_{scs}(s \beta, s \beta')$ for all $\beta$ and~$\beta'$ in $V$.
\end{lemma}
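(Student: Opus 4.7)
The plan is to deduce the claim from the already-established invariance of $\omega_c$ (\cref{OmegaInvariance}) together with the $W$-invariance of the symmetric form $K$. The key observation is that $E_c$ admits a clean decomposition in terms of $\omega_c$ and $K$, namely
\[
E_c(\alpha_i\ck,\alpha_j)=\tfrac12\bigl(\omega_c(\alpha_i\ck,\alpha_j)+K(\alpha_i\ck,\alpha_j)\bigr)
\]
for every $i,j$. First I would verify this identity by a case check on basis pairs using the explicit tables: for $i>j$ both sides equal $a_{ij}$; for $i=j$ both sides equal $1$ (since $\omega_c$ vanishes on $(\alpha_i\ck,\alpha_i)$ and $K(\alpha_i\ck,\alpha_i)=2$); and for $i<j$ both sides equal $0$ (since $\omega_c(\alpha_i\ck,\alpha_j)=-a_{ij}$ cancels $K(\alpha_i\ck,\alpha_j)=a_{ij}$). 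Because $E_c$, $\omega_c$, and $K$ are all bilinear forms on $V$ determined by their values on this basis, the identity $E_c=\tfrac12(\omega_c+K)$ holds on all of $V\times V$.

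Given this decomposition, the invariance statement becomes immediate. Indeed, for any $\beta,\beta'\in V$ and any $s$ initial or final in $c$, \cref{OmegaInvariance} yields $\omega_c(\beta,\beta')=\omega_{scs}(s\beta,s\beta')$, while the fact that $K$ is $W$-invariant (standard for the symmetric form associated to a Cartan matrix, since $s$ is a $K$-isometry) gives $K(\beta,\beta')=K(s\beta,s\beta')$. Averaging these two equalities produces $E_c(\beta,\beta')=E_{scs}(s\beta,s\beta')$, which is the desired conclusion.

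There is no real obstacle here: the only thing to be careful about is recording that $E_c$ is genuinely bilinear on all of $V$ (so that the identity extends from basis pairs to arbitrary vectors) and that the claim is for all $\beta,\beta'\in V$ rather than just roots, which is automatic from the bilinear extension on both sides. No case split on whether $s$ is initial or final is needed, since \cref{OmegaInvariance} already handles both cases uniformly.
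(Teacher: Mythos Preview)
Your argument is correct. The identity $E_c=\tfrac12(\omega_c+K)$ holds on the basis pairs $(\alpha_i\ck,\alpha_j)$ exactly as you check, and since all three forms are bilinear on $V$ it extends globally; then \cref{OmegaInvariance} and the $W$-invariance of $K$ give the result. One small point: as stated in this paper, \cref{OmegaInvariance} is phrased for \emph{roots} $\beta,\beta'$, so you should remark that by bilinearity (and the fact that the simple roots span $V$) the identity $\omega_c(\beta,\beta')=\omega_{scs}(s\beta,s\beta')$ extends to all of $V\times V$.

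As for comparison with the paper: there is nothing to compare, since the paper does not prove this lemma here but simply quotes it as \cite[Lemma~3.3]{typefree}. Your self-contained derivation via the decomposition $E_c=\tfrac12(\omega_c+K)$ is a perfectly good way to obtain it directly from the other results already recorded in this paper.
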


\begin{lemma}\label{nu scs}  
Suppose $\beta\in\RSpos$.
If $s$ is initial or final in $c$ and $\beta\neq\alpha_s$, then $\nu_{scs}(s\beta)=s\nu_c(\beta)$.
\end{lemma}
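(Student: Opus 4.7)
The plan is to reduce the statement to an identity between the forms $E_c$ and $E_{scs}$ by using the simplified expression for $\nu_c$ on positive roots, then invoke \cref{Ec invariant}.

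First I would observe that, since $s$ is a simple reflection and $\beta \in \RSpos$ with $\beta \neq \alpha_s$, the root $s\beta$ is again positive (simple reflections permute $\RSpos \setminus \{\alpha_s\}$). Therefore in the piecewise formula defining $\nu_c$ and $\nu_{scs}$, the index set $I$ of negative-coordinate directions is empty for both $\beta$ and $s\beta$. Consequently the simpler linear formulas apply:
\[
\nu_c(\beta) = -E_c(\,\cdot\,,\beta), \qquad \nu_{scs}(s\beta) = -E_{scs}(\,\cdot\,,s\beta).
\]

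Next I would unwind both sides of the desired identity by pairing against an arbitrary $\gamma \in V$. On the right-hand side, using the fact that the $W$-action on $V^*$ is dual to that on $V$ and that $s$ is an involution, one has
\[
\br{s\,\nu_c(\beta),\gamma} = \br{\nu_c(\beta),s\gamma} = -E_c(s\gamma,\beta).
\]
On the left-hand side, $\br{\nu_{scs}(s\beta),\gamma} = -E_{scs}(\gamma,s\beta)$. Applying \cref{Ec invariant} (which gives $E_c(\beta',\beta'') = E_{scs}(s\beta',s\beta'')$ for all $\beta',\beta''$) with $\beta' = s\gamma$ and $\beta'' = \beta$, together with $s(s\gamma) = \gamma$ and $s\beta$ in place of $\beta$ after applying the involution, yields $E_c(s\gamma,\beta) = E_{scs}(\gamma,s\beta)$. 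Since this holds for every $\gamma \in V$, the two linear functionals on $V$ agree, proving $\nu_{scs}(s\beta) = s\,\nu_c(\beta)$.

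There is essentially no serious obstacle here; the only thing to be careful about is ensuring that the simplified positive-root formula for $\nu$ is applicable on both sides, which is exactly where the hypothesis $\beta \neq \alpha_s$ is used (to keep $s\beta$ positive). Once that is in place, the argument is a one-line computation powered by \cref{Ec invariant} and the duality of the $W$-actions on $V$ and $V^*$.
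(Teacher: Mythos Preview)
Your proof is correct and follows essentially the same approach as the paper: both reduce to the positive-root formula $\nu_c(\beta)=-E_c(\,\cdot\,,\beta)$ (using that $s\beta$ remains positive when $\beta\neq\alpha_s$), pair against an arbitrary $\gamma\in V$, and apply \cref{Ec invariant} together with the duality of the $W$-actions to obtain the chain $-E_{scs}(\gamma,s\beta)=-E_c(s\gamma,\beta)$.
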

\begin{proof}
Since $\beta\neq\alpha_s$, the root $s\beta$ is positive, so by Lemma~\ref{Ec invariant}, for any $\gamma\in V$,
\[\br{\nu_{scs}(s\beta),\gamma}=-E_{scs}(\gamma,s\beta)=-E_c(s\gamma,\beta)=\br{\nu_c(\beta),s\gamma}=\br{s\nu_c(\beta),\gamma}.\qedhere\]
\end{proof}

\begin{proposition}\label{faces in shards imaginary}
Every $(n-2)$-dimensional face of $\nu_c(\Fan_c(\RS))$ that is contained in $\d_\infty$ is also contained in another wall of $\DCScat(A,c)$.
\end{proposition}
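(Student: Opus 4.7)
The plan is to write the $(n-2)$-dimensional face $F$ as $\nu_c(\Cone(C'))$ for a set $C'$ of $n-2$ pairwise $c$-compatible roots in $\AP{c}$, and then split into two cases according to whether $\delta\in C'$.

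\emph{Case 1} ($\delta\in C'$). Here I apply \cref{walls in d infty2} directly to $C'$. It produces a root $\beta\in\APTre{c}$ with $\nu_c(C')\subset\beta^\perp$, and by linearity of $\nu_c$ on $\Cone(C')$ this yields $F\subseteq\beta^\perp\cap\d_\infty$. Since $\APTre{c}\subseteq\APre{c}$, \cref{DC Phic} provides a $c$-sortable or $c^{-1}$-sortable join-irreducible element $j$ with $\cov(j)=\set{t}$ and $\beta_t=\beta$. Then \cref{tech prop} says the corresponding shard wall (either $\Sh(j)$ or $-\Sh(j)$) intersects $\d_\infty$ in exactly $\beta^\perp\cap\d_\infty$, so $F$ is contained in this wall. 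Because its normal is $\beta\neq\delta$, this wall is distinct from $\d_\infty$.

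\emph{Case 2} ($\delta\notin C'$). Then $C'\subseteq\APre{c}$, so by \cref{nu thm,Affine g}, $F$ is an $(n-2)$-dimensional face of $\DF_c=\nu_c(\Fan_c^\re(\RS))$. Because $F\subseteq\d_\infty=\partial|\DF_c|$ by \cref{DFc complement}, the relative interior of $F$ is not in the interior of $|\DF_c|$, so \cref{finite type interior} says the maximal cones of $\DF_c$ containing $F$ form an infinite path of adjacent cones. The shared facet $G$ of any two adjacent cones on this path is an $(n-1)$-dimensional face of $\DF_c\subseteq\nu_c(\Fan_c(\RS))$ containing $F$. By \cref{faces in shards}, $G$ is contained in a wall of $\DCScat(A,c)$, and hence so is $F$. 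Since the normal to $G$ is a real root (by \cref{DC Phic}), this wall is distinct from $\d_\infty$.

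The main obstacle is really packaged into the two preparatory results \cref{walls in d infty2} and \cref{tech prop} used in Case 1: the former isolates the correct real-root normal for an imaginary subcone, and the latter guarantees that the shard wall with that normal covers all of $\beta^\perp\cap\d_\infty$. Once those are in hand, both cases are short syntheses of previously established results; Case 2 is the cleaner one, reducing immediately to the structural dichotomy of \cref{finite type interior} together with the already-known \cref{faces in shards}.
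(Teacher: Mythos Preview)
Your proof is correct and follows essentially the same approach as the paper's: the same two-case split (your condition $\delta\in C'$ is equivalent to the paper's ``$F$ contains $\nu_c(\delta)$''), and Case~1 invokes exactly the same chain \cref{walls in d infty2}\,$\to$\,\cref{DC Phic}\,$\to$\,\cref{tech prop}. The only difference is in Case~2, where the paper uses \cref{clus def OK} (together with \cref{codim 1 delta}) to find an $(n-1)$-dimensional face $F'\supseteq F$ not contained in $\delta^\perp$, whereas you reach the same conclusion via \cref{finite type interior}; both routes are short and valid. One small wording quibble: ``$\d_\infty=\partial|\DF_c|$'' is not literally accurate (the boundary of $|\DF_c|$ is the relative boundary of $\d_\infty$), but what you actually need---that $\d_\infty$ is disjoint from the interior of $|\DF_c|$---follows directly from \cref{DFc complement}, so the argument stands.
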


\begin{proof}
Suppose $F$ is an $(n-2)$-dimensional face of $\nu_c(\Fan_c(\RS))$ that is contained in $\d_\infty$.
If $F$ does not contain $\nu_c(\delta)$, then $F$ is a face of $\nu_c(\Fan_c^\re(\RS))$.
Theorem~\ref{clus def OK} implies that $F$ is not a maximal face of $\nu_c(\Fan_c^\re(\RS))$, so $F$ is contained in some ${(n-1)}$-dimensional face $F'$ of $\nu_c(\Fan_c^\re(\RS))$.
Proposition~\ref{faces in shards} says that $F'$ is contained in some wall of $\DCScat(A,c)$.
But Proposition~\ref{codim 1 delta} says that $F'$ is not contained in $\delta^\perp$, so that wall is not $\d_\infty$.
If $F$ contains $\nu_c(\delta)$, then Proposition~\ref{walls in d infty2} says that $F$ is contained in $\d_\beta$ for some $\beta\in\APTre{c}$.
\end{proof}

\begin{proof}[Proof of Theorem~\ref{scat nu}]
Throughout the proof, we use Theorem~\ref{DCScat}, which implies that $\ScatFanTB$ is the set of  $\DCScat(A,c)$-cones and their faces.
By construction and by Proposition~\ref{Sigma j j'}, the ramparts of $\DCScat(A,c)$ are precisely its walls.
Thus points $p$ and $q$ are in the same $\DCScat(A,c)$-class if and only if there is a path $\gamma$ from~$p$ to $q$ such that the function taking a point to the set of all walls containing it is constant on $\gamma$.
Recall that the $\DCScat(A,c)$-cones are the closures of $\DCScat(A,c)$-classes.

We first show that every cone of $\nu_c(\Fan_c(\RS))$ is contained in a $\DCScat(A,c)$-cone.
We will make several successive reductions/rephrasings:
It is enough to show that every maximal cone $F$ of $\nu_c(\Fan_c(\RS))$ is contained in a $\DCScat(A,c)$-cone.
Since $F$ is the closure of $\relint(F)$ and since the closure of a $\DCScat(A,c)$-class is a $\DCScat(A,c)$-cone, it is enough to show that $\relint(F)$ is in a $\DCScat(A,c)$-class.
Finally, it is enough to show that any wall that intersects $\relint(F)$ actually contains $\relint(F)$, because if so, any two points $p,q\in\relint(F)$ are connected by a path inside $\relint(F)$ that does not enter or leave any wall.

A maximal cone $F$ of $\nu_c(\Fan_c(\RS))$ is $\nu_c(\Cone(C))$ for some $c$-cluster~$C$.  
We break into two cases, based on whether $C$ is a real or imaginary $c$-cluster.

First suppose $C$ is real.
Theorems~\ref{nu thm} and~\ref{Affine g} combine to say that $F$ is a maximal cone of $\DF_c$.
In this case, we need to prove that no wall of $\DCScat(A,c)$ intersects the interior of $F$.
Suppose to the contrary that some wall of $\DCScat(A,c)$ intersects the interior of~$F$.
Since $\d_\infty$ is in the boundary of $|\DF_c|$, it does not intersect the interior of~$F$, so without loss of generality (up to replacing $c$ by $c^{-1}$ and applying the antipodal map), there is a $c$-sortable join-irreducible element $j$ such that $\Sh(j)$ intersects the interior of $F$.
However, $\Sh(j)$ is normal to some real root and thus not contained in $\delta^\perp$, so the intersection of $\Sh(j)$ with the interior of $F$ contains a point $p$ in $\Tits(A)\cup(-\Tits(A))$.
However, Proposition~\ref{explicit shards}.\ref{explicit j} then says that $p$ is contained in an $(n-1)$-dimensional face of $\nu_c(\Fan_c(\RS))$.
This contradiction shows that no wall of $\DCScat(A,c)$ intersects the interior of $F$.

Now suppose $C$ is imaginary.
Then $F$ is contained in $\d_\infty$, which is the closure of $V^*\setminus|\DF_c|$.
We will show that the relative interior of $F$ is disjoint from all walls of $\DCScat(A,c)$ other than $\d_\infty$.

Suppose for the sake of contradiction that some point $p$ in the relative interior of $F$ is contained in a wall $\d$ of $\DCScat(A,c)$ with $\d\neq\d_\infty$.
Since $\d$ is a codimension-$1$ cone not contained in the hyperplane containing $\d_\infty$, there exist a point $q\not\in\d_\infty$ such that the line segment $\seg{pq}$ is contained in $\d$.

Consider the bi-infinite sequence $\ldots,C_{-1},C_0,C_1,\ldots$ of distinct real $c$-clusters constructed in Proposition~\ref{cluster limit}.
The sequences 
\[\nu_c(\Cone(C_1)),\nu_c(\Cone(C_2)),\ldots\text{ and }\nu_c(\Cone(C_{-1})),\nu_c(\Cone(C_{-2})),\ldots\]
converge to $F$ from opposite sides of the hyperplane containing $F$.
Thus there exists some $i\in\integers$ such that $\seg{pq}$ intersects the interior of $\nu_c(\Cone(C_i))$.
Therefore $\d$ intersects the interior of $\nu_c(\Cone(C_i))$.
But $\nu_c(\Cone(C_i))$ is a full-dimensional cone of $\nu_c(\Fan_c(\RS))$, so we have already shown that no wall of $\DCScat(A,c)$ intersects the interior of $\nu_c(\Cone(C_i))$.
This contradiction proves that the relative interior of $F$ is disjoint from all walls of $\DCScat(A,c)$ other than $\d_\infty$.

We complete the proof by showing that every $\DCScat(A,c)$-cone is contained in some cone of $\nu_c(\Fan_c(\RS))$.
Since each $\DCScat(A,c)$-cone is the closure of a $\DCScat(A,c)$-class, it is enough to show that each $\DCScat(A,c)$-class is contained in some cone of $\nu_c(\Fan_c(\RS))$.

Let $\C$ be a $\DCScat(A,c)$-class.
By definition, any two points of $\C$ are connected by a path which never enters or leaves a wall of $\DCScat(A,c)$.
Since $\d_\infty$ (the closure of $V^*\setminus|DF_c|$) is a wall of $\DCScat(A,c)$, the $\DCScat(A,c)$-class $\C$ is either contained in $|\DF_c|$ or contained in $\d_\infty$.

First, suppose $\C$ is contained in $|\DF_c|$.  
Proposition~\ref{faces in shards} says that every $(n-1)$-dimensional face of $\DF_c$ is contained in some wall of $\DCScat(A,c)$.
A point $p\in\C$ is in the relative interior of some face $F$ of $\DF_c$.
If some $q\in\C$ is not also in $\relint(F)$, then every path from $p$ to $q$ enters or leaves a wall of $\DCScat(A,c)$:
Since $\DF_c$ is a simplicial fan, leaving $\relint(F)$ to a larger face leaves some $(n-1)$-dimensional face, and thus leaves the wall containing that face.
Leaving $\relint(F)$ to a smaller face enters some $(n-1)$-dimensional face, and thus enters the wall containing that face.
We conclude that all of $\C$ is in the relative interior of $F$, which is, by Theorems~\ref{nu thm} and~\ref{Affine g}, a face of $\nu_c(\Fan_c(\RS))$.

Now suppose $\C$ is contained in $\d_\infty$.
The faces of $\nu_c(\Fan_c(\RS))$ that are contained in $\d_\infty$ constitute a fan whose maximal faces have dimension $n-1$.
This fan is simplicial as a fan in $\delta^\perp$.
Proposition~\ref{faces in shards imaginary} says that every $(n-2)$-dimensional face of the fan in $\d_\infty$ is contained in some wall of $\DCScat(A,c)$ in addition to $\d_\infty$.
Arguing exactly as in the previous case, we see that all of $\C$ is contained in the relative interior of a face of $\nu_c(\Fan_c(\RS))$ contained in $\d_\infty$.
\end{proof}

As a consequence of Theorem~\ref{scat nu}, we have a strengthening of Proposition~\ref{explicit shards}.
(Part of the following, specifically Corollary~\ref{explicit shards cor}.\ref{d inf union}, was already known by Theorems~\ref{nu thm} and~\ref{DFc complement}.)

\begin{corollary}\label{explicit shards cor} Let $\RS$ be a root system of affine type and let $c$ be a Coxeter element of the corresponding Weyl group.
\begin{enumerate}[label=\bf\arabic*., ref=\arabic*]  
\item \label{explicit j cor}
If $j$ is a $c$-sortable join-irreducible element with $\cov(j)=\set{t}$, then $\Sh(j)$ is the union of all faces of $\nu_c(\Fan_c(\RS))$ contained in $\beta_t^\perp$.
\item \label{explicit j' cor}
If $j$ is a $c^{-1}$-sortable join-irreducible element with $\cov(j)=\set{t}$, then $-\Sh(j)$ is the union of all faces of $\nu_c(\Fan_c(\RS))$ contained in $\beta_{t}^\perp$.
\item \label{d inf union}
$\d_\infty$ is the union of all faces of $\nu_c(\Fan_c(\RS))$ contained in $\delta^\perp$.
\end{enumerate}
\end{corollary}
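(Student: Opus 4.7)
The plan is to combine \cref{DCScat} (identifying $\ScatTB$ with $\DCScat(A,c)$) and \cref{scat nu} (identifying $\ScatFanTB$ with $\nu_c(\Fan_c(\RS))$). These give that each wall of $\DCScat(A,c)$---namely $\Sh(j)$, $-\Sh(j')$, or $\d_\infty$---is a union of codimension-$1$ cones of $\nu_c(\Fan_c(\RS))$, and that conversely, since $\DCScat(A,c)$ has minimal support, each codimension-$1$ cone of $\nu_c(\Fan_c(\RS))$ lies in a unique wall of $\DCScat(A,c)$.

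For part \ref{explicit j cor}, \cref{Sigma j j'} ensures that $\Sh(j)$ is the unique wall of $\DCScat(A,c)$ contained in $\beta_t^\perp$: a competing $-\Sh(j')$ for a $c^{-1}$-sortable join-irreducible $j'$ with $\cov(j')=\set{t}$ would coincide with $\Sh(j)$, and $\d_\infty\subseteq\delta^\perp\neq\beta_t^\perp$. Combined with the correspondence above, this identifies $\Sh(j)$ with the union of $(n-1)$-dimensional cones of $\nu_c(\Fan_c(\RS))$ in $\beta_t^\perp$. The fact that those $(n-1)$-dimensional cones genuinely cover $\Sh(j)$ is supplied by \cref{explicit shards} on $\Sh(j)\cap(\pm\Tits(A))$ and by \cref{tech prop} combined with the description of $\d_\infty$ as $\nu_c$ of the imaginary cones of $\Fan_c(\RS)$ (see the paragraph preceding \cref{d inf extreme}) on $\Sh(j)\cap\delta^\perp$. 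To upgrade to the union of all faces in $\beta_t^\perp$, I will verify that each face $F\subseteq\beta_t^\perp$ of $\nu_c(\Fan_c(\RS))$ lies in some $(n-1)$-dimensional face in $\beta_t^\perp$, using the explicit inequality description of $\Sh(j)$ from \cref{shard ineq sort} to check that $F$ satisfies the halfspace conditions defining $\Sh(j)$.

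Part \ref{explicit j' cor} will follow by symmetry, transporting the argument under $c\leftrightarrow c^{-1}$ via \cref{scat antip} and \cref{nu cinv}. Part \ref{d inf union} will follow by the identical argument with $\beta_t^\perp$ replaced by $\delta^\perp$---the unique wall of $\DCScat(A,c)$ in $\delta^\perp$ is $\d_\infty$, since no real root is proportional to $\delta$---and, as the excerpt remarks, it is also directly implied by \cref{DFc complement,nu thm}.

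The corollary is essentially a repackaging of the two main theorems already established, so I do not foresee any serious obstacle. The only genuinely delicate point is extending the equality from $(n-1)$-dimensional faces to all faces of $\nu_c(\Fan_c(\RS))$ in the relevant hyperplane; I plan to handle this by combining the explicit description of $\Sh(j)$ from \cref{shard ineq sort} with the observation that faces of the scattering fan in $\beta_t^\perp$ arise as intersections of walls, whose structure in $\beta_t^\perp$ is controlled by $\Sh(j)$ and its boundary.
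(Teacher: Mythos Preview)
Your proposal is on the right track in identifying \cref{DCScat}, \cref{scat nu}, and \cref{Sigma j j'} as the key inputs, but it is considerably more elaborate than the paper's proof and leaves the ``upgrade'' step underdetermined.

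The paper argues in a few lines, entirely from the definition of the scattering fan. Since $\DCScat(A,c)$ has at most one wall per hyperplane (by \cref{Sigma j j'}), each wall is literally a rampart. A rampart $R$ is by definition a union of $\D$-classes, hence (being closed) a union of $\D$-cones; and any $\D$-cone contained in the hyperplane $H\supseteq R$ must in fact lie in $R$, because a $\D$-class in $H\setminus R$ would not be confined to $H$ (no rampart traps it there). So $R$ equals the union of all $\D$-cones contained in $H$. By \cref{scat nu}, $\D$-cones are exactly the cones of $\nu_c(\Fan_c(\RS))$, and all three assertions follow at once.

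By contrast, you first assemble $\Sh(j)$ from $(n-1)$-dimensional faces via \cref{explicit shards} and \cref{tech prop}, and only then try to ``upgrade'' to arbitrary faces using \cref{shard ineq sort}. That upgrade is the one genuinely incomplete step in your plan: the sentence ``faces of the scattering fan in $\beta_t^\perp$ arise as intersections of walls'' is not quite how $\D$-cones are defined, and it is not clear how the halfspace description in \cref{shard ineq sort} would directly force a low-dimensional face $F\subseteq\beta_t^\perp$ to satisfy those inequalities. The rampart argument above sidesteps this entirely: it never distinguishes $(n-1)$-dimensional faces from lower-dimensional ones, because the statement ``a $\D$-cone in $H$ lies in the unique rampart in $H$'' is dimension-agnostic. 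If you want to repair your route, the cleanest fix is exactly this observation, which makes the separate invocations of \cref{explicit shards} and \cref{tech prop} unnecessary.
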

\begin{proof}
By definition, each rampart $R$ of any scattering diagram $\D$ is a union of $\D$-classes, and if $R$ is closed, it is immediate that $R$ is the union of all $\D$-cones it contains.  
By the definition of a rampart, $R$ is also the union of all $\D$-cones contained in the hyperplane containing $R$.

Since the ramparts of $\DCScat(A,c)$ are precisely its walls, in particular, each rampart is closed.
All three assertions now follow.
\end{proof}

\subsection{Affine mutation fans}\label{aff mut fan sec}
It now remains only to show that $\ScatFan^T(B)$ and $\nu_c(\Fan_c(\RS))$ coincide with the mutation fan $\F_{B^T}$.
We begin by reviewing the definition of the mutation fan.

Let $B$ be an exchange matrix and let $\tB$ be an \newword{extended exchange matrix}:  a matrix with $n$ columns, whose first $n$ rows are $B$, with an additional $\ell$ rows (with $\ell\ge0$) consisting of real entries.
For each $k=1,\ldots,n$, we define the \newword{mutation} $\mu_k(\tB)$ of $\tB$ at $k$ to be the matrix $\tB'=[b'_{ij}]$ given by
\begin{equation}\label{b mut}
b_{ij}'=\left\lbrace\!\!\begin{array}{ll}
-b_{ij}&\mbox{if }i=k\mbox{ or }j=k;\\
b_{ij}+\sgn(b_{kj})\max(b_{ik}b_{kj},0)&\mbox{otherwise.}
\end{array}\right.
\end{equation}
Here, $\sgn(x)$ is the sign ($-1$, $0$, or $1)$ of $x$.

If $\kk=k_q,\ldots,k_1$ is a sequence of indices in $\set{1,\ldots,n}$, then $\mu_\kk$ means the composition $\mu_{k_q}\circ\mu_{k_{q-1}}\circ\cdots\circ\mu_{k_1}$.
Given an exchange matrix $B$ and a sequence $\kk$, the \newword{mutation map} is a map $\eta_\kk^B:V^*\to V^*$ defined as follows:
Starting with $x=\sum_{i=1}^na_i\rho_i\in V^*$, let $\tB$ be the extended exchange matrix obtained by adjoining a single row $(a_i:i=1,\ldots,n)$ below $B$.
Writing $(a'_i:i=1,\ldots,n)$ for the last row of $\mu_\kk(\tB)$, we define $\eta_\kk^B(x)$ to be $\sum_{i=1}^na'_i\rho_i\in V^*$.
Each mutation map is a piecewise linear homeomorphism from $V^*$ to itself.

Given $x=\sum_{i=1}^na_i\rho_i\in V^*$, write $\vsgn(x)$ for $(\sgn(a_1),\ldots,\sgn(a_n))\in\set{-1,0,1}^n$.
We define a notion of $B$-equivalence for vectors in $x_1,x_2\in V^*$.
Say $x_1\equiv^B x_2$ if and only if $\vsgn(\eta^B_\kk(x_1))=\vsgn(\eta^B_\kk(x_2))$ for every sequence $\kk$ of indices.
We call the $\equiv^B$-equivalence classes \newword{$B$-classes} and we call the closures of $B$-classes \newword{$B$-cones}.
The \newword{mutation fan}~$\F_B$ is the set of $B$-cones and their faces.
This is a complete fan by \cite[Theorem~5.13]{universal}.

In light of Theorems~\ref{scat ref mut} and~\ref{scat nu}, we can complete the proof of Theorem~\ref{fans thm} by proving the following proposition.

\begin{proposition}\label{mut ref nu}
If $B$ is an acyclic exchange matrix of affine type, corresponding to a root system $\RS$ and a Coxeter element $c$, then $\F_{B^T}$ refines $\nu_c(\Fan_c(\RS))$.
\end{proposition}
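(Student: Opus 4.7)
The strategy is as follows. By \cref{scat ref mut} combined with \cref{scat nu}, the fan $\nu_c(\Fan_c(\RS))=\ScatFanTB$ refines $\F_{B^T}$. To establish the reverse refinement -- which, together with the forward one, yields that the two fans coincide -- it suffices to show that each maximal (full-dimensional) cone of $\nu_c(\Fan_c(\RS))$ is a cone of $\F_{B^T}$, and that each codimension-one cone of $\nu_c(\Fan_c(\RS))$ contained in $\d_\infty$ (the $\nu_c$-image of an imaginary $c$-cluster cone) is a wall of $\F_{B^T}$.

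The full-dimensional cones of $\nu_c(\Fan_c(\RS))$ are the $\g$-vector cones $\nu_c(\Cone(C))$ for real $c$-clusters $C$, by \cref{nu thm} and \cref{Affine g}. The fact that each such cone is a $B^T$-cone is a foundational result from \cite{universal}: for any cluster, a suitable mutation sequence sends its $\g$-vector cone linearly to an orthant, so every mutation sequence acts linearly on this cone and all points in its interior share the same sign pattern after any mutation. Distinct cluster cones always yield distinct $B^T$-cones, so the codimension-one cones of $\nu_c(\Fan_c(\RS))$ lying strictly inside $|\DF_c|$ are automatically walls of $\F_{B^T}$.

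The main obstacle is to show that the codimension-one cones inside $\d_\infty$ are walls of $\F_{B^T}$ as well. To handle this, I would use \cref{cluster limit}: two adjacent imaginary cluster cones inside $\d_\infty$ can be approached from inside $|\DF_c|$ by bi-infinite sequences of real cluster cones whose $\nu_c$-images share the $(n-1)$-dimensional face $\nu_c(\Cone(C\setminus\set{\delta}))$. Each approximating real cluster cone is a $B^T$-cone by the previous paragraph, and successive approximating cones are separated by walls of $\F_{B^T}$. Using the continuity of each mutation map $\eta^{B^T}_\kk$ together with the fact that it acts linearly on each approximating cone, one extracts a mutation sequence whose action distinguishes the two imaginary cones, witnessing the presence of a wall of $\F_{B^T}$ between them.

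The delicate point is making this limiting argument precise: one must verify that the sign patterns of the images of the approximating cones under mutation stabilize as $i\to\pm\infty$ in a way that yields two distinct sign patterns on the two sides of each imaginary wall. Here the explicit wall description of \cref{easy scat} and the rank-2 affine computations in \cref{rk2 aff formula} -- which were central to the consistency proof for $\DCScat(A,c)$ -- should allow the local behavior near $\d_\infty$ to be reduced to a rank-2 problem and thereby finished, completing the equality of all three fans.
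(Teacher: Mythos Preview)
Your overall strategy is sound and your treatment of the full-dimensional cones is correct: by \cref{mut g} each $\g$-vector cone is a $B^T$-cone, so once you know $\nu_c(\Fan_c(\RS))$ refines $\F_{B^T}$ from \cref{scat nu} and \cref{scat ref mut}, the two fans agree away from $\d_\infty$.

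The genuine gap is in your handling of the imaginary cones. You propose to approximate an imaginary cone by real cluster cones $C_i$ via \cref{cluster limit}, use that each $C_i$ is a $B^T$-cone, and then ``extract a mutation sequence whose action distinguishes the two imaginary cones.'' But this extraction step does not go through. For each fixed $i$ there is some mutation sequence $\kk_i$ distinguishing $\nu_c(\Cone(C_i))$ from its neighbors, yet as $i\to\pm\infty$ these sequences $\kk_i$ typically grow without bound in length, and there is no compactness or diagonal argument available to produce a single finite sequence $\kk$ that separates the two limiting imaginary cones. Continuity of $\eta_\kk^{B^T}$ helps only once $\kk$ is fixed; it cannot manufacture a suitable $\kk$. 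Your appeal to the rank-$2$ affine computations in \cref{rk2 aff formula} does not help either: those results concern scattering-diagram consistency, not mutation-fan separation, and the rank-$2$ affine mutation fan has exactly the same limiting-ray issue you are trying to resolve.

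The paper's argument sidesteps the limiting problem entirely by a reduction to \emph{finite} type. Given $p,q\in\d_\infty$ in different imaginary cones, one finds (via \cref{walls in d infty2}) a root $\beta\in\APTre{c}$ with $p$ and $q$ on opposite sides of $\beta^\perp$. Using \cref{eta nice} one applies a power of the mutation $\eta^{B^T}_{12\cdots n}$ (which acts as $c$ on $\d_\infty$ and is an automorphism of $\F_{B^T}$) to arrange $\beta\in\RSfin$. Then by \cref{tech prop} and \cref{Sigma para} the separating shard is the preimage under $\Proj_{S_\fin}$ of a shard in the finite parabolic, so $\Proj_{S_\fin}(p)$ and $\Proj_{S_\fin}(q)$ lie in different cones of the finite-type mutation fan $\F_{(B_\fin)^T}$ by \cref{camb FB finite} and \cref{explicit shards fin}. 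Finally \cref{FB Proj} says that projection to a submatrix can only coarsen $B^T$-equivalence, so $p$ and $q$ were already in different $B^T$-cones. This produces the required separating mutation sequence concretely (it lives in $W_\fin$, preceded by a power of the Coxeter mutation), which is exactly what your limiting argument fails to supply.
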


To prove Proposition~\ref{mut ref nu}, we quote and prove some additional background results, mostly about the mutation fan.
We begin with the following less-detailed version of \cite[Theorem~8.7]{universal}.

\begin{theorem}\label{mut g}
For any exchange matrix $B$, the $\g$-vector fan associated to $B$ is a subfan of the mutation fan $\F_{B^T}$.
\end{theorem}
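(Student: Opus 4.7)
The plan is to combine three already-established ingredients: (i) from Theorems \ref{mut g}, \ref{Affine g}, and \ref{nu thm}, the $\g$-vector fan is a subfan of $\F_{B^T}$ and equals $\DF_c$, which is exactly the subfan of real cones of $\nu_c(\Fan_c(\RS))$; (ii) the combination of Theorems \ref{scat nu} and \ref{scat ref mut}, which gives that $\nu_c(\Fan_c(\RS))$ refines $\F_{B^T}$; and (iii) Proposition \ref{cluster limit}, which realizes each imaginary cone as a bi-infinite limit of real $c$-cluster cones. Throughout, the goal is to show that every cone of $\F_{B^T}$ is contained in some cone of $\nu_c(\Fan_c(\RS))$.

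First I would handle the $n$-dimensional cones. Let $C$ be any $n$-dimensional cone of $\F_{B^T}$. By (ii) it is contained in some cone $D$ of $\nu_c(\Fan_c(\RS))$, and necessarily $\dim D = n$. Since imaginary cones have dimension $n-1$ by Proposition \ref{Qc n-1}, the cone $D$ is a maximal cone of $\DF_c$. By (i), $D$ belongs to $\F_{B^T}$; the maximality of $C$ among $n$-dimensional $\F_{B^T}$-cones then forces $C=D$. Consequently every face of such a $C$ is a cone of $\DF_c$, and hence a cone of $\nu_c(\Fan_c(\RS))$. The same conclusion applies to any cone of $\F_{B^T}$ whose relative interior meets a maximal cone of $\DF_c$, because $\F_{B^T}$-classes partition $V^*$ into the relative interiors of their cones, and that partition restricted to $|\DF_c|$ coincides with the partition induced by $\DF_c$.

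The remaining possibility is a cone $C$ of $\F_{B^T}$ whose relative interior lies in $V^*\setminus|\DF_c|$, which equals $\relint(\d_\infty)$ by Theorem \ref{DFc complement}. Such a cone is contained in $\d_\infty$ and is at most $(n-1)$-dimensional. I would show that any such $C$ lies inside a single imaginary cone of $\nu_c(\Fan_c(\RS))$, which (since cones in $\d_\infty$ belong to $\nu_c(\Fan_c(\RS))$ as imaginary cones or their faces) will complete the proof. Equivalently, I need to show that if $D$ and $D'$ are distinct imaginary cones of $\nu_c(\Fan_c(\RS))$, then no $B^T$-class contains points in both $\relint(D)$ and $\relint(D')$. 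For this, I would appeal to Proposition \ref{cluster limit}: the imaginary cones $D$ and $D'$ are each limits of bi-infinite sequences of distinct real $c$-cluster cones. Each such real cone is a $\g$-vector cone, hence a cone of $\F_{B^T}$ by (i). For $D\neq D'$, one can arrange the approaching real cluster cones to differ at every finite stage, so they are separated by at least one wall of $\F_{B^T}$, witnessed by a mutation sequence producing opposite sign patterns on the two sides.

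The main obstacle is this final step: transferring the separation of $B^T$-classes known for real $\g$-vector cones into the limiting imaginary region $\relint(\d_\infty)$. The piecewise-linear maps $\eta^{B^T}_\kk$ are continuous, but the sign vectors $\vsgn$ are not, so one must choose the distinguishing mutation sequence $\kk$ carefully: it should stabilize the coordinates at which the sign changes occur along the limiting sequence, so that taking the limit of sign patterns yields a genuine distinction between the two sides. Once this separation is in place, the fan axiom together with (ii) forces every $\F_{B^T}$-cone in $\d_\infty$ to lie inside a single imaginary cone, completing the proof that $\F_{B^T}$ refines $\nu_c(\Fan_c(\RS))$.
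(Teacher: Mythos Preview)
Your proposal is not a proof of Theorem~\ref{mut g}. The statement you were asked to prove is that the $\g$-vector fan is a subfan of $\F_{B^T}$, for an arbitrary exchange matrix $B$. The paper does not prove this at all: it is quoted verbatim as a less-detailed version of \cite[Theorem~8.7]{universal}. Your proposal, by contrast, opens by \emph{invoking} Theorem~\ref{mut g} as ingredient~(i), and then proceeds to argue that $\F_{B^T}$ refines $\nu_c(\Fan_c(\RS))$ in the acyclic affine case. That is Proposition~\ref{mut ref nu}, not Theorem~\ref{mut g}. So as a proof of the stated theorem your argument is circular (it assumes the conclusion) and in any case aimed at the wrong target.

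Even read as an attempt at Proposition~\ref{mut ref nu}, the argument is incomplete at exactly the point you flag as the ``main obstacle'': you never actually produce a mutation sequence $\kk$ that separates two points in $\relint(\d_\infty)$ lying in distinct imaginary cones. Appealing to limits of real cluster cones and hoping that a separating sign pattern survives the limit is not a proof, because $\vsgn$ is discontinuous. The paper's proof of Proposition~\ref{mut ref nu} avoids this entirely: given two such points, it finds a root $\beta\in\APTre{c}$ whose orthogonal hyperplane separates them (via Proposition~\ref{walls in d infty2}), applies a power of $\eta^{B^T}_{12\cdots n}$ to move $\beta$ into $\RSfin$ (Proposition~\ref{eta nice}), and then projects to the finite-type parabolic $W_\fin$ where the mutation fan is known to coincide with the Cambrian fan (Theorem~\ref{camb FB finite} and Proposition~\ref{FB Proj}). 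This reduction to finite type is the key idea your approach is missing.
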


The following theorem is \cite[Theorem~3.8]{dominance}, which is obtained by combining results of other papers as explained in~\cite{dominance}.

\begin{theorem}\label{camb FB finite}
If $B$ is an acyclic exchange matrix of finite type and $c$ is the associated Coxeter element, then the mutation fan $\F_{B^T}$ coincides with the Cambrian fan~$\F_c$.
\end{theorem}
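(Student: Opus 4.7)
The plan is to deduce Theorem~\ref{camb FB finite} from three ingredients: (i) the completeness of the Cambrian fan $\F_c$ in finite type, (ii) the identification of $\F_c$ with the $\g$-vector fan in acyclic finite type, and (iii) Theorem~\ref{mut g}, which says the $\g$-vector fan is always a subfan of $\F_{B^T}$. Once these are in hand, the theorem follows from the simple observation that a complete subfan of any fan must equal the ambient fan.

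First I would verify completeness of $\F_c$. When $W$ is finite, $\Tits(A)=V^*$, so Theorem~\ref{pidown cone} (whose conclusion covers $\Tits(A)$ by the cones $\Cone_c(v)$) immediately gives that $|\F_c|=V^*$. Equivalently, in finite type $\DF_c=\F_c$ and both are complete. Next, I would invoke the finite-type analog of Theorem~\ref{Affine g}: in acyclic finite type, the $c$-Cambrian fan $\F_c$ coincides with the $\g$-vector fan of $B$. This is the well-established identification from the Cambrian fan / cluster complex theory (developed via $c$-sortable elements and $\pidown^c$), paralleling exactly the affine statement quoted in the excerpt. Combining these two points shows that the $\g$-vector fan is a complete fan and coincides with $\F_c$.

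To close the argument, by Theorem~\ref{mut g} the $\g$-vector fan is a subfan of $\F_{B^T}$, so $\F_c$ is a subfan of $\F_{B^T}$. I would then prove the small general lemma that if $\F\subseteq\F'$ are fans in $V^*$ and $\F$ is complete, then $\F=\F'$: given any cone $C\in\F'$, pick $p\in\relint(C)$, and let $F\in\F$ be the unique cone with $p\in\relint(F)$. Since $\F\subseteq\F'$, both $C$ and $F$ belong to $\F'$, so $C\cap F$ is a face of each; because $p$ is in the relative interior of both, $C=F$, so $C\in\F$. Applied here this gives $\F_{B^T}=\F_c$, finishing the proof.

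The main obstacle is really just isolating and citing the finite-type identification $\F_c=\{\g\text{-vector fan}\}$, which in the present paper appears only implicitly through its affine analog Theorem~\ref{Affine g} and the structural results on $c$-sortable elements from \cite{sortable,camb_fan,typefree,afframe}; the rest of the argument is a one-line combination of completeness with Theorem~\ref{mut g}.
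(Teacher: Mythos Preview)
The paper does not give its own proof of this theorem; it simply quotes it as \cite[Theorem~3.8]{dominance}, remarking that it is ``obtained by combining results of other papers as explained in~\cite{dominance}.'' Your argument is correct and is exactly the kind of combination the paper alludes to: in finite type the Cambrian fan $\F_c$ is complete (since $\Tits(A)=V^*$ and \cref{pidown cone} covers $\Tits(A)$), it coincides with the $\g$-vector fan (the finite-type analog of \cref{Affine g}, established in the cited Cambrian-fan literature), and by \cref{mut g} this complete fan is a subfan of $\F_{B^T}$; your lemma that a complete subfan of a fan equals the fan then finishes the job, and its proof is sound. The one genuinely external ingredient, which you correctly flag, is the finite-type identification of $\F_c$ with the $\g$-vector fan, which is not proved in this paper but is standard from \cite{camb_fan,framework}.
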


Define $B_\br{\ell}$ to be the matrix obtained from $B$ by deleting row $\ell$ and column~$\ell$.
We retain the original indexing of $B_\br{\ell}$, so its rows and columns are indexed by $\set{1,\ldots,n}\setminus\set{\ell}$.
We also realize the mutation maps for $B_\br{\ell}$, the $B_\br{\ell}$-classes and $B_\br{\ell}$-cones, and the mutation fan $\F_{B_\br{\ell}}$ in the subspace $V_\br{\ell}^*$ (the dual space to $V_\br{\ell}=\Span\set{\alpha_i:i\neq\ell}$).
As before, $\Proj_\br{\ell}:V^*\to V^*_\br{\ell}$ is the projection dual to the inclusion of $V_\br{\ell}$ into $V$.
Define $\Proj_\br{\ell}^{-1}(\F_{B_\br{\ell}})$ to be the fan with cones $\Proj_\br{\ell}^{-1}(C)=\set{x\in V^*:\Proj_{\br{\ell}}(x)\in C}$, where $C$ ranges over cones of $\F_{B_\br{\ell}}$.

\begin{proposition}\label{FB Proj}
For $\ell\in\set{1,\ldots,n}$, the fan $\F_B$ refines the fan $\Proj_\br{\ell}^{-1}(\F_{B_\br{\ell}})$.
\end{proposition}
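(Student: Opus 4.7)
The plan is to show that each $B$-class in $V^*$ projects into a single $B_\br{\ell}$-class in $V^*_\br{\ell}$; from this, every maximal cone of $\F_B$ sits inside some cone of $\Proj_\br{\ell}^{-1}(\F_{B_\br{\ell}})$, and the same then holds for each face, giving the refinement. The key observation is that mutations at indices $k\neq\ell$ commute with passage to $B_\br{\ell}$ and with the projection $\Proj_\br{\ell}$.

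First I would establish the compatibility lemma: for any $k\in\set{1,\ldots,n}\setminus\set\ell$,
\[\mu_k(B)_\br{\ell}=\mu_k(B_\br{\ell}),\qquad \Proj_\br{\ell}\circ\eta^B_k=\eta^{B_\br{\ell}}_k\circ\Proj_\br{\ell}.\]
Both are direct computations from \eqref{b mut}. Since $k\neq\ell$, the entries $b'_{ij}$ with $i,j\neq\ell$ only involve $b_{ij}$, $b_{ik}$, and $b_{kj}$ with $i,j\neq\ell$, all of which belong to $B_\br{\ell}$; similarly, when $\eta^B_k$ updates the last (coefficient) row $(a_1,\ldots,a_n)$ adjoined under $B$, the new $j$-th entry for $j\neq\ell$ only involves $a_k$, $a_j$, and $b_{kj}$, none of which are lost in passing to $B_\br{\ell}$. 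An easy induction on length then gives, for any sequence $\kk$ of indices in $\set{1,\ldots,n}\setminus\set\ell$,
\[\Proj_\br{\ell}\circ\eta^B_\kk=\eta^{B_\br{\ell}}_\kk\circ\Proj_\br{\ell}.\]

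Now suppose $x_1\equiv^B x_2$. In particular, for every sequence $\kk$ in $\set{1,\ldots,n}\setminus\set\ell$ we have $\vsgn(\eta^B_\kk(x_1))=\vsgn(\eta^B_\kk(x_2))$. The sign vector on $V^*_\br{\ell}$ is obtained from the sign vector on $V^*$ by dropping the $\ell$-th coordinate, i.e.\ $\vsgn(\Proj_\br{\ell}(y))$ agrees with $\vsgn(y)$ on the remaining coordinates. Combining this with the compatibility lemma yields $\vsgn(\eta^{B_\br{\ell}}_\kk(\Proj_\br{\ell}(x_1)))=\vsgn(\eta^{B_\br{\ell}}_\kk(\Proj_\br{\ell}(x_2)))$ for every such $\kk$, which is precisely the statement $\Proj_\br{\ell}(x_1)\equiv^{B_\br{\ell}}\Proj_\br{\ell}(x_2)$.

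Thus each $B$-class projects into a $B_\br{\ell}$-class, so each $B$-cone $C$ (the closure of a $B$-class) is contained in the closed preimage $\Proj_\br{\ell}^{-1}(C')$ of the $B_\br{\ell}$-cone $C'$ that is the closure of its image. Every face of $C$ lies in the same preimage cone, so every cone of $\F_B$ is contained in some cone of $\Proj_\br{\ell}^{-1}(\F_{B_\br{\ell}})$, proving the refinement. The only real obstacle is the bookkeeping in the compatibility lemma; this is routine but must be checked carefully because mutation at $\ell$ itself does not commute with the restriction, and so one must genuinely restrict to sequences $\kk$ avoiding $\ell$ when testing $B_\br{\ell}$-equivalence.
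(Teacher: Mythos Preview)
Your proof is correct and follows essentially the same approach as the paper: both reduce the refinement to showing that $x_1\equiv^B x_2$ implies $\Proj_\br{\ell}(x_1)\equiv^{B_\br{\ell}}\Proj_\br{\ell}(x_2)$, and both prove this by observing that mutations at indices $k\neq\ell$ commute with deleting row and column $\ell$ (and hence with $\Proj_\br{\ell}$). The paper states this commutation more tersely as $\vsgn\circ\eta^{B_\br{\ell}}_\kk\circ\Proj_\br{\ell}=\vsgn\circ\Proj_\br{\ell}\circ\eta^B_\kk$, while you spell out the underlying compatibility lemma in more detail, but the argument is the same.
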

\begin{proof}
The proposition is equivalent to the following assertion:
For $\ell\in\set{1,\ldots,n}$, if $x_1\equiv^Bx_2$ then $\Proj_\br{\ell}(x_1)\equiv^{B_\br{\ell}}\Proj_\br{\ell}(x_2)$.

Suppose $x_1\equiv^Bx_2$ and let $\kk$ be a sequence of indices in $\set{1,\ldots,n}\setminus\set{\ell}$.
Because~$\ell$ doesn't occur in $\kk$, the matrix mutations used to compute $\eta^B_\kk$ commute with deleting row $\ell$ and column $\ell$.
Thus $\vsgn\circ\eta^{B_\br{\ell}}_\kk\circ\Proj_\br{\ell}=\vsgn\circ\Proj_\br{\ell}\circ\eta^B_\kk$.
We conclude that $x_1\equiv^Bx_2$ implies $\Proj_\br{\ell}(x_1)\equiv^{B_\br{\ell}}\Proj_\br{\ell}(x_2)$.
\end{proof}

The following is \cite[Proposition~5.3]{universal}.
\begin{proposition}\label{linear}
For any exchange matrix $B$, and every sequence $\kk$ of indices, the mutation map $\eta_\kk^B$ is linear on every $B$-cone.
\end{proposition}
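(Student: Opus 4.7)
The plan is to reduce to the structure of a single mutation step and then use the definition of $\equiv^B$ to match the linear branch taken at every stage of the composition. Unpacking \eqref{b mut} applied to the extra bottom row of the extended matrix formed by stacking $(a_1,\ldots,a_n)$ below $B$ yields
\[\eta_k^B\Bigl(\sum_{i=1}^n a_i\rho_i\Bigr)=\sum_{i=1}^n a'_i\rho_i,\qquad a'_k=-a_k,\quad a'_j=a_j+\sgn(b_{kj})\max(a_kb_{kj},0)\text{ for }j\ne k.\]
The term $\max(a_kb_{kj},0)$ is linear in $a_k$ on each of the closed halfspaces $\set{a:a_k\ge 0}$ and $\set{a:a_k\le 0}$, and the two pieces agree when $a_k=0$. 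Hence $\eta_k^B$ itself is linear on each halfspace, and the ``linear branch'' used at a given step is determined entirely by $\sgn(a_k)$.

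Next I would fix a $B$-class $\C$ and an arbitrary sequence $\kk=k_q,\ldots,k_1$. For each $0\le i\le q-1$ the truncated sequence $k_i,\ldots,k_1$ is itself a legal sequence of indices, so by the very definition of $\equiv^B$ the vector $\vsgn(\eta^B_{k_i\cdots k_1}(x))$ is the same for all $x\in\C$. In particular, the sign of the $k_{i+1}$-th coordinate of $\eta^B_{k_i\cdots k_1}(x)$ is constant on $\C$, so at this stage every point of $\C$ lies in the same closed halfspace on which $\eta^B_{k_{i+1}}$ acts linearly. Composing the chosen linear branches one after the other produces a single linear map $L$ satisfying $\eta^B_\kk|_{\C}=L|_{\C}$.

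Finally, to pass from $\C$ to its closure $\overline{\C}$ (which is by definition the $B$-cone), I would observe that $\eta^B_\kk$ is continuous on $V^*$ as a composition of piecewise linear maps, and that $L$ is continuous everywhere. Since they agree on the dense subset $\C\subseteq\overline{\C}$, they agree on all of $\overline{\C}$, establishing linearity on the $B$-cone. The only point that requires mild care is an intermediate stage where a relevant coordinate vanishes and the ``choice of branch'' might seem ambiguous; but there the two branches of $\eta_k^B$ coincide, so the composition remains well-defined and the continuity argument goes through without modification. The only real content of the proof is the observation that the defining signs of $\equiv^B$ are precisely what is needed to ensure that all points of $\C$ traverse the same sequence of branches.
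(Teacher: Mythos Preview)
The paper does not give a proof of this proposition; it simply quotes it as \cite[Proposition~5.3]{universal}. Your argument is correct and is the standard one: a single mutation step $\eta_k$ is piecewise linear with the two branches determined solely by the sign of the $k$-th coordinate, and the definition of $\equiv^B$ forces these signs to be constant along a $B$-class at every intermediate stage of the composition, so the composed map is linear on the class and hence on its closure by continuity. One very minor notational point: at stage $i+1$ the single mutation being applied is $\eta_{k_{i+1}}^{\mu_{k_i\cdots k_1}(B)}$ rather than $\eta_{k_{i+1}}^{B}$, but since the halfspace decomposition depends only on the coordinate $k_{i+1}$ and not on the underlying exchange matrix, this does not affect the argument.
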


The following lemma is easy.
\begin{lemma}\label{mu c}
If $B$ is an acyclic exchange matrix and $c=s_1\cdots s_n$ is the corresponding Coxeter element, then $\mu_{12\cdots n}(B)=B$.
\end{lemma}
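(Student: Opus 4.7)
The plan is to compute the iterated mutation directly and track the sign pattern of the entries. The convention defining $c$ from $B$ says $s_i$ precedes $s_j$ in $c = s_1 \cdots s_n$ whenever $b_{ij} > 0$; combined with acyclicity and skew-symmetrizability, this forces $b_{ij} \ge 0$ for $i < j$ and $b_{ij} \le 0$ for $i > j$. This sign pattern is the engine that makes all the bilinear correction terms in the mutation formula vanish.

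Writing $B^{(0)} = B$ and $B^{(k)} = \mu_k(B^{(k-1)})$ for $k = 1, \ldots, n$, I will prove by induction on $k$ the intermediate claim
\[
b^{(k)}_{ij} = \begin{cases} b_{ij} & \text{if } \{i,j\} \subseteq \{1,\ldots,k\} \text{ or } \{i,j\} \subseteq \{k+1,\ldots,n\},\\ -b_{ij} & \text{otherwise.}\end{cases}
\]
Taking $k = n$ then gives $\mu_{12\cdots n}(B) = B$. The base case $k = 0$ is trivial. For the inductive step, entries in row or column $k+1$ are handled by the rule $b^{(k+1)}_{ij} = -b^{(k)}_{ij}$, and a direct check against the claim is immediate in each sub-case. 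The substance is in the entries with $i, j \neq k+1$, where
\[
b^{(k+1)}_{ij} = b^{(k)}_{ij} + \sgn(b^{(k)}_{k+1,j}) \max\!\bigl(b^{(k)}_{i,k+1}\, b^{(k)}_{k+1,j},\, 0\bigr).
\]

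The heart of the argument is showing that this correction term is always zero, i.e., that $b^{(k)}_{i,k+1}\, b^{(k)}_{k+1,j} \le 0$. I would split into four cases according to whether $i \le k$ or $i > k+1$ and whether $j \le k$ or $j > k+1$. In each case, the inductive hypothesis determines the sign flip on $b^{(k)}_{i,k+1}$ and on $b^{(k)}_{k+1,j}$ relative to $B$, and the sign pattern of $B$ coming from acyclicity then pins down the signs of the factors. A short case-by-case check shows that in every one of the four cases exactly one factor is $\ge 0$ and the other is $\le 0$, so the product is nonpositive and the max vanishes. Consequently $b^{(k+1)}_{ij} = b^{(k)}_{ij}$ for such $i, j$, and comparing with the desired form (noting that the parity of ``exactly one of $i, j$ lies in $\{1, \ldots, k+1\}$'' agrees with that of the previous step on these entries) completes the induction.

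The only obstacle is careful sign bookkeeping across the four cases; there is no conceptual difficulty, and in fact the argument is really just the well-known fact that mutating at a source turns it into a sink, applied sequentially along the linear extension $1, 2, \ldots, n$ of the acyclic orientation encoded by $c$.
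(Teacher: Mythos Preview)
Your proof is correct. The paper gives no proof at all, merely declaring the lemma ``easy,'' so your direct verification is exactly the kind of argument the authors had in mind; in particular, your interpretation of mutation at a source (turning it into a sink) applied along the linear extension $1,2,\ldots,n$ is the standard way to see this.
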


The following lemma is immediate by comparing the definition of the mutation map with the action of simple reflections on fundamental weights \eqref{dual on rho}.

\begin{lemma}\label{helpful}
Suppose $B$ is acyclic and the Coxeter element associated to $B$ is $c=s_1\cdots s_n$.
Let $x=\sum_{i=1}^na_i\rho_i\in V^*$.
If $a_1\ge0$, then $\eta_1^{B^T}(x)=-a_1\rho_1+\sum_{i=2}^na_i\rho_i$.
If $a_1\le0$, then $\eta_1^{B^T}(x)=s_1x$.
\end{lemma}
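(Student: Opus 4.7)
The plan is to compute $\eta_1^{B^T}(x)$ directly from the definition. Adjoin the row $(a_1,\ldots,a_n)$ beneath $B^T$ to form the extended matrix $\tB$, whose entries are $\tilde b_{ij}=b_{ji}$ for $i\leq n$ and $\tilde b_{n+1,j}=a_j$. Then $\eta_1^{B^T}(x)$ is recovered by applying the mutation rule \eqref{b mut} at $k=1$ and reading off the $\rho$-coordinate vector from the last row of $\mu_1(\tB)$.

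The key structural input is that $c=s_1\cdots s_n$ forces $s_1$ to be initial in $c$, so by the convention relating $B$ to $c$ one has $b_{1j}\geq 0$ for all $j$, and consequently $\tilde b_{1j}=b_{j1}\leq 0$ for every $j\neq 1$. In particular $\sgn(\tilde b_{1j})\in\{-1,0\}$ for $j\neq 1$. By \eqref{b mut}, the $(n+1,1)$-entry becomes $-a_1$ in every case, and for $j\neq 1$ I would obtain
\[\tilde b'_{n+1,j}=a_j+\sgn(b_{j1})\,\max(a_1b_{j1},\,0).\]

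If $a_1\geq 0$, then $a_1b_{j1}\leq 0$, the correction term vanishes, and $\tilde b'_{n+1,j}=a_j$, yielding the first formula. If $a_1\leq 0$, then $a_1b_{j1}\geq 0$ and the correction equals $-a_1b_{j1}=-a_1a_{j1}$, using $a_{j1}=-|b_{j1}|=b_{j1}$ for $j\neq 1$; thus $\tilde b'_{n+1,j}=a_j-a_1a_{j1}$. Comparing with \eqref{dual on rho}, which gives $s_1(\rho_j)=\rho_j$ for $j\neq 1$ and $s_1(\rho_1)=\rho_1-\sum_i a_{i1}\rho_i$, one reads off that the last row of $\mu_1(\tB)$ is precisely the $\rho$-coordinate vector of $s_1x=-a_1\rho_1+\sum_{j\neq 1}(a_j-a_1a_{j1})\rho_j$, yielding the second formula.

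The only real obstacle is keeping the sign conventions straight across the transpose $B\leftrightarrow B^T$, the passage between $B$ and the Cartan matrix $A$, and the dual action on weights; once those are pinned down the verification is mechanical, which is consistent with the authors' claim that the lemma is immediate from comparing \eqref{b mut} with \eqref{dual on rho}.
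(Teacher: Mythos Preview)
Your proposal is correct and follows exactly the approach the paper indicates: the authors declare the lemma immediate by comparing the mutation rule \eqref{b mut} with the dual action of $s_1$ on fundamental weights \eqref{dual on rho}, and your write-up carries out precisely that comparison, with the sign conventions handled correctly throughout.
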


The map $\sigma_s:\AP{c}\to\AP{scs}$ was defined in Section~\ref{affdenom sec} for $s$ initial or final in $c$.

\begin{proposition}\label{nu sigma}
If $c=s_1\cdots s_n$, then the maps $\nu_{s_1cs_1}\circ\sigma_{s_1}$ and $\eta_1^{B^T}\circ\nu_c$ from $\AP{c}$ to $V^*$ coincide.
\end{proposition}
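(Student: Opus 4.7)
The plan is to verify the identity case-by-case on $\alpha\in\AP{c}$, organized by whether $\alpha$ is a negative simple root or a positive (real or imaginary) root. Writing $s=s_1$, so that $scs=s_2\cdots s_n s_1$, I note that in the ordering induced by $scs$, the root $\alpha_1$ becomes the final simple root while $\alpha_i$ for $i>1$ keep their relative order. This controls how $E_{scs}$ differs from $E_c$ on expressions involving $\alpha_1$.

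First, for $\alpha=-\alpha_i$ with $i\neq 1$, the map $\sigma_s$ fixes $\alpha$, and the formula for $\nu_c$ on negative simples gives $\nu_c(-\alpha_i)=\rho_i=\nu_{scs}(-\alpha_i)$; since the $\rho_1$-coefficient of $\rho_i$ vanishes, \cref{helpful} gives $\eta_1^{B^T}(\rho_i)=\rho_i$. For $\alpha=-\alpha_1$, we have $\sigma_s(\alpha)=\alpha_1$, and direct computation yields $\nu_c(-\alpha_1)=\rho_1$, while $\nu_{scs}(\alpha_1)=-E_{scs}(\,\cdot\,,\alpha_1)=-\rho_1$ because $\alpha_1$ is last in $scs$; \cref{helpful} with $a_1=1\ge0$ gives $\eta_1^{B^T}(\rho_1)=-\rho_1$, as required.

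For positive $\alpha\in\AP{c}$, split according to whether $\alpha=\alpha_1$. If $\alpha=\alpha_1$, then $\sigma_s(\alpha)=-\alpha_1$, so $\nu_{scs}(-\alpha_1)=\rho_1$. On the other hand, $\nu_c(\alpha_1)=-\rho_1-\sum_{j>1}a_{j1}\rho_j$ (using that $s_1$ is initial in $c$), whose $\rho_1$-coefficient is $-1\le0$; by \cref{helpful} $\eta_1^{B^T}$ acts as $s_1$, and applying \eqref{dual on rho} one checks $s_1(-\rho_1-\sum_{j>1}a_{j1}\rho_j)=\rho_1$. If instead $\alpha$ is positive with $\alpha\neq\alpha_1$, then $\sigma_s(\alpha)=s\alpha$ and \cref{nu scs} immediately gives $\nu_{scs}(\sigma_s(\alpha))=s\,\nu_c(\alpha)$; writing $\alpha=\sum c_i\alpha_i$ with $c_1\ge0$, the fact that $s_1$ is initial in $c$ gives $E_c(\alpha_1\ck,\alpha)=c_1\ge0$, so the $\rho_1$-coefficient of $\nu_c(\alpha)$ is $-c_1\le0$, and \cref{helpful} again shows $\eta_1^{B^T}(\nu_c(\alpha))=s\,\nu_c(\alpha)$. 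This covers both positive real roots and the imaginary root $\delta$ (for which $s\delta=\delta$).

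The only slightly delicate step is the sign-flipping case $\alpha=\pm\alpha_1$, where $\sigma_s$ and $\eta_1^{B^T}$ both change sign in non-parallel ways, and the matching requires tracking the asymmetric definition of $\nu_c$ on vectors with negative $\rho_i\ck$-coordinates versus the linear formula on the positive cone. Everything else reduces, via \cref{nu scs,helpful}, to the observation that $\nu_c$ maps the positive root cone into the halfspace $\{x:\br{x,\alpha_1}\le 0\}$ (equivalently, its $\rho_1$-coefficient is nonpositive), which is precisely the halfspace on which $\eta_1^{B^T}$ acts as $s_1$.
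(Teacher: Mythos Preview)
Your proof is correct and follows essentially the same approach as the paper: a case-by-case verification on $\alpha\in\AP{c}$ (negative simples $-\alpha_i$ with $i\neq1$, then $-\alpha_1$, then $\alpha_1$, then positive roots $\neq\alpha_1$), using \cref{helpful} and \cref{nu scs} at exactly the same places. Your explicit computation $E_c(\alpha_1\ck,\alpha)=c_1$ to justify the nonpositivity of the $\rho_1$-coefficient is slightly more detailed than the paper's one-line claim, but the argument is otherwise identical.
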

\begin{proof}
We use Lemma~\ref{helpful} throughout.
Suppose $\beta\in\AP{c}$.

If $\beta=-\alpha_\ell$ for some $\ell\neq 1$, then $\nu_{s_1cs_1}(\sigma_{s_1}(\beta))=\nu_{s_1cs_1}(-\alpha_\ell)=\rho_\ell$ and $\eta_1^{B^T}(\nu_c(\beta))=\eta_1^{B^T}(\rho_\ell)=\rho_\ell$.

If $\beta=-\alpha_1$, then $\nu_{s_1cs_1}(\sigma_{s_1}(\beta))=\nu_{s_1cs_1}(\alpha_1)=-\sum_{i=1}^nE_{s_1cs_1}(\alpha_i\ck,\alpha_1)\rho_i$.
Since $s_1$ is final in $s_1cs_1$, this is $-\rho_1$.
Also, $\eta_1^{B^T}(\nu_c(\beta))=\eta_1^{B^T}(\rho_1)=-\rho_1$.

If $\beta=\alpha_1$, then $\nu_{s_1cs_1}(\sigma_{s_1}(\beta))=\nu_{s_1cs_1}(-\alpha_1)=\rho_1$.
Also, \[\nu_c(\beta)=-\sum_{i=1}^nE_c(\alpha_i\ck,\alpha_1)=-\rho_1-\sum_{i\neq 1}a_{i1}\rho_i=s_1\rho_1,\]
so $\eta_1^{B^T}(\nu_c(\beta))=\rho_1$.

On positive roots $\beta\neq\alpha_1$, the map $\sigma_{s_1}$ agrees with $s_1$, so $\nu_{s_1cs_1}(\sigma_{s_1}(\beta))=s_1\nu_c(\beta)$ by Lemma~\ref{nu scs}.
Since $\nu_c(\beta)$ has a nonpositive $\rho_1$-coefficient in the basis of fundamental weights, $\eta_1^{B^T}(\nu_c(\beta))=s_1\nu_c(\beta)$.
\end{proof}

The next result is \cite[Proposition~7.3]{universal}.
\begin{proposition}\label{eta FB}
For any exchange matrix~$B$ and any sequence $\kk$ of indices, the mutation map $\eta^{B^T}_\kk$ is an isomorphism from $\F_{B^T}$ to $\F_{\mu_\kk(B^T)}$.
\end{proposition}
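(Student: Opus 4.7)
My plan is to deduce the proposition from two inputs: a composition law for mutation maps, and the linearity-on-cones statement of \cref{linear}. Writing $B'=B^T$ for brevity, the target is to show that $\eta^{B'}_\kk$ restricts to a bijection between the cones of $\F_{B'}$ and those of $\F_{\mu_\kk(B')}$ that is linear on each cone.

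The pivotal identity is the composition rule
\[\eta^{\mu_\kk(B')}_{\mm}\circ\eta^{B'}_\kk=\eta^{B'}_{\mm\cdot\kk},\]
where $\mm\cdot\kk$ denotes the concatenated sequence that applies $\kk$ first and then $\mm$. I would verify this directly from \eqref{b mut}: if $\tB$ is built by placing $x\in V^*$ below $B'$, then the bottom row of $\mu_{\mm\cdot\kk}(\tB)$ is by definition $\eta^{B'}_{\mm\cdot\kk}(x)$, while after the first $|\kk|$ mutations the top block is $\mu_\kk(B')$ and the bottom row is $\eta^{B'}_\kk(x)$, so the remaining mutations produce $\eta^{\mu_\kk(B')}_\mm(\eta^{B'}_\kk(x))$. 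As a corollary, since each single mutation $\mu_k$ is an involution on matrices, $\eta^{B'}_\kk$ is invertible with inverse $\eta^{\mu_\kk(B')}_{\kk^{\mathrm{op}}}$, where $\kk^{\mathrm{op}}$ is the reverse sequence. In particular $\eta^{B'}_\kk$ is a (piecewise-linear) homeomorphism of $V^*$.

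The composition rule immediately intertwines the two equivalence relations: for every sequence $\mm$, $\vsgn\circ\eta^{\mu_\kk(B')}_{\mm}\circ\eta^{B'}_\kk=\vsgn\circ\eta^{B'}_{\mm\cdot\kk}$. Hence if $x_1\equiv^{B'} x_2$ then $\eta^{B'}_\kk(x_1)\equiv^{\mu_\kk(B')}\eta^{B'}_\kk(x_2)$, and applying the same observation to the inverse map yields the converse. Thus $\eta^{B'}_\kk$ induces a bijection between $B'$-classes and $\mu_\kk(B')$-classes, and, being a homeomorphism, between their closures, which are the maximal cones of the two fans. To upgrade this to a fan isomorphism I would invoke \cref{linear}: on each $B'$-cone $C$ the map $\eta^{B'}_\kk$ is linear and globally injective, hence restricts to a linear isomorphism $C\to\eta^{B'}_\kk(C)$, and such a linear isomorphism induces a bijection of face lattices. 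So every face of a $B'$-cone is carried to a face of the corresponding $\mu_\kk(B')$-cone, and symmetrically via the inverse; the two fans are isomorphic as claimed.

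The step I anticipate as the main obstacle is not conceptual but bookkeeping: one must carefully set ordering conventions for $\mm\cdot\kk$ and $\kk^{\mathrm{op}}$, and then check that the recursive description of matrix mutation of an extended exchange matrix behaves correctly as we peel off the top block and carry along the augmented bottom row. Once that verification is cleanly in place, the rest of the argument is formal.
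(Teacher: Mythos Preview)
The paper does not prove this proposition; it simply quotes it as \cite[Proposition~7.3]{universal}. Your proposal supplies a correct self-contained argument where the paper gives none.

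Your composition law $\eta^{\mu_\kk(B')}_{\mm}\circ\eta^{B'}_\kk=\eta^{B'}_{\mm\cdot\kk}$ follows directly from the definition of mutation maps via extended exchange matrices, exactly as you describe, and the intertwining of the two equivalence relations is then immediate. One small remark: you refer to the $B'$-cones as ``maximal cones,'' but by the definition in the paper the $B'$-cones are closures of $B'$-classes and $\F_{B'}$ consists of these cones \emph{together with their faces}; in particular, $B'$-cones need not be full-dimensional. Your argument already handles this correctly, since invoking \cref{linear} gives a linear isomorphism on each $B'$-cone and hence a bijection between its faces and those of its image, so the correspondence extends to all cones of the two fans. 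The bookkeeping you flag (orientation of concatenation, the reverse sequence for the inverse) is indeed the only thing requiring care, and it is routine.
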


The following result is \cite[Corollary~4.4]{scatfan}.

\begin{theorem}\label{mut scat fan}
For any exchange matrix~$B$ and any sequence $\kk$ of indices, the mutation map $\eta^{B^T}_\kk$ is an isomorphism from $\ScatFan^T(B)$ to $\ScatFan^T(\mu_\kk(B))$.
\end{theorem}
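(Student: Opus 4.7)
The plan is first to reduce to the case of a single mutation $\mu_k$. The mutation maps compose correctly along the concatenation of index sequences, and a composition of fan isomorphisms is a fan isomorphism, so it suffices to prove that $\eta_k^{B^T}$ gives an isomorphism $\ScatFanTB \to \ScatFan^T(\mu_k(B))$.

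For a single index $k$, the key input is the mutation operation on consistent scattering diagrams developed in \cite{GHKK}. This operation $T_k$ takes $\ScatTB$ to a new consistent scattering diagram whose initial walls are those dictated by $\mu_k(B)$. Geometrically $T_k$ acts in three steps: (i) cut any wall whose support crosses $\alpha_k^\perp$ transversely into the two pieces lying in the two closed halfspaces determined by $\alpha_k^\perp$; (ii) push each resulting wall forward by $\eta_k^{B^T}$, which by construction is linear on each of those two halfspaces; and (iii) replace each scattering function by its image under the substitution of $\hy$-monomials dictated by the mutation rule \eqref{b mut}, consistent with the conventions recorded at the start of \cref{trans scat sec} (and with \cref{scat antip} for the interaction with sign changes). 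By the GHKK uniqueness theorem, the output $T_k(\ScatTB)$ is equivalent to $\Scat^T(\mu_k(B))$.

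The cutting in step (i) happens along $\alpha_k^\perp$, which is itself a wall of $\ScatTB$ and hence a union of facets of $\ScatFanTB$, so it does not alter the underlying fan structure. Steps (ii) and (iii) together show that $\eta_k^{B^T}$ sends the collection of $\DCScat$-like cones and walls of $\ScatTB$ bijectively to those of $T_k(\ScatTB) \sim \Scat^T(\mu_k(B))$. Hence $\eta_k^{B^T}$ carries $\ScatFanTB$ to $\ScatFan^T(\mu_k(B))$ as fans, and since $\eta_k^{B^T}$ is a piecewise-linear homeomorphism of $V^*$ (with piecewise-linear inverse $\eta_k^{\mu_k(B)^T}$), this map is a fan isomorphism. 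Composing over the sequence $\kk$ then yields the general statement. As a sanity check, this is consistent with \cref{eta FB}, which gives the analogous (easier) statement at the level of mutation fans; since $\ScatFanTB$ refines $\F_{B^T}$ by \cref{scat ref mut}, the scattering-fan statement is strictly finer and genuinely requires the GHKK mutation machinery.

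The main obstacle is setting up $T_k$ correctly: one must verify that the monomial substitution in step (iii) really intertwines the wall-crossing automorphisms of $\ScatTB$ with those that should hold in $\Scat^T(\mu_k(B))$, and one must check that step (i)'s cutting produces pieces whose images under $\eta_k^{B^T}$ are again convex pieces of hyperplanes of the form $\beta^\perp$ for the mutated root data. Both facts are standard but delicate consequences of the GHKK consistency framework, and once established, the theorem follows by the uniqueness of the cluster scattering diagram up to equivalence.
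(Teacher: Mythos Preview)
The paper does not prove this theorem; it is quoted verbatim as \cite[Corollary~4.4]{scatfan}, so there is no in-paper argument to compare against. Your sketch is in the right spirit---the result in \cite{scatfan} is indeed deduced from the GHKK mutation of scattering diagrams (what you call $T_k$), and the reduction to a single index is correct.

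That said, what you have written is an outline rather than a proof. The substantive content lies exactly in the two ``standard but delicate'' verifications you defer at the end: that the monomial substitution intertwines wall-crossing automorphisms, and that the pushed-forward pieces are again walls for the mutated data with the correct scattering terms. Without carrying these out (or citing precisely where they are done), the argument does not stand on its own. Also, your appeal to ``$\DCScat$-like cones'' is out of place here: $\DCScat(A,c)$ is defined in this paper only for acyclic affine $B$, whereas the theorem is asserted for arbitrary $B$; you should work directly with the rampart/$\D$-equivalence description of $\ScatFanTB$ from \cref{trans scat sec}. Finally, the remark that step~(i) does not alter the fan because $\alpha_k^\perp$ is already a wall is correct, but you should say why the fan associated to $T_k(\ScatTB)$ coincides with $\ScatFan^T(\mu_k(B))$: equivalence of scattering diagrams with minimal support implies equality of supports and hence of the induced fans, which is the step that lets you pass from ``equivalent to'' to ``same fan as.''
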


\begin{proposition}\label{eta nice} 
Suppose $B$ is an acyclic exchange matrix and the Coxeter element associated to $B$ is $c=s_1\cdots s_n$.
Then the piecewise linear map $\eta^{B^T}_{12\cdots n}$
\begin{enumerate}[label=\bf\arabic*., ref=\arabic*]  
\item \label{nu tau}   
has $\eta^{B^T}_{12\cdots n}\circ\nu_c=\nu_c\circ\tau_c$ as maps on $\AP{c}$, 
\item\label{eta aut mut}
is an automorphism of $\F_{B^T}$,
\item\label{eta c fan}
is an automorphism of $\ScatFanTB$, and
\item\label{eta is c}
fixes $\d_\infty$ as a set, agrees with $c$ on $\d_\infty$, and has finite order on $\d_\infty$. 
\end{enumerate}
\end{proposition}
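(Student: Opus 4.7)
The plan is to establish the four parts in the order (2), (3), (1), (4), since (2) and (3) follow at once from earlier results, (1) is the technical heart, and (4) is extracted from (1) together with the structure of $\d_\infty$.

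For (2) and (3), an elementary calculation from the mutation formula \eqref{b mut} shows that matrix mutation commutes with transpose. Combined with Lemma~\ref{mu c}, this gives $\mu_{12\cdots n}(B^T) = B^T$. Proposition~\ref{eta FB} and Theorem~\ref{mut scat fan} applied with $\kk = 12\cdots n$ then immediately yield that $\eta^{B^T}_{12\cdots n}$ is an automorphism of $\F_{B^T}$ and of $\ScatFanTB$ respectively.

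For (1), I would iterate Proposition~\ref{nu sigma}. Put $c^{(0)} = c$, $B^{(0)} = B$, and inductively $c^{(k)} = s_k c^{(k-1)} s_k$ and $B^{(k)} = \mu_k(B^{(k-1)})$ for $k = 1, \ldots, n$. A direct check shows that $s_k$ is initial in $c^{(k-1)}$ at each step, that $c^{(n)} = c$, and that $B^{(n)} = B$ (the same manipulation as in the proof of Lemma~\ref{mu c}). Proposition~\ref{nu sigma} then yields at the $k$th step the identity $\eta_k^{(B^{(k-1)})^T} \circ \nu_{c^{(k-1)}} = \nu_{c^{(k)}} \circ \sigma_{s_k}$ on $\AP{c^{(k-1)}}$. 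Composing these $n$ identities in the order prescribed by the conventions for $\eta^{B^T}_{12\cdots n}$ and for $\tau_c = \sigma_{s_1}\cdots\sigma_{s_n}$ produces (1).

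For (4), I would use (1) to analyze the action on the extreme rays of $\d_\infty$, which by Theorem~\ref{d inf extreme} are spanned by $\nu_c(\delta)$ and by $\nu_c(\beta)$ for $\beta \in \SimplesT{c}$. Each simple reflection $s_i$ fixes $\delta$ (since $K(\alpha_i\ck, \delta) = 0$), so every $\sigma_{s_i}$ fixes $\delta$ and hence $\tau_c(\delta) = \delta$. The roots in $\SimplesT{c}$ are positive and are permuted by $c$, and a direct check along the chain of $\sigma$'s shows that $\tau_c$ coincides with the $c$-action on them. Since $c \in W$ preserves $K$, and iterating Lemma~\ref{OmegaInvariance} along the chain $c = c^{(0)}, c^{(1)}, \ldots, c^{(n)} = c$ shows that $c$ also preserves $\omega_c$, it preserves $E_c = \tfrac{1}{2}(K + \omega_c)$; hence $\nu_c(c\beta) = c \cdot \nu_c(\beta)$ for every $\beta$ in the nonnegative span of $\Simples$. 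Thus $\eta^{B^T}_{12\cdots n}$ and the linear action of $c$ on $V^*$ agree on the extreme rays of $\d_\infty$. To promote this to agreement on all of $\d_\infty$, invoke Theorem~\ref{scat nu} to identify $\ScatFanTB$ with $\nu_c(\Fan_c(\RS))$: then $\d_\infty$ is a union of cones $\nu_c(\Cone(C))$ of $\ScatFanTB$ indexed by imaginary $c$-clusters $C$, each lying in a single $B^T$-cone by Theorem~\ref{scat ref mut}, so that $\eta^{B^T}_{12\cdots n}$ is linear on each such cone by Proposition~\ref{linear}. Since both $\eta^{B^T}_{12\cdots n}$ and $c$ are linear on each such cone and agree on its extreme rays, they agree on the whole cone, hence on $\d_\infty$, which is therefore fixed setwise. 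The main obstacle is (1), where one must carefully track the orders of composition and the sequence of Coxeter elements and exchange matrices; granted (1), the remaining parts are essentially assembly from previously established results.
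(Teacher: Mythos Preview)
Your proofs of (1), (2), and (3) match the paper's approach essentially verbatim: iterate \cref{nu sigma} and then appeal to \cref{mu c}, \cref{eta FB}, and \cref{mut scat fan}.

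Your argument for (4) takes a genuinely different route from the paper, and it contains a gap. The paper proves (4) directly from \cref{helpful} and \cref{DFc complement}: one checks that $\br{x_c,\alpha_1\ck}<0$, so every point of $\d_\infty$ has nonpositive $\rho_1$-coordinate, whence $\eta_1^{B^T}$ acts as $s_1$ on $\d_\infty$; then \cref{mut scat fan} moves $\d_\infty$ to the imaginary wall for $\mu_1(B)$ and the argument iterates. This is short and avoids \cref{scat nu} entirely.

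Your approach instead uses (1) on extreme rays and then linearity via \cref{scat nu}, \cref{scat ref mut}, and \cref{linear}. The idea is sound, but there is a mismatch between the rays on which you establish agreement and the rays on which you need it. You verify $\eta^{B^T}_{12\cdots n}=c$ on the extreme rays of $\d_\infty$, which by \cref{d inf extreme} are $\nu_c(\SimplesT{c})$. However, the cones $\nu_c(\Cone(C))$ for imaginary clusters $C$ have extreme rays $\nu_c(C)$ with $C\setminus\{\delta\}\subset\APTre{c}$, and in general $\APTre{c}\supsetneq\SimplesT{c}$. So ``agree on its extreme rays'' is not yet justified. The fix is easy: your verification that $\tau_c(\beta)=c\beta$ works for every $\beta\in\APTre{c}$, not just $\SimplesT{c}$, because all such $\beta$ lie in $\eigenspace{c}$ and hence (by \cref{useful}) never equal any $\psi^\inj_{c;k}$, so each intermediate $s_{k+1}\cdots s_n\beta$ stays positive and $\sigma_{s_k}$ acts as $s_k$. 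Combined with the $c$-invariance of $E_c$ (which you correctly derive from \cref{OmegaInvariance}), this gives $\eta^{B^T}_{12\cdots n}(\nu_c(\beta))=c\cdot\nu_c(\beta)$ for all $\beta\in\APT{c}$, and then your linearity argument goes through. As a minor point, $\nu_c(\delta)$ is not an extreme ray of $\d_\infty$ but merely a point in it; this is harmless since your argument covers it anyway.
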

\begin{proof}
We first prove Assertion~\ref{nu tau} by checking that $\eta^{B^T}_{12\cdots n}=\nu_c\circ\tau_c\circ\nu_c^{-1}$ as maps on the set $\nu_c(\AP{c})$.
Proposition~\ref{nu sigma} says that $\eta^{B^T}_1=\nu_{s_1cs_1}\circ\sigma_{s_1}\circ\nu_c^{-1}$.
Since $s_2$ is initial in $s_1cs_1$, which is the Coxeter element associated to $\mu_1(B)$, and since transpose commutes with mutation, we can write a similar expression for $\eta^{\mu_1(B^T)}_2$.  
Composing, we see that $\eta^{B^T}_{21}=\nu_{s_2s_1cs_1s_2}\circ\sigma_{s_2}\sigma_{s_1}\circ\nu_c^{-1}$.
Continuing in this manner, we see that $\eta^{B^T}_{n(n-1)\cdots1}=\nu_c\tau_c^{-1}\nu_c^{-1}$.
Inverting and appealing to Lemma~\ref{mu c}, we see that $\eta^{B^T}_{12\cdots n}=\nu_c\circ\tau_c\circ\nu_c^{-1}$ as desired.
Taking $\kk=12\cdots n$ in Proposition~\ref{eta FB} and applying Lemma~\ref{mu c}, we obtain Assertion~\ref{eta aut mut}.
Assertion \ref{eta c fan} follows from Theorem~\ref{mut scat fan} in the same way.

Now, $\br{x_c,\alpha_1\ck}=-\omega_c(\alpha_1\ck,\delta)<0$ by the definition of $\omega_c$, because $\delta$ is a positive combination of all of the simple roots, and because there exists some $j>1$ with $a_{1j}\neq0$.
Thus every cone of the Coxeter fan in $\delta^\perp$ that contains $x_c$ is in the halfspace $\set{x\in V^*:\br{x,\alpha_1\ck}\le0}$.
Thus by Theorem~\ref{DFc complement}, every point $\sum_{i=1}^na_i\rho_i\in\d_\infty$ satisfies $a_1\le0$, so Lemma~\ref{helpful} says that $\eta_1^{B^T}$ acts as $s_1$ on~$\d_\infty$.
By Theorem~\ref{mut scat fan}, $\eta_1^{B^T}$ takes $\d_\infty$ to the analogous imaginary wall relative to $\mu_1(B^T)$.
Thus we can apply Lemma~\ref{helpful} again to see that $\eta_2^{\mu_1(B^T)}$ acts as $s_2$ on that imaginary wall, and continue until we show that $\eta_{n(n-1)\cdots1}^B$ acts as $c^{-1}$ on $\d_\infty$ and fixes $\d_\infty$ as a set. 
By taking inverses and appealing to Lemma~\ref{mu c}, we see that $\eta^{B^T}_{12\cdots n}$ fixes $\d_\infty$ as a set and agrees with $c$ on $\d_\infty$.
It has finite order on $\d_\infty$ because it acts as $\nu_c\circ\tau_c\circ\nu_c^{-1}$ on the rays of $\nu_c(\Fan_c(\RS))$ and because $\d_\infty$ is the image of the span of $\APT{c}$ under~$\nu_c$.
This is Assertion~\ref{eta is c}.
\end{proof}

We now prove Proposition~\ref{mut ref nu}.

\begin{proof}[Proof of Proposition~\ref{mut ref nu}]
We need to show that if two points are in the same cone of $\F_{B^T}$ then they are in the same cone of $\nu_c(\Fan_c(\RS))$.
Since $\F_{B^T}$ and $\nu_c(\Fan_c(\RS))$ both contain the $\g$-vector fan as a subfan (Theorems~\ref{nu thm} and~\ref{mut g}), and since the $\g$-vector fan covers the entire ambient space except for the relative interior of~$\d_\infty$ (Theorem~\ref{DFc complement}), it remains only to prove the case where the two points are in~$\d_\infty$.
We will prove the contrapositive:
Suppose two points $p,q\in\d_\infty$ are not in the same cone of $\nu_c(\Fan_c(\RS))$.
We will show that they are not in the same cone of $\F_{B^T}$.

All maximal cones of $\nu_c(\Fan_c(\RS))$ contained in $\d_\infty$ are simplicial and of dimension ${n-1}$.
Thus since $p$ and $q$ are not in the same cone of $\nu_c(\Fan_c(\RS))$, there exists a face $F$ of $\nu_c(\Fan_c(\RS))$ of dimension $n-2$, contained in $\d_\infty$, such that $p$ and $q$ are not in the linear span of $F$ but the line segment $\seg{pq}$ intersects $F$.
The face $F$ is the nonnegative span of $\nu_c(C)$ for a set of $n-2$ pairwise $c$-compatible roots in $\AP{c}$.
If $\delta\not\in C$, then $F$ is on the relative boundary of $\d_\infty$, so that the intersection of $\d_\infty$ with the linear span of $F$ is a face of $\d_\infty$.
But this is impossible since $p$ and $q$ are on opposite sides of the linear span of $F$.
Thus $\delta\in C$, so Proposition~\ref{walls in d infty2} says that there exists $\beta\in\APTre{c}$ such that 
$F\subseteq\d_\beta$.
In particular, the line segment $\seg{pq}$ intersects~$\d_\beta$, and $p$ and~$q$ are on opposite sides of $\beta^\perp$.

To complete the proof, we will make use of the symmetry provided by the action of $c$.
Everything constructed so far in this proof is invariant under this symmetry.
Specifically, we check the following four statements of invariance.

First, by Proposition~\ref{eta nice}.\ref{eta is c}, the mutation map $\eta_{12\cdots n}^B$ is linear on $\d_\infty$ and fixes $\d_\infty$, and $\eta_{12\cdots n}^BF=cF$.
On the other hand, $F$ is the nonnegative span of $\nu_c(C)$, so Proposition~\ref{eta nice}.\ref{nu tau} says that $\eta_{12\cdots n}^BF$ is the nonnegative span of $\nu_c(\tau_c(C))$.
Now, \eqref{compat tau} implies that $\tau_c$ induces an automorphism of $\Fan_c(\RS)$, and we see that $\eta_{12\cdots n}^BF$ is an $(n-2)$-dimensional face of $\nu_c(\Fan_c(\RS))$.
The argument reverses, so $F$ is an $(n-2)$-dimensional face of $\nu_c(\Fan_c(\RS))$ if and only if $cF$ is an $(n-2)$-dimensional face of $\nu_c(\Fan_c(\RS))$.

Second, the map $c$ takes $\beta^\perp$ to $(c\beta)^\perp$ (by definition, because the action of $c$ on~$V^*$ is dual to the action of $c$ on $V$).
Thus $\seg{pq}$ intersects $F$ and $F\subseteq\beta^\perp$ and $p$ and $q$ are on opposite sides of $\beta^\perp$ if and only if $\seg{(cp)(cq)}$ intersects $cF$ and $cF\subseteq(c\beta)^\perp$ and $cp$ and $cq$ are on opposite sides of $(c\beta)^\perp$.

Third, we claim further that $F\subseteq\d_\beta$ if and only if $cF\subseteq\d_{c\beta}$.
Lemma~\ref{rescue} says that we can ignore roots in $\APre{c}\setminus\APTre{c}$ to verify the claim.
Thus the claim follows by Lemmas~\ref{OmegaInvariance} and~\ref{Ec invariant} and by the fact (apparent, for example, from the proof of Proposition~\ref{walls in d infty1}) that the cutting relation on $\APTre{c}$ is invariant under that action of~$c$.

Fourth and finally, by Proposition~\ref{eta nice}.\ref{eta aut mut} and Proposition~\ref{eta nice}.\ref{eta is c}, $p$ and $q$ are in the same cone of $\F_{B^T}$ if and only if $c(p)$ and $c(q)$ are in the same cone of $\F_{B^T}$.

By the definition of $\APTre{c}$ (see Section~\ref{affdenom sec}), there exists $k\in\integers$ such that $c^k\beta\in\RSfin$.
By the statements above about invariance under the action of $c$, we may as well assume that $k=0$, so that $\beta\in\RSfin$.

Theorem~\ref{easy scat} says that there there is a $c$-sortable join-irreducible element $j$ such that $\Sh(j)=\d_\beta$ or a $c^{-1}$-sortable join-irreducible element $j'$ such that ${-\Sh(j')=\d_\beta}$.
Indeed, since $\beta\in\RSfin$, Propositions~\ref{ji ref} and~\ref{Sigma j j'} imply that both $j$ and $j'$ exist, and by Lemma~\ref{cover para}, we see that $j\in W_\fin$.
Thus $\Sh(j)=\Proj_{S_\fin}^{-1}\Sh_{S_\fin}(j)$ by Lemma~\ref{Sigma para}.
Therefore the segment connecting $\Proj_{S_\fin}(p)$ to $\Proj_{S_\fin}(q)$ intersects $\Sh_{S_\fin}(j)$, but neither $\Proj_{S_\fin}(p)$ nor $\Proj_{S_\fin}(q)$ is in the linear span of $\Sh_{S_\fin}(j)$.

Let $B_\fin$ be the exchange matrix obtained by deleting from $B$ the row and column indexed by the affine index $\aff$.
This is of finite type, so combining Theorem~\ref{camb FB finite} with Proposition~\ref{explicit shards fin}, $\Sh_{S_\fin}(j)$ is the union of all faces of the mutation fan $\F_{(B_\fin)^T}$ contained in the hyperplane in $V_\fin^*$ orthogonal to $\beta$.
We see that $\Proj_{S_\fin}(p)$ and $\Proj_{S_\fin}(q)$ are not in the same cone of $\F_{(B_\fin)^T}$.
Proposition~\ref{FB Proj} now implies that $p$ and $q$ are not in the same cone of the mutation fan $\F_{B^T}$.
\end{proof}

This completes the proof of Theorem~\ref{fans thm}.
We conclude with the following proof.

\begin{proof}[Proof of Theorem~\ref{gen fans thm}]
An exchange matrix is of affine type if and only if it is mutation-equivalent to an acyclic exchange matrix of affine type.
Theorem~\ref{fans thm}  is the special case of Theorem~\ref{gen fans thm} where $B$ is acyclic.
Thus Proposition~\ref{eta FB} and Theorem~\ref{mut scat fan} combine to prove Theorem~\ref{gen fans thm} for all exchange matrices of affine type.
\end{proof}

\subsection*{Acknowledgments}
We thank an anonymous referee for many helpful comments.

\bibliographystyle{plain}
\bibliography{bibliography}
\vspace{-0.175 em}

\end{document}